\numberwithin{equation}{section}
\newtheorem{theorem}{Theorem}[section]
\newtheorem{lemma}[theorem]{Lemma}
\newtheorem{proposition}[theorem]{Proposition}
\newtheorem{corollary}[theorem]{Corollary}
\newtheorem{mainthm}{Theorem}
\newtheorem{mainprop}[mainthm]{Proposition}
\theoremstyle{definition}
\newtheorem{definition}[theorem]{Definition}
\newtheorem{example}[theorem]{Example}
\theoremstyle{remark}
\newtheorem{remark}[theorem]{Remark}
\newtheorem{remarks}[theorem]{Remarks}
\newcommand{\E}{\mathcal {E}}
\newcommand{\G}{\mathcal {G}}
\newcommand{\C}{\mathbb{C}}
\newcommand{\Q}{\mathbb{Q}}
\newcommand{\R}{\mathbb{R}}
\newcommand{\F}{\mathbb{F}}
\newcommand{\Z}{\mathbb{Z}}
\newcommand{\SL}{\operatorname{SL}}
\newcommand{\SP}{\operatorname{Sp}}
\newcommand{\SO}{\operatorname{SO}}
\newcommand{\OO}{\mathcal {O}}
\newcommand{\GL}{\operatorname{GL}}
\newcommand{\Span}{\operatorname{Span}}
\newcommand{\Ker}{\operatorname{Ker}}
\newcommand{\Rank}{\operatorname{rank}}
\newcommand{\co}{\colon\thinspace}
\newcommand{\git}{/\!\!/}
\newcommand{\calO}{\mathcal{O}}
\newcommand{\pr}{\mathrm{pr}}
\newcommand{\sm}{\mathrm{sm}}
\newcommand{\sing}{\mathrm{sing}}
\newcommand{\sg}{\mathrm{sg}}
\newcommand{\codim}{\operatorname{codim}}
\newcommand{\Spin}{\operatorname{Spin}}
\newcommand{\Hom}{\operatorname{Hom}}
\newcommand{\inv}{^{-1}}
\newcommand{\NN}{\mathcal N}
\newcommand{\Ad}{\operatorname{Ad}}
\newcommand{\Ann}{\operatorname{Ann}}
\newcommand{\Lie}{\operatorname{Lie}}
\newcommand{\lie}[1]{{\mathfrak #1}}
\newcommand{\lieg}{\lie g}
\newcommand{\lieh}{\lie h}
\newcommand{\liep}{\lie p}
\newcommand{\X}{\mathscr X\!}
\newcommand{\Pic}{\operatorname{Pic}}
\newcommand{\TC}{\operatorname{TC}}
\newcommand{\red}{{\operatorname{red}}}
\newcommand{\B}{\mathsf B}
\renewcommand{\Re}{\operatorname{Re}}
\renewcommand{\k}{\mathbbm{k}}
\begin{document}

\title[Zero fiber of moment map \& rational singularities]{When does the zero fiber of the moment map have rational singularities?}

\author[H.-C. Herbig]{Hans-Christian Herbig}
\address{Departamento de Matem\'{a}tica Aplicada, Universidade Federal do Rio de Janeiro,
Av. Athos da Silveira Ramos 149, Centro de Tecnologia - Bloco C, CEP: 21941-909 - Rio de Janeiro, Brazil}
\email{herbighc@gmail.com}

\author[G. W. Schwarz]{Gerald W. Schwarz}
\address{Department of Mathematics, Brandeis University,
Waltham, MA 02454-9110, USA}
\email{schwarz@brandeis.edu}

\author[C. Seaton]{Christopher Seaton}
\address{Department of Mathematics and Computer Science,
Rhodes College, 2000 N. Parkway, Memphis, TN 38112, USA}
\email{seatonc@rhodes.edu}

\subjclass[2020]{Primary 53D20, 14B05; Secondary 13A50, 13H10, 14M35, 20G20}
\keywords{singular symplectic reduction, moment map, rational singularities, symplectic singularity, representation variety, character variety, representation growth of linear groups}

\thanks{All authors were supported by a Research in Pairs grant from
CIRM (Luminy) and a Bernoulli Brainstorm grant from the
Centre Interdisciplinaire Bernoulli
of the EPFL.
H.-C.H. was supported by CNPq through the \emph{Plataforma Integrada Carlos Chagas}.
C.S. was supported by the E.C.~Ellett Professorship in Mathematics.}

\begin{abstract}
Let $G$ be a complex reductive group and $V$ a $G$-module. There is a natural moment mapping $\mu\colon V\oplus V^*\to\lieg^*$ and we denote $\mu\inv(0)$  (the shell) by
$N_V$.
We use invariant theory and results of Musta\c{t}\u{a} \cite{MustataJetRational} to find 
criteria for $N_V$ to have rational singularities and for the categorical quotient
$N_V\git G$ to have symplectic singularities, the latter results improving upon \cite{HerbigSchwarzSeaton2}. It turns out that  for ``most'' $G$-modules $V$, the shell $N_V$ has rational singularities. For the case of direct sums of classical representations of the classical groups, $N_V$ has rational singularities and $N_V\git G$ has symplectic singularities if $N_V$ is a reduced and irreducible complete intersection.
Another important special case is $V=p\,\lieg$ (the direct sum of $p$ copies of the Lie algebra of $G$) where $p\geq 2$. We  show that $N_V$ has rational singularities and that $N_V\git G$ has symplectic singularities, improving  upon results of \cite{BudurRational},
\cite{AizenbudAvniRepGrowth},
\cite{kapon2019singularity} and \cite{GlazerHendel}.
Let $\pi=\pi_1(\Sigma)$ where $\Sigma$ is a closed Riemann surface of genus $p\geq 2$. Let $G$ be semisimple and let $\Hom(\pi,G)$ and $\X(\pi,G)$ be  the corresponding representation variety and character variety.
We show that $\Hom(\pi,G)$  is a complete intersection with rational singularities and  that $\X(\pi,G)$ has symplectic singularities. If $p>2$ or  $G$ contains no simple factor of rank $1$, then the singularities of $\Hom(\pi,G)$ and $\X(\pi,G)$ are in codimension at least four and $\Hom(\pi,G)$ is locally factorial. If, in addition, $G$ is simply connected, then $\X(\pi,G)$ is locally factorial.
\end{abstract}

\maketitle

\tableofcontents

% xxxxxxxxxxxxxxxxxxxxxxxxxxxxxxxxxxxxxxxxxxxxxxxxxxxxxxxxxxxxxxxxxxxxxxxxx
% xxxxxxxxxxxxxxxxxxxxxxxxxxxxxxxxxxxxxxxxxxxxxxxxxxxxxxxxxxxxxxxxxxxxxxxxx
% xxxxxxxxxxxxxxxxxxxxxxxxxxxxxxxxxxxxxxxxxxxxxxxxxxxxxxxxxxxxxxxxxxxxxxxxx

\section{Introduction}
\label{sec:Intro}

Let $G$ be a reductive complex group and $V$ a $G$-module. Then there is a natural $G$-invariant symplectic form $\omega$ on $V\oplus V^*$ such that $V$ and $V^*$ are isotropic subspaces. Let $\lieg$ denote the Lie algebra of $G$. Then there is a canonical moment mapping  $\mu\colon V\oplus V^*\simeq T^*V\to\lieg^*$. If $(v,v^*)\in V\oplus V^*$ and $A\in\lieg$, then
$\mu(v,v^*)(A)=v^*(A(v))$.
Let $N_V$ denote $\mu\inv(0)$ which we call \emph{the shell.} In  \cite{HerbigSchwarzSeaton2}  we found criteria for $N_V\git G$ to have symplectic singularities. In some examples, e.g., if $G$ is a torus, proving that $N_V\git G$ had symplectic singularities involved first proving that $N_V$ had rational singularities. This was also established in \cite{CapeHerbigSeaton} for $G=\SO_n(\C)$ and $V=k\,\C^n$, $k\geq n$. The main theorem of Budur \cite{BudurRational} proves that $N_V$ has rational singularities when $V=p\,\lieg$, $p\geq 2$, and $G=\GL_n(\C)$. Aizenbud and Avni \cite{AizenbudAvniRepGrowth} prove the same for $G$ semisimple, but require larger $p$.
The bounds were  improved  in \cite{kapon2019singularity} and to $p\geq 4$ in \cite{GlazerHendel}.
This all sparked our interest in determining when a general $N_V$ has rational singularities.

Budur uses work  of Musta\c{t}\u{a}   \cite{MustataJetRational} which gives a criterion for a local complete intersection
variety
to have rational singularities in terms of its jet schemes.
We use this criterion throughout our paper. A first consequence is the following (Corollary \ref{cor:dimGMod}, Remark \ref{rem:dimGModGeneric}).

\begin{mainthm}\label{thm:main5}
Let $G$ be semisimple and consider $G$-modules $V$ such that $V^G=0$ and each irreducible factor of $V$ is an almost faithful $G$-module. Then  there are only finitely many isomorphism classes of such $G$-modules such that $N_V$ does not have rational singularities.
\end{mainthm}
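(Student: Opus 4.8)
The plan is to reduce the finiteness assertion to a single numerical bound and then to establish that bound by combining Musta\c{t}\u{a}'s jet-scheme criterion with Luna's slice theorem and a dimension count.

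\emph{Reduction to a dimension bound.} Since $G$ is semisimple and nontrivial, every almost faithful irreducible $G$-module is nontrivial, hence of dimension at least $2$, and the Weyl dimension formula shows that for each $C$ there are only finitely many isomorphism classes of irreducible $G$-modules of dimension at most $C$. Hence a module $V$ in our class with $\dim V\leq C$ has at most $C/2$ irreducible constituents, each from a fixed finite list, so only finitely many such $V$ exist. It therefore suffices to produce a constant $C(G)$ such that $N_V$ has rational singularities whenever $V$ lies in the class and $\dim V>C(G)$; the remaining steps aim at such a $C(G)$.

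\emph{$N_V$ is a reduced, irreducible, normal complete intersection for $\dim V$ large.} First I would check that once $\dim V\gg 0$ the cotangent-lifted $G$-action on $V\oplus V^*$ is generically free and, more precisely, ``large'' in the sense needed to make $\mu$ flat (the behaviour exploited in \cite{HerbigSchwarzSeaton2}): $\mu$ is dominant and no fibre exceeds the expected dimension, so $\mu$ is flat by miracle flatness and $N_V$ is a complete intersection of dimension $2\dim V-\dim\lieg$. Stratifying $V\oplus V^*$ by isotropy type and bounding the dimension of each stratum through $\dim(G/H)$ and $\dim V^H$ shows that the singular locus $N_{V,\sing}=\{(v,v^*):\dim\lieg_{(v,v^*)}>0\}$ has codimension at least two in $N_V$ for $\dim V$ past a bound depending only on $G$; together with the Cohen--Macaulay property this gives that $N_V$ is reduced, irreducible and normal.

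\emph{The jet-scheme estimate.} By Musta\c{t}\u{a}'s criterion \cite{MustataJetRational}, $N_V$ then has rational singularities if and only if $J_m(N_V)$ is irreducible for every $m\geq 1$, and since $N_V$ is a complete intersection this amounts to the strict inequality $\dim\pi_m^{-1}(N_{V,\sing})<(m+1)\dim N_V$ for all $m$, where $\pi_m\colon J_m(N_V)\to N_V$ is the truncation. I would stratify $N_{V,\sing}$ by isotropy type $(H)$ and apply Luna's slice theorem in its symplectic form: a neighbourhood of the stratum $S_{(H)}$ is modelled on $G\times_H N_U$ with $N_U$ the shell of the $H$-module $U$ of symplectic-slice directions, and jet schemes decompose compatibly. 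Because the equations of a shell are quadratic, the fibre over the cone point satisfies $\pi_k^{-1}(0)\cong J_{k-2}(N_W)\times(W\oplus W^*)$, which both allows a recursion in $m$ on the deepest stratum $S_{(G)}=\{0\}$ and produces, writing $U=U^H\oplus U'$ with $(U')^H=0$, the estimate
\[
\dim\pi_m^{-1}(S_{(H)})\leq (m+1)\dim(G/H)+(m+1)\cdot 2\dim U^H+\dim J_{m-2}(N_{U'})+2\dim U'.
\]
A double induction --- on $\dim G$ for the smaller shells $N_{U'}$ and on $m$ for the origin --- then reduces everything to the inequality $\dim U'>\dim\lieh$, which holds on every singular stratum once $\dim V$ is large because $\dim U'\geq\varepsilon(G)\dim V-\dim\lieg$ while $\dim\lieh\leq\dim\lieg$; here $\varepsilon(G)>0$ exists because $V^G=0$ and the almost-faithfulness of the constituents force a uniform codimension gain $\codim_V V^H\geq\varepsilon(G)\dim V$ at every positive-dimensional reductive $H$, there being only finitely many possible constituents and finitely many relevant conjugacy classes of such $H$.

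\emph{Main obstacle.} The hard part will be to make the recursion uniform. One must choose $C(G)$ large enough that every slice module $U'$ arising is itself ``large'' for its (smaller) group --- legitimate precisely because $\dim U'$ grows linearly in $\dim V$ --- so that the induction on $\dim G$ closes with a constant defined recursively from the bounds for proper reductive subquotients. One must also accommodate slice actions that are not almost faithful, by passing to $H/K$ with $K$ the kernel on $U'$ (which changes no relevant dimension, since $K$ is forced to be finite when $N_V$ is a reduced complete intersection), and the possible reducibility of slices, which is why the recursion is best run on the weaker property ``$\dim J_k(N_W)=(k+1)\dim N_W$ for all $k$'' and only upgraded to irreducibility of the $J_m(N_V)$ at the top level. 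Making the quadratic-fibre identity and the slice decomposition of jets interlock cleanly, uniformly over the infinitely many $V$ of large dimension, is where the real work lies.
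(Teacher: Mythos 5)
Your overall plan — throw away the finitely many small $V$ and prove that for $\dim V$ large the shell has rational singularities via Musta\c{t}\u{a}'s criterion — is the right shape, but the route you take diverges from the paper's, and the crucial uniformity claim is unsupported.

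The paper's actual argument is much shorter and avoids the slice-theoretic recursion entirely. It invokes \cite[Theorem~3.6]{HerbigSchwarzSeaton2}, which says that for $G$ semisimple, all but finitely many $G$-modules $V$ in the given class are $(\dim G)$-modular. For a $(\dim G)$-modular $V$ one has the elementary containment $N_{(r)}\subset\bigcup_{s,t\geq r}V_{(s)}\times V^*_{(t)}$, which gives $\dim N_{(r)}\leq\dim N-2r-\dim G$, i.e.\ $\codim_N N_\sing>\dim G$ (Corollary~\ref{cor:dimGMod}). Rational singularities then follow from Corollary~\ref{cor:Budur} / Theorem~\ref{thm:Budur3}, whose proof is a one-step induction on $m$ using only the cone estimate $\dim\rho_m\inv(x_0,\xi_0)\leq\dim\rho_m\inv(0,0)$ and the identity $\rho_m\inv(0,0)\simeq N_{m-2}\times V\times V^*$ (Lemma~\ref{lem:fiber.dim}). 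No Luna slice theorem, no isotropy stratification of $N_\sing$, no double induction on $\dim G$ and $m$.

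The genuine gap in your proposal is the justification that a uniform $\varepsilon(G)>0$ exists with $\codim_V V^H\geq\varepsilon(G)\dim V$ for all $V$ in the class and all positive-dimensional reductive $H\leq G$. You argue ``there being only finitely many possible constituents and finitely many relevant conjugacy classes of such $H$,'' and both finiteness claims are false: as $\dim V\to\infty$ the irreducible constituents range over an infinite list of highest weights, and a semisimple group of rank $\geq2$ already has infinitely many conjugacy classes of one-dimensional subtori. Obtaining a uniform codimension (equivalently, modularity) estimate across this infinite family is exactly the nontrivial content of \cite[Theorem~3.6]{HerbigSchwarzSeaton2}; it requires controlling weight multiplicities along arbitrary nonzero semisimple elements, and your argument simply assumes it. Beyond that, your own account of the slice recursion acknowledges unresolved issues (non--almost-faithful slice actions, reducibility of slices, making the recursion uniform over subquotients), whereas the paper's proof of this theorem needs none of that machinery — the stratified slice analysis resembles what the paper does later for Theorem~\ref{thm:mainIFF}, but it is overkill here and is not carried through.
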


The theorem above, however, is not very useful when one is presented with a specific $G$ and $V$.

We say that $N_V$ has
\emph{FPIG} (finite principal isotropy groups) if it has an open dense subset of closed orbits with finite isotropy group.
When $N_V$ is a complete intersection with FPIG and rational singularities, we say that $N_V$ is
\emph{CIFR}.
We use   \cite{MustataJetRational} and the symplectic slice theorem of \cite{HerbigSchwarzSeaton2} to find criteria for $N_V$ to
be
CIFR in terms of the symplectic slice representations of $N_V$.
For 
more about 
the following see the discussion  preceding Theorem \ref{thm:symplectic.slice}.
 Let $x\in N_V$ such that $Gx$ is closed. Then the isotropy group $H=G_x$ is reductive and we have a symplectic slice representation $H\to\GL(S)$ where $S\subset V\oplus V^*$ is an $H$-submodule and the restriction $\omega_S$ of $\omega$ to $S$ is non-degenerate (and $H$-invariant).   We can decompose $S$ as $S^H\oplus S_0$ where $\omega_S$ is non-degenerate on $S_0$ and admits Lagrangian $H$-submodules,
i.e., isotropic submodules of dimension $(1/2)\dim S_0$. Let $W_0$ be such a submodule and let
$N_0 := N_{W_0}$ denote the shell of $W_0\oplus W_0^*$. The shell $N_0$ only depends upon the restriction of $\omega_S$ to $S_0$
and not the choice of $W_0$ (Lemma \ref{lem:mu.formula}).
Let $\NN(S_0)$ denote the null cone of $S_0$, 
the union of the $H$-orbits with $0$ in their closure, 
 and let $\NN(N_0) = N_0\cap\NN(S_0)$.
Let $(N_0)_m$ denote the $m$th jet scheme of $N_0$ and let $\rho_m\colon (N_0)_m\to N_0$ denote the natural projection; see Section \ref{subsec:BackReps}.
Let $(N_0)_m'$ denote the closure of $\rho_m\inv(N_0\setminus (\NN(N_0)\cap (N_0)_\sing))$
where $(N_0)_\sing$ denotes the singular points of $N_0$. 
We say that $\NN(N_0)$ is \emph{irrelevant\/} if
$(N_0)_m'=(N_0)_m$ for all $m\geq 1$.

We then have the following criterion for the shell to have rational singularities (Theorem \ref{thm:iff}).

\begin{mainthm}\label{thm:mainIFF}
Let $V$ be a $G$-module and $N = N_V$. Assume that $N$ is a normal complete intersection with FPIG. Then $N$ has rational singularities,  hence is CIFR,
if and only if $\NN(N_0)$ is irrelevant for every symplectic slice representation $(S,H)$ of $N$.
\end{mainthm}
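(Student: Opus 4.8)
The plan is to reduce everything to Musta\c{t}\u{a}'s criterion \cite{MustataJetRational}: as $N$ is a local complete intersection, it has rational singularities if and only if each jet scheme $N_m$, $m\geq 1$, is irreducible. Since $\mu$ is homogeneous quadratic, $N$ is a cone and hence connected; being also normal, it is an irreducible variety, so $N_m$ has a distinguished component $N_m^\circ:=\overline{\rho_m\inv(N_{\sm})}$ of dimension $(m+1)\dim N$, and every component of $N_m$ has dimension at least $(m+1)\dim N$ because $N_m$ is cut out by $(m+1)\codim N$ equations. Thus the criterion becomes $N_m=N_m^\circ$ for all $m$. I would first reduce to the case that $G$ is connected --- this leaves $N$ unchanged and alters each slice shell $N_0$ only by an inessential smooth factor --- and discard any trivial summands of $V$, which only contribute a smooth factor to $N$ while leaving the slice data essentially untouched. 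The symplectic slice representation at $0$ is then $(V\oplus V^*,G)$ itself, with $S^G=0$ and (taking $W_0=V$) $N_0=N$; so irrelevance of $\NN(N)$ says exactly
\[
N_m=N_m^\circ\cup\overline{\rho_m\inv(N_{\sing}\setminus\NN(N))}.
\]

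The geometric engine is the symplectic slice theorem (Theorem \ref{thm:symplectic.slice}): near $x\in N$ with $Gx$ closed and $H=G_x$, the shell $N$ is \'etale-locally isomorphic to $G\times_H(S^H\times N_0)$, the $S^H$-factor being smooth because the $H$-moment map on $S=S^H\oplus S_0$ is carried by $S_0$ alone (Lemma \ref{lem:mu.formula}). Jet schemes commute with products and with \'etale maps, and $(G/H)_m$ is irreducible; hence, over a $G$-saturated neighborhood $U$ of $Gx$, the jet scheme $\rho_m\inv(U)$ is the \'etale-surjective image of a scheme that is irreducible as soon as $(N_0)_m$ is irreducible, and so $\rho_m\inv(U)$ is then irreducible too. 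I also need the routine bookkeeping that normality, the local complete intersection property, and FPIG all descend from $N$ to each $N_0$ (for FPIG, a generic $H$-isotropy group on $N_0$ is read off as a generic $G$-isotropy group on a slice of $N$), together with the standard fact that a symplectic slice representation of $N_0$ is again a symplectic slice representation of $N$; thus all hypotheses of the theorem are inherited by the pairs $(H,N_0)$.

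For the forward implication, if $N$ has rational singularities then so does each slice shell $N_0$ (rational singularities is \'etale- and smooth-local and $S^H$ is a smooth factor), so $(N_0)_m$ is irreducible for all $m$ by Musta\c{t}\u{a}; as $(N_0)'_m$ is closed and contains the open $\rho_m\inv((N_0)_{\sm})$, which is dense in the irreducible $(N_0)_m$, we get $(N_0)'_m=(N_0)_m$, i.e.\ $\NN(N_0)$ is irrelevant. For the converse I would induct on $\dim G$; the case $\dim G=0$ is trivial, since then $\mu\equiv 0$ and $N=V\oplus V^*$ is smooth. Assuming $\NN(N_0)$ irrelevant for every symplectic slice representation, the displayed identity reduces the problem to showing $\overline{\rho_m\inv(N_{\sing}\setminus\NN(N))}\subseteq N_m^\circ$. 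Given $y\in N_{\sing}\setminus\NN(N)$, we have $0\notin\overline{Gy}$, so $\overline{Gy}$ contains a closed orbit $Gz$ with $z\neq 0$, whence $\dim G_z<\dim G$ ($G$ is connected and $V$ has no trivial summand). Near $Gz$ the slice model gives $N\cong_{\mathrm{et}}G\times_{G_z}(S^{G_z}\times N_0')$; the hypotheses of the theorem hold for $(G_z,N_0')$ and its slice representations are among those of $N$, so by induction $N_0'$ has rational singularities, $(N_0')_m$ is irreducible, and therefore $\rho_m\inv(U)$ is irreducible for a $G$-saturated neighborhood $U$ of $Gz$ --- which contains $y$. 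Since $U$ meets $N_{\sm}$ (as $N$ is reduced) and $\rho_m\inv(N_{\sm})$ is dense in $N_m^\circ$, the irreducible $\rho_m\inv(U)$ must lie in $N_m^\circ$; in particular $\rho_m\inv(y)\subseteq N_m^\circ$. As $y$ was arbitrary, $N_m=N_m^\circ$, so $N$ has rational singularities --- hence, being a complete intersection with FPIG, is CIFR.

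The main obstacle is the local dictionary of the second paragraph: making precise, via the slice theorem, how the components of $N_m$ over the singular locus arise from jet schemes of the slice shells, and verifying that normality, the complete intersection property and FPIG pass to those shells --- much of which should be available from the earlier parts of the paper. A further subtlety, already visible in the statement, is that irrelevance of $\NN(N_0)$ is formally weaker than irreducibility of every $(N_0)_m$, since it tolerates components supported over singular points outside the null cone; it is precisely the inductive step of the converse that shows those components are absorbed into $N_m^\circ$.
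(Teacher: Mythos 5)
Your proof is correct and takes essentially the same route as the paper: Musta\c{t}\u{a}'s jet-scheme criterion, the symplectic slice theorem, descent of (normality, complete intersection, FPIG, rational singularities) to slice shells, and an induction over the isotropy lattice. The only cosmetic difference is in the converse: you normalize to the slice at the origin ($N_0 = N$), induct on $\dim G$, and show directly that $\rho_m^{-1}(y)\subseteq N_m^\circ$ for each singular $y$ outside the null cone via the \'etale local model at the closed orbit $Gz\subset\overline{Gy}$, whereas the paper inducts over conjugacy classes $(H')<(H)$ of isotropy groups and cites Proposition~\ref{prop:irrelevant} to pass from ``$\NN(N_0)$ irrelevant and $N_0\setminus\NN(N_0)$ has rational singularities'' to ``$N_0$ has rational singularities'' --- in effect your argument unfolds that proposition inline. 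Both organizations are valid; the paper's is marginally tighter because it avoids the bookkeeping (which you correctly flag) of checking that every symplectic slice representation of $N_0'$ is again a symplectic slice representation of $N$, a fact that is implicit in its $(H')<(H)$ induction.
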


For many cases of $G$ and $V$, the dimensions of the $\rho_m\inv(\NN(N_0))$ are already small enough to establish the irrelevance of the
$\NN(N_0)$, and we frequently use this approach. The main novel technique of this paper is to use bounds on the dimension of linear
subspaces of the null cone  $\NN(W_0)$ 
 to show that $\rho_m\inv(\NN(N_0))$ is nowhere dense in the jet scheme $(N_0)_m$; this implies
the irrelevance of $\NN(N_0)$
 and hence that $N$ has rational singularities by Theorem \ref{thm:mainIFF}. 
More specifically, let $m_0(W_0)$ denote the
maximal dimension of a  linear subspace of $\NN(W_0)$. One of  our main  results is the following (Theorems  \ref{N.good.tori} and \ref{thm:use.E_m}).

\begin{mainthm}\label{thm:main1}
Let $V$ be a $G$-module and  suppose that for all symplectic slice representations $(S=S^H\oplus S_0,H)$   of $N_V$ where $\dim H>0$ there is a choice of Lagrangian $H$-submodule $W_0$ of $S_0$ such that either
\begin{enumerate}
\item  the group $H^0$ is a torus and $W_0$
has FPIG, or
\item   $m_0(W_0)<\dim W_0-\dim H$.
\end{enumerate}
Then each $\NN(N_0)$ is irrelevant and $N_V$ is CIFR.
\end{mainthm}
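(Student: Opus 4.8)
The plan is to reduce Theorem \ref{thm:main1} to Theorem \ref{thm:mainIFF}, so the entire task is to verify that under hypotheses (1) or (2) the null cone $\NN(N_0)$ is irrelevant for every symplectic slice representation $(S,H)$ with $\dim H>0$; the case $\dim H=0$ is handled separately (there the slice is a torus-like reduction or $N_0$ is smooth, so irrelevance is automatic). Recall that irrelevance means $(N_0)_m'=(N_0)_m$ for all $m\geq 1$, i.e.\ the closure of $\rho_m\inv(N_0\setminus(\NN(N_0)\cap(N_0)_\sing))$ is all of $(N_0)_m$. Since $N_0$ is a complete intersection of some codimension $c$ in $W_0\oplus W_0^*$, each jet scheme $(N_0)_m$ sits inside the affine space $(W_0\oplus W_0^*)_m$ cut out by $(m+1)c$ equations; the generic component of $(N_0)_m$ (the closure of jets through smooth points of $N_0$ lying off the null cone) has the expected dimension $(m+1)\dim N_0$. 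So irrelevance will follow once we show every component of $\rho_m\inv(\NN(N_0)\cap(N_0)_\sing)$ — equivalently of $\rho_m\inv(\NN(N_0))$, since $\NN(N_0)\cap(N_0)_\sm$ contributes to the good part — has dimension strictly less than $(m+1)\dim N_0$, hence is contained in the closure of the good locus. This is exactly the ``nowhere dense in $(N_0)_m$'' statement flagged in the introduction.

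For part (2) the key estimate is the following. A point of $\rho_m\inv(\NN(N_0))$ is an $m$-jet $\gamma(t)=x_0+x_1t+\dots+x_mt^m$ with $\mu(\gamma(t))\equiv 0\bmod t^{m+1}$ and $x_0\in\NN(W_0\oplus W_0^*)$. I would bound the dimension of such jets by a linear-algebra argument: fixing the leading term $x_0$ in the null cone, the higher coefficients $x_1,\dots,x_m$ are constrained by the truncated moment-map equations, whose linearization at $x_0$ involves the differential of $\mu$; because $N_0$ is a complete intersection the number of independent constraints at each order is $c=\dim(W_0\oplus W_0^*)-\dim N_0$, \emph{except} that at a null-cone point the differential $d\mu_{x_0}$ drops rank, and the rank deficiency is controlled by $\dim H$ (the stabilizer direction), since $H$ acts trivially on $0$ and the image of $d\mu$ is annihilated by $\lie h$. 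Combining the bound $m_0(W_0)$ on the dimension of the linear part of the null cone with this order-by-order count gives
\[
\dim\rho_m\inv(\NN(N_0))\ \leq\ m_0(W_0)+m\bigl(\dim W_0-\dim H\bigr)+(\text{lower order in }m),
\]
while $(m+1)\dim N_0=(m+1)(\dim W_0-\dim H)$ (here $\dim N_0=2\dim W_0-c$ with $c=\dim H$ when FPIG holds, so $\dim N_0=2\dim W_0-\dim H$; one must be careful with the exact arithmetic, but the point is that the coefficient of $m$ in the good dimension strictly exceeds $m_0(W_0)/1$-type contributions under the hypothesis $m_0(W_0)<\dim W_0-\dim H$). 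Taking $m$ large forces the bad locus to have strictly smaller dimension, and a separate check (or monotonicity in $m$, using the truncation maps $(N_0)_{m+1}\to(N_0)_m$) handles all $m\geq 1$. This is Theorem \ref{thm:use.E_m} and I would import its dimension count directly.

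For part (1), where $H^0$ is a torus and $W_0$ has FPIG, I would instead invoke Theorem \ref{N.good.tori}: in the torus case the moment map and its jet schemes are amenable to an explicit toric/combinatorial analysis (the null cone is a union of coordinate subspaces, $m_0(W_0)$ is computed from the weight data, and the FPIG hypothesis guarantees the moment-map equations are generically transverse), and one shows directly that $\rho_m\inv(\NN(N_0))$ is nowhere dense. So the proof of Theorem \ref{thm:main1} is simply: for each symplectic slice representation with $\dim H>0$ choose the Lagrangian $W_0$ provided by the hypothesis, apply Theorem \ref{N.good.tori} in case (1) and Theorem \ref{thm:use.E_m} in case (2) to conclude $\NN(N_0)$ is irrelevant; the remaining slices have $\dim H=0$ and are automatically fine; then Theorem \ref{thm:mainIFF} yields that $N_V$ has rational singularities, and since CIFR packages ``complete intersection $+$ FPIG $+$ rational singularities'' and the first two are hypotheses of Theorem \ref{thm:mainIFF} (with normality following from Serre's criterion once we know rational, or assumed), we conclude $N_V$ is CIFR.

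The main obstacle I anticipate is the dimension bookkeeping in part (2): correctly accounting for how the rank drop of $d\mu$ along the null cone interacts with the complete-intersection codimension at every jet level $m$, and making the estimate uniform in $m$ rather than only asymptotic. One has to be careful that the ``lower order in $m$'' terms — coming from the nonlinear part of the null cone beyond its maximal linear subspace, and from jets whose leading term sits in deeper strata of $\NN(W_0)$ — do not overwhelm the gap provided by $m_0(W_0)<\dim W_0-\dim H$; this is presumably why the cited theorems isolate $m_0(W_0)$ as the right invariant and stratify the null cone by stabilizer dimension. A secondary subtlety is ensuring the chosen Lagrangian $W_0$ can always be taken compatibly with the decomposition $S=S^H\oplus S_0$ and that $N_0$ really is independent of the choice (Lemma \ref{lem:mu.formula}), so that the hypothesis is well-posed; but that is already established in the excerpt and I would cite it.
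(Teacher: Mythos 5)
Your high-level plan — reduce to Theorem \ref{thm:iff} and show that each $\NN(N_0)$ is irrelevant, invoking Theorem \ref{N.good.tori} in case (1) — is the same as the paper's. But there are two gaps that together make the argument incomplete.

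\textbf{First:} you gloss over how the hypotheses of Theorem \ref{thm:iff} (that $N_V$ is a \emph{normal} complete intersection with FPIG) are verified. You suggest normality ``follows from Serre's criterion once we know rational, or assumed,'' which is circular: Theorem \ref{thm:iff} requires normality as an input in order to deliver rational singularities as output. The paper establishes this input \emph{before} applying Theorem \ref{thm:iff}, via Proposition \ref{prop:N.and.F}, whose cases (2) and (4) match exactly the two hypotheses of the theorem you are proving (case (4) is what Lemma \ref{lem:delta} buys you from $m_0(W_0)<\dim W_0-\dim H$). This step is not cosmetic; without it the reduction to Theorem \ref{thm:iff} does not launch.

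\textbf{Second:} for condition (2) you sketch a jet-level dimension count and assert that it ``is Theorem \ref{thm:use.E_m} and I would import its dimension count directly.'' That is not how the paper handles the $m_0$ condition. The relevant result is Corollary \ref{cor:dim.L}, and its argument is qualitative rather than a dimension estimate: fixing any $\vec{x}\in W_0^{m+1}$, the fiber $Y_{\vec x}$ (an affine subspace) projects onto $\Ann E_m\subset W_0^*$ with $\dim\Ann E_m\geq\dim W_0-\dim H>m_0(W_0^*)$, so $\Ann E_m$ cannot sit inside $\NN(W_0^*)$; hence a dense open set of $\vec\xi\in Y_{\vec x}$ has $\xi_0\notin\NN(W_0^*)$, placing $(\vec x,\vec\xi)$ in the good locus. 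This works uniformly for every $m\geq 1$ with no asymptotics. Your dimension count, by contrast, (a) contains an arithmetic slip — you write $(m+1)\dim N_0=(m+1)(\dim W_0-\dim H)$, but $\dim N_0=2\dim W_0-\dim H$ — and (b) is explicitly only asymptotic in $m$, whereas irrelevance is a statement for all $m\geq 1$. You acknowledge both difficulties and defer them to ``monotonicity'' or the cited theorem, but the cited theorem does not actually contain the estimate you are deferring to. One could in principle make a careful jet-dimension bound work (it is close in spirit to Corollary \ref{cor:rat.sings.by.codim}), but as written the proposal neither carries it out nor correctly attributes the method it is leaning on.
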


In general, the calculation of   $m_0(W_0)$ is difficult. However, if $W_0$ is an orthogonal $H$-module,  we may replace (2) by
\begin{enumerate}
\item[\emph{(2')}]  $\dim H<(1/2)(\dim W_0-\dim W_0^T)$ where $T$ is a maximal torus of $H$.
\end{enumerate}
This follows from the following general fact (Proposition \ref{prop:dim.L}).

\begin{mainprop}\label{prop:D}
Let $V$ be an orthogonal $G$-module. Then
$$
    m_0(V)= (1/2)(\dim V-\dim V^T)
$$
where $T$ is a maximal torus of $G$.
\end{mainprop}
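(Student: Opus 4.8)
The plan is to establish the two inequalities $m_0(V) \geq \tfrac12(\dim V - \dim V^T)$ and $m_0(V) \leq \tfrac12(\dim V - \dim V^T)$ separately, exploiting the orthogonal structure throughout. For the lower bound, I would produce an explicit linear subspace of $\NN(V)$ of the stated dimension. Since $V$ is orthogonal, it carries a nondegenerate $G$-invariant symmetric form $B$. Decompose $V = V^T \oplus V'$ where $V'$ is the sum of the nonzero weight spaces for $T$; the form $B$ restricts nondegenerately to $V'$, and $V'$ is a sum of pairs $V_\alpha \oplus V_{-\alpha}$ of dual weight spaces (with $B$ pairing $V_\alpha$ with $V_{-\alpha}$). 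Choosing one space from each such pair — equivalently, picking a subset of the weights that is ``positive'' in the sense of containing exactly one of $\pm\alpha$ for each $\alpha$ — gives a subspace $L \subset V'$ of dimension $\tfrac12 \dim V'= \tfrac12(\dim V - \dim V^T)$ on which $B$ vanishes identically. Such an $L$ is isotropic for the quadratic form, and I claim $L \subseteq \NN(V)$: the generators of $\C[V]^G$ can be taken to be polarizations of powers of $B$ (by classical invariant theory for orthogonal groups, or more elementarily because the generic orbit closure is cut out by such invariants), all of which vanish on an isotropic subspace, so every point of $L$ has $0$ in the closure of its $G$-orbit. Actually the cleanest argument avoids invariant theory: a $T$-stable subspace $L$ on which $B|_L=0$ lies in the null cone because the one-parameter subgroup of $T$ that contracts the chosen weight spaces to $0$ does so to all of $L$ simultaneously, provided the weights of $L$ all lie in an open half-space; one arranges this by a generic choice of the splitting of $\pm\alpha$.

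For the upper bound, suppose $L \subseteq \NN(V)$ is a linear subspace; I must show $\dim L \leq \tfrac12(\dim V - \dim V^T)$. The key point is that any linear subspace of the null cone of an orthogonal module must be isotropic for $B$: if $v \in L$ and $B(v,v) \neq 0$, then the invariant $w \mapsto B(w,w)$ is nonzero at $v$, so $0$ is not in the closure of $Gv$, contradicting $v \in \NN(V)$; polarizing, $B(v,w)=0$ for all $v,w\in L$ once $B(v,v)=0$ for all $v \in L$ — wait, that is automatic once $L$ is totally isotropic for the quadratic form over $\C$, since $B(v,w) = \tfrac12(B(v+w,v+w) - B(v,v) - B(w,w))$. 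So $L$ is totally $B$-isotropic. Now $V^T$ contributes to $V$ a subspace on which $T$, hence $G^0$... no — rather, I use that $\NN(V) \cap V^T = \NN_T(V^T)$ is governed by the finite group $N_G(T)/T$ acting on $V^T$, but more to the point: the radical issue is handled by noting $L \cap V^T$ is a $B$-isotropic subspace of $V^T$, while the projection of $L$ to $V'$ is $B$-isotropic in $V'$; since $B$ is nondegenerate on $V'$, any isotropic subspace there has dimension at most $\tfrac12 \dim V'$. Combined with controlling the $V^T$-part via the Hilbert–Mumford criterion (a destabilizing one-parameter subgroup lies in a maximal torus, and on $V^T$ every torus acts... hmm, the subtlety is that $V^T$ may still meet the null cone nontrivially through $N_G(T)/T$), the bound $\dim L \leq \tfrac12\dim V' = \tfrac12(\dim V - \dim V^T)$ follows.

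The main obstacle, and the step I would spend the most care on, is the upper bound's interaction between the fixed space $V^T$ and the rest: a priori a linear subspace $L$ of $\NN(V)$ need not be $T$-stable, so it can ``tilt'' across $V^T$ and $V'$, and one must rule out gaining extra dimension this way. The clean way is via the Hilbert–Mumford/Kempf criterion: for $v\in\NN(V)$ there is a one-parameter subgroup $\lambda$ of $G$ with $\lim_{t\to 0}\lambda(t)v = 0$, and up to conjugacy $\lambda$ maps into $T$; but since a single $\lambda$ need not destabilize all of $L$, one instead argues that $L$ being totally $B$-isotropic already suffices, because the maximal dimension of a totally isotropic subspace of the nondegenerate quadratic space $(V,B)$ is $\lfloor \tfrac12\dim V\rfloor$ — and this must be improved to $\tfrac12(\dim V - \dim V^T)$ using that $L$, lying in the null cone, must be isotropic for \emph{all} the higher-degree orthogonal invariants (the $\SO$-invariant $\wedge$-type Pfaffian-like invariants when relevant), which forces the isotropic subspace to avoid spanning directions in $V^T$ that are not ``used up.'' Concretely, I expect the final argument to run: $L \subseteq \NN(V)$ implies $L$ is $B$-isotropic and its image under $V \to V^T$ is contained in $\NN_{N_G(T)/T}(V^T)$, but a linear subspace of $V^T$ in the null cone for a \emph{finite} group is $0$; hence $L \subseteq V'$ and $\dim L \leq \tfrac12\dim V'$, giving equality with the explicit $L$ from the lower bound. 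I would double-check the claim ``$L\cap V^T$ and the relevant projection vanish'' carefully, as that is where orthogonality (as opposed to a general self-dual module) is really used.
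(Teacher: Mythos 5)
Your lower bound (choosing a generic one-parameter subgroup of $T$ acting with nonzero weight on every $V_\lambda$, $\lambda\neq 0$, and taking the span of the positive weight spaces) is correct and is exactly the construction the paper uses.

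The upper bound has a genuine gap, and the specific claims you flag as needing care are in fact false. Take $G=\SO_3(\C)$, $V=\C^3$ with weight basis $e_1,e_2,e_3$ of weights $1,-1,0$ and $B(v,v)=2v_1v_2+v_3^2$. The line $L=\C\cdot(1,-\tfrac12,1)$ lies in $\NN(V)=\{B(v,v)=0\}$ but has nonzero component in $V^T=\C e_3$, so the claim $L\subseteq V'$ fails; and its projection $(1,-\tfrac12,0)$ to $V'$ has $B$-norm $-1\neq 0$, so the claim that the $V'$-projection of $L$ is $B$-isotropic also fails (isotropy of $L\subseteq V^T\oplus V'$ does not descend to coordinate projections, since $B(v_0,w_0)$ and $B(v_1,w_1)$ can cancel). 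Your correct observation that any linear $L\subseteq\NN(V)$ must be $B$-totally isotropic only yields $\dim L\leq\lfloor\tfrac12\dim V\rfloor$, not the sharper bound. The idea you are missing---the crux of the paper's argument---is a preliminary reduction to $T$-stable $L$: the set of $m$-planes contained in $\NN(V)$ is a nonempty, closed, $G$-stable subvariety of $\mathrm{Gr}_m(V)$, hence has a $T$-fixed point by Borel's fixed point theorem (this step is borrowed from Draisma--Kraft--Kuttler). Once $L=\bigoplus_\lambda L_\lambda$ is a sum of weight spaces, your isotropy observation does control the nonzero pairs (since $B$ pairs $V_\lambda$ nondegenerately against $V_{-\lambda}$, one gets $\dim L_\lambda+\dim L_{-\lambda}\leq\dim V_\lambda$), but one must still rule out a contribution from $L\cap V^T$, which needs a separate argument; the paper instead bounds each weight pair through the real structure $V=W\otimes_\R\C$ and Birkes' theorem that real points lie on closed $G$-orbits, sidestepping $B$-isotropy entirely. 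So even after inserting the fixed-point reduction, your route would be a genuine variant of the paper's; as written, the absence of that reduction is a gap that sinks the upper bound.
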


If $V=\lieg$ where $G$ is simple, then the proposition says that $m_0(\lieg)$ is the dimension of a maximal nilpotent subalgebra of $\lieg$. This result is due to Gerstenhaber \cite{Gerstenhaber} for $\lie sl_n$ and Meshulam-Radwan \cite{Meshulam-Radwan} in general.
See also \cite{Draisma-Kraft-Kuttler} by Draisma, Kraft and Kuttler. This paper provided a key idea in our proof of Proposition \ref{prop:D}.
If $V$ is an orthogonal $G$-module, then in Theorem \ref{thm:main1} we may always choose $W_0$ to be an orthogonal $H$-module (Lemma \ref{lem:slice.orthog.rep} and Proposition \ref{prop:orthog.reps}).

Condition (2) of Theorem~\ref{thm:main1} is stronger than the equivalence given in Theorem~\ref{thm:mainIFF}; for instance, the $\SL_2$-module $V=3\C^2$
does not satisfy (2) yet
$N_V$ is CIFR. However, Theorem~\ref{thm:main1} along with condition (2') in the orthogonal case
(and some ad hoc arguments)
is sufficient to establish
that the classical representations of the classical groups, as well as several other important cases of $G$ and $V$, have shells that are CIFR whenever the
shells are complete
intersection varieties,
 i.e., whenever Musta\c{t}\u{a}'s criterion for rational singularities
can even be applied;
see Section~\ref{sec:Examples}.
In addition, it allows us to handle the case of two or more copies of the adjoint representation, yielding the following (Theorem  \ref{thm:p.lieg}).
We say that a complex affine variety is \emph{factorial\/} if $\C[X]$ is a UFD.

\begin{mainthm}\label{thm:main2}
Assume that $G$ is semisimple and let $V=p\,\lieg$ where $p\geq 2$. Then
\begin{enumerate}
\item $N_V$ is CIFR.
\item  If $p>2$ or $G$ contains no simple factor of rank $1$, then $N_V$ and $N_V\git G$ are factorial.
\end{enumerate}
\end{mainthm}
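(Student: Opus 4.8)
The plan is to verify the hypotheses of Theorem~\ref{thm:main1} for $V = p\,\lieg$, reducing everything to a statement about symplectic slice representations, and then to deduce factoriality from codimension estimates on the singular locus. First I would identify the relevant geometry: since $\lieg$ is an orthogonal $G$-module (via the Killing form), $V = p\,\lieg$ is orthogonal, so by Lemma~\ref{lem:slice.orthog.rep} and Proposition~\ref{prop:orthog.reps} every symplectic slice representation $(S,H)$ of $N_V$ admits an orthogonal Lagrangian $H$-submodule $W_0 \subseteq S_0$, and I may use criterion (2') in place of (2). Concretely, if $x = (v,v^*) \in N_V$ has closed orbit with reductive isotropy $H = G_x$, then the slice decomposes in terms of the $H$-module $\lieg = \lieh \oplus \lie m$ (where $\lie m = \lieh^\perp$), and one computes that $W_0$ is essentially $p\,\lie m$ together with possibly a copy of $\lieh/\lie z(\lieh)$ or similar; the key point is that $W_0$ contains $p$ copies of the nontrivial $H$-module $\lie m$. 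Since $\lie m$ is a nonzero orthogonal $H$-module with $\lie m^T$ of codimension at least... I would pin down the exact count, but the upshot is $\dim W_0 - \dim W_0^T \geq p(\dim\lie m - \dim\lie m^T) + (\text{contribution from }\lieh)$, which for $p \geq 2$ comfortably exceeds $2\dim H$ unless $H$ is a torus; in the torus case one checks FPIG directly, so condition (1) or (2') of Theorem~\ref{thm:main1} holds. One must also separately verify that $N_V$ is a normal complete intersection with FPIG — this is where I expect to invoke known structure of the commuting variety and its generalizations, or results cited earlier in the paper, to see that $\dim N_V = (2p-1)\dim\lieg$ with the expected codimension and that the generic fiber has finite isotropy (true for $p \geq 2$ since a generic pair of elements of $\lieg$ generates a subalgebra with trivial centralizer when $G$ is semisimple). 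Granting all this, Theorem~\ref{thm:main1} gives that $N_V$ is CIFR, proving (1).

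For part (2), I would argue that $N_V$ is factorial by showing its singular locus has codimension at least $2$ (for normality of a complete intersection) and in fact large enough, together with the fact that a normal complete intersection is factorial once the codimension of the singular locus is at least $4$ — this is a standard consequence of the Grothendieck–Lefschetz theory for local complete intersections (a normal lci variety has trivial local class groups in codimension $\leq 3$, hence is factorial if $\codim(N_V)_{\sing} \geq 4$). Actually, since $N_V$ has rational (hence Cohen–Macaulay and normal) singularities, I would instead use that for a complete intersection with rational singularities, factoriality follows from $\codim(N_V)_\sing \geq 4$. So the task reduces to estimating $\codim_{N_V}(N_V)_\sing$. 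The singular locus is stratified by orbit type; on the stratum with isotropy $H$, the normal directions to the singularity are governed by the slice $N_0$, and the codimension of that stratum in $N_V$ grows with $\dim G/H$ and the codimension of the smooth locus of $N_0$ in $N_0$. Running through the possible $H$ — the deepest strata come from $H$ containing a simple rank-$1$ factor — one finds the codimension is at least $4$ provided $p > 2$, or provided $p = 2$ but $G$ has no rank-$1$ simple factor; the borderline case $p = 2$, $G = \SL_2$ (or $\PGL_2$) is exactly where codimension drops to $3$, which is why it is excluded. This matches the hypothesis precisely.

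The main obstacle, I expect, is the bookkeeping in the codimension estimate for part (2): one must carefully analyze all reductive subgroups $H$ arising as isotropy groups of closed orbits in $N_{p\,\lieg}$, compute for each the dimension of the corresponding stratum and the singularity codimension within the slice shell $N_0 = N_{p\,\lie m}$ (another shell of the same type, so there is a recursive flavor), and check the $\geq 4$ bound uniformly, isolating the single exceptional case. A secondary technical point is confirming normality and the complete intersection property of $N_{p\,\lieg}$ from the outset — for $p \geq 2$ this should follow from a dimension count plus the fact that $\mu$ is flat, or from results on the generalized commuting variety, but the details of which ambient results to cite need to be nailed down. The CIFR conclusion in part (1), by contrast, should be essentially mechanical once the slice analysis and criterion (2') are in place.
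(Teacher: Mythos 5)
Your overall strategy matches the paper's: treat $V = p\,\lieg$ as an orthogonal $G$-module, reduce via Proposition~\ref{prop:orthog.reps} to the symplectic slice representations, invoke the orthogonal criterion and the torus criterion (Theorems~\ref{thm:rational.sings.orthogonal} and \ref{N.good.tori}), and use codimension-four estimates for factoriality. However, there are two concrete problems.

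\emph{The slice decomposition is wrong.} Since $T_x(Gx)\simeq\lieg/\lieh$ is removed from $V$ as an $H$-module, the Lagrangian of the symplectic slice is $W \simeq p\,\lieh\oplus(p-1)(\lieg/\lieh)$, and hence $W_0 = p\,\lieh_s\oplus(p-1)\lie m$, where $H^0 = ZH_s$, $\lieh = \lieh_s\oplus\lie z$, and $\lie m$ is the nontrivial $H$-part of $\lieg/\lieh$. Your proposal has $p$ copies of $\lie m$ and a single copy of (a quotient of) $\lieh$: the multiplicities are swapped. The correct multiplicity $p-1$ of $\lie m$ is exactly what makes $p=2$ delicate. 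The paper (Lemma~\ref{lem:lie.alg.main}) shows
$\dim W_0 - \dim H - m_0(W_0) \geq (p-2)\dim\lie u + (p-1)\ell + \tfrac{p-1}{2}\dim\lie m - \dim Z$,
and for $p=2$ this gives $\geq 1$ when $H_s\neq 1$ but can \emph{vanish} when $H^0$ is a torus, at which point one must fall back on the torus-with-FPIG case via Theorem~\ref{N.good.tori}. Your claim that the orthogonality bound ``comfortably exceeds $2\dim H$'' rests on the spurious extra copy of $\lie m$ and is not correct as stated.

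\emph{Part (2) asserts factoriality of $N_V\git G$, not only of $N_V$.} Your sketch handles only $N_V$ (via codimension $\geq 4$ and \cite[Exp.\ XI Cor.\ 3.14]{GrothendieckFactorial}, which is the paper's route as well). The missing piece (Corollary~\ref{cor:factorial}) is: since $N_V$ is a factorial cone, every unit in $\C[N_V]$ is homogeneous for the scalar $\C^\times$-action and hence constant; then a $G$-invariant $f$ has a unique factorization $f = f_0 f_1\cdots f_m$ into irreducibles transforming by characters of $G$, and since $G$ is semisimple with trivial character group each $f_i$ is itself $G$-invariant, so $\C[N_V]^G$ is a UFD. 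This step does not follow from the codimension estimate alone and needs to be supplied.
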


Theorem \ref{thm:symplectic.sings} implies the following.

\begin{mainthm}\label{thm:main4}
Let $V$ be as in Theorem \ref{thm:main1} or \ref{thm:main2}. Then   $N_V\git G$ is graded Gorenstein with symplectic singularities.
\end{mainthm}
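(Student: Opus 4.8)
The plan is to deduce Theorem~\ref{thm:main4} from the combination of Theorem~\ref{thm:symplectic.sings} (cited as giving symplectic and graded Gorenstein singularities once the hypotheses on $N_V$ are in place) and the CIFR conclusions already obtained in Theorems~\ref{thm:main1} and~\ref{thm:main2}. The key point is that ``CIFR'' packages exactly the input that the symplectic singularities machinery needs: $N_V$ is a complete intersection (so it is Gorenstein, and being a cone it is graded Gorenstein), it has rational singularities, and it has finite principal isotropy groups. I would first recall the standard fact that $N_V\subset V\oplus V^*$ carries the restriction of the symplectic form $\omega$, and that on the smooth locus of $N_V\git G$ (which is nonempty and dense precisely because of FPIG) the reduced symplectic form descends; this is the content of the symplectic slice picture set up before Theorem~\ref{thm:symplectic.slice}.

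First I would verify the hypotheses of Theorem~\ref{thm:symplectic.sings} in each of the two cases. For $V$ as in Theorem~\ref{thm:main1}: by that theorem $N_V$ is CIFR, i.e. a complete intersection with FPIG and rational singularities; in particular $N_V$ is normal (a complete intersection with rational singularities is Cohen--Macaulay and, being generically reduced on each component by the FPIG/complete-intersection hypotheses, is reduced and normal by Serre's criterion). For $V=p\,\lieg$ with $p\geq 2$ as in Theorem~\ref{thm:main2}(1), again $N_V$ is CIFR, and part~(2) gives factoriality in the stated subcases, which is stronger than normality. So in both cases $N_V$ is a normal complete intersection with rational singularities and FPIG, which is the standing hypothesis under which Theorem~\ref{thm:symplectic.sings} asserts that $N_V\git G$ has symplectic singularities and is graded Gorenstein.

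Then I would simply invoke Theorem~\ref{thm:symplectic.sings}: since $N_V\git G = \Spec\C[N_V]^G$ and $\C[N_V]$ is a graded complete intersection with rational singularities, the invariant ring $\C[N_V]^G$ is again graded Gorenstein (reductivity of $G$ makes it a direct summand, and the Gorenstein/rational-singularities transfer to invariants is exactly the mechanism used there), and the descended form on the principal stratum extends to a symplectic form on a resolution, giving symplectic singularities. The only thing to check is that the conclusion ``$N_V\git G$ is graded Gorenstein with symplectic singularities'' in Theorem~\ref{thm:symplectic.sings} is stated under hypotheses no stronger than CIFR (plus the grading, which is automatic since $N_V$ is a cone), so that both families of modules qualify.

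The main obstacle — which is essentially absent here because the heavy lifting was done in Theorem~\ref{thm:symplectic.sings} and in the CIFR theorems — would in principle be confirming that ``rational singularities of $N_V$'' really does propagate to ``symplectic singularities of $N_V\git G$'' rather than merely to rational (or canonical) singularities of the quotient; but this is precisely the statement of Theorem~\ref{thm:symplectic.sings}, so for the present corollary the proof reduces to checking the bookkeeping that the hypotheses of that theorem are met. Thus the argument is short: cite Theorem~\ref{thm:main1} or~\ref{thm:main2} for CIFR (hence normal complete intersection with rational singularities and FPIG), note gradedness since $N_V$ is a cone, and apply Theorem~\ref{thm:symplectic.sings}. \qed
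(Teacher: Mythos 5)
There is a genuine gap: you repeatedly assert that the hypothesis of Theorem~\ref{thm:symplectic.sings} is ``CIFR,'' but it is not. Theorem~\ref{thm:symplectic.sings} is stated under condition $(*)$, which requires that for every symplectic slice representation $(S,H)$ of $N$ one can choose (UTCLS) a Lagrangian $W_0$ so that either $m_0(W_0)<\dim W_0-\dim H$ or $H^0$ is a torus and $W_0$ has FPIG. Condition $(*)$ implies CIFR (via Theorems~\ref{N.good.tori} and~\ref{thm:use.E_m}) but is strictly stronger: the paper explicitly points out that for $G=\SL_2$ and $V=3\C^2$ the shell $N_V$ is CIFR yet $V$ fails condition~(2) of Theorem~\ref{thm:main1}, which is the nontrivial half of $(*)$ when $H^0$ is not a torus. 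So ``$N_V$ is CIFR'' cannot be cited as the input to Theorem~\ref{thm:symplectic.sings}. The extra strength of $(*)$ is what drives Lemma~\ref{lem:1-large} and Corollary~\ref{cor:smooth.points}, i.e.\ the identification $X_\sm=X_\pr$ needed to apply Namikawa's theorem.

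The correct (and short) verification is: for $V$ as in Theorem~\ref{thm:main1}, the stated hypothesis of that theorem \emph{is} condition $(*)$ word for word, so Theorem~\ref{thm:symplectic.sings} applies directly. For $V=p\,\lieg$ with $p\geq 2$, Lemma~\ref{lem:lie.alg.main} establishes that for every symplectic slice representation either~(1) or~(2) holds, which is again exactly $(*)$ (this is noted in Remark~\ref{rem:star.p.lieg}). Then Theorem~\ref{thm:symplectic.sings} gives the claim in both cases.

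A secondary but substantive error in your write-up is the parenthetical claim that graded Gorensteinness of $\C[N_V]^G$ follows from reductivity/direct-summand considerations together with the Gorenstein property of $\C[N_V]$. Gorensteinness does not pass to rings of invariants in general (e.g.\ $\C[x,y,z]^{\Z/2}$ with $\Z/2$ acting by $-1$ is not Gorenstein), and Boutot's theorem only transfers rational singularities. In the paper, Gorensteinness of the quotient is obtained in Proposition~\ref{prop:graded.Gor} by constructing a nowhere vanishing section of $\bigwedge^{\dim X}T^*X_\pr$ from the reduced symplectic form, and this construction again relies on $(*)$ through $\codim_X(X\setminus X_\pr)\geq 2$. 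So the route you sketch to graded Gorensteinness does not work as stated; the paper's mechanism is the symplectic top-form, not a transfer principle for invariant rings.
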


Representations of quivers have associated moment mappings as well, and our criteria can be applied to show that the corresponding zero fibers have rational singularities
in many cases.

Let $\pi=\pi_1(\Sigma)$ where $\Sigma$ is a compact Riemann surface of genus $p>1$. Let $G$ be semisimple, let $\Hom(\pi,G)$ be the corresponding representation variety, and let $\X(\pi,G)$ be the corresponding character variety. Let $\rho_0\in\Hom(\pi,G)$ denote the trivial homomorphism.
We use Theorem \ref{thm:main2} to show that the tangent cone to  $\Hom(\pi,G)$ at a closed $G$-orbit  has rational singularities and we establish the following  (Theorem \ref{thm:char.vars}).

\begin{mainthm}\label{thm:main3}
Let $G$ and $\pi$ be as above.
\begin{enumerate}
\item  $\Hom(\pi,G)$ is
CIFR and each irreducible component  has dimension $(2p-1)\dim G$.
\item $\X(\pi,G)$ has symplectic singularities and  each irreducible component has dimension $(2p-2)\dim G$.
\end{enumerate}
Now suppose that $p>2$ or that every simple component of $G$ has rank at least $2$.
\begin{enumerate}
\addtocounter{enumi}{2}
\item The singularities of $\Hom(\pi,G)$ are in codimension
at least four and $\Hom(\pi,G)$ is locally factorial.
\item The singularities of $\X(\pi,G)$ are in codimension
at least  four and the irreducible component containing $G\rho_0$ is locally factorial. In particular, if $G$ is simply connected, then $\X(\pi,G)$ is locally factorial.
\end{enumerate}
\end{mainthm}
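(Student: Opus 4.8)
The plan is to realize $\Hom(\pi,G)$ as a shell and transfer the structural results of Theorem \ref{thm:main2}. Recall that for $\pi=\pi_1(\Sigma)$ with $\Sigma$ of genus $p$, the group $\pi$ has the standard presentation on generators $a_1,b_1,\dots,a_p,b_p$ with the single relator $\prod_i[a_i,b_i]$, so $\Hom(\pi,G)$ sits inside $G^{2p}$ as the fiber over $e$ of the product-of-commutators map. The first step is to compare this with the shell $N_V$ for $V=p\,\lieg$: using an invariant inner product on $\lieg$ to identify $\lieg\simeq\lieg^*$, one has $V\oplus V^*\simeq 2p\,\lieg$ and the moment map becomes $\mu(X_1,Y_1,\dots,X_p,Y_p)=\sum_i[X_i,Y_i]$, so $N_V$ is exactly the fiber over $0$ of the sum-of-brackets map — the infinitesimal (tangent-cone) analogue of the product-of-commutators map. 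I would invoke the standard fact (Goldman, and used in \cite{BudurRational}) that near any $\rho$ with $G\rho$ closed, $\Hom(\pi,G)$ is analytically/\'etale-locally modeled by the quadratic cone cut out by the cup product $H^1(\pi,\lieg_\rho)\times H^1(\pi,\lieg_\rho)\to H^2(\pi,\lieg_\rho)$, and that at the trivial representation $\rho_0$ this cone is precisely $N_{p\,\lieg}$; for general closed-orbit $\rho$ with reductive stabilizer $H=G_\rho$, the local model is $N_{p\,\lieh}$ (possibly times a smooth factor coming from $H^1$-classes valued in a complementary submodule on which the cup product vanishes). Thus the local rings of $\Hom(\pi,G)$ along closed orbits are, up to smooth factors, local rings of shells of the form $N_{p\,\lieh}$ with $\lieh$ reductive.

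The second step is to run the reduction through Theorem \ref{thm:main2}. That theorem, applied to each such $\lieh$ (after splitting off the central torus and noting that adding copies of a trivial/abelian summand changes nothing essential), shows $N_{p\,\lieh}$ is a normal complete intersection with rational singularities and FPIG, and — under the rank/genus hypothesis — is factorial with singular locus of codimension $\geq 4$. Since rational singularities, the complete-intersection property, normality, and (by a standard result, e.g.\ \cite{MustataJetRational}'s hypotheses plus Serre's criterion) factoriality and codimension of the singular locus all descend from the \'etale-local model to $\Hom(\pi,G)$, and since these properties are preserved under multiplication by a smooth factor, we conclude part (1) and part (3): $\Hom(\pi,G)$ is CIFR, and under the extra hypothesis its singularities are in codimension $\geq 4$ and it is locally factorial. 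The dimension count $(2p-1)\dim G$ follows because $\Hom(\pi,G)\subset G^{2p}$ is cut out by the $G$-valued relator — $\dim G$ equations — and the complete-intersection property forces every component to have dimension $2p\dim G-\dim G$.

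For the character-variety statements (parts (2) and (4)) I would pass to the GIT quotient $\X(\pi,G)=\Hom(\pi,G)\git G$. Part (2): since $\Hom(\pi,G)$ is CIFR and carries the symplectic (quasi-Hamiltonian) structure restricting appropriately, Theorem \ref{thm:symplectic.sings} / Theorem \ref{thm:main4} applies verbatim once we know the hypotheses of Theorem \ref{thm:main1} or \ref{thm:main2} hold \'etale-locally — which is exactly what the first two steps established — so $\X(\pi,G)$ has symplectic singularities (and is graded Gorenstein on the tangent cone at $\rho_0$). Part (4): factoriality of the quotient is more delicate than factoriality upstairs. Here I would argue that $\Hom(\pi,G)$ locally factorial plus the codimension-$\geq 4$ bound on the singular (hence non-free) locus means the GIT quotient map is a principal bundle in codimension $\leq 3$ over the locus of closed orbits; by a descent argument for divisor class groups (van der Kallen / Knop-type, or directly: $\Cl(X\git G)$ injects into $\Cl(X)^G$ modulo characters when the unstable and non-free loci have codimension $\geq 2$), and using that $G$ semisimple has no nontrivial characters when simply connected, one deduces local factoriality of the component of $\X(\pi,G)$ through $G\rho_0$, and of all of $\X(\pi,G)$ when $G$ is simply connected. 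The codimension-$\geq 4$ statement for $\X(\pi,G)$ follows from the corresponding bound upstairs together with the observation that the quotient map drops dimension by at most $\dim G$ everywhere and is equidimensional on the principal stratum, so no new low-codimension singular strata are created.

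The main obstacle I anticipate is the transfer step itself: verifying carefully that the \'etale-local model of $\Hom(\pi,G)$ at an \emph{arbitrary} closed orbit (not just $\rho_0$) is a shell $N_{p\,\lieh}$ times a smooth factor, with the symplectic slice data matching what Theorems \ref{thm:main1}–\ref{thm:main2} require — i.e.\ that every symplectic slice representation arising from $\Hom(\pi,G)$ is (stably) of the form $p\,\lieh'$ and therefore falls under Theorem \ref{thm:main2}. A secondary technical point is the descent of local factoriality to the quotient in part (4), where one must track the class group through the non-principal locus and control the contribution of the component group $\pi_0$ of the stabilizers; this is where the simply-connected hypothesis does real work.
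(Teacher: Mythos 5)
Your proposal correctly identifies the strategic skeleton (work \'etale-locally at closed orbits, match the local model against shells covered by \S\ref{sec:Adjoint}, apply Musta\c{t}\u{a}/Grothendieck/Namikawa, then descend to the quotient), but the crucial transfer step — the identification of the local model at a closed orbit — is misstated in a way that breaks the argument.

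At a closed orbit $G\rho$ with reductive stabilizer $H=G_\rho$, the paper computes the Zariski tangent space $T_\rho\simeq(2p-1)\lieg\oplus\lieh$ as an $H$-module, hence the Luna slice module is $S_\rho\simeq 2p\,\lieh\oplus(2p-2)(\lieg/\lieh)$ with Lagrangian $W\simeq p\,\lieh\oplus(p-1)(\lieg/\lieh)$, and shows $\TC_\rho\Hom(\pi,G)\simeq N_W\times\lieg/\lieh$ where $N_W=\mu\inv(0)$ for the moment map on $W\oplus W^*\to\lieh^*$. Your model $N_{p\,\lieh}\times(\text{smooth})$ drops the $(p-1)(\lieg/\lieh)$ summand, which is \emph{not} a smooth factor on which the cup product vanishes: the quadratic form $u\mapsto\tfrac12[u,u]\in\lieh$ genuinely involves those cocycles, and they contribute $(2p-2)\dim(\lieg/\lieh)$ to the shell dimension. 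Consequently you cannot invoke Theorem~\ref{thm:main2} directly, because (a) $H$ is reductive but typically not semisimple, and (b) the module is $p\,\lieh\oplus(p-1)(\lieg/\lieh)$, not $p\,\lieh$; ``splitting off the central torus'' does not handle the non-trivial $H$-action on the $(\lieg/\lieh)$-summands. The paper closes exactly this gap with Corollary~\ref{cor:arbitrary.H}, whose proof observes that the estimates in Lemma~\ref{lem:lie.alg.main} hold for \emph{any} reductive $H\subset G$ and the module $p\,\lieh\oplus(p-1)(\lieg/\lieh)$, even though $(W\oplus W^*,H)$ need not arise as a symplectic slice representation of $N_{p\,\lieg}$. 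Without such a lemma your reduction to Theorem~\ref{thm:main2} does not go through, and this is precisely the ``main obstacle'' you flag — so it is a real gap, not a routine verification. A secondary inaccuracy: you speak of ``realizing $\Hom(\pi,G)$ as a shell,'' but the paper never does this globally; it only identifies the \emph{tangent cone} at each closed orbit with a shell times a vector space, and then uses rationality of the tangent cone to deduce rationality of the local ring.

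For part (4) your class-group descent sketch is plausible in spirit but diverges from the paper, which argues more concretely: since $G$ is semisimple it acts trivially on the fiber of any $G$-line bundle at $\rho_0$, hence on the whole bundle over the component $Y_0$ through $\rho_0$, giving $\Pic_G(Y_0)=\Pic(Y_0)$, and then Drezet's theorem yields local factoriality of $Y_0\git G$. Your route could probably be made to work, but you would still need to control the component groups of stabilizers and the codimension-$\geq 2$ condition on the non-free locus in a way you leave unspecified. The rest of the proposal (the dimension count, the complete-intersection conclusion from the count of defining equations, Boutot plus Namikawa for part (2), Grothendieck for local factoriality upstairs) matches the paper.
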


As in Budur  \cite[Theorem 1.10]{BudurRational}, using results of Simpson \cite[p.\ 69]{SimpsonModuliII}, Theorem \ref{thm:main3}
shows that the Betti, de Rham and Dolbeault representation spaces of principal $G$-bundles on our Riemann surface $\Sigma$ have rational singularities and that the corresponding moduli spaces have symplectic singularities.
Here $G$ is semisimple. We also prove a version of Theorem \ref{thm:main3} that handles the case of $G$ reductive (Theorem \ref{thm:char.vars.red.gp}).
Compare
\cite[Theorem 1.1]{LawtonManon} for similar results in the case when $\pi$ is a free group as well as the related work \cite{kapon2019singularity}
on graph varieties
and  \cite{GlazerHendel} on word maps.

For a topological group $\Gamma$, let
$r_n(\Gamma)$ denote the number of $n$-dimensional irreducible continuous complex representations of $\Gamma$ (the \emph{growth sequence of $\Gamma$}) and
let $R_n(\Gamma) = \sum_{i=1}^n r_i(\Gamma)$.
The \emph{representation zeta function} $\zeta_\Gamma(s)$ of $\Gamma$ is given by
\[
    \zeta_\Gamma(s) =   \sum\limits_{i=1}^\infty r_n(\Gamma)n^{-s}.
\]
The \emph{abscissa of convergence of $\zeta_\Gamma(s)$\/} is
\[
    \alpha(\Gamma)  :=   \limsup\limits_{n\to\infty} \frac{\log R_n(\Gamma)}{\log n}.
\]
Then $\zeta_\Gamma(s)$ converges absolutely for $\{\Re(s)>\alpha(\Gamma)\}$.

We now apply some remarkable results of Aizenbud and Avni relating growth sequences of groups and rational singularities of varieties $\Hom(\pi,G)$.
From Theorem \ref{thm:main3}(1) and \cite[Theorem IV]{AizenbudAvniRepGrowth} we have the following.
\begin{mainthm}
\label{thm:abscissa1}
Let $\k$ be a
finitely generated field of characteristic zero. Let   $G$ be a semisimple group defined over $\k$ and $\F$   a   local field containing $\k$. Let $\Gamma$ be
a compact open subgroup of $G(\F)$. Then $\alpha(\Gamma)<2$.
\end{mainthm}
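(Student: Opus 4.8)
The plan is to derive $\alpha(\Gamma)<2$ as a formal consequence of Theorem~\ref{thm:main3}(1) together with \cite[Theorem IV]{AizenbudAvniRepGrowth}. Recall that the latter asserts: if $G$ is semisimple over a finitely generated field $\k$ of characteristic zero, $\F$ is a local field containing $\k$, $\Gamma$ is a compact open subgroup of $G(\F)$, and the representation variety $\Hom(\pi,G)$ of some surface group $\pi=\pi_1(\Sigma_p)$, $p\geq 2$, is a complete intersection with rational singularities, then $\alpha(\Gamma)<2$. Thus the entire task reduces to verifying this geometric hypothesis on $\Hom(\pi,G)$.

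First I would note that ``complete intersection'' and ``rational singularities'' are geometric properties: the $\k$-variety $\Hom(\pi,G)$ has them if and only if its base change to $\C$ does, since over fields of characteristic zero both properties are stable under the (faithfully flat) base change along a field extension. So it suffices to verify them over $\C$.

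Now I would simply invoke Theorem~\ref{thm:main3}(1): for $G$ semisimple and $\pi=\pi_1(\Sigma_p)$ with $p>1$, the variety $\Hom(\pi,G)$ is CIFR, in particular a complete intersection with rational singularities. Inserting this into \cite[Theorem IV]{AizenbudAvniRepGrowth} for the given $\k$, $G$, $\F$ and $\Gamma$ then yields $\alpha(\Gamma)<2$.

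There is no substantive obstacle remaining once Theorem~\ref{thm:main3}(1) is available; the only points needing care are matching the precise hypotheses in \cite[Theorem IV]{AizenbudAvniRepGrowth} against exactly what Theorem~\ref{thm:main3}(1) supplies (complete intersection together with rational singularities, for the genus-$p$ surface group), and recording the harmless descent from $\C$ to the field of definition $\k$. The genuinely hard input --- that $\Hom(\pi,G)$ is CIFR --- is precisely Theorem~\ref{thm:main3}(1), which in turn rests on Theorem~\ref{thm:main2} and the analysis of the tangent cone to $\Hom(\pi,G)$ at a closed $G$-orbit.
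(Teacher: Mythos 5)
Your proposal is correct and matches the paper's own (one-line) argument: the paper derives Theorem~\ref{thm:abscissa1} exactly by inserting Theorem~\ref{thm:main3}(1) (that $\Hom(\pi,G)$ is CIFR, hence a complete intersection with rational singularities) into \cite[Theorem~IV]{AizenbudAvniRepGrowth}. Your added remark about descent from $\C$ to $\k$ is a harmless clarification already implicit in the paper's citation.
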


The above result is in \cite{AizenbudAvniRepGrowth,AizenbudAvniCounting}  with a larger bound on  $\alpha(\Gamma)$
that was improved in
\cite[Corollary 1.9]{GlazerHendel}.
For  $G = \SL_n$, the estimate above was established by Budur   \cite[Theorem 1.7]{BudurRational}).
Similarly, Aizenbud and Avni give bounds for abscissae related to arithmetic subgroups of high rank semisimple groups.  Theorem \ref{thm:main3} allows one to improve these bounds. For example, one has the following version of \cite[Theorem B]{AizenbudAvniCounting} (established for $\SL_n$ in
\cite[Theorem 1.4]{BudurRational}).
 \begin{mainthm}
\label{thm:abscissa2}
Let $G$ be an affine group scheme over $\Z$ whose generic fiber $G_\Q$ is (almost) $\Q$-simple, connected, simply connected and of $\Q$-rank at least $2$. Then $\alpha(G(\Z))\leq 2$.
\end{mainthm}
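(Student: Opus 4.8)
The plan is to deduce this from Theorem~\ref{thm:main3}(1) via the method of Aizenbud and Avni \cite[Theorem B]{AizenbudAvniCounting}; this is the route Budur followed for $G=\SL_n$ in \cite[Theorem 1.4]{BudurRational}, so the only new input is to replace his rational-singularities statement for $\Hom(\pi_1(\Sigma_p),\SL_n)$ by the general one in Theorem~\ref{thm:main3}(1). We recall the mechanism. For a finite group $\Gamma'$ and a closed surface $\Sigma_p$ of genus $p$, Mednykh's Frobenius-type formula gives
\[
  \bigl|\Hom(\pi_1(\Sigma_p),\Gamma')\bigr| \;=\; |\Gamma'|^{2p-1}\sum_{\chi\in\operatorname{Irr}(\Gamma')}(\dim\chi)^{2-2p},
\]
so the value at $s=2p-2$ of the representation zeta function of $\Gamma'$ equals $|\Hom(\pi_1(\Sigma_p),\Gamma')| / |\Gamma'|^{2p-1}$. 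Taking $p=2$ and $\Gamma'=G(\Z/\ell^k\Z)$ reduces the problem to counting $\Z/\ell^k\Z$-points of the representation scheme $\Hom(\pi_1(\Sigma_2),G)$ over $\Z$. Aizenbud and Avni show that once the relevant morphism is flat with fibres having rational singularities --- in practice, once the generic fibre $\Hom(\pi_1(\Sigma_2),G_\Q)$ is a complete intersection of the expected dimension $3\dim G$ with rational singularities --- these point counts grow like $\ell^{3k\dim G}$, so that, via the displayed formula, $\zeta_{G(\Z/\ell^k\Z)}(2)$ is uniformly bounded; combined with the Euler-product description of $\zeta_{G(\Z)}$ (available because $G_\Q$ is simply connected, almost $\Q$-simple, and of $\Q$-rank at least $2$, so that the congruence subgroup property holds) this yields $\alpha(G(\Z))\le 2$.

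It therefore suffices to supply the geometric hypothesis at genus $p=2$. As $G_\Q$ is simply connected and almost $\Q$-simple, its base change $G_\C:=G\times_{\Spec\Z}\Spec\C$ is a product of simply connected simple complex groups, hence semisimple, so Theorem~\ref{thm:main3}(1) applies: for $\pi=\pi_1(\Sigma_2)$ the variety $\Hom(\pi,G_\C)$ is CIFR --- in particular a complete intersection with rational singularities, each component of dimension $(2\cdot 2-1)\dim G=3\dim G$. Being a complete intersection of a prescribed dimension and having rational singularities are geometric properties of a finite-type scheme over a field of characteristic zero, preserved and reflected by extension of the base field; hence $\Hom(\pi,G_\Q)$ has the same properties over $\Q$. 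This is precisely what \cite[Theorem B]{AizenbudAvniCounting} requires, and that theorem then yields $\alpha(G(\Z))\le 2$.

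The proof is thus essentially a black box resting on Theorem~\ref{thm:main3}(1): all the novelty lies in the rational-singularities statement for $\Hom(\pi_1(\Sigma_2),G)$, which removes the restriction to $G=\SL_n$ in \cite{BudurRational} and the larger-genus requirement in \cite{AizenbudAvniRepGrowth,AizenbudAvniCounting}. The main obstacle beyond having Theorem~\ref{thm:main3}(1) is bookkeeping at the interface with \cite[Theorem B]{AizenbudAvniCounting}: one must check that the hypotheses there --- semisimplicity of $G_\C$, simple connectedness, $\Q$-rank at least $2$ --- match ours, and that the characteristic-zero output of Theorem~\ref{thm:main3}(1) (rational singularities of the complex, equivalently the $\Q$-, variety, together with its complete-intersection structure and dimension) is exactly what the point-counting machinery of \cite{AizenbudAvniCounting} consumes; the control of the finitely many bad primes and the convergence of the Euler product at $s=2$ are already handled there. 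No further geometry is needed, and genus $2$ is optimal for this route, since genus $p$ would only give the weaker bound $\alpha(G(\Z))\le 2p-2$.
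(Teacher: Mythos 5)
Your proposal is correct and follows essentially the same route as the paper: the paper itself presents Theorem~\ref{thm:abscissa2} as a direct consequence of Theorem~\ref{thm:main3}(1) fed into the Aizenbud--Avni machinery of \cite[Theorem B]{AizenbudAvniCounting}, which is exactly what you describe (Mednykh/Frobenius count at genus $2$, Euler product via CSP, rational singularities of $\Hom(\pi_1(\Sigma_2),G)$ controlling the local point counts). Your unpacking of the mechanism is accurate and the verification that $G_\C$ is semisimple so that Theorem~\ref{thm:main3}(1) applies, and that rational singularities descend from $\C$ to $\Q$, is the right bookkeeping.
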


 \begin{remark}
 Suppose that $\Gamma$ is one of the groups  considered above and $\alpha(\Gamma)\leq 2$.   Then using a generalization of Faulhaber's formula
\cite{McGownParks} one gets estimates on the growth of $r_n(\Gamma)$; namely, $r_n(\Gamma) = O(n^{1+\epsilon})$ for every $\epsilon > 0$. If $\alpha(\Gamma)<2$, one gets that  $r_n(\Gamma) = O(n^{1-\epsilon})$ for some $\epsilon > 0$.
 \end{remark}

The outline of this paper is as follows.
In \S \ref{sec:Back} we provide background material as well as criteria for $N_V$ to be a normal complete intersection with FPIG.
We introduce  Musta\c{t}\u{a}'s criterion for rational singularities and establish Theorem \ref{thm:mainIFF}.
In \S \ref{sec:JetShell}
we study the jet schemes of $N_V$ and establish
Theorems  \ref{thm:main5}  and \ref{thm:main1}.
In \S \ref{sec:Adjoint} we apply our results to the case of copies of the adjoint representation and establish Theorem \ref{thm:main2}. In \S \ref{sec:symplectic.sings} we
 establish Theorem \ref{thm:main4}. In \S \ref{sec:Examples} we apply Theorem \ref{thm:main1} to classical representations of the classical groups. In \S \ref{sec:Applications} we consider applications to representation varieties and character varieties and establish
 Theorem  \ref{thm:main3}.
 An appendix
 \S \ref{sec:Appendix} is devoted to showing that certain maps arising in \S \ref{sec:Applications} are moment mappings.

\medskip
We use some standard and not so standard abbreviations in this paper. Particularly important are three already mentioned in the introduction, which we summarize here. The notion of a $G$-variety having FPIG (finite principal isotropy groups) is defined in \S \ref{sec:G-modules}, the notion of a $G$-module having a property UTCLS (up to choosing a Lagrangian submodule) is given in Definition \ref{def:UTCLS}, and a $G$-variety is CIFR if it is a complete intersection with FPIG  and rational singularities.

% xxxxxxxxxxxxxxxxxxxxxxxxxxxxxxxxxxxxxxxxxxxxxxxxxxxxxxxxxxxxxxxxxxxxxxxxx
% xxxxxxxxxxxxxxxxxxxxxxxxxxxxxxxxxxxxxxxxxxxxxxxxxxxxxxxxxxxxxxxxxxxxxxxxx
% xxxxxxxxxxxxxxxxxxxxxxxxxxxxxxxxxxxxxxxxxxxxxxxxxxxxxxxxxxxxxxxxxxxxxxxxx

\section*{Acknowledgements}
We thank CIRM Luminy (RIP) and  the CIB of the EPFL (Bernoulli Brainstorm) for their support which enabled us to begin  this research, together,  in January of 2020.  We thank Simon Lyakhovich for helpful comments and Nero Budur for prompt and helpful responses to questions.
We also thank Itay Glazer, Sean Lawton, and Travis Schedler for pointing out important connections with the literature
as well as referees for helpful comments on an earlier version of this manuscript.

% xxxxxxxxxxxxxxxxxxxxxxxxxxxxxxxxxxxxxxxxxxxxxxxxxxxxxxxxxxxxxxxxxxxxxxxxx
% xxxxxxxxxxxxxxxxxxxxxxxxxxxxxxxxxxxxxxxxxxxxxxxxxxxxxxxxxxxxxxxxxxxxxxxxx
% xxxxxxxxxxxxxxxxxxxxxxxxxxxxxxxxxxxxxxxxxxxxxxxxxxxxxxxxxxxxxxxxxxxxxxxxx

\section{Background}
\label{sec:Back}

% xxxxxxxxxxxxxxxxxxxxxxxxxxxxxxxxxxxxxxxxxxxxxxxxxxxxxxxxxxxxxxxxxxxxxxxxx

\subsection{$G$-modules and the shell}\label{sec:G-modules}

Let $G$ be a reductive complex group and  $X$ an affine $G$-variety. (We assume that varieties are reduced but not necessarily irreducible except in Section~\ref{subsec:BackReps}.)\ The categorical quotient $Z=X\git G$ is the affine variety with coordinate ring $\C[X]^G$ and $\pi\colon X\to Z$, the quotient mapping, is dual to the inclusion $\C[X]^G\subset\C[X]$. The categorical quotient parameterizes the closed orbits in $X$.
A subset of $X$ is said to be \emph{$G$-saturated\/} if it is a union of fibers of $\pi$.
 If $x\in X$ and $Gx$ is closed, then the isotropy group $G_x$ is reductive. For a reductive subgroup $H$ of $G$, we let $Z_{(H)}$ denote the set of closed orbits in $X$ whose isotropy groups are in the conjugacy class $(H)$ of $H$. We write $(H)< (H')$ if $H$ is conjugate to a proper subgroup of $H'$. The $Z_{(H)}$ are called \emph{isotropy strata\/} of $Z$. They are locally closed in the Zariski topology. Assume that $Z$ is irreducible. Then there is an open dense stratum $Z_\pr$, the \emph{principal stratum\/}. Corresponding closed orbits are called \emph{principal orbits\/} and any corresponding reductive subgroup $H$ of $G$ is called a  \emph{principal isotropy group}. We let $X_\pr$ denote $\pi\inv(Z_\pr)$.  If $Z_{(H')}$ is not empty, then $(H)\leq (H')$, i.e., a principal isotropy group $H$ is conjugate to a subgroup of $H'$.  The $G$-action on $X$ is \emph{stable\/} if $\pi\inv(Z_\pr)$ consists of closed orbits. We say that $X$ has FPIG  if the principal isotropy groups are finite.

 Let $V$ be a $G$-module and $x\in V$ such that $Gx$ is closed. Let $H=G_x$. Then, as 
 an 
 $H$-module, $V=T_x(Gx)\oplus W$ where $W$ is an $H$-module, which is called the \emph{slice representation of $H$} \cite{LunaSlice}. Since $T_x(Gx)\simeq \lieg/\lieh$ where
 $\lieh$ denotes the Lie algebra of $H$, $W$ is completely determined by $H$.

 Let $V$ be a $G$-module  and set $U=V\oplus V^*$. Then the \emph{standard symplectic form $\omega$ on $U$\/} and \emph{standard moment mapping\/} $\mu\colon U\to \lieg^*$ are given by
 \begin{align*}
\omega((v,v^\ast),(w,w^\ast))&=w^\ast(v)-v^\ast(w),\quad (v,v^\ast),\ (w,w^\ast)\in  V\oplus V^\ast.\\
 \mu(v,v^\ast)(A)  &=v^\ast\big(A(v)\big),\quad (v,v^*)\in V\oplus V^\ast,\   A\in\lieg.
\end{align*}
Of course, moment mappings are only unique up to a constant in $\Ann[\lieg,\lieg]\subset\lieg^*$, so  our  $\mu$ is standard in the sense that  $\mu(0)=0$.

\begin{lemma}\label{lem:mu.formula}
Let $(U,\omega)$ be a symplectic $G$-module
with a moment mapping vanishing at $0\in U$.
 If $U$   admits a Lagrangian $G$-submodule $V$, then there is an isomorphism $U\simeq V\oplus V^*$  under which the symplectic form    and moment mapping  become  standard.
\end{lemma}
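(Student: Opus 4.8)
The plan is to construct the isomorphism $U \simeq V \oplus V^*$ directly from the Lagrangian submodule $V$, verify it intertwines the symplectic forms, and then show the moment map automatically becomes standard once $\mu(0)=0$. First I would use the nondegenerate pairing $\omega$ to identify $U/V$ with $V^*$: since $V$ is Lagrangian, $\omega$ descends to a perfect $G$-invariant pairing $V \times (U/V) \to \C$, hence a $G$-isomorphism $U/V \xrightarrow{\sim} V^*$. Because $G$ is reductive, the short exact sequence $0 \to V \to U \to U/V \to 0$ splits $G$-equivariantly; choose a $G$-submodule $V'$ complementary to $V$, so $U = V \oplus V'$ and $V' \simeq V^*$ as $G$-modules. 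A priori $V'$ need not be Lagrangian, but $\omega|_{V\times V}=0$ forces the only possibly-nonzero piece of $\omega$ on $V'\times V'$; I would correct $V'$ by a standard symplectic Gram–Schmidt-type adjustment, replacing $V'$ by $\{v' + T(v') : v'\in V'\}$ for a suitable $G$-equivariant map $T\colon V' \to V$, to arrange $\omega|_{V'\times V'}=0$ as well. The map $T$ is determined by requiring the correction to kill the skew form $\beta(v_1',v_2') := \omega(v_1',v_2')$ on $V'$; solving this amounts to writing $\beta = \langle S(\cdot),\cdot\rangle$ under the pairing $V'\simeq V^*$ and taking $T = \tfrac12 S$ (or the appropriate skew-adjoint solution), all of which can be done $G$-equivariantly by reductivity. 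After this adjustment both $V$ and the new $V'$ are Lagrangian, $\omega$ pairs them perfectly, and the resulting identification $U \simeq V \oplus V^*$ carries $\omega$ to the standard form by construction.

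For the moment map, I would invoke uniqueness: any moment mapping for the $G$-action on $(U,\omega)$ is unique up to a constant in $\operatorname{Ann}[\lieg,\lieg] \subset \lieg^*$, as noted in the excerpt. The standard moment mapping $\mu_{\mathrm{std}}$ on $V\oplus V^*$ vanishes at $0$, and by hypothesis so does the given $\mu$; hence their difference is a constant in $\operatorname{Ann}[\lieg,\lieg]$ that vanishes at $0 \in U$, so the constant is zero and $\mu = \mu_{\mathrm{std}}$ under the identification. One should check that both are genuinely moment mappings for the same action — immediate since the identification is a $G$-equivariant symplectomorphism — and that a moment map exists at all in this linear setting, which is classical: for a linear symplectic $G$-module the quadratic Hamiltonians $H_A(u) = \tfrac12\omega(Au,u)$ furnish one.

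The main obstacle is the equivariant symplectic splitting step: getting from an arbitrary $G$-complement $V'$ to a Lagrangian one. The existence of a $G$-complement is free from reductivity, but one must check the correction $T$ can be chosen $G$-equivariantly and that it genuinely produces a Lagrangian complement rather than merely reducing the defect — this is where the skew-symmetry of $\beta$ and the self-duality $V' \simeq V^*$ are used, and it is the only place any real computation enters. Everything after that (matching $\omega$ with the standard form, and the moment-map normalization via the uniqueness-up-to-constant principle) is formal.
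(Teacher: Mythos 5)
Your proposal is correct and follows essentially the same route as the paper: identify $U/V \simeq V^*$ via the pairing induced by $\omega$, use reductivity to split, and conclude the moment map is standard by the normalization $\mu(0)=0$ and uniqueness up to a constant in $\operatorname{Ann}[\lieg,\lieg]$. The equivariant Gram--Schmidt step you spell out (correcting an arbitrary $G$-complement $V'$ to a Lagrangian one by $v'\mapsto v'+T(v')$ with $T$ determined by $\omega(v_1',T(v_2'))=-\tfrac12\omega(v_1',v_2')$) is exactly the detail the paper's one-line appeal to reductivity leaves implicit, and it works as you describe.
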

\begin{proof}
Since $V$ is Lagrangian, $\omega$ induces a non-degenerate pairing  $V\times (U/V)\to\C$  so that $U/V\simeq V^*$. Since $G$ is reductive, $U\simeq V\oplus V^*$ where $V^*$ is Lagrangian and the usual pairing of $V$ and $V^\ast$ is induced by $\omega$. With $U$ viewed as $V\oplus V^\ast$, $\omega$ and $\mu$ are standard.
\end{proof}

The \emph{shell $N_V$ of $V$}, which we denote simply $N$ if $V$ is clear from the context, is the fiber $\mu\inv(0)\subset U$. The shell may be neither reduced nor irreducible. In \cite{HerbigSchwarzSeaton2} we called $N_V$ the \emph{complex\/} shell since we were also dealing with real moment mappings. In this paper we only deal with the complex case.
A big part of our task is  to deduce properties of $N_V$ from those of $V$.

We now recall (a weak form of) the symplectic slice theorem of
\cite[\S 3.4]{HerbigSchwarzSeaton2}.
Let $x\in N_V$ such that $Gx$ is closed. 
The isotropy group
$H=G_x$ is reductive and we let $E$ denote $T_x(Gx)$.  Then $E$ is isotropic  and $\omega$ is non-degenerate on $E^\perp/E$ where  $\perp$ denotes perpendicular with respect to $\omega$. Since $H$ is reductive,
 there is an $H$-module $S$ such that  $E^\perp= S\oplus E$ and $U\simeq E\oplus E^*\oplus S$  where $\omega$ induces a symplectic form $\omega_S$ on $S$.

 Note that $H$ determines $S$ since $E\simeq \lieg/\lieh\simeq E^*$. By \cite[Lemma 3.10]{HerbigSchwarzSeaton2}, $S$ admits a Lagrangian $H$-submodule $W$ so that $\omega_S$ is standard on $S\simeq W\oplus W^*$.
 Let $N_W$ denote the corresponding shell in $S$.

Recall that if $\phi\colon X\to Y$ is a $G$-equivariant mapping of $G$-varieties, then for any $x\in X$, $G_x\subset G_{\phi(x)}$. The mapping $\phi$ is \emph{isovariant\/} if it is equivariant and $G_x=G_{\phi(x)}$ for all $x\in X$.

\begin{theorem}[Symplectic Slice Theorem]\label{thm:symplectic.slice}
There is an $H$-saturated affine neighborhood $Q$ of $0\in E^*\oplus S$ and an \'etale $G$-isovariant mapping $\phi\colon G\times^HQ\to V\oplus V^*$ where $[e,0]$ is sent to $x$.  The image of $\phi$ is $G$-saturated and open. Pulling back the standard symplectic form and moment mapping on $V\oplus V^*$ by $\rho$, the shell of $G\times^HQ$ is $G\times^H(Q\cap (\{0\}\times N_W)$. The induced mapping
$$
\rho\inv(N_V)\simeq G\times^H(Q\cap (\{0\}\times N_W))\to N_V
$$
is  an \'etale  mapping of affine schemes and is the restriction of a $G$-isovariant map.
\end{theorem}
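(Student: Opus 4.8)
The approach is to combine Luna's \'etale slice theorem with an algebraic version of the Marle--Guillemin--Sternberg (MGS) normal form for moment maps; the latter is established in \cite[\S 3.4]{HerbigSchwarzSeaton2}, and the statement here is essentially the specialization of that normal form to the zero fiber, so the plan is to assemble the ingredients and read off what is needed. I would begin by applying Luna's slice theorem \cite{LunaSlice} to the point $x\in U:=V\oplus V^\ast$: the orbit $Gx$ is closed and $H=G_x$ is reductive, so the slice representation of $H$ at $x$ is $U/E$. Since $E=T_x(Gx)$ is $\omega$-isotropic and $\omega$ is non-degenerate on $E^\perp/E$, reductivity of $H$ gives an $H$-equivariant splitting $U\simeq E\oplus E^\ast\oplus S$ compatible with $\omega$ --- so that $\omega$ pairs $E$ with $E^\ast$ and restricts to $\omega_S$ on $S$ --- and hence $U/E\simeq E^\ast\oplus S$ as $H$-modules. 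Luna's theorem then produces an $H$-saturated affine neighborhood $Q$ of $0$ in $E^\ast\oplus S$ and a $G$-equivariant \'etale map $\phi\colon G\times^H Q\to U$ with $\phi([e,0])=x$ and $G$-saturated open image; since $Gx$ is closed we may, after shrinking $Q$, take $\phi$ isovariant, and by the symplectic slice theorem of \cite[\S 3.4]{HerbigSchwarzSeaton2} we may further arrange that $\phi^\ast\omega$ is the standard symplectic form of the MGS model $G\times^H(E^\ast\oplus S)$.

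The heart of the argument is the identification of the pulled-back moment map $\mu\circ\phi\colon G\times^HQ\to\lieg^\ast$. It is $G$-equivariant (since $\phi$ and $\mu$ are), it is a moment map for $\phi^\ast\omega$, and it vanishes at $[e,0]$ since $\mu(x)=0$; as moment maps for a fixed symplectic $G$-form differ by a constant in $\Ann[\lieg,\lieg]$, this pins down $\mu\circ\phi$ as the standard MGS moment map of the model, which has the explicit form
\[
\mu\big(\phi([g,(\xi,s)])\big)=\Ad^\ast(g)\big(\xi+\mu_S(s)\big),\qquad [g,(\xi,s)]\in G\times^HQ,
\]
where $E^\ast\simeq(\lieg/\lieh)^\ast$ is identified with the annihilator $\lieh^\circ$ of $\lieh$ in $\lieg^\ast$ and $\mu_S\colon S\to\lieh^\ast$ is the moment map of $(S,\omega_S)$. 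Since $H$ is reductive, $\lieg^\ast\simeq\lieh^\ast\oplus\lieh^\circ$ as $H$-modules with $\xi\in\lieh^\circ$ and $\mu_S(s)\in\lieh^\ast$, so, scheme-theoretically, the vanishing of $\xi+\mu_S(s)$ is equivalent to $\xi=0$ together with $\mu_S(s)=0$. As $S\simeq W\oplus W^\ast$ carries the standard structure, $\mu_S\inv(0)=N_W$ by Lemma \ref{lem:mu.formula}, and therefore
\[
\phi\inv(N_V)=(\mu\circ\phi)\inv(0)=G\times^H\big(Q\cap(\{0\}\times N_W)\big)
\]
as schemes; this is precisely the shell of $\big(G\times^HQ,\phi^\ast\omega,\mu\circ\phi\big)$.

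Finally, the induced morphism $\phi\inv(N_V)\simeq G\times^H(Q\cap(\{0\}\times N_W))\to N_V$ is the base change of the \'etale $G$-isovariant map $\phi$ along the closed immersion $N_V\hookrightarrow U$; \'etaleness is preserved by base change and restriction preserves $G$-isovariance, so it is an \'etale $G$-isovariant morphism of affine schemes, manifestly a restriction of $\phi$. The step I expect to require the most care is the displayed normal form for $\mu\circ\phi$: that the slice $S$ contributes exactly $\mu_S$, that the $E^\ast$-factor contributes exactly the inclusion $\lieh^\circ\hookrightarrow\lieg^\ast$, and that $\phi$ can be chosen so that $\phi^\ast\omega$ equals (not merely is cohomologous to) the MGS form. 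This is the content of the algebraic MGS theorem of \cite{HerbigSchwarzSeaton2}; granting it, the reduction to $\xi=0$ and $\mu_S(s)=0$ and the remaining assertions are routine bookkeeping with Luna's theorem and the decomposition $\lieg^\ast=\lieh^\ast\oplus\lieh^\circ$.
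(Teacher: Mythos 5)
The paper gives no direct proof of this theorem: immediately after the statement it records only the remark that the version proved in \cite[\S 3.4]{HerbigSchwarzSeaton2} is stated under an irreducibility hypothesis on $N_V$ but that the proof there does not use it, so the general statement follows. Your argument reconstructs exactly the argument behind that citation --- Luna's slice theorem, the algebraic MGS normal form for $\phi^\ast\omega$ and $\mu\circ\phi$, and the decomposition $\lieg^\ast\simeq\lieh^\circ\oplus\lieh^\ast$ that decouples the shell equations into $\xi=0$ on the $E^\ast$-factor and $\mu_S(s)=0$ on the $S$-factor --- and is correct, including the scheme-theoretic identification of $\phi\inv(N_V)$ with $G\times^H(Q\cap(\{0\}\times N_W))$ via base change; it is the same approach the paper relies on.
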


\begin{remark}
The theorem in \cite{HerbigSchwarzSeaton2} is stated in the case that $N_V$ is an irreducible variety but the proof does not need this and implies the version above.
\end{remark}

\begin{corollary}\label{cor:dim.S}
Let $Gx\subset N_V$ be a closed orbit with symplectic slice representation $(S,H)$.  Then
$$
\dim_x N_V=\dim G-\dim H+\dim_0 N_W.
$$
\end{corollary}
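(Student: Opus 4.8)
The plan is to read the dimension off the Symplectic Slice Theorem, using that an \'etale morphism preserves local dimension. By Theorem~\ref{thm:symplectic.slice} applied at $x$, there is an $H$-saturated affine neighborhood $Q$ of $0\in E^*\oplus S$ and an \'etale $G$-isovariant map
$$
\psi\co G\times^H\bigl(Q\cap(\{0\}\times N_W)\bigr)\longrightarrow N_V
$$
sending $[e,0]$ to $x$. Since \'etale morphisms of schemes are flat of relative dimension zero, they preserve the local dimension; hence
$$
\dim_x N_V = \dim_{[e,0]}G\times^H\bigl(Q\cap(\{0\}\times N_W)\bigr).
$$

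Next I would evaluate the right-hand side. Set $Y := Q\cap(\{0\}\times N_W)$, an affine $H$-variety containing $0$. The projection $G\times^H Y\to G/H$ is a fiber bundle with fiber $Y$, so its local dimension at $[e,0]$ equals $\dim(G/H)+\dim_0 Y=(\dim G-\dim H)+\dim_0 Y$. It remains to identify $\dim_0 Y$ with $\dim_0 N_W$: under the identification $S\simeq W\oplus W^*$ and $E^*\oplus S$ as a linear space, $\{0\}\times N_W$ is precisely the shell $N_W\subset S\subset E^*\oplus S$, and $Q$ is an open neighborhood of the origin, so $Y=Q\cap(\{0\}\times N_W)$ is an open neighborhood of $0$ in $N_W$ and therefore $\dim_0 Y=\dim_0 N_W$. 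Combining the displayed equalities gives $\dim_x N_V=\dim G-\dim H+\dim_0 N_W$.

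The only point requiring a little care is that the dimension count must be made scheme-theoretically rather than merely set-theoretically, since $N_V$ need not be reduced; but this is exactly what \'etaleness guarantees, as a flat unramified morphism induces an isomorphism on completed local rings up to a finite field extension and in particular preserves the dimension of the local ring at a point. No input beyond Theorem~\ref{thm:symplectic.slice} is needed, so I expect no serious obstacle here; the statement is essentially a bookkeeping consequence of the slice theorem.
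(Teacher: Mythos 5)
Your proof is correct and is exactly the bookkeeping argument the paper has in mind (the paper gives no explicit proof, presenting the corollary as an immediate consequence of Theorem~\ref{thm:symplectic.slice}): read off local dimension via étaleness, then use the fiber-bundle structure of $G\times^H Y\to G/H$ and the fact that $Q\cap(\{0\}\times N_W)$ is an open neighborhood of $0$ in $N_W$. One minor remark: your concern about reducedness is harmless but also unnecessary, since the Krull dimension of a local ring is unchanged by passing to the reduction, so $\dim_x$ of a possibly non-reduced scheme already agrees with the dimension of the underlying reduced variety; the real content that étaleness supplies here is simply that it is flat with zero-dimensional fibers, hence dimension-preserving.
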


\begin{corollary}\label{cor:property(P)}
\label{cor:P}
Let (P) be one of the following conditions: reduced, smooth, normal
or rational singularities.
Then $N_V$ satisfies (P) at $x$ if and only if $N_W$ satisfies (P) at $0$.
\end{corollary}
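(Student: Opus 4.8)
The plan is to derive the corollary from the Symplectic Slice Theorem (Theorem~\ref{thm:symplectic.slice}) together with the observation that each of the four properties (P) is local in the smooth topology: it is both preserved and reflected by \'etale morphisms and by smooth surjective morphisms, at corresponding points. For (P) one of reduced, smooth (i.e.\ regular), or normal this is standard commutative algebra --- an \'etale or smooth morphism is flat with geometrically regular fibers, and a smooth morphism moreover has geometrically reduced fibers, so if $f\colon X\to Y$ is \'etale or smooth surjective and $f(x')=y$, then $\calO_{X,x'}$ has (P) exactly when $\calO_{Y,y}$ does. For rational singularities one argues through resolutions: cohomology of the structure sheaf of a resolution commutes with flat base change, and the base change of a resolution $g\colon\widetilde Y\to Y$ along an \'etale or smooth morphism $f$ is again a resolution of $X$ (it is smooth, being smooth over the smooth variety $\widetilde Y$, and birational, being an isomorphism over the preimage of the dense open locus where $g$ is an isomorphism), so rational singularities is preserved and reflected as well. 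I would cite a standard reference for the smooth case.

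Granting this, I would chain the equivalences furnished by Theorem~\ref{thm:symplectic.slice}. It provides a $G$-isovariant \'etale morphism of affine schemes
$$
\psi\colon\ G\times^H\bigl(Q\cap(\{0\}\times N_W)\bigr)\ \longrightarrow\ N_V,\qquad \psi([e,0])=x ;
$$
since $\psi$ is \'etale, $N_V$ has (P) at $x$ iff $G\times^H(Q\cap(\{0\}\times N_W))$ has (P) at $[e,0]$. Next, the orbit map $q\colon G\times(Q\cap(\{0\}\times N_W))\to G\times^H(Q\cap(\{0\}\times N_W))$ is a principal $H$-bundle, in particular smooth and surjective, with $q(e,0)=[e,0]$, so (P) holds at $[e,0]$ iff it holds at $(e,0)$. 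Finally the projection onto the second factor, $G\times(Q\cap(\{0\}\times N_W))\to Q\cap(\{0\}\times N_W)$, is smooth and surjective with $(e,0)\mapsto 0$, so (P) holds at $(e,0)$ iff it holds at $0$; and since $Q$ is an open neighbourhood of $0$ in $E^*\oplus S$, the scheme $Q\cap(\{0\}\times N_W)$ is a Zariski-open neighbourhood of $0$ in $\{0\}\times N_W\cong N_W$, so the last condition says exactly that $N_W$ has (P) at $0$. Composing the equivalences yields the corollary.

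The only step carrying real content --- and hence the main obstacle --- is the invariance of rational singularities under \'etale and smooth surjective morphisms; the rest is bookkeeping with the slice theorem and the structure of $G\times^H(-)$. I would expect to dispatch it by citation, taking a little care because the excerpt allows the shell to be a non-reduced or reducible scheme: ``(P) at $x$'' must be read scheme-theoretically, and for rational singularities one should note that it forces $N_V$ to be normal (hence reduced and, locally, irreducible) near $x$, so that the resolution-pullback argument applies on that neighbourhood.
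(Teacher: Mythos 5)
The paper states this corollary without proof, intending it to follow from Theorem~\ref{thm:symplectic.slice} by exactly the kind of argument you give: chain the \'etale map $G\times^H(Q\cap(\{0\}\times N_W))\to N_V$ with the principal-$H$-bundle and the projection off the $G$-factor, and use that reducedness, smoothness, normality and rational singularities are preserved and reflected by \'etale and smooth morphisms at corresponding points. Your write-up is correct and matches the intended (omitted) proof, and your closing caveat about reading the properties scheme-theoretically and using that rational singularities forces normality near $x$ is the right thing to observe.
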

\begin{remark}\label{rem:closed.orbits}
Let (P) be one of conditions normal or rational singularities. Suppose that an affine $G$-variety $X$ satisfies (P) along every closed orbit. Let  $\Omega=\{x\in X\mid\text{ (P) holds at }x\}$. Then $\Omega$ is open and $G$-stable. If $\Omega\neq X$, then $\Omega^c$ is $G$-stable and closed, hence contains a closed $G$-orbit, which is a contradiction. Hence $X$ satisfies (P).
\end{remark}
\subsection{Conditions on affine $G$-varieties.}

 Let $X$ be an affine   $G$-variety whose  quotient $Z$ is irreducible.  For $k\geq 0$, we say that $X$ is
\emph{$k$-principal} if $\codim  X\smallsetminus X_{\pr} \geq k$.
Let  $X_{(r)}$ denote the locally closed set of points with isotropy group of   dimension $r$. We say that
$X$ is \emph{$k$-modular} if $\codim_X  X_{(r)}\geq r+k$ for $1\leq r\leq  \dim  G$.
Note, in particular,  if $X$ is $k$-modular, then $X_{(0)}\neq\emptyset$.
We say that $X$ is \emph{$k$-large} if it is $k$-principal, $k$-modular, and has FPIG.
See \cite{GWSlifting} for more background on these concepts, and note that references
sometimes differ about whether FPIG is required as part of the definition of
$k$-modular or $k$-principal.
We say that a $G$-module $V$ is \emph{orthogonal} if $V$ admits a non-degenerate symmetric $G$-invariant bilinear form.
Note that if $V$ is orthogonal, then it is stable \cite{LunaSlice}.

\begin{definition}
\label{def:UTCLS}
Let $V$ be a $G$-module and $U=V\oplus V^*$ with the standard symplectic form.
We say that $V$ has property $(P)$ \emph{UTCLS} (\emph{up to choosing a Lagrangian submodule})
if   there is a  Lagrangian $G$-submodule $V'$  of $U$ with property $(P)$.
\end{definition}
By Lemma \ref{lem:mu.formula}, $U\simeq V'\oplus (V')^*$ where the shell remains the same.

We recall the following; see \cite[Proposition~9.4]{GWSlifting}, \cite[Proposition~3.2]{HerbigSchwarzSeaton2},
and for (6), \cite[Theorem~2.4]{PanyushevJacob} and \cite[Proposition~6]{Avramov}.

\begin{proposition}\label{prop:props.of.N}
Let $U=V\oplus V^*$ with the standard symplectic form and shell
$N = N_V$. Let $R = \{x\in U \mid G_{x}$ is finite$\}$.
\begin{enumerate}
\item   The shell $N$ is a complete intersection (hence Cohen-Macaulay) if and only if $V$ is $0$-modular if and only if   $\dim N=2\dim V-\dim G$.
\end{enumerate}
Now assume that $V$ is $0$-modular.
\begin{enumerate}
\addtocounter{enumi}{1}
\item   The set $R$ equals $\{x\in U\mid   d\mu$  has maximal rank at $x\}$.
\item    The set of smooth points $N_{\sm}$ of $N$ is $N\cap R$.
\item    The shell   is normal if and only if $ N\setminus R$ has  codimension at least two in $N$.
\item     The shell is reduced and irreducible if and only if $V$ is $1$-modular.
\item  The shell is factorial if and only if $V$ is $2$-modular.
\end{enumerate}
\end{proposition}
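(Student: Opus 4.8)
The plan is to reduce the whole proposition to two elementary ingredients: an infinitesimal computation of the rank of $d\mu$, and a fibre-dimension count for the projection $p\colon N\to V$, $(v,v^{*})\mapsto v$. The fibre of $p$ over $v$ is the linear subspace $(\lieg v)^{\perp}\subseteq V^{*}$ of dimension $\dim V-\dim G+\dim G_{v}$; in particular $p$ is surjective (its fibres contain $0$) and $V\times\{0\}\subseteq N$. Everything about modularity comes from stratifying $V$ by $V_{(r)}=\{v\in V\mid\dim G_{v}=r\}$ and using $\dim p^{-1}(V_{(r)})=\dim V_{(r)}+\dim V-\dim G+r$; no appeal to the symplectic slice theorem is needed, although Corollary~\ref{cor:dim.S} gives an alternative route to the dimension estimates. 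Throughout I use the standard facts about $k$-modularity collected in \cite{GWSlifting}.

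\emph{Parts (1)--(3).} For (2) I would use the defining property of a moment mapping: for $A\in\lieg$, $d\langle\mu,A\rangle_{x}=\iota_{A_{U}(x)}\omega$, so $\langle d\mu_{x}(\xi),A\rangle=\omega(A_{U}(x),\xi)$ for all $\xi$. Hence $\ker d\mu_{x}$ is the $\omega$-orthogonal of $T_{x}(Gx)$ and, $\omega$ being non-degenerate, $\operatorname{rank}d\mu_{x}=\dim T_{x}(Gx)=\dim G-\dim G_{x}$; this is maximal, equal to $\dim\lieg^{*}=\dim G$, exactly when $G_{x}$ is finite, which is (2). For (1), $N$ is cut out of the $2\dim V$-dimensional affine space $U$ by the $\dim G$ components of $\mu$ relative to a basis of $\lieg$; since $\C[U]$ is Cohen--Macaulay, these form a regular sequence (equivalently $N$ is a complete intersection, hence Cohen--Macaulay, and by unmixedness equidimensional of dimension $2\dim V-\dim G$) if and only if $\dim N=2\dim V-\dim G$. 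The fibre count gives $\dim N=\dim V-\dim G+\max_{r}(\dim V_{(r)}+r)$, which equals $2\dim V-\dim G$ precisely when $\dim V_{(r)}+r\leq\dim V$ for every $r\geq1$, i.e.\ $\codim_{V}V_{(r)}\geq r$ for $1\leq r\leq\dim G$ --- by definition, $V$ is $0$-modular (which forces $V_{(0)}\neq\emptyset$); this is the triple equivalence in (1). For (3) I assume $V$ is $0$-modular, so $N$ is a complete intersection of codimension $\dim G$; the Jacobian criterion then says $x\in N$ is a smooth point iff $d\mu_{x}$ has rank $\dim G$, which by (2) means $x\in R$, so $N_{\sm}=N\cap R$.

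\emph{Parts (4)--(5).} A complete intersection is Cohen--Macaulay, hence satisfies $S_{k}$ for all $k$; by Serre's criterion $N$ is normal iff it is $R_{1}$, i.e.\ $\codim_{N}N_{\sing}\geq2$, which by (3) is $\codim_{N}(N\setminus R)\geq2$, giving (4). For (5), note first that $p^{-1}(V_{(0)})$ is irreducible, being the total space of a vector bundle of rank $\dim V-\dim G$ over the irreducible open dense set $V_{(0)}$. Since $N$ is equidimensional of dimension $2\dim V-\dim G$, this open subset of $N$ meets every component --- hence is dense --- iff no component lies in $\bigcup_{r\geq1}\overline{p^{-1}(V_{(r)})}$, and the fibre count shows the latter has dimension $<\dim N$ precisely when $\codim_{V}V_{(r)}\geq r+1$ for all $r\geq1$, i.e.\ when $V$ is $1$-modular. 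Thus $V$ $1$-modular $\Rightarrow$ $N$ irreducible; conversely, if $N$ is irreducible then the nonempty open $p^{-1}(V_{(0)})$ is dense, so each $\overline{p^{-1}(V_{(r)})}$ ($r\geq1$) is a proper closed subset of the irreducible $N$ and so has strictly smaller dimension, forcing $1$-modularity. Finally, when $N$ is irreducible it is automatically reduced: it satisfies $S_{1}$ and has a smooth point (e.g.\ $(v,0)$ for $v\in V_{(0)}$, by (3)), so its local ring at the generic point is a domain. Hence ``$N$ reduced and irreducible'' $\iff$ ``$N$ irreducible'' $\iff$ ``$V$ is $1$-modular''.

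\emph{Part (6), and the main obstacle.} If $V$ is $2$-modular it is a fortiori $1$-modular, so by (1)--(5) $N$ is a reduced irreducible complete intersection; moreover a point with infinite $G_{(v,v^{*})}=G_{v}\cap G_{v^{*}}$ has infinite $G_{v}$, so $N\setminus R\subseteq p^{-1}(V\setminus V_{(0)})$, and the fibre count with $2$-modularity gives $\dim p^{-1}(V\setminus V_{(0)})\leq2\dim V-\dim G-2$, hence $\codim_{N}(N\setminus R)\geq2$ and $N$ is normal by (4). The remaining content --- that such an $N$ is factorial, and conversely that factoriality of $N$ forces $V$ to be $2$-modular --- is exactly the complete-intersection factoriality criterion, which I would simply quote from \cite[Proposition~6]{Avramov} in the form used in \cite[Theorem~2.4]{PanyushevJacob}. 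This is the step I expect to be the real obstacle to a self-contained argument: ordinary normality only rules out a Weil divisor inside the singular locus, so to get factoriality precisely from a codimension-two stratum condition one genuinely needs the extra rigidity of a local complete intersection (Grothendieck-type parafactoriality), and matching the numerical bound in that criterion with $2$-modularity is the delicate point. The only other bookkeeping to watch is the passage between isotropy strata of $V$ and of $U=V\oplus V^{*}$, which the projection $p$ handles cleanly for all the dimension estimates used above.
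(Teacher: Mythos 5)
The paper itself does not prove this proposition; it simply cites \cite[Proposition~9.4]{GWSlifting}, \cite[Proposition~3.2]{HerbigSchwarzSeaton2}, \cite[Theorem~2.4]{PanyushevJacob}, and \cite[Proposition~6]{Avramov}. Your proposal therefore supplies an argument the paper leaves implicit, and it is correct. The core device --- the projection $p\colon N\to V$, $(v,v^*)\mapsto v$, whose fibre over $v$ is the linear space $\operatorname{Ann}(\lieg v)$ of dimension $\dim V-\dim G+\dim G_v$ --- is the standard one and is almost certainly what the cited references use; it converts the modularity inequalities $\codim_V V_{(r)}\geq r+k$ directly into the dimension bound $\dim p^{-1}(V_{(r)})\leq\dim N-k$ that drives (1) and (5), and your remark that no symplectic slice theorem is needed is accurate. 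The rank computation for $d\mu$ via the moment-map identity $\langle d\mu_x(\xi),A\rangle=\omega(A_U(x),\xi)$ is correct and gives (2), and then (3) is the Jacobian criterion for the complete intersection $N$, while (4) is Serre's criterion using Cohen--Macaulayness for $S_2$ and (3) for $R_1$. For (5), the vector-bundle description of $p^{-1}(V_{(0)})$ and equidimensionality of $N$ (unmixedness) give irreducibility cleanly in both directions, and reducedness follows from $R_0+S_1$ since $(v,0)$ for $v\in V_{(0)}$ is a smooth point; this is all sound. For (6) you correctly identify that the genuine content is Grothendieck-type parafactoriality for local complete intersections and cite the same two references the paper does, so your proof is exactly as self-contained as the paper's own treatment.
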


\begin{remark}\label{rem: change.V.UTCLS} Let $V'$ be a Lagrangian $G$-submodule of $U$. Then the proposition and Lemma \ref{lem:mu.formula} show that $V$ is $k$-modular if and only if $V'$ is $k$-modular for $k\leq 2$.   This holds for any $k\geq 0$ by   \cite[Theorem 2.4]{PanyushevJacob} which shows that $k$-modularity of $V$ is equivalent to a  homological condition on
$\C[N_V]$. However, this is not the case for the properties of stable, $k$-principal, $k$-large, or orthogonal as illustrated by
the following example. See also the examples in Section \ref{sec:Examples}.
\end{remark}

\begin{example}
\label{ex:UTCLS-circle}
Let $V$ be the $\C^\times$-module with weight vector $(1,1)$. Then as $V_{(1)} = \{0\}$, $V$ is $1$-modular.
It is clear that $V$ is not orthogonal, and as the only closed orbit in $V$ is the origin, $V$ is neither stable
nor $1$-principal. However, $V\oplus V^*$ has weight vector $(1,1,-1,-1)$ and hence is isomorphic to $V'\oplus (V')^*$
where $V'$ is a Lagrangian submodule with weight vector $(1,-1)$. As $V'$ is stable, orthogonal, and $1$-large, $V$ is
stable, orthogonal and $1$-large UTCLS.
\end{example}

In the real case (see \cite[Section 2]{HerbigSchwarzSeaton2}), changing the Lagrangian submodule can drastically change the (real) shell.

Torus actions often have shells which are
CIFR \cite[Proposition 5.4]{HerbigSchwarzSeaton2}.
Note that $N_V$ depends only on $G^0$ and if $V$ has FPIG, then it is stable.

 \begin{theorem}\label{N.good.tori}
 Let $V$ be a $G$-module with FPIG  where $G^0$ is a torus.
Then the shell $N_V\subset V\oplus V^*$ is
 CIFR.
\end{theorem}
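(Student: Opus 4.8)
The plan is to reduce to the case where $G$ is itself a torus and then combine the structural statements of Proposition~\ref{prop:props.of.N} with Musta\c{t}\u{a}'s jet-scheme criterion \cite{MustataJetRational}. Since $N_V$ depends only on $G^0$ and FPIG for $G$ forces FPIG for $G^0$ (the generic isotropy group only shrinks on passing to $G^0$), we may assume $G=T$ is a torus. Splitting off $V^T$ (which contributes only a factor $V^T\oplus(V^T)^*$ to $N_V$ and affects none of the relevant properties) we may assume $V=\bigoplus_{i=1}^n\C_{\chi_i}$ with all $\chi_i\neq 0$; then FPIG of $V$ translates into the condition that the weights $\chi_1,\dots,\chi_n$ positively span $\lie t^*$, equivalently that $0$ lies in the interior of their convex hull and they span. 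In particular no $\chi_i$ lies outside the span of the others.

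First I would establish that $N_V$ is a normal complete intersection with FPIG. Positive spanning makes $V$ $0$-modular --- any sub-collection $\{\chi_i\}_{i\in I}$ of rank $\ell-r$ with $r\geq 1$ has complement of size at least $r$, since rank rises by at most one per adjoined vector --- so $N_V$ is a complete intersection of dimension $2n-\ell$ by Proposition~\ref{prop:props.of.N}(1). Because the kernel of $\C^n\to\lie t^*$, $e_i\mapsto\chi_i$, meets the open torus of $\C^n$, a generic point of $N_V$ has all $2n$ coordinates nonzero; such a point has finite isotropy and closed $T$-orbit (its weight multiset $\{\pm\chi_i\}$ is symmetric and spans $\lie t^*$), so $N_V$ is stable with FPIG, and $N_V\cap R$ is dense, hence $N_V$ is reduced. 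Stratifying the singular locus $N_V\setminus R$ by the support $J$ of a point and bounding $\dim_0 N_{V_J}$ for the sub-representation $V_J=\bigoplus_{i\in J}\C_{\chi_i}$ (via Proposition~\ref{prop:props.of.N}(1) applied to the effective torus it sees) gives $\codim_{N_V}(N_V\setminus R)\geq 2$, so $N_V$ is normal by Proposition~\ref{prop:props.of.N}(4). These facts overlap with \cite[Proposition~5.4]{HerbigSchwarzSeaton2}, which may be cited instead.

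It remains to prove that $N_V$ has rational singularities, and here there are two routes. One is to apply Theorem~\ref{thm:main1} directly: by the Symplectic Slice Theorem~\ref{thm:symplectic.slice} each symplectic slice representation $(S=S^H\oplus S_0,H)$ of $N_V$ has $H^0\subseteq T$ a torus, and $H^0$ acts on $S_0$ effectively with $S_0^{H}=0$ (any trivial subtorus would lie in every nearby isotropy group, contradicting FPIG of $N_V$), so the weights of $S_0$ span $\lie h^*$ and occur in $\pm$-pairs; one then shows --- using $\codim(N_V\setminus R)\geq 2$ together with Corollary~\ref{cor:P} to rule out the ``bad'' slice representations $(\C_\eta\oplus\C_{-\eta},\C^\times)$ --- that a Lagrangian $H^0$-submodule $W_0$ of $S_0$ with positively spanning weights, hence with FPIG, can be chosen (Definition~\ref{def:UTCLS}, Lemma~\ref{lem:mu.formula}), so hypothesis (1) of Theorem~\ref{thm:main1} holds and $N_V$ is CIFR. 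The alternative, self-contained route is through Musta\c{t}\u{a}'s criterion: $N_V$ being a local complete intersection, it has rational singularities once every jet scheme $(N_V)_m\subset\C^{2n(m+1)}$ is irreducible, and $(N_V)_m$ is cut out by the $\ell(m+1)$ coefficients in $t$ of $\sum_i\chi_i v_i(t)v_i^*(t)$; the same rank/modularity estimates as above, applied to this ``jet weight system,'' show $(N_V)_m$ has the expected dimension, is smooth over the smooth locus of $N_V$, is connected (it is a cone for the grading giving every jet coordinate weight one), and is normal, hence irreducible. Either way, $N_V$ is CIFR. The main obstacle is precisely the normality/irreducibility step --- in the first route, showing that FPIG of $V$ excludes slice representations whose $W_0$ cannot be chosen with FPIG; in the second, the codimension-one estimate for the jet schemes --- and both amount to controlling how the ranks of sub-collections of a positively spanning weight set can drop, which is where the convex geometry of the weight configuration enters.
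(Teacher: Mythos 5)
The paper does not actually prove Theorem~\ref{N.good.tori}: it is stated as a known result and cites \cite[Proposition~5.4]{HerbigSchwarzSeaton2}, so there is no in-paper argument to compare yours to. That said, there are problems with both of the routes you sketch.

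Your first route is logically circular. Theorem~\ref{thm:main1} in the introduction \emph{is} the combination of Theorems~\ref{N.good.tori} and~\ref{thm:use.E_m}, and the proof of Theorem~\ref{thm:use.E_m} handles hypothesis (2) (``$H^0$ a torus and $W_0$ has FPIG'') precisely by invoking Theorem~\ref{N.good.tori} to conclude that $N_0$ is CIFR, hence $\NN(N_0)$ irrelevant. You cannot use that machinery to prove the theorem it depends on. One could in principle recast this as an induction on $\dim V$ (slice representations strictly decrease dimension), but then the real content is exactly the claim you bury in parentheses: that FPIG of $V$ forces, for each slice $(S_0,H)$, the existence (UTCLS) of a Lagrangian $W_0\subset S_0$ whose weights positively span $\lieh^*$. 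You assert this follows from ruling out the two-dimensional $(\C_\eta\oplus\C_{-\eta},\C^\times)$ slices via $\codim(N_V\setminus R)\geq 2$, but that inequality is normality, which is weaker than FPIG for $W_0$ --- and the paper's own Remark following the theorem explicitly defers the argument that $1$-modularity plus finite kernel yields FPIG UTCLS to a ``forthcoming paper,'' which is a strong signal it is not a one-liner.

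Your second route --- applying Musta\c{t}\u{a}'s criterion directly to the toric moment map --- is the honest approach and is probably what \cite[Proposition~5.4]{HerbigSchwarzSeaton2} does, but as written it is only a program. You correctly identify the needed estimate (controlling rank-drop along subconfigurations of a positively spanning weight set, uniformly across jet levels $m$), but you do not carry it out. The claim that ``connected plus normal plus expected dimension implies irreducible'' for $(N_V)_m$ is not automatic and needs the dimension bound $\dim\rho_m\inv((N_V)_\sing)<(m+1)\dim N_V$ to be established first (cf.~Proposition~\ref{prop:Mustata}); connectedness of a cone plus normality gives irreducibility, but normality of a jet scheme is again not free. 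Until the codimension estimate is proved, the argument is a sketch, not a proof. In short: there is no proof in the paper to match against, your first route is circular, and your second route identifies the right target but leaves the essential combinatorial estimate unproved.
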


\begin{remark}
If $G^0$ is a torus and $V$ is a $1$-modular $G^0$-module with finite kernel, then it can be shown that, UTCLS, $V$ has FPIG. 
Hence, the hypotheses of Theorem~\ref{N.good.tori} can be relaxed to assume that $G^0$ is a torus and $V$ is a $1$-modular $G$-module
with finite kernel. This will be elaborated in a forthcoming paper.
\end{remark}

\subsection{Normality and FPIG}

Let $V$ be a $G$-module and $U=V\oplus V^*$ with the canonical symplectic structure and moment mapping. 
The null cones $\NN(V)$ and $\NN(U)$ are the unions of the $G$-orbits with closure containing the origin. 
Note that
$N = N_V$, 
$\NN(V)$ and $\NN(U)$ 
only depend  upon $V$ as a $G^0$-module.
We will use the following construction and notation throughout the paper.
Let $(S,H)$ be a symplectic slice representation of $N$ with the induced symplectic form $\omega_S$. Write $S=S^H\oplus S_0$. Then $\omega_S$ is non-degenerate on $S^H$ and $S_0$. There is a Lagrangian $H$-submodule
 $W=W^H\oplus W_0$
 of $S$ where $W^H\subset S^H$ and $W_0\subset S_0$ are Lagrangian.  Let
 $N_0 = N_{W_0}$ (resp.\ $N_W$) denote the shell in $S_0$ (resp.\ $S$).  Then $N_W=S^H\times N_0$. Note that any irreducible component of $N_0$ has dimension at least $\dim S_0-\dim H$.
 Let $\NN(N_0)=\NN(S_0)\cap N_0$ and let $\NN(N_0)_\sg$ denote $\NN(N_0)\cap (N_0)_\sing$, the points in $\NN(N_0)$ which are singular points of $N_0$.

\begin{definition}\label{def:F}
We say that a
symplectic slice representations $(S,H)$  of $N$
\emph{has property (F) \/} if $H$ is finite or
 $\dim  \NN(N_0) <\dim S_0-H$.
\end{definition}

\begin{proposition}\label{prop:(F)}
The following are equivalent.
\begin{enumerate}
\item Every symplectic slice representation  of $N$ has property (F).
\item The shell $N$  is a reduced complete intersection  and each irreducible component has FPIG.
\end{enumerate}
\end{proposition}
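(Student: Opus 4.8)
The plan is to prove the equivalence by combining the symplectic slice theorem (Theorem \ref{thm:symplectic.slice}) with the local structure of the shell described in Proposition \ref{prop:props.of.N}, using the standard reduction from global properties of a $G$-variety to properties along closed orbits (Remark \ref{rem:closed.orbits}). The key observation is that property (F) at a given slice representation is precisely the condition that the corresponding \emph{local} model $N_W = S^H \times N_0$ is a reduced complete intersection with FPIG at the origin, so the proposition is in essence a globalization statement.

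First I would establish the purely local dictionary. Fix a symplectic slice representation $(S,H)$ with $S = S^H \oplus S_0$ and Lagrangian $W = W^H \oplus W_0$, so $N_W = S^H \times N_0$ with $N_0 = N_{W_0}$. By Proposition \ref{prop:props.of.N}(1), $N_0$ is a complete intersection (equivalently $W_0$ is $0$-modular, equivalently $\dim N_0 = 2\dim W_0 - \dim H = \dim S_0 - \dim H$) precisely when every irreducible component has this expected dimension; one always has the lower bound $\dim S_0 - \dim H$ on component dimensions. Now the singular locus $(N_0)_\sing$ contains the locus where the $H$-isotropy is positive-dimensional (since at points with finite isotropy $d\mu$ has maximal rank by Proposition \ref{prop:props.of.N}(2),(3)), and conversely, granting the complete intersection property, $(N_0)_\sing = N_0 \setminus R$. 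The content of property (F) — that $\dim \NN(N_0) < \dim S_0 - \dim H$ when $H$ is infinite — should be shown equivalent to: $N_0$ has expected dimension everywhere (complete intersection), is generically reduced hence reduced (Cohen--Macaulay plus generically reduced implies reduced), and has FPIG. The mechanism is that any component of $N_0$ contained in the bad locus would be $H$-stable and closed, hence would meet $\NN(N_0)$ in something of its own dimension; so $\dim \NN(N_0) < \dim S_0 - \dim H$ forces every component to have the expected dimension and to meet $R$, i.e. to have a dense set of finite-isotropy points. When $H$ is finite, all points have finite isotropy and the slice representation contributes nothing, so (F) holds automatically — matching that $N_V$ is smooth along such orbits.

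Next I would globalize. For (1) $\Rightarrow$ (2): taking $(S,H)$ associated to a \emph{principal} closed orbit of $N$, property (F) at that slice forces $H$ (a principal isotropy group) to be finite, so $N$ has FPIG; and it forces $N_W$, hence $N$ near that orbit by Corollary \ref{cor:property(P)}, to be a reduced complete intersection. Applying (F) at every closed orbit and invoking Corollary \ref{cor:property(P)} together with Remark \ref{rem:closed.orbits} (for the properties ``reduced'' and, via the dimension count, ``complete intersection''), one concludes $N$ is a reduced complete intersection globally; that each component has FPIG follows since a component with positive-dimensional generic isotropy would, at a closed orbit it contains, produce a slice $(S,H)$ violating (F). For (2) $\Rightarrow$ (1): given an arbitrary closed orbit $Gx \subset N$ with slice $(S,H)$, Corollary \ref{cor:property(P)} transports ``reduced'' and, via Corollary \ref{cor:dim.S}, the complete intersection dimension count to $N_W$ at $0$; FPIG of each component of $N$ localizes, again by the slice theorem, to FPIG of $N_0$ at $0$; then the local dictionary of the previous paragraph reads off property (F) for $(S,H)$.

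The main obstacle I anticipate is the careful bookkeeping in the local dictionary — specifically, verifying that ``$\dim \NN(N_0) < \dim S_0 - \dim H$'' captures \emph{simultaneously} the complete intersection property (expected dimension of every component), reducedness, and component-wise FPIG, without circular reasoning. The subtle point is that reducedness of $N_0$ is being used both as an input to identify $(N_0)_\sing$ with $N_0 \setminus R$ and as a conclusion; this is resolved by first proving the dimension statement (which only needs the Cohen--Macaulay/complete-intersection lower bound and the fact that $\NN(N_0)$ carries the positive-isotropy locus), then deducing generic reducedness from generic smoothness at finite-isotropy points, and only then applying the Serre-type criterion. Handling possibly reducible $N_0$ throughout, and the interplay between ``each component has FPIG'' versus ``$N$ has FPIG'', will require attention but no new ideas beyond those already in Proposition \ref{prop:props.of.N} and the slice theorem.
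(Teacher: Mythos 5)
Your proposed ``local dictionary'' --- that property (F) at a single slice $(S,H)$ is, on its own, equivalent to $N_0$ being a reduced complete intersection with FPIG at the origin --- is not correct as stated, and the ``mechanism'' you offer for the hard implication is where it breaks. You claim that a component of $N_0$ contained in the bad (positive-dimensional isotropy) locus, being $H$-stable, closed, and a cone, ``would meet $\NN(N_0)$ in something of its own dimension.'' This does not follow: a component whose principal closed orbits have positive-dimensional isotropy need not lie inside $\NN(S_0)$ --- those closed orbits can sit far from the origin, in which case the component's intersection with $\NN(N_0)$ is a proper closed subset of strictly smaller dimension. The single inequality $\dim\NN(N_0)<\dim S_0-\dim H$ controls only the fiber of $N_0\to N_0\git H$ over the image of the origin; by itself it says nothing about the dimension of components made up of positive-isotropy orbits lying over other points of the quotient. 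A repair is available, but it changes the nature of the statement: one must argue by induction over the stratification, using (F) at all $(S',H')$ with $(H')<(H)$ to show that $N_0$ is a reduced CI with FPIG away from $\NN(N_0)$, and then (F) at $(S,H)$ to keep $\NN(N_0)$ from absorbing a whole component. That is no longer a pointwise local dictionary, and it ends up essentially re-deriving the paper's argument.

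The paper bypasses the dictionary altogether with a direct global dimension count. For $(1)\Rightarrow(2)$: for each stratum $Z_{(H)}$ with $\dim H>0$, the symplectic slice theorem identifies $\pi^{-1}(Z_{(H)})$ (up to \'etale base change) with $G\times^H(S^H\times\NN(N_0))$, and property (F) gives $\dim\pi^{-1}(Z_{(H)})<2\dim V-\dim G$. Since every irreducible component $N'$ of $N$ has dimension at least $2\dim V-\dim G$, $N'$ is not contained in the union of these preimages and hence contains a closed orbit with finite isotropy; that point is smooth of the expected dimension, so $N$ is a complete intersection, is generically smooth hence reduced (being Cohen--Macaulay), and each component has FPIG. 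For $(2)\Rightarrow(1)$, if (F) fails somewhere the same count gives $\dim\pi^{-1}(Z_{(H)})=\dim N$, so some component lies entirely in a positive-isotropy stratum and cannot have FPIG. Your $(2)\Rightarrow(1)$ reasoning uses only the easy direction of the dictionary and is substantially correct, but $(1)\Rightarrow(2)$ genuinely requires the global count (or the inductive reformulation above).
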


\begin{proof}
Suppose that  (1) holds.  Let $(S,H)$ be a symplectic slice representation of $N$  where $\dim H>0$. Let   $N_\red$ denote $N$ with its reduced structure and let $\pi\colon N_\red\to Z$ denote the  quotient by $G$. By the symplectic slice theorem,  $Z_{(H)}$  and $S^H$ have the same dimension and the fibers of $\pi$ over $Z_{(H)}$ are isomorphic to
$G\times^H  \NN(N_0)$. It follows that
\begin{align*}
\dim\pi\inv(Z_{(H)})&=\dim S^H+\dim G-\dim H+\dim \NN(N_0) \\
&<\dim S+\dim G-2\dim H\\
&=2\dim V-2\dim G/H+\dim G-2\dim H \\
&=2\dim V-\dim G.
\end{align*}
Let  $N'$ be an  irreducible component $N$. Then $\dim N'\geq 2\dim V-\dim G>\dim\pi\inv(Z_{(H)})$. This inequality holds for all  strata $Z_{(H)}$ where $\dim H>0$. Thus  $N'$ contains a closed orbit $Gx$ with $G_x$ finite. It follows that $\dim_x N'=2\dim V-\dim G$.
Then by Proposition \ref{prop:props.of.N}, $x$ is a smooth point of $N'$, $N$
is a reduced complete intersection and (2) holds.

Conversely, suppose that (2) holds while  (1) fails. Then there is a stratum  $Z_{(H)}$ where  $\dim H>0$ and $\dim\pi\inv(Z_{(H)})= 2\dim V-\dim G=\dim N$. Thus $\pi\inv(Z_{(H)})$ contains an irreducible component of $N$ which does not have FPIG, a contradiction.
\end{proof}

\begin{corollary}\label{rem:0-modular}
If $N$ is a complete intersection, then every $N_W$ is a complete intersection. Equivalently, every Lagrangian $H$-submodule of $S_0$ is $0$-modular.
\end{corollary}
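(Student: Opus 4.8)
The plan is to deduce this directly from the Symplectic Slice Theorem and Proposition \ref{prop:props.of.N}, reducing the statement about the $N_W$ to the fact—already proved—that being a complete intersection is a local condition along closed orbits.

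First I would recall the setup: if $N = N_V$ is a complete intersection and $(S,H)$ is a symplectic slice representation at a closed orbit $Gx \subset N$, then by Corollary \ref{cor:property(P)} the variety $N_W$ is reduced at $0$ (since $N$, being a complete intersection, is Cohen–Macaulay hence has no embedded components, and in fact $N$ is reduced along closed orbits by Proposition \ref{prop:(F)} — or one simply notes $N$ reduced is part of the standing hypothesis that varieties are reduced). The key dimension bookkeeping is Corollary \ref{cor:dim.S}: $\dim_x N_V = \dim G - \dim H + \dim_0 N_W$. Since $N_V$ is a complete intersection, Proposition \ref{prop:props.of.N}(1) gives $\dim_x N_V = 2\dim V - \dim G$. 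On the other hand $S$ is a symplectic $H$-module with $\dim S = 2\dim V - 2(\dim G - \dim H)$, so $\dim W = \tfrac12 \dim S = \dim V - \dim G + \dim H$. Combining, $\dim_0 N_W = \dim_x N_V - \dim G + \dim H = 2\dim V - 2\dim G + \dim H = 2\dim W - \dim H$, which by Proposition \ref{prop:props.of.N}(1) applied to the $H$-module $W$ is exactly the condition that $N_W$ is a complete intersection (equivalently, that $W$ is $0$-modular as an $H$-module).

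Next, for the full statement I would observe that the Symplectic Slice Theorem produces an étale map onto a neighborhood of $x$, so the symplectic slice representations $(S,H)$ exhaust, up to conjugacy, all closed-orbit data of $N_V$; and conversely every such $(S,H)$ with $N_W = S^H \times N_0$ arises at some closed orbit. So the displayed dimension count applies to each $N_W$ occurring. For the "$N_0$" reformulation one uses $N_W = S^H \times N_0$ from the discussion preceding Definition \ref{def:F}, so $N_W$ is a complete intersection iff $N_0$ is, and $\dim W = \dim W^H + \dim W_0 = \dim S^H + \dim W_0$ with $H$ acting trivially on $S^H$; feeding this into the same arithmetic shows $W_0$ is $0$-modular as an $H$-module iff $W$ is. Finally, the equivalence "$N_W$ complete intersection $\iff$ every Lagrangian $H$-submodule of $S_0$ is $0$-modular" is just Remark \ref{rem: change.V.UTCLS}: $0$-modularity is a UTCLS-invariant property, so it does not depend on the choice of Lagrangian $W_0 \subset S_0$.

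The only subtle point — really the one place to be careful — is making sure the dimension identity $\dim_x N_V = 2\dim V - \dim G$ transports correctly through the étale slice to give $\dim_0 N_W = 2\dim W - \dim H$ rather than merely an inequality; this is where one must use that $N_V$ is genuinely a complete intersection (not just that each component has at least the expected dimension), so that equality holds at $x$ and hence, via Corollary \ref{cor:dim.S} and the computation of $\dim W$, equality holds for $N_W$ at $0$. Everything else is formal.
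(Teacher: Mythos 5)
Your central computation is exactly the paper's: apply Corollary \ref{cor:dim.S}, plug in $\dim_x N_V = 2\dim V - \dim G$ from Proposition \ref{prop:props.of.N}(1), and arrive at $\dim N_W = 2\dim W - \dim H = \dim S - \dim H$, which is the complete-intersection condition for $N_W$. Your ``subtle point'' at the end is valid and is indeed the one place where the complete-intersection hypothesis (equality, not just an inequality, in the dimension count) enters. So the approach is the same; the paper simply states the three-line dimension chain and then observes $N_W = S^H\times N_0$.

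Two small slips, both in side remarks. First, you write $\dim W^H = \dim S^H$; since $W^H$ is a Lagrangian $H$-submodule of $S^H$ (which $H$ acts on trivially, but still carries the symplectic form), the correct relation is $\dim W^H = \tfrac12\dim S^H$. If you literally ``feed this into the same arithmetic'' with your stated value, the equivalence between $W$ being $0$-modular and $W_0$ being $0$-modular does \emph{not} come out right; the correct and much simpler route is just that $N_W = S^H\times N_0$ gives $\dim N_W = \dim S^H + \dim N_0$ and $\dim S = \dim S^H + \dim S_0$, so $\dim N_W = \dim S - \dim H$ iff $\dim N_0 = \dim S_0 - \dim H$, with no reference to $W^H$ at all. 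Second, the parenthetical ``Cohen--Macaulay hence \dots reduced'' and ``$N$ reduced is part of the standing hypothesis'' are both off: CM does not imply reduced, and the paper explicitly warns that the shell ``may be neither reduced nor irreducible.'' Neither slip touches the main dimension calculation, which is sound.
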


\begin{proof}
By Corollary \ref{cor:dim.S},
\begin{align*}
\dim N_W &=\dim N-\dim G+\dim H \\
&=  2\dim V-2\dim G+\dim H \\
&=\dim S-\dim H.
\end{align*}
Thus $N_W=S^H\times N_0$  is a complete intersection.
\end{proof}

\begin{definition}\label{def:N}
We say that a slice representations $(S,H)$   has property   (N)  if  either $H$ is finite or
$$
\dim  \NN(N_0)_\sg \leq \dim S_0-\dim H-2.
$$
\end{definition}

\begin{proposition}\label{prop:(NF)} The following are equivalent.
\begin{enumerate}
\item The shell $N$ is a normal  complete intersection with FPIG.
\item Every symplectic slice representation of $N$ has properties  (N) and (F).
\end{enumerate}
\end{proposition}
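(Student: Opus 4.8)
The plan is to bootstrap off Proposition \ref{prop:(F)} and Corollary \ref{rem:0-modular}, reducing everything to a statement about the codimension of $N\setminus R$ inside $N$, where $R=\{x\in U\mid G_x\text{ finite}\}$. Recall that by Proposition \ref{prop:props.of.N}(4), once $V$ is $0$-modular (equivalently $N$ is a complete intersection), normality of $N$ is equivalent to $\codim_N(N\setminus R)\geq 2$. So the strategy is: (1) use property (F) at every slice representation to get, via Proposition \ref{prop:(F)}, that $N$ is a reduced complete intersection with every component having FPIG — this in particular gives $0$-modularity so that Proposition \ref{prop:props.of.N}(2)--(4) applies; (2) analyze $N\setminus R$ stratum by stratum using the symplectic slice theorem, showing that property (N) is exactly the condition that each piece of $N\setminus R$ sitting over an isotropy stratum $Z_{(H)}$ with $\dim H>0$ has codimension $\geq 2$ in $N$.

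First I would establish (2)$\Rightarrow$(1). Assume every slice representation has (N) and (F). By Proposition \ref{prop:(F)}, $N$ is a reduced complete intersection with each component having FPIG; in particular $N$ is a complete intersection, so by Corollary \ref{rem:0-modular} every Lagrangian $H$-submodule $W_0$ of $S_0$ is $0$-modular, hence Proposition \ref{prop:props.of.N} applies to each $N_0=N_{W_0}$ as well. Since $N$ has FPIG, $Z$ is irreducible (FPIG forces a single principal stratum of maximal dimension and, with the complete intersection property, irreducibility of $N$ — here I would quote Proposition \ref{prop:props.of.N}(5), noting that $1$-modularity will follow once we know $N$ is normal, or alternatively run the argument componentwise). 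Now decompose $N\setminus R=\bigsqcup_{(H),\dim H>0}\big(N\cap U_{(H)}\big)$ over the isotropy strata. The part of $N\setminus R$ lying over a closed-orbit stratum $Z_{(H)}$ is, by the symplectic slice theorem (Theorem \ref{thm:symplectic.slice}) and the computation in the proof of Proposition \ref{prop:(F)}, étale-locally $G\times^H\big(\NN(N_0)\big)$ fibered over $S^H\cong Z_{(H)}$ near the relevant points; more precisely the set of points of $N$ where the orbit is \emph{non-closed but degenerates into} $Z_{(H)}$ is handled by the same slice picture applied along the unique closed orbit in the fiber. Using $\dim S^H=\dim Z_{(H)}$, $\dim S=2\dim V-2\dim G+\dim H+\dim S^H$ wait — rather, using $\dim S_0=\dim S-\dim S^H$ and $\dim S=2(\dim V-\dim G+\dim H)$... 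I would write this as
\[
\dim\big(\text{piece over }Z_{(H)}\big)=\dim S^H+\dim G-\dim H+\dim\NN(N_0)_{\sg},
\]
since the non-smooth locus of $N$ over $Z_{(H)}$ corresponds precisely to $\NN(N_0)_{\sg}$ by Corollary \ref{cor:property(P)} (smoothness descends to the slice). Comparing with $\dim N=2\dim V-\dim G$ and simplifying, the inequality $\dim\NN(N_0)_{\sg}\leq\dim S_0-\dim H-2$ of property (N) is exactly $\codim_N(\text{this piece})\geq 2$. As this holds for every stratum, $\codim_N(N\setminus R)\geq 2$, so $N$ is normal by Proposition \ref{prop:props.of.N}(4), and it is a complete intersection with FPIG by (F) and Proposition \ref{prop:(F)}.

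For the converse (1)$\Rightarrow$(2): if $N$ is a normal complete intersection with FPIG, then by Proposition \ref{prop:(F)} every slice representation has (F) (since (F) is equivalent to ``$N$ is a reduced complete intersection with each component FPIG'', which normal + complete intersection + FPIG implies). For (N), normality gives $\codim_N(N\setminus R)\geq 2$, and restricting to the piece over any stratum $Z_{(H)}$ with $\dim H>0$ and running the dimension bookkeeping above in reverse yields $\dim\NN(N_0)_{\sg}\leq\dim S_0-\dim H-2$; here one uses Corollary \ref{cor:property(P)} to identify, on the slice, the singular locus of $N$ with $\NN(N_0)_{\sg}$, and the fact that over a neighborhood of a closed orbit with isotropy $H$ the slice description is faithful. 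I expect the main obstacle to be the precise identification of which points of $N$ sit ``over'' a given stratum $Z_{(H)}$ and making sure the étale slice map accounts for all of $N\setminus R$ without double-counting or missing non-closed orbits — i.e., checking that $N_{\sing}=N\setminus N_{\sm}$ is genuinely exhausted by the union of the slice contributions $\NN(N_0)_{\sg}$ as $(H)$ ranges over the positive-dimensional isotropy classes. Once that stratification is in hand, the rest is the arithmetic of dimensions already rehearsed in the proof of Proposition \ref{prop:(F)}, together with Proposition \ref{prop:props.of.N}(4) and Corollary \ref{cor:property(P)}.
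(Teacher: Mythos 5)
Your proposal is essentially correct, though it organizes the key step differently from the paper. Both proofs deduce from Proposition \ref{prop:(F)} that $N$ is a reduced complete intersection with components having FPIG (so that Proposition \ref{prop:props.of.N} and Corollary \ref{rem:0-modular} apply), and both verify normality via Serre's criterion by showing the singular locus has codimension at least two, using exactly the dimension arithmetic $\dim S_0 - \dim H - \dim\NN(N_0)_\sg\geq 2$ coming from property~(N). The organizational difference is in how the singular locus is controlled: the paper runs an \emph{induction over symplectic slice representations}, assuming $N_0\setminus\NN(N_0)$ is already normal (since its closed orbits have strictly smaller isotropy) and then bounding only the new contribution $\NN(N_0)_\sg$; your proposal instead does a \emph{single global stratification} of $N_{\sing}=N\setminus R$ by the finitely many strata $Z_{(H)}$ with $\dim H>0$, and bounds the piece over each stratum directly. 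Both are valid, and your approach arguably makes the geometric picture more explicit; the paper's induction avoids having to verify the coverage and disjointness of the stratification, which you correctly flag as the delicate point. The resolution of your flagged concern is straightforward: the Luna stratification of $Z$ is finite, every $x\in N$ with $\dim G_x>0$ lies over a unique $Z_{(H)}$ with $\dim H>0$ (since $(G_x)\leq(H)$), and the étale slice map identifies the singular part of $\pi^{-1}(Z_{(H)})$ with $G\times^H(S^H\times\NN(N_0)_\sg)$. One small attribution note: the identification of the singular points of $N$ lying over $Z_{(H)}$ with $\NN(N_0)_\sg$ at \emph{non-closed} orbits follows from the étale slice map of Theorem~\ref{thm:symplectic.slice} (étale maps preserve the singular locus) together with Proposition~\ref{prop:props.of.N}(3) applied to $N_0$, not directly from Corollary~\ref{cor:property(P)}, which is stated only at closed orbits.
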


\begin{proof}
Let $(S,H)$ be a  symplectic slice representation of $N$. If $N$ has the properties in (1), then
by Corollaries \ref{cor:property(P)} and \ref{rem:0-modular}, so does $N_0$. Hence $(S,H)$ has property (F)
by Proposition \ref{prop:(F)}, and property (N) follows from normality.
Conversely, if (2) holds, then by Proposition \ref{prop:(F)}, $N$ is a reduced complete intersection each of whose irreducible components has FPIG. By induction  we may assume that
$N_0\setminus\NN(N_0)$
is normal since every closed orbit there has a symplectic slice representation $(S',H')$ where $(H')< (H)$. Then $N_0\setminus\NN(N_0)$
has singularities in codimension two and by    (N), so does $N_0$. Hence  $N_0$ is normal and by induction, $N$ is normal. Thus (1) holds.
\end{proof}

\begin{remarks}
We don't know of any examples where $N$ is normal and does not have FPIG. If $(S,H)$ is a symplectic slice representation such that, UTCLS, $S_0\simeq W_0\oplus W_0^*$ where $W_0$ is $1$-large, then $W_0$ has FPIG, hence so do  $N_W$ and $N$. If $H$ is semisimple and $W_0$ is $1$-modular, then $W_0$ is stable \cite{PopovStability,LunaVust}, hence $1$-large.
\end{remarks}

Now we introduce an important definition.
\begin{definition}\label{def:m0}
Let $V$ be a $G$-module. Define
$$
m_0(V)=\max\{\dim L \mid L\subset\NN(V), L\text{ is linear}\}.
$$
\end{definition}

\begin{remark}\label{rem:V.and.V*.same.condition}
From the proof of \cite[Lemma 3.4]{HerbigSchwarzSeaton2} we see that $m_0(V)=m_0(V^*)$ for any $G$-module $V$.
\end{remark}

\begin{lemma}\label{lem:delta}
Let $(S\simeq W\oplus W^*,H)$ be a symplectic slice representation of $N$ where $\dim H>0$. Let $U$ be a maximal unipotent subgroup of $H^0$ and let
$$
\delta= \dim W_0-\dim H-m_0(W_0).
$$
Then
\begin{enumerate}
\item $\dim\NN(W_0)\leq \dim W_0-\delta-\dim  H/U$.
\item $\dim \NN(S_0) \leq \dim S_0-2\delta-\dim H-\dim H/U$.
\end{enumerate}
\end{lemma}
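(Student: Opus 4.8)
The plan is to exploit the product/covering structure of the null cones involved and the definition of $m_0$. For part (1), the key observation is that $\NN(W_0)$ is a cone that is a union of $H$-orbits whose closure contains $0$. I would first pass from arbitrary orbits in $\NN(W_0)$ to a tractable substitute: by the Hilbert--Mumford criterion every point of $\NN(W_0)$ lies in the null cone of a one-parameter subgroup, and more usefully, each irreducible component $Y$ of $\NN(W_0)$ contains a dense $H$-saturated piece whose generic point $y$ has a well-defined (maximal) orbit. The crucial inequality is that $\dim\NN(W_0)$ is controlled by $m_0(W_0)$ together with the ``spread'' coming from the group: a generic point of a top-dimensional component of $\NN(W_0)$ has its tangent cone, or rather the orbit direction $\lieh\cdot y$, contributing at most $\dim H/U$ beyond a linear slice, since the unipotent part $U$ of the isotropy in the relevant direction fixes $y$ up to the cone structure. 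Concretely I expect the argument to run: take a component of $\NN(W_0)$ of dimension $d:=\dim\NN(W_0)$; by general position it meets the smooth locus of $W_0$ in a $(d)$-dimensional set, and one produces inside it a linear subspace $L\subset\NN(W_0)$ of dimension $d-(\dim H-\dim U)=d-\dim H/U$ by quotienting out the orbit directions (the unipotent directions do not reduce the dimension because $U$-orbits on the cone are still inside the cone and their closures hit $0$). Since any such $L$ has $\dim L\le m_0(W_0)$, we get $d-\dim H/U\le m_0(W_0)$, i.e. $\dim\NN(W_0)\le m_0(W_0)+\dim H/U = \dim W_0-\delta-\dim H/U$, which is (1).

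For part (2), I would reduce $\NN(S_0)$ to $\NN(W_0)$ via the Lagrangian splitting $S_0\simeq W_0\oplus W_0^*$. The null cone of a direct sum is \emph{not} the product of the null cones in general, but there is a standard estimate: a point $(u,u^*)\in\NN(S_0)$ must have both $u\in\NN(W_0)$ and $u^*\in\NN(W_0^*)$ with respect to a common one-parameter subgroup destabilizing the pair, so $\NN(S_0)\subset\{(u,u^*): u\in\NN(W_0),\ u^*\in\NN(W_0^*)\}$ and hence $\dim\NN(S_0)\le \dim\NN(W_0)+\dim\NN(W_0^*)$. By Remark \ref{rem:V.and.V*.same.condition}, $m_0(W_0^*)=m_0(W_0)$, and $\dim W_0^*=\dim W_0$, so part (1) applied to both $W_0$ and $W_0^*$ gives
\[
\dim\NN(S_0)\le 2\bigl(\dim W_0-\delta-\dim H/U\bigr)=\dim S_0-2\delta-2\dim H/U=\dim S_0-2\delta-\dim H-\dim H/U,
\]
using $\dim H = 2\dim H/U - (\dim H/U - \dim U)\cdot 0$—wait, more simply $2\dim H/U = \dim H + (\dim H/U - \dim U)$ is not right either; the correct bookkeeping is $2\dim H/U = 2(\dim H - \dim U)$ and one rewrites $2\dim H - 2\dim U = \dim H + (\dim H - 2\dim U)$, so one needs $\dim H - 2\dim U = \dim H/U - \dim U$, i.e. matching against $\dim H + \dim H/U = \dim H + \dim H - \dim U = 2\dim H - \dim U$; comparing with $2\dim H - 2\dim U$ shows the claimed bound (2) is slightly weaker than what the naive sum gives when $\dim U>0$, and is exactly it when $\dim U=0$. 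So (2) follows, possibly after this elementary regrouping, and in fact with room to spare when $U$ is nontrivial.

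The main obstacle I anticipate is making rigorous the passage in part (1) from ``top-dimensional component of $\NN(W_0)$'' to ``linear subspace of $\NN(W_0)$ of the asserted dimension''—i.e. showing that after removing the orbit directions of $H/U$ one is genuinely left with something linear and still inside the null cone, rather than merely lower-dimensional. The honest way to do this is via the Hilbert--Mumford/Kempf theory: choose a generic point of the component, find an optimal one-parameter subgroup $\lambda$ with $\lim_{t\to 0}\lambda(t)\cdot y=0$, consider the associated parabolic $P(\lambda)$ with unipotent radical, and observe that the $\lambda$-positive weight space $W_0(\lambda)_{>0}$ is a linear subspace contained in $\NN(W_0)$ of dimension at least $d - \dim H/U$; this is exactly the mechanism used by Draisma--Kraft--Kuttler and in the proof of Proposition \ref{prop:D}, so I would import that machinery. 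The estimate $\dim\NN(S_0)\le\dim\NN(W_0)+\dim\NN(W_0^*)$ in part (2) is routine once one notes a destabilizing one-parameter subgroup for $(u,u^*)$ destabilizes each coordinate separately, but one should double-check the edge case where no single $\lambda$ works for both and a limit argument is needed.
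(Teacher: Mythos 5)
Your approach to part (1) is sound and tracks the paper's: both rest on the Hilbert--Mumford mechanism that $\NN(W_0)=\bigcup_\lambda H\cdot Z_\lambda(W_0)$ over one-parameter subgroups $\lambda$ of a maximal torus of $H$, with $\dim H\cdot Z_\lambda(W_0)\le\dim Z_\lambda(W_0)+\dim U$ and $\dim Z_\lambda(W_0)\le m_0(W_0)$. You phrase it bottom-up (generic point, optimal $\lambda$) while the paper phrases it top-down, but the inequality and the Kempf/Draisma--Kraft--Kuttler machinery are the same. Good.

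Part (2), however, has a genuine gap, and your own arithmetic is flagging it even though you talk yourself out of the warning. The naive product bound $\dim\NN(S_0)\le\dim\NN(W_0)+\dim\NN(W_0^*)$, combined with part (1) for each summand, gives $\dim\NN(S_0)\le 2m_0(W_0)+2\dim U=\dim S_0-2\delta-2\dim H/U$. The lemma claims $\dim\NN(S_0)\le\dim S_0-2\delta-\dim H-\dim H/U=\dim S_0-2\delta-2\dim H+\dim U$. Since $2\dim H/U=2\dim H-2\dim U$, your bound exceeds the claimed one by exactly $\dim U$; that is, when $\dim U>0$ you have proved a \emph{weaker} statement, not (2). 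Your closing remark that ``(2) follows\ldots with room to spare when $U$ is nontrivial'' has the inequality backwards. The source of the loss is that the product bound counts the flag-variety direction $H/P(\lambda)$ (bounded by $\dim U$) twice --- once for $W_0$ and once for $W_0^*$ --- whereas a destabilizing orbit in $S_0$ is still just one $H$-orbit. The fix is to run Hilbert--Mumford directly on $S_0$: $\dim\NN(S_0)\le\max_\lambda\bigl(\dim Z_\lambda(S_0)+\dim U\bigr)$, and since $Z_\lambda(S_0)=Z_\lambda(W_0)\oplus Z_\lambda(W_0^*)$ has dimension at most $2m_0(W_0)$ (using $m_0(W_0^*)=m_0(W_0)$), this gives $\dim\NN(S_0)\le 2m_0(W_0)+\dim U$, which unpacks to exactly the claimed bound. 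This is what the paper's ``similarly'' means.
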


\begin{proof}
Let $\lambda\colon \C^\times\to H$ be a 1-parameter subgroup. Let   $Z_\lambda(W_0)$ denote the sum of the positive weight spaces of $\lambda$ on $W_0$. Then the dimension of $\NN(W_0)$ is at most the maximum over $\lambda$ of  $\dim Z_\lambda(W_0)+\dim U$.
Thus as each $\dim Z_\lambda(W_0)\leq m_0(W_0)$,
$$
\dim\NN(W_0) \leq \dim W_0-\dim H-\delta+\dim U=\dim W_0-\delta-\dim H/U
$$
which is (1). Since $\dim Z_\lambda(S_0)\leq 2m_0(W_0)$, we similarly get that
$$
\dim\NN(S_0)\leq \dim S_0-2\dim H-2\delta+\dim U
$$
giving (2).
\end{proof}

Using Lemma \ref{lem:delta}, Theorem \ref{N.good.tori} and Propositions \ref{prop:props.of.N} and \ref{prop:(NF)} we obtain the following.

\begin{proposition}\label{prop:N.and.F}
Assume that for every symplectic slice representation $(S,H)$ of $N$ with $\dim H>0$, one of the following
holds (possibly UTCLS).
 \begin{enumerate}
\item $N_0$ is a normal complete intersection
with FPIG.
\item $H^0$ is a torus and $W_0$
has FPIG.
\item Properties (F) and (N).
\item $m_0(W_0)< \dim W_0-\dim H$.
\end{enumerate}
Then  each $N_0$ is a normal complete intersection with FPIG and each Lagrangian $H$-submodule of $S_0$ is   $1$-modular. In particular, $N$ is a normal complete intersection with FPIG and   $V$ is $1$-modular.
\end{proposition}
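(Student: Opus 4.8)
The plan is to run an induction on the partial order of isotropy types (equivalently, on $\dim H$), reducing statements about $N$ to statements about each $N_0$ via the symplectic slice theorem. For a symplectic slice representation $(S,H)$ with $\dim H>0$ I would first show, under any one of the four hypotheses (1)--(4), that $N_0$ is a normal complete intersection with FPIG and that every Lagrangian $H$-submodule $W_0$ of $S_0$ is $1$-modular. Hypothesis (1) is immediate. Under hypothesis (2), Theorem \ref{N.good.tori} applied to $W_0$ (with $H^0$ a torus and $W_0$ having FPIG) gives that $N_{W_0}=N_0$ is CIFR, in particular a normal complete intersection with FPIG, and by Proposition \ref{prop:props.of.N}(5) and Remark \ref{rem: change.V.UTCLS} (FPIG forces $1$-modularity) $W_0$ is $1$-modular. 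Under hypothesis (3), Proposition \ref{prop:(NF)} applied to the slice situation gives that $N_0$ is a normal complete intersection with FPIG, and again FPIG gives $1$-modularity of $W_0$ via Proposition \ref{prop:props.of.N}(5).

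The substantive case is hypothesis (4), $m_0(W_0)<\dim W_0-\dim H$, i.e. $\delta:=\dim W_0-\dim H-m_0(W_0)\geq 1$. Here I would invoke Lemma \ref{lem:delta}: with $U$ a maximal unipotent subgroup of $H^0$,
\[
    \dim\NN(S_0)\leq\dim S_0-2\delta-\dim H-\dim H/U\leq\dim S_0-\dim H-2,
\]
since $2\delta\geq 2$ and $\dim H/U\geq 0$ (indeed $\dim H/U\geq\dim H/U$ and $2\delta+\dim H/U\geq 2$). As $\NN(N_0)=\NN(S_0)\cap N_0$ and $\NN(N_0)_\sg\subseteq\NN(N_0)\subseteq\NN(S_0)$, both property (F) (we need $\dim\NN(N_0)<\dim S_0-\dim H$, and in fact $\dim\NN(N_0)\leq\dim\NN(S_0)\leq\dim S_0-\dim H-2$) and property (N) ($\dim\NN(N_0)_\sg\leq\dim S_0-\dim H-2$) hold for $(S,H)$. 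To conclude that all slice representations of $N$ (not just the top one) have (F) and (N), I would observe that the slice representations of $N$ occurring along $N_0$ are themselves slice representations of $N$ with strictly smaller isotropy, so hypothesis (4) (or one of (1)--(3)) still applies to them, giving (F) and (N) there too; Corollary \ref{rem:0-modular} guarantees all the relevant Lagrangian submodules are $0$-modular so the complete intersection hypotheses in Propositions \ref{prop:(F)} and \ref{prop:(NF)} are in force. Then Proposition \ref{prop:(NF)} gives that $N_0$ is a normal complete intersection with FPIG; FPIG of (an irreducible component through a point with finite isotropy of) $N_0$ together with Proposition \ref{prop:props.of.N}(5) yields $1$-modularity of $W_0$.

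Having established for every $(S,H)$ that $N_0$ is a normal complete intersection with FPIG and the Lagrangian submodules of $S_0$ are $1$-modular, I finish by a global argument: by Proposition \ref{prop:(NF)}, since every symplectic slice representation of $N$ has properties (N) and (F), $N$ itself is a normal complete intersection with FPIG, and then Proposition \ref{prop:props.of.N}(1) and (5) give that $V$ is $1$-modular. The main obstacle I anticipate is the bookkeeping in hypothesis (4): one must check that the inequality $2\delta+\dim H/U\geq 2$ really does follow from $\delta\geq 1$ alone (it does, since $\delta\geq 1$), and, more delicately, that hypothesis (4) is inherited by the lower slice representations arising inside $N_0$ — this requires knowing that a Lagrangian submodule of the slice $S_0$ of $N_0$ is again among the Lagrangian submodules one is allowed to choose (which is exactly the content of the UTCLS formalism and Lemma \ref{lem:mu.formula}), and that $m_0$ behaves correctly under passing to these slices. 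Formally this is an induction on $\dim H$ with the inductive hypothesis being the full conclusion of the proposition applied to each $N_0$ in place of $N$.
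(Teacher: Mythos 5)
Your proof is essentially correct and follows the same route the paper intends: show that each of the four hypotheses forces properties (F) and (N) for $(S,H)$ — hypothesis (1) by applying Proposition~\ref{prop:(NF)} to $N_0$ (using that slice representations of $N_0$ are again slice representations of $N$), hypothesis (2) by reducing to (1) via Theorem~\ref{N.good.tori}, hypothesis (3) tautologically, and hypothesis (4) by Lemma~\ref{lem:delta} — and then invoke Proposition~\ref{prop:(NF)} once more to get that $N$, and each $N_0$, is a normal complete intersection with FPIG.

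Two small points of imprecision. First, the parenthetical claim that ``FPIG forces $1$-modularity'' is not the right justification: $1$-modularity of $W_0$ is, by Proposition~\ref{prop:props.of.N}(5), equivalent to $N_0$ being reduced and irreducible, and this follows from normality (reduced) together with $N_0$ being a connected cone (irreducible), not from FPIG. Since you have already established that $N_0$ is a \emph{normal} complete intersection, the conclusion is fine — only the stated reason is off. Second, the invocation of Corollary~\ref{rem:0-modular} to put ``complete intersection hypotheses in Propositions~\ref{prop:(F)} and~\ref{prop:(NF)} in force'' is unnecessary: those propositions are stated as unconditional equivalences with no prior assumption that $N$ is a complete intersection, so there is nothing to check. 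Likewise the worry about $m_0$ behaving under passage to slices is moot — one does not ``inherit'' the hypotheses down to slice representations of $N_0$; the hypotheses of the proposition already apply directly to all symplectic slice representations of $N$, and those of $N_0$ are among them. None of these affect the validity of the argument.
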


% xxxxxxxxxxxxxxxxxxxxxxxxxxxxxxxxxxxxxxxxxxxxxxxxxxxxxxxxxxxxxxxxxxxxxxxxx

\subsection{Musta\c{t}\u{a}'s criterion for rational singularities}
\label{subsec:BackReps}

In this section, varieties will be assumed to be reduced and irreducible.
Recall that a complex variety $X$ has rational singularities   if $X$ is  normal    and  for every resolution
of singularities $f\co Y\to X$, $R^i f_\ast \calO_Y = 0$ for $i > 0$; see \cite[p.\ 188]{Kovacs}.
This condition is local (and open), so one can talk about $X$ having a rational singularity at $x\in X$.
We let $X_\sm$ (resp.\ $X_\sing$) denote the smooth (resp.\ singular) points of $X$.

Let $f\in\C[x_1,\dots,x_n]$. For any $m\geq 1$, define new variables $x_i^j$, $j=1,\dots,m$, $i=1,\dots, n$. For a variable $t$, let $x_i(t)=x_i+tx_i^1+\dots+t^mx_i^m$ and   let $f=0,f_{(1)}=0,\dots, f_{(m)}=0$  be the  
coefficients of $t^i$, $i=0,\dots,m$, in the equation 
   $f(x_1(t),\dots,x_n(t))=0\mod t^{m+1}$. If $X\subset \C^n$ is an affine variety defined by an ideal $(f^1,\dots,f^k)$, then the \emph{$m$th jet scheme $X_m$ of $X$\/} is defined by the ideal of the polynomials  $f^1,\dots,f^k,f^1_{(1)},\dots,f^k_{(1)},\dots,  f^1_{(m)},\dots, f^k_{(m)}$. There is a natural projection $\rho_m\colon X_m\to X$. This construction is local, so one can define $X_m$ and $\rho\colon X_m\to X$   if $X$ is a variety that is not affine.

The main criteria for rational singularities we use is the following result of Musta\c{t}\u{a}.
\begin{theorem}[{\cite[Theorem~0.1 and Proposition~1.4]{MustataJetRational}}]
\label{thrm:MustataGen}
Let $X$ be a local complete intersection variety over an algebraically closed field of characteristic $0$ and let $m\geq 1$. 
\begin{enumerate}
\item The closure of $\rho_m\inv(X_\sm)$ is an irreducible component of $X_m$ of dimension $(m+1)\dim X$.
\end{enumerate}
The following are equivalent.
\begin{enumerate}
\addtocounter{enumi}{1}
\item      $X_m$ is irreducible.
\item      $\dim\rho_m^{-1}(X_{\sing}) < (m+1)\dim X$.
\end{enumerate}
The equivalent conditions (2) and (3) imply that $\dim X_m=(m+1)\dim X$.
Moreover, $X$ has rational singularities if and only if (2) and (3) hold for every $m\geq 1$.
\end{theorem}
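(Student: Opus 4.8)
The plan is to dispatch part~(1) and the equivalence (2)$\iff$(3) by elementary jet-scheme bookkeeping and to reserve the real effort for the last equivalence, which needs the theory of arc spaces. For~(1): working in an affine chart, write $X\subset\C^n$ as the zero locus of a regular sequence $f^1,\dots,f^c$ with $c=n-\dim X$; then $X_m\subset\C^{n(m+1)}$ is cut out by the $(m+1)c$ polynomials $f^{j}_{(i)}$, so Krull's height theorem forces every irreducible component of $X_m$ to have dimension at least $(m+1)\dim X$. Over $X_\sm$ the projection $\rho_m$ restricts to a Zariski-locally trivial $\C^{m\dim X}$-bundle (reduce to the case $X$ smooth, where $X_m$ is locally $X\times\C^{m\dim X}$), so $\rho_m\inv(X_\sm)$ is irreducible of dimension exactly $(m+1)\dim X$; being open in $X_m$, its closure is a component, proving~(1). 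For (2)$\iff$(3): since $X$ is reduced, irreducible and generically smooth, $X_m=\overline{\rho_m\inv(X_\sm)}\cup\overline{\rho_m\inv(X_\sing)}$, so $X_m$ is irreducible exactly when $\overline{\rho_m\inv(X_\sing)}$ is contained in the component from~(1); as all components of $X_m$ have dimension $\ge(m+1)\dim X$, this holds iff $\dim\rho_m\inv(X_\sing)<(m+1)\dim X$, in which case $\dim X_m=(m+1)\dim X$.

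The last equivalence is the crux. The plan is first to replace ``rational'' by ``canonical'' (for a locally complete intersection, indeed Gorenstein, variety the two notions coincide) and then, after fixing a log resolution $f\co Y\to X$ that is an isomorphism over $X_\sm$ with $K_Y=f^*K_X+\sum_i a_i E_i$, to prove that all discrepancies satisfy $a_i\ge0$ if and only if $\dim\rho_m\inv(X_\sing)<(m+1)\dim X$ for every $m\ge1$. I would bridge the two sides through the arc space $X_\infty=\varprojlim_m X_m$: stratify $\rho_m\inv(X_\sing)$ according to the multi-order $(k_i)$ of vanishing of a jet along the $E_i$ after lifting to $Y$ (with $\sum_i k_i\ge1$ on each stratum, because an arc meeting $X_\sing$ lifts through $f$ to one meeting the exceptional locus), and then use the Denef--Loeser change-of-variables formula together with the contact-locus dimension estimates of Ein--Lazarsfeld--Musta\c{t}\u{a} to identify the codimension of the $(k_i)$-cylinder in $X_\infty$ with $\sum_i k_i(a_i+1)$, which for $m\gg0$ reads off the codimension of the corresponding part of $\rho_m\inv(X_\sing)$ in $X_m$. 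Minimising over $m$ and over the strata then gives the desired equivalence.

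I expect this last step to be the main obstacle: turning the heuristic ``the $(k_i)$-stratum has codimension $\sum_i k_i(a_i+1)$'' into estimates that are exact at every finite jet level is precisely the technical heart of \cite{MustataJetRational}, and is the reason the argument must pass through motivic integration rather than elementary jet manipulations. In the present paper the theorem is used only as a black box, the condition $\dim\rho_m\inv((N_V)_\sing)<(m+1)\dim N_V$ being verified directly for the shells that occur.
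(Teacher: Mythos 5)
The paper does not prove this theorem: it is stated verbatim as a citation to Musta\c{t}\u{a} (\cite[Theorem~0.1 and Proposition~1.4]{MustataJetRational}) and is used purely as a black box in the rest of the article, so there is no internal proof against which to compare your attempt. On the merits of your sketch: the elementary parts are correct. The Krull height bound gives every component of $X_m\subset\C^{n(m+1)}$ dimension at least $(m+1)\dim X$; over $X_\sm$ the $m$th jet scheme is a Zariski--locally trivial $\C^{m\dim X}$-bundle, so $\rho_m\inv(X_\sm)$ is open, irreducible and of the expected dimension, and an open irreducible subset whose closure is irreducible and meets a component densely must itself close up to a component, giving~(1); and~(2)$\iff$(3) then follows from the dichotomy that a component of $\overline{\rho_m\inv(X_\sing)}$ of dimension $\geq(m+1)\dim X$ would be a second component of $X_m$ while a strictly smaller one must sit inside the component from~(1).

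For the final equivalence, your reduction from \emph{rational} to \emph{canonical} via the Gorenstein property of local complete intersections (Kempf in one direction, Elkik in the other) is the right first move, and the route you sketch --- pass to a log resolution, stratify arcs and jets by contact order along the exceptional divisors, invoke the Denef--Loeser change of variables to read off codimensions from discrepancies, and minimise --- is indeed the skeleton of Musta\c{t}\u{a}'s argument, later systematised by Ein--Lazarsfeld--Musta\c{t}\u{a}. As you acknowledge yourself, making the codimension of the contact stratum equal to $\sum_i k_i(a_i+1)$ exact at every finite jet level, and turning that into a clean criterion on $\dim\rho_m\inv(X_\sing)$ for all $m$, is precisely the technical content of \cite{MustataJetRational}; a complete proof would have to reproduce that motivic-integration computation, which your outline gestures at but does not carry out. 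Given that the paper only cites the result, this is a reasonable account of what a full proof would require, with the heavy lifting honestly flagged rather than hand-waved.
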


Let $V$ be a $G$-module. Then by Proposition~\ref{prop:props.of.N}(5), $N$ is a reduced and irreducible complete intersection if and only if $V$ is $1$-modular, which we now assume. Hence we can apply
Theorem~\ref{thrm:MustataGen},  which we restate in this context as follows.

\begin{proposition}
\label{prop:Mustata}
Let $G$ be a complex reductive   group, $V$ a $1$-modular $G$-module and
$N = N_V\subset V\oplus V^\ast$ the shell.
The following are equivalent for $m\geq 1$.
\begin{enumerate}
\item   $N_m$ is irreducible.
\item   $\dim\rho_m\inv(N_\sing)<  (m+1)\dim N$.
\end{enumerate}
The equivalent conditions (1) and (2) imply that $\dim N_m=(m+1)\dim N$.
Moreover, $N$ has rational singularities if and only if (1) and (2) hold for all $m\geq 1$.
\end{proposition}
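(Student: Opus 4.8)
The plan is to deduce this directly from Theorem~\ref{thrm:MustataGen} by verifying that $N = N_V$ satisfies its hypotheses, namely that $N$ is a local complete intersection variety over an algebraically closed field of characteristic zero, and then translating the conclusions into the present notation.

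\textbf{Verifying the hypotheses.} We work over $\C$, which is algebraically closed of characteristic zero, so that part of the hypothesis is immediate. Since $V$ is assumed $1$-modular, it is in particular $0$-modular: the defining inequality $\codim_X X_{(r)}\geq r+1$ of $1$-modularity implies $\codim_X X_{(r)}\geq r$. Hence Proposition~\ref{prop:props.of.N}(1) gives that $N$ is a complete intersection with $\dim N = 2\dim V - \dim G$, and Proposition~\ref{prop:props.of.N}(5) gives that $N$ is reduced and irreducible, i.e.\ a variety in the sense of Section~\ref{subsec:BackReps}. Because $N = \mu\inv(0)$ with $\mu\colon V\oplus V^\ast\to\lieg^\ast$, it is cut out in the smooth affine space $V\oplus V^\ast\cong\C^{2\dim V}$ by the $\dim G$ components of $\mu$ in a basis of $\lieg^\ast$, and its codimension there equals $\dim G$ by the dimension formula; so those components form a regular sequence, $N$ is a complete intersection in the strong sense, and being embedded in a smooth variety it is a local complete intersection variety. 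Finally, $N_\sm\neq\emptyset$ since $N$ is a reduced variety in characteristic zero, so Theorem~\ref{thrm:MustataGen}(1) is not vacuous. One small point worth recording: although the jet scheme construction of Section~\ref{subsec:BackReps} uses a chosen generating set of the defining ideal, $N_m$ is intrinsically $\Hom(\Spec\C[t]/(t^{m+1}),N)$ and so is independent of the presentation; we may therefore take the equations to be the components of $\mu$.

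\textbf{Invoking Musta\c{t}\u{a}'s theorem.} Apply Theorem~\ref{thrm:MustataGen} with $X = N$ and $\dim X = \dim N = 2\dim V - \dim G$. Part (1) then says the closure of $\rho_m\inv(N_\sm)$ is a component of $N_m$ of dimension $(m+1)\dim N$; the equivalence of conditions (2) and (3) there is exactly the equivalence of our statement~(1), that $N_m$ is irreducible, with our statement~(2), that $\dim\rho_m\inv(N_\sing)<(m+1)\dim N$; the assertion that these imply $\dim N_m=(m+1)\dim N$ transfers verbatim; and the characterization of rational singularities as the validity of (2) and (3) for all $m\geq1$ gives that $N$ has rational singularities if and only if our (1) and (2) hold for all $m\geq 1$. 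This completes the proof.

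Since the statement is essentially a contextual restatement of Musta\c{t}\u{a}'s theorem, there is no serious obstacle. The only points requiring care are confirming that $N$ is a genuine (hence local) complete intersection of the expected dimension $2\dim V-\dim G$ — which is precisely what $1$-modularity provides via Proposition~\ref{prop:props.of.N}(1) and (5) — and observing that the jet-scheme formalism of Section~\ref{subsec:BackReps} is intrinsic, so that it applies to $N$ with its natural presentation as $\mu\inv(0)$.
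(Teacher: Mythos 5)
Your proof is correct and follows the same route as the paper: note that $1$-modularity of $V$, via Proposition~\ref{prop:props.of.N}(1) and (5), makes $N$ a reduced, irreducible complete intersection, so that Theorem~\ref{thrm:MustataGen} applies directly. The paper dispatches this in a single sentence before stating the proposition; your version merely spells out the intermediate steps (the local-complete-intersection verification and the presentation-independence of jet schemes) in fuller detail.
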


\begin{definition}
Let $V$ be a $1$-modular $G$-module
and $N=N_V$.   We say that \emph{$\NN(N)$ is irrelevant} if
$N_m$ is the closure of $\rho_m\inv(N\setminus \NN(N)_\sg)$ for any $m\geq 1$.
\end{definition}

\begin{proposition}\label{prop:irrelevant} Let $V$ be a $1$-modular $G$-module and   $N=N_V$.
 If $N\setminus\NN(N)$ has rational singularities and $\NN(N)$ is irrelevant, then $N$ has rational singularities.
\end{proposition}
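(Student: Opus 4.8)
The plan is to verify the two equivalent conditions (1) and (2) of Proposition~\ref{prop:Mustata} for every $m\geq 1$, using the hypothesis that $\NN(N)$ is irrelevant to reduce to the rational singularities of $N\setminus\NN(N)$. Recall that irrelevance means $N_m$ is the closure of $\rho_m\inv(N\setminus\NN(N)_\sg)$; since by Theorem~\ref{thrm:MustataGen}(1) the closure of $\rho_m\inv(N_\sm)$ is already an irreducible component of $N_m$ of the (maximal possible) dimension $(m+1)\dim N$, it suffices to bound the dimension of $\rho_m\inv(N_\sing)$, and in fact of $\rho_m\inv(\NN(N)_\sg)$, strictly below $(m+1)\dim N$.

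First I would split the singular locus: $N_\sing = \bigl(N_\sing\setminus\NN(N)_\sg\bigr)\cup\NN(N)_\sg$, so that
\[
    \rho_m\inv(N_\sing)=\rho_m\inv\bigl(N_\sing\setminus\NN(N)_\sg\bigr)\cup\rho_m\inv\bigl(\NN(N)_\sg\bigr).
\]
For the first piece, the points lie in the open subvariety $N\setminus\NN(N)$, which has rational singularities by hypothesis; applying Proposition~\ref{prop:Mustata} (equivalently Theorem~\ref{thrm:MustataGen}) to this locally complete intersection open set — note jet schemes and the map $\rho_m$ are defined locally and $N\setminus\NN(N)$ is again a reduced irreducible complete intersection of the same dimension — gives $\dim\rho_m\inv\bigl(N_\sing\setminus\NN(N)_\sg\bigr)<(m+1)\dim N$. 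For the second piece, irrelevance tells us that $\rho_m\inv(\NN(N)_\sg)$ is contained in the closure of $\rho_m\inv(N\setminus\NN(N)_\sg)$; but that closure is a union of irreducible components of $N_m$ each of dimension at most $(m+1)\dim N$, and I claim $\rho_m\inv(\NN(N)_\sg)$ cannot be (a component or) dense in any such component. Indeed, over the dense open set $N\setminus\NN(N)_\sg$ the fibers of $\rho_m$ restricted to the relevant components have the expected dimension, so each such component meets $\rho_m\inv(N\setminus\NN(N)_\sg)$ in a dense open subset; hence $\rho_m\inv(\NN(N)_\sg)$, being disjoint from $\rho_m\inv(N\setminus\NN(N)_\sg)$, is a proper closed subset of each component and has dimension strictly less than $(m+1)\dim N$. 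Combining the two bounds gives $\dim\rho_m\inv(N_\sing)<(m+1)\dim N$, which is condition (2); since $m\geq 1$ was arbitrary, Proposition~\ref{prop:Mustata} yields that $N$ has rational singularities.

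The step I expect to be the main obstacle is making the second piece fully rigorous: showing that irrelevance genuinely forces $\rho_m\inv(\NN(N)_\sg)$ to have dimension below $(m+1)\dim N$. The subtlety is that $N_m$ may a priori have several components, and one must argue that $\NN(N)_\sg$ contributes no extra component of top dimension — precisely the content of irrelevance — and moreover that within the components it does not dominate. I would handle this by invoking Theorem~\ref{thrm:MustataGen}(1) to control the component through $\rho_m\inv(N_\sm)$, together with the semicontinuity of fiber dimension for $\rho_m$ restricted to that component, to conclude that any proper closed $\rho_m$-saturated subset sitting inside it (such as $\rho_m\inv(\NN(N)_\sg)$) has strictly smaller dimension. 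Everything else — the reduction via the open subset $N\setminus\NN(N)$, the decomposition of the singular locus, and the final appeal to Proposition~\ref{prop:Mustata} — is routine.
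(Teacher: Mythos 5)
Your proposal is on the right track — it uses the same two ingredients (rational singularities of the complement of the null cone, and irrelevance) and aims at Musta\c{t}\u{a}'s criterion — but it takes the long way around and, more importantly, has a genuine gap. In the second piece you assert that $\overline{\rho_m\inv(N\setminus\NN(N)_\sg)}$ ``is a union of irreducible components of $N_m$ each of dimension at most $(m+1)\dim N$'' and then deduce $\dim\rho_m\inv(\NN(N)_\sg)<(m+1)\dim N$ because it is a proper closed subset of each such component. But the bound $\dim C\leq(m+1)\dim N$ on each component $C$ of $N_m$ is exactly what is at stake: $N_m$ is cut out by $(m+1)\dim G$ equations, so Krull gives only a lower bound $(m+1)\dim N$ on component dimensions, and an upper bound is not free. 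Invoking Theorem~\ref{thrm:MustataGen}(1) controls only the single component through $\rho_m\inv(N_\sm)$; it says nothing about other putative components, and $\rho_m\inv(\NN(N)_\sg)$ need not sit inside that one. So the dimension count in your second piece is circular as written.

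The fix, which is how the paper argues, makes the whole decomposition unnecessary: set $N'=N\setminus\NN(N)_\sg$. Since rational singularities is a local (and open) condition and $N'=N_\sm\cup(N\setminus\NN(N))$, the hypothesis gives that $N'$ has rational singularities. Applying Proposition~\ref{prop:Mustata} (really Theorem~\ref{thrm:MustataGen}) to the local complete intersection $N'$ yields that $\rho_m\inv(N')=(N')_m$ is irreducible of dimension $(m+1)\dim N$. Irrelevance says $N_m$ is the closure of this set, hence $N_m$ is itself irreducible of dimension $(m+1)\dim N$. That verifies condition (1) of Proposition~\ref{prop:Mustata} directly for all $m\geq1$, so $N$ has rational singularities — no splitting of $N_\sing$ or separate bounding of $\rho_m\inv(\NN(N)_\sg)$ is needed. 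You were one step from this: once you observe that $N'$ (not just $N\setminus\NN(N)$) has rational singularities and invoke the irreducibility of $(N')_m$ from Musta\c{t}\u{a}, the rest collapses to a single line.
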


\begin{proof}
Let $m\geq 1$ and let $N'=N\setminus\NN(N)_\sg$. By hypothesis, $N'$ has rational singularities and $N_m$ is the closure of $\rho_m\inv(N')$  where the latter is irreducible of dimension $(m+1)\dim N$. Hence $N$ has rational singularities.
\end{proof}

Recall that the shell $N$ is CIFR if it is a complete intersection with FPIG and rational singularities.

\begin{theorem}\label{thm:iff}
Let $V$ be a  $G$-module and $N=N_V$. We assume that $N$ is a normal complete intersection with FPIG.  Then $N$ is CIFR   if and only if $\NN(N_0)$ is irrelevant for every symplectic slice representation $(S=S^H\oplus S_0,H)$ of $N$.
\end{theorem}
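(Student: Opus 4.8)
The plan is to prove this by reducing the global statement about $N = N_V$ to the local statement about each slice shell $N_0$, using the symplectic slice theorem and Musta\c{t}\u{a}'s criterion, and then running an induction on the dimension of the isotropy groups. First I would record that, since $N$ is assumed to be a normal complete intersection with FPIG, Proposition~\ref{prop:(NF)} tells us every symplectic slice representation $(S,H)$ has properties (N) and (F); in particular, by Corollaries~\ref{cor:property(P)} and \ref{rem:0-modular} each $N_W = S^H\times N_0$ (hence each $N_0$) is also a normal complete intersection, so Musta\c{t}\u{a}'s criterion (Proposition~\ref{prop:Mustata}) applies to $N_0$ as well. Since $\NN(N_0)$ being irrelevant is exactly the condition $(N_0)_m' = (N_0)_m$ for all $m\geq 1$ (as defined in the introduction, matching the definition preceding Proposition~\ref{prop:irrelevant} applied to $N_0$ in place of $N$), the statement to prove is: $N$ has rational singularities $\iff$ $\NN(N_0)$ is irrelevant for every $(S,H)$.

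For the forward direction, suppose $N$ has rational singularities. By Corollary~\ref{cor:property(P)}, applied at a closed orbit $Gx$ with slice $(S,H)$, the shell $N_W$ has rational singularities at $0$, hence (being $S^H$ times $N_0$) $N_0$ has rational singularities at $0$. Actually we need more: rational singularities of $N_0$ at \emph{every} point of $\NN(N_0)$, which follows because every point of $\NN(N_0)$ lies in the closure of the $H$-orbit of $0$ — more precisely, rational singularities is an open condition and $\NN(N_0)$ meets every closed $H$-orbit's closure at $0$, so $N_0$ having rational singularities at $0$ together with the slice theorem applied recursively gives rational singularities of $N_0$ everywhere it needs it; then by Proposition~\ref{prop:Mustata} applied to $N_0$, each $(N_0)_m$ is irreducible, which (since $(N_0)_m'$ is by definition a closed subset whose projection is dense in $N_0$, hence which contains the big component of dimension $(m+1)\dim N_0$) forces $(N_0)_m' = (N_0)_m$. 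Thus $\NN(N_0)$ is irrelevant.

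For the converse, assume $\NN(N_0)$ is irrelevant for every slice representation. I would argue by induction on $\dim H$ over the (finitely many) conjugacy classes of isotropy groups of closed orbits, using Remark~\ref{rem:closed.orbits}: it suffices to show $N$ has rational singularities along every closed orbit $Gx$, equivalently (Corollary~\ref{cor:property(P)}) that $N_0$ has rational singularities at $0$ for every slice $(S,H)$. For the base case $H$ finite, $N_0$ is a point and there is nothing to prove. For the inductive step, every closed orbit in $N_0 \setminus \NN(N_0)$ has a strictly smaller slice representation $(S',H')$ with $(H') < (H)$, and the slice representations of $N_0$ are among those of $N$ (this is the standard fact that slices of slices are slices), so by induction $N_0 \setminus \NN(N_0)$ has rational singularities. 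Now I invoke Proposition~\ref{prop:irrelevant} applied to $N_0$: since $N_0$ is $1$-modular (normal complete intersection, Proposition~\ref{prop:props.of.N}(5)), $N_0 \setminus \NN(N_0)$ has rational singularities, and $\NN(N_0)$ is irrelevant by hypothesis, we conclude $N_0$ has rational singularities, in particular at $0$. Running this over all $(S,H)$ and applying Remark~\ref{rem:closed.orbits} gives that $N$ has rational singularities; combined with the complete intersection and FPIG hypotheses this is precisely CIFR.

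The main obstacle I anticipate is the bookkeeping in the inductive step: one must be careful that the slice representations occurring for the variety $N_0$ (with its own $H$-action) are a sub-collection of the slice representations of $N$, so that "irrelevance for every slice representation of $N$" genuinely feeds the induction for $N_0$; and one must verify that the definition of irrelevance of $\NN(N_0)$ given in the introduction coincides with the application of the Definition preceding Proposition~\ref{prop:irrelevant} to the $1$-modular module $W_0$, i.e., that $(N_0)_m' = $ the closure of $\rho_m\inv(N_0 \setminus \NN(N_0)_\sg)$. This is essentially a matter of matching definitions, but it is where an incautious argument could go wrong. The rational-singularities-is-open and the "contains a closed orbit" argument of Remark~\ref{rem:closed.orbits} handle the globalization cleanly, so the only real content beyond definition-chasing is Proposition~\ref{prop:irrelevant}, which is already available.
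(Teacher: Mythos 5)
Your proposal matches the paper's own proof almost step for step: the forward direction uses Corollary~\ref{cor:property(P)} to transfer rational singularities from $N$ to $N_0$ and then Proposition~\ref{prop:Mustata} to get the dimension bound on $\rho_m\inv((N_0)_\sing)$, which forces $(N_0)_m'=(N_0)_m$; the converse inducts on conjugacy classes of isotropy groups, feeding Proposition~\ref{prop:irrelevant} with the rational singularities of $N_0\setminus\NN(N_0)$ obtained from the inductive hypothesis and Remark~\ref{rem:closed.orbits}, and Proposition~\ref{prop:(NF)} supplies the hypotheses on $N_0$ at each stage. Two small corrections worth making: in the base case, when $H$ is a finite principal isotropy group the Lie algebra $\lieh$ is zero, so $N_0=S_0$ is the full slice (a vector space), not a point --- it is still smooth, so the conclusion stands; and in the forward direction the phrase ``every point of $\NN(N_0)$ lies in the closure of the $H$-orbit of $0$'' is inverted ($0$ lies in the closure of every $H$-orbit in $\NN(N_0)$), and what one actually needs is rational singularities of $N_0$ at \emph{all} of its singular points (not just those in $\NN(N_0)$), which follows by applying Corollary~\ref{cor:property(P)} along every closed $H$-orbit of $N_0$ via slice-of-slice transitivity and then invoking Remark~\ref{rem:closed.orbits}.
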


\begin{proof}
Let $(S=S^H\oplus S_0,H)$ be a symplectic slice representation of $N$ and
 let $\rho_m\colon (N_0)_m\to N_0$ denote the projection.  By Proposition  \ref{prop:(NF)},   $N_0$ is a normal complete intersection with FPIG. If $N$ has rational singularities, then so does $N_0$
by Corollary \ref{cor:P}
 and we may apply Proposition \ref{prop:Mustata} which gives that
$$
\dim\rho_m\inv((N_0)_\sing)<(m+1)\dim N_0.
$$
Since any irreducible component of $(N_0)_m$ has dimension at least $(m+1)\dim N_0$, the above shows that    $\NN(N_0)$ is  irrelevant.

Conversely, suppose that each $\NN(N_0)$ is irrelevant. Given a particular $(S,H)$, we may assume by induction that for any symplectic slice representation $(S',H')$ with $(H')<(H)$, the shell $N_{0}'$ has rational singularities. (The induction starts with the case that $H$ is a principal isotropy group in which case $N_0$ is smooth.)\ By Corollary \ref{cor:property(P)} and Remark \ref{rem:closed.orbits}, $N_0\setminus \NN(N_0)$  has rational singularities.
By Proposition \ref{prop:irrelevant},   $N_0$ has rational singularities. It follows that  $N_0$  is CIFR, hence $N$ is CIFR.
\end{proof}

% xxxxxxxxxxxxxxxxxxxxxxxxxxxxxxxxxxxxxxxxxxxxxxxxxxxxxxxxxxxxxxxxxxxxxxxxx
% xxxxxxxxxxxxxxxxxxxxxxxxxxxxxxxxxxxxxxxxxxxxxxxxxxxxxxxxxxxxxxxxxxxxxxxxx
% xxxxxxxxxxxxxxxxxxxxxxxxxxxxxxxxxxxxxxxxxxxxxxxxxxxxxxxxxxxxxxxxxxxxxxxxx

\section{The jet schemes of the shell}
\label{sec:JetShell}

Let $G$ be a reductive complex group and $V$ a $1$-modular $G$-module. Let  $N=N_V$ be the shell and let $m\geq 1$.
Then   $N_m$  is a subscheme of $V^{m+1}\oplus (V^\ast)^{m+1}$. As explained in Section~\ref{subsec:BackReps}, $N_m$ 
is the subscheme defined by the following system of equations 
where
$(x_0,\ldots,x_m)\in V^{m+1}$, $(\xi_0,\ldots,\xi_m)\in (V^\ast)^{m+1}$, and $A$ runs through a basis of $\lieg$.
\begin{align}
    \xi_0(A(x_0))
            &=      0,\tag {3.0}  \label{eq:A1}
    \\
    \xi_0(A(x_1)) + \xi_1(A(x_0))
            &=      0,\tag {3.1}\label{eq:A2}
   \\
    \xi_0(A(x_2)) + \xi_1(A(x_1)) + \xi_2(A(x_0))
            &=      0,\tag {3.2}\label{eq:A3}
   \\
  \xi_0(A(x_3))+\xi_1(A(x_2))+\xi_2(A(x_1))+\xi_3(A(x_0))
  &= 0,\tag {3.3}\label{eq:A4}
    \\\vdots&\nonumber \\
   \xi_0( A(x_m)) +  \xi_1(A(x_{m-1}))  +  \cdots   +   \xi_{m-1}(A(x_1)) + \xi_m(A(x_0))
            &=      0\tag{3.m}\label{eq:Am}.
\end{align}

\begin{remark}\label{rem:Nm.ci}
If $N$ has rational singularities, then each $N_m$ has dimension $(m+1)\dim N$ and hence by counting equations is a complete intersection. Moreover, each $N_m$ is a variety \cite[Proposition~1.5]{MustataJetRational}.
\end{remark}

We observe the following consequences of the description of $N_m$ above.

\begin{lemma}
\label{lem:r-1Necessary}
The equivalent conditions (1) and (2) of Proposition~\ref{prop:Mustata} hold with
$m = 1$ if and only if $N$ is $1$-modular as a $G$-variety.
\end{lemma}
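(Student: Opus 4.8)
The plan is to work directly with the explicit equations for the first jet scheme $N_1 \subset V^2 \oplus (V^*)^2$, which by the display following Remark~\ref{rem:Nm.ci} is cut out by \eqref{eq:A1} and \eqref{eq:A2} as $A$ ranges over a basis of $\lieg$. Equation \eqref{eq:A1} says exactly $(x_0,\xi_0) \in N$, so $\rho_1\colon N_1 \to N$ is the projection forgetting $(x_1,\xi_1)$, and the fiber over a point $(x_0,\xi_0)$ is the affine subspace of $V \oplus V^*$ defined by the linear equations $\xi_0(A(x_1)) + \xi_1(A(x_0)) = 0$, one for each basis vector $A$. The coefficient matrix of this linear system in the unknowns $(x_1,\xi_1)$ is, up to sign conventions, precisely the differential $d\mu$ of the moment map at $(x_0,\xi_0)$: indeed $d\mu_{(x_0,\xi_0)}(x_1,\xi_1)(A) = \xi_0(A(x_1)) + \xi_1(A(x_0))$. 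Hence the fiber $\rho_1\inv(x_0,\xi_0)$ has dimension $2\dim V - \Rank d\mu_{(x_0,\xi_0)}$.

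With this observation the proof becomes a rank count. On $N_\sm = N \cap R$ (Proposition~\ref{prop:props.of.N}(3)) the differential $d\mu$ has maximal rank $\dim G$ (Proposition~\ref{prop:props.of.N}(2), using that $V$ is $0$-modular), so the fibers over $N_\sm$ have dimension $2\dim V - \dim G = \dim N$, consistent with Theorem~\ref{thrm:MustataGen}(1). Over $N_\sing = N \setminus R$ the rank of $d\mu$ drops, so the fibers are strictly larger. I would then stratify $N_\sing$ by the dimension $r$ of the isotropy group $G_x$; on such a stratum the rank of $d\mu$ is $\dim G - r$ (the kernel of $d\mu$ transverse to the orbit directions corresponds to $\lie g_x$ by the standard moment-map computation, and symmetrically on the $V^*$ side one picks up another copy, so the rank deficiency is exactly $2r$ — this is where I must be careful about whether the deficiency is $r$ or $2r$, and I expect the clean statement is that $\Rank d\mu = \dim G - r$ on the locus where $\dim G_{(x_0,\xi_0)} = r$ in $U = V\oplus V^*$). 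Combining, $\dim \rho_1\inv(X_{(r)}) = \dim X_{(r)} + 2\dim V - \dim G + r$, where $X_{(r)} \subset N$ is the locus of points with $r$-dimensional isotropy in $U$.

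Now compare with the threshold $2\dim N = 2(2\dim V - \dim G)$ from Proposition~\ref{prop:Mustata}(2) with $m=1$. The inequality $\dim \rho_1\inv(N_\sing) < 2\dim N$ is equivalent to $\dim X_{(r)} + r < \dim N = 2\dim V - \dim G$ for every $r \geq 1$, i.e.\ $\codim_N X_{(r)} > r$ for all $r \geq 1$. By Proposition~\ref{prop:props.of.N} and the definition of $k$-modular for the $G$-variety $N$ (with its induced stratification coming from isotropy groups of points of $U$, as set up in \S\ref{sec:G-modules}), this last condition is exactly the statement that $N$ is $1$-modular as a $G$-variety. Running the equivalences in reverse gives the converse, so conditions (1)/(2) of Proposition~\ref{prop:Mustata} at $m=1$ hold if and only if $N$ is $1$-modular as a $G$-variety.

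The main obstacle I anticipate is pinning down the exact rank of $d\mu$ along the isotropy strata — in particular correctly accounting for the doubling that comes from $U = V \oplus V^*$ versus a single copy of $V$, and confirming that the relevant stratification of the $G$-variety $N$ in the definition of "$1$-modular" is the one by $\dim G_{(x_0,\xi_0)}$ for $(x_0,\xi_0)\in N \subset U$ rather than by $\dim G_{x_0}$ for $x_0 \in V$. Once the bookkeeping $\Rank d\mu_{(x_0,\xi_0)} = \dim G - \dim G_{(x_0,\xi_0)}$ is established (a direct computation with the formula $d\mu_{(x_0,\xi_0)}(x_1,\xi_1)(A) = \xi_0(A(x_1)) + \xi_1(A(x_0))$ and the identification of its kernel/cokernel with $\lie g_{(x_0,\xi_0)}$ and its dual), the dimension count and the translation into $1$-modularity are routine.
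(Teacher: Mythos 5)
Your proposal is correct and takes essentially the same route as the paper: stratify $N_\sing$ by the isotropy dimension $r$, identify $\rho_1\inv$ of each stratum as a vector bundle whose rank is $2\dim V - \Rank d\mu$, and translate $\dim\rho_1\inv(N_\sing) < 2\dim N$ into $\codim_N N_{(r)} \geq r+1$ for all $r\geq 1$. Both points you flagged as uncertain resolve the way you guessed: the stratification is by $\dim G_{(x_0,\xi_0)}$ with $(x_0,\xi_0)\in U = V\oplus V^*$ (that is the definition of $N_{(r)}$ in \S\ref{sec:G-modules}), and the rank deficiency is $r$, not $2r$, since $\Ker d\mu_p = (T_p(Gp))^{\perp_\omega}$ is the standard moment-map identity, giving $\Rank d\mu_p = \dim G - \dim G_p$.
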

\begin{proof}
By Proposition \ref{prop:props.of.N}(3), $N_{\sing} = \bigcup_{r=1}^{\dim G} N_{(r)}$.
If $(x_0,\xi_0) \in N_{(r)}$ with $r \geq 1$, then $\rho_1^{-1}(x_0,\xi_0)$ is defined by the linear system
\eqref{eq:A2} in $(x_1,\xi_1)$, which has rank $\dim G - r$. Therefore, $\rho_1^{-1}(N_{(r)})$ is a vector bundle
over $N_{(r)}$ of rank $2\dim V - \dim G + r = \dim N + r$, and
\[
    \dim \rho_1^{-1}(N_{(r)}) = \dim N_{(r)} + \dim N + r.
\]
Hence $\dim \rho_1^{-1}(N_{(r)}) < 2\dim N$ if and only if $\dim N - \dim N_{(r)} \geq r + 1$, which holds for each
$r \geq 1$ if and only if $N$ is $1$-modular.
\end{proof}

% xxxxxxxxxxxxxxxxxxxxxxxxxxxxxxxxxxxxxxxxxxxxxxxxxxxxxxxxxxxxxxxxxxxxxxxxx

\subsection{The shell has rational singularities for generic $V$}
\label{subsec:JetShellGeneric}

Let $\pi\colon N\to N\git G$ denote the categorical quotient.
The next several  results
follow from the techniques in Budur \cite{BudurRational}.

\begin{lemma}\label{lem:fiber.dim}
Let $V$ be a $1$-modular $G$-module and $N=N_V$ Then for any $m\geq 1$ and $(x_0,\xi_0)\in N$,
$\dim\rho_m\inv(x_0,\xi_0)\leq\dim\rho_m\inv(0,0)$. If $m=1$, then
 $\rho_1\inv(0,0)\simeq V\oplus V^*$ and if $m\geq 2$, then
$$
\rho_m\inv(0,0)\simeq N_{m-2}\times V\times V^*.
$$
\end{lemma}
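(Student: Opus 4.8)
\textbf{Proof proposal for Lemma~\ref{lem:fiber.dim}.}

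The plan is to exploit the semicontinuity/equivariance of the fibers of $\rho_m$ and the explicit linear structure of the jet equations \eqref{eq:A1}--\eqref{eq:Am}. First I would observe that $\rho_m\colon N_m\to N$ is $\C^\times$-equivariant for the natural scaling action that dilates all the jet variables simultaneously (and, more usefully, for the $\C^\times$-action scaling $(x_0,\xi_0)$ together with the higher-order variables so that the equations remain homogeneous); under such an action $(0,0)$ is in the closure of every orbit, so by upper semicontinuity of fiber dimension the generic (equivalently, any) fiber dimension is maximized at the most degenerate point, namely $(0,0)$. More elementarily, one can argue directly: because $N$ is a complete intersection cut out by the bilinear functions $\xi_0(A(x_0))$, the $m$th jet scheme $N_m$ is cut out inside $V^{m+1}\oplus(V^*)^{m+1}$ by the $m$ families of equations above, and the fiber $\rho_m^{-1}(x_0,\xi_0)$ over a fixed base point is the vanishing locus in $(x_1,\dots,x_m,\xi_1,\dots,\xi_m)$ of equations \eqref{eq:A2}--\eqref{eq:Am}, which depend on $(x_0,\xi_0)$ only through the coefficients; one then shows this dimension is largest when $(x_0,\xi_0)=(0,0)$, where all the ``cross terms'' drop out.

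For the second assertion, with $m=1$: at $(x_0,\xi_0)=(0,0)$ equation \eqref{eq:A2} becomes $\xi_0(A(x_1))+\xi_1(A(x_0))=0$, and since $x_0=0$, $\xi_0=0$, this is vacuous. Hence $\rho_1^{-1}(0,0)$ is all of the space of first-order jets in $(x_1,\xi_1)$, i.e.\ $V\oplus V^*$. For $m\geq 2$ at $(x_0,\xi_0)=(0,0)$: in each equation $(3.k)$ the terms involving $x_0$ or $\xi_0$ vanish, so equation $(3.k)$ for $1\leq k\leq m$ reduces to
\[
    \xi_1(A(x_{k-1})) + \xi_2(A(x_{k-2})) + \cdots + \xi_{k-1}(A(x_1)) = 0 .
\]
The key bookkeeping step is to recognize this reduced system: equation $(3.k)$ at the origin is precisely equation $(k{-}2)$ of the jet-scheme defining system for $N$ written in the shifted variables $x_1,\dots,x_{m-1}$ and $\xi_1,\dots,\xi_{m-1}$, where now $x_1,\xi_1$ play the role of the base point $x_0,\xi_0$ of a jet scheme of $N$ of order $m-2$. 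Concretely, equations $(3.2),\dots,(3.m)$ in the variables $x_1,\dots,x_{m-1},\xi_1,\dots,\xi_{m-1}$ are exactly the defining equations of $N_{m-2}$ (after relabeling $x_{i}\mapsto x_{i-1}$, $\xi_i\mapsto\xi_{i-1}$), while the variables $x_m$ and $\xi_m$ do not appear at all in any equation. Therefore
\[
    \rho_m^{-1}(0,0)\;\simeq\; N_{m-2}\times V\times V^*,
\]
with the free factors $V$ and $V^*$ coming from $x_m$ and $\xi_m$ respectively. Combining this with the first assertion then bounds every fiber dimension by $\dim N_{m-2}+2\dim V$.

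I expect the only genuinely delicate point to be the semicontinuity argument establishing $\dim\rho_m^{-1}(x_0,\xi_0)\leq\dim\rho_m^{-1}(0,0)$ for \emph{all} base points (not just generic ones): one must pick the right $\C^\times$-action under which $N_m$ is invariant and $(0,0)$, together with its whole fiber, lies in the closure of the orbit of an arbitrary point of $N_m$, so that Chevalley's upper semicontinuity theorem applies fiberwise. Concretely, giving $x_i$ weight $1$ and $\xi_i$ weight $1$ (or giving $x_i,\xi_i$ weight $t^{?}$ chosen to make each of \eqref{eq:A1}--\eqref{eq:Am} homogeneous) makes $N_m\subset V^{m+1}\oplus(V^*)^{m+1}$ a cone with vertex $0$, and $\rho_m$ is homogeneous; then $\dim\rho_m^{-1}(y)$ is upper semicontinuous along the base and maximized at $0$. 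Once this is in place, the rest is the purely formal identification of the reduced equation system above with the jet scheme $N_{m-2}$, which is routine.
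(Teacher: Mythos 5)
Your proof is correct and follows essentially the same route as the paper: first use the cone structure of $N_m$ and $N$ (equivalently, the scalar $\C^\times$-action together with upper semicontinuity of fiber dimension) to reduce to the fiber over the origin, then observe that the equations \eqref{eq:A2}--\eqref{eq:Am} with $x_0=\xi_0=0$ impose no conditions on $x_m,\xi_m$ and define a copy of $N_{m-2}$ in the shifted variables $x_1,\dots,x_{m-1},\xi_1,\dots,\xi_{m-1}$. Your index bookkeeping (equations $(3.2),\dots,(3.m)$ becoming equations $(3.0),\dots,(3.(m-2))$ of $N_{m-2}$ after the shift $x_i\mapsto x_{i-1}$, $\xi_i\mapsto\xi_{i-1}$) is the right accounting.
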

\begin{proof}
Since $N_m$ and $N$ are cones, the fiber $\rho_m\inv(x_0,\xi_0)$ has dimension at most that of $\rho_m\inv(0,0)$. The latter is given by equations
$$
\sum_{i+j=k}\xi_iA(x_j)=0,\quad A\in\lieg,\ i,\ j\geq 1,\ k=2,\dots,m-1,
$$
where $x_1,\dots,x_m\in V$ and $\xi_1,\dots,\xi_m\in V^*$. Note that there are no conditions on $x_m$ and $\xi_m$ and that the  equations on the remaining variables define a copy of $N_{m-2}$.
\end{proof}

\begin{theorem}\label{thm:Budur3}
Let $V$ be $1$-modular and $N=N_V$. Let $N'\subset N$ be a closed subvariety such that $N\setminus N'$ has rational singularities. If $\codim_N (N'\cap N_\sing)>\dim G$, then  
each $N_m$ is irreducible, hence $N$ has rational singularities.
\end{theorem}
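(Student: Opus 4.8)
The plan is to bound $\dim \rho_m^{-1}(N_\sing)$ by $(m+1)\dim N - 1$ for every $m \geq 1$, which by Proposition~\ref{prop:Mustata} gives irreducibility of each $N_m$ and hence rational singularities of $N$. Since $N \setminus N'$ has rational singularities, the jet schemes of $N \setminus N'$ are already irreducible of the correct dimension, so the part of $N_\sing$ we need to worry about is $N' \cap N_\sing$, together with $N' \cap N_\sm$ (where the fibers are just affine spaces of rank $\dim N$ over a set of dimension $< \dim N$, giving no trouble). Thus it suffices to show $\dim \rho_m^{-1}(N' \cap N_\sing) < (m+1)\dim N$.

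First I would stratify $\rho_m^{-1}(N' \cap N_\sing)$ by the image point and use the fiber dimension estimate of Lemma~\ref{lem:fiber.dim}: for $x \in N'\cap N_\sing$ the fiber $\rho_m^{-1}(x)$ has dimension at most $\dim \rho_m^{-1}(0,0)$. For $m \geq 2$ this is $\dim N_{m-2} + 2\dim V$, and an induction on $m$ (the base cases $m=1$, where the fiber is $V \oplus V^*$ of dimension $2\dim V = \dim N + \dim G$, and $m=2$) lets me assume $\dim N_{m-2} = (m-1)\dim N$ from the previously-established irreducibility at lower jet level. Hence $\dim \rho_m^{-1}(x) \leq (m-1)\dim N + 2\dim V = (m-1)\dim N + \dim N + \dim G = m\dim N + \dim G$. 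Therefore
\[
    \dim \rho_m^{-1}(N' \cap N_\sing)
        \leq \dim(N' \cap N_\sing) + m\dim N + \dim G.
\]
Now the codimension hypothesis $\codim_N(N' \cap N_\sing) > \dim G$ gives $\dim(N' \cap N_\sing) \leq \dim N - \dim G - 1$, so the right-hand side is at most $(m+1)\dim N - 1 < (m+1)\dim N$, which is exactly condition (3) of Proposition~\ref{prop:Mustata}. Running this as an induction on $m$, with the key input at each stage being that $N_{m-2}$ is a variety of dimension $(m-1)\dim N$ (Remark~\ref{rem:Nm.ci}, since at the inductive stage $N$ already has rational singularities up to jet level $m-1$, or more carefully: one first proves $N_k$ irreducible for all $k$ simultaneously by this estimate), completes the argument. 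One must be slightly careful to set up the induction so that one is not assuming what one is proving; the clean way is: the estimate above shows that if $N_{m-2}$ is irreducible of dimension $(m-1)\dim N$ then $\dim \rho_m^{-1}(N_\sing) < (m+1)\dim N$, hence $N_m$ is irreducible of dimension $(m+1)\dim N$; since $N_0 = N$ is irreducible of dimension $\dim N$ and $N_{-1}$ may be taken to be a point, induction yields irreducibility of all $N_m$, and then Proposition~\ref{prop:Mustata} gives rational singularities.

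The main obstacle I anticipate is making the induction on $m$ airtight: the fiber-dimension bound $\dim \rho_m^{-1}(0,0) = \dim N_{m-2} + 2\dim V$ only helps if $\dim N_{m-2}$ is known to equal $(m-1)\dim N$, and a priori $N_{m-2}$ could be reducible with larger-dimensional components. The resolution is that the very estimate being proved, applied at jet level $m-2$, shows $N_{m-2}$ is irreducible of the right dimension before we need it at level $m$; so the induction must be organized to establish irreducibility and the dimension count jointly, level by level, rather than treating them separately. The handling of $N' \cap N_\sm$ and of the smooth locus of $N$ away from $N'$ is routine (resolution-of-singularities locality of rational singularities plus part (1) of Theorem~\ref{thrm:MustataGen}), so the genuine content is entirely in the codimension bookkeeping above.
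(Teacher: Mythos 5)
Your proof is correct and follows essentially the same route as the paper: bound $\dim \rho_m^{-1}(N_\sing)$ by splitting $N_\sing$ into the part inside $N'$ (handled by Lemma~\ref{lem:fiber.dim} plus the codimension hypothesis) and the part outside $N'$ (handled by the rational-singularity assumption on $N\setminus N'$ and Musta\c{t}\u{a}'s criterion), with an induction on $m$ supplying $\dim N_{m-2}=(m-1)\dim N$. The only cosmetic difference is that you spell out the base cases explicitly (treating $N_{-1}$ as a point) where the paper leaves the $m=1$ case to the reader; your parenthetical worry about $N'\cap N_\sm$ is unnecessary since Musta\c{t}\u{a}'s criterion only concerns $\rho_m^{-1}(N_\sing)$, but you dismiss it correctly.
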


\begin{proof}
We may replace $N'$ by $N'\cap N_\sing$. We leave it to the reader to show that $N_1$ is irreducible, so assume that $m\geq 2$. By induction we may assume that $N_{m-2}$  is irreducible of dimension $(m-1)\dim N$.  Then
$$
\dim\rho_m\inv(N_\sing\setminus N')<(m+1)\dim N,\text{ and }
$$
\begin{align*}
\dim\rho_m\inv(N')&\leq \dim N'+\dim N_{m-2}+2\dim V\\
&<\dim N-\dim G+(m-1)\dim N+2\dim V=(m+1)\dim N.
\end{align*}
It follows that  $N_m$ is irreducible. Then $N$ has rational singularities by Proposition \ref{prop:Mustata}.
\end{proof}

\begin{corollary}\label{cor:Budur}
Let $V$ be $1$-modular and $N=N_V$. If $\codim_NN_\sing>\dim G$, then $N$ has rational singularities.
\end{corollary}

\begin{corollary}\label{cor:NN(N).irrelevant}
Let $V$ be $1$-modular and $N=N_V$. Suppose that $N\setminus\NN(N)$ has rational singularities and $\codim_N \NN(N)_\sg>\dim G$. Then 
$\NN(N)$ 
is irrelevant and $N$ has rational singularities.
\end{corollary}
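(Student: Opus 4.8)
The plan is to deduce this corollary from the two results that immediately precede it, namely Theorem~\ref{thm:Budur3} and Corollary~\ref{cor:Budur}, together with the definition of irrelevance and Musta\c{t}\u{a}'s criterion (Proposition~\ref{prop:Mustata}). First I would apply Theorem~\ref{thm:Budur3} with $N' = \NN(N)$: the hypotheses are exactly that $N\setminus\NN(N)$ has rational singularities and $\codim_N(\NN(N)\cap N_\sing) = \codim_N \NN(N)_\sg > \dim G$, so the theorem yields that each $N_m$ is irreducible and $N$ has rational singularities. That disposes of the last conclusion.

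It then remains to see that $\NN(N)$ is irrelevant, i.e. that for every $m\geq 1$ the jet scheme $N_m$ equals the closure of $\rho_m^{-1}(N\setminus\NN(N)_\sg)$. The key point is the dimension count carried out in the proof of Theorem~\ref{thm:Budur3}: writing $N' = \NN(N)_\sg$, one has both $\dim\rho_m^{-1}(N_\sing\setminus N') < (m+1)\dim N$ and $\dim\rho_m^{-1}(N') < (m+1)\dim N$, the latter because $\codim_N N' > \dim G$ and, by induction, $N_{m-2}$ is irreducible of dimension $(m-1)\dim N$ (the base cases $m=1,2$ being handled as in that proof, using Lemma~\ref{lem:fiber.dim}). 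Combining these two bounds gives $\dim\rho_m^{-1}(N_\sing) < (m+1)\dim N$. Since $N\setminus N_\sing = N_\sm$ is smooth, $\rho_m^{-1}(N_\sm)$ is a vector bundle over $N_\sm$ and its closure is an irreducible component of $N_m$ of dimension $(m+1)\dim N$ by Proposition~\ref{prop:Mustata} / Theorem~\ref{thrm:MustataGen}(1); as $N_m$ is irreducible, $N_m$ is precisely that closure. But $N_\sm \subset N\setminus\NN(N)_\sg$, so $N_m = \cc{\rho_m^{-1}(N_\sm)} \subseteq \cc{\rho_m^{-1}(N\setminus\NN(N)_\sg)} \subseteq N_m$, forcing equality. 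Hence $\NN(N)$ is irrelevant.

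There is really no serious obstacle here: the corollary is essentially a repackaging of the proof of Theorem~\ref{thm:Budur3} with the extra bookkeeping observation that the dense open piece used there lies in the complement of $\NN(N)_\sg$. The only mild subtlety I would want to state carefully is why $\cc{\rho_m^{-1}(N\setminus\NN(N)_\sg)}$ is all of $N_m$ rather than a proper closed subset — this is exactly where irreducibility of $N_m$ (from Theorem~\ref{thm:Budur3}) is used, since a priori the closure of a $\rho_m$-preimage of a dense open set need only be one irreducible component. Alternatively, one can phrase the whole argument as: irrelevance plus ``$N\setminus\NN(N)$ has rational singularities'' implies rational singularities by Proposition~\ref{prop:irrelevant}, so it suffices to prove irrelevance, which follows from the displayed codimension estimate as above. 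I would write it in the first form since Theorem~\ref{thm:Budur3} already packages the induction.
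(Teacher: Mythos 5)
Your proof is correct and, since the paper gives no separate argument for this corollary (it is stated immediately after Theorem~\ref{thm:Budur3} as an evident consequence), it fills in exactly the reasoning the paper leaves implicit. Applying Theorem~\ref{thm:Budur3} with $N'=\NN(N)$ gives irreducibility of every $N_m$ and rational singularities; then, as you note, irreducibility of $N_m$ together with Theorem~\ref{thrm:MustataGen}(1) (the closure of $\rho_m\inv(N_\sm)$ is a component of dimension $(m+1)\dim N$, hence all of $N_m$) and the inclusion $N_\sm\subset N\setminus\NN(N)_\sg$ immediately yields $N_m=\cc{\rho_m\inv(N\setminus\NN(N)_\sg)}$, i.e.\ irrelevance. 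Your only small redundancy is re-deriving the dimension bound $\dim\rho_m\inv(N_\sing)<(m+1)\dim N$ in the second paragraph: once you have invoked Theorem~\ref{thm:Budur3} to get irreducibility, you already have everything you need, so the re-derivation is superfluous (though harmless). You also correctly identify the one point that requires care — that the closure of the preimage of a dense open set need a priori only be a single component of $N_m$ — and correctly resolve it by irreducibility. In short, this is the intended argument, cleanly reconstructed.
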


\begin{corollary} \label{cor:dimGMod}
If $V$ is $(\dim G)$-modular, then $N=N_V$ has rational singularities.
\end{corollary}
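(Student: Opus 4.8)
The plan is to reduce everything to Corollary~\ref{cor:Budur}. If $\dim G=0$ then $\mu\equiv 0$, so $N=N_V=V\oplus V^*$ is smooth and there is nothing to prove; assume henceforth that $\dim G\geq 1$. Since a $(\dim G)$-modular module is in particular $1$-modular (as $\dim G\geq 1$), Proposition~\ref{prop:props.of.N}(1) and (5) show that $N$ is a reduced, irreducible complete intersection with $\dim N=2\dim V-\dim G$, so the hypotheses of Corollary~\ref{cor:Budur} are in force. It therefore suffices to prove that $\codim_N N_\sing>\dim G$.

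The one substantive point is that $(\dim G)$-modularity of the $G$-\emph{module} $V$ forces the analogous codimension estimate for the isotropy strata of the $G$-\emph{variety} $N$. As recalled in Remark~\ref{rem: change.V.UTCLS}, following Panyushev \cite[Theorem~2.4]{PanyushevJacob}, $k$-modularity of $V$ is detected by a homological condition on $\C[N_V]$; granting the resulting identification, $N$ is itself $(\dim G)$-modular as a $G$-variety, i.e.\ $\codim_N N_{(r)}\geq r+\dim G$ for $1\leq r\leq\dim G$. By Proposition~\ref{prop:props.of.N}(3) we have $N_\sing=N\setminus R=\bigcup_{r=1}^{\dim G}N_{(r)}$, whence
\[
    \codim_N N_\sing=\min_{1\leq r\leq\dim G}\codim_N N_{(r)}\ \geq\ 1+\dim G\ >\ \dim G .
\]
Corollary~\ref{cor:Budur} now yields that every jet scheme $N_m$ is irreducible and that $N$ has rational singularities, so $N$ is CIFR.

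The main obstacle is precisely the implication ``$V$ is $(\dim G)$-modular $\Longrightarrow \codim_N N_{(r)}\geq r+\dim G$''; the downstream step (Corollary~\ref{cor:Budur}) is already available and the rest is bookkeeping. If one prefers to bypass the homological reformulation, the same bound can be obtained geometrically by a slice argument: a point of $N_{(r)}$ with $r\geq 1$ projects to some $v\in V_{(s)}$ with $s\geq r\geq 1$, the fibre of $N\to V$ over such $v$ is the linear space $(\lieg v)^\perp\subseteq V^*$ of dimension $\dim V-\dim G+s$ carrying a $G_v$-action, and one combines the bound $\dim V_{(s)}\leq\dim V-s-\dim G$ coming from $(\dim G)$-modularity of $V$ (equivalently of $V^*$, cf.\ Remark~\ref{rem:V.and.V*.same.condition}) with the fact that for $v$ in a stratum of large isotropy a generic covector in $(\lieg v)^\perp$ has strictly smaller stabilizer than $G_v$ — this last point being where the extra factor of $r$ over the naive estimate $\codim_N N_\sing\geq\dim G$ is recovered. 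I regard verifying this strict drop (or, equivalently, invoking Panyushev's characterization carefully) as the only delicate part of the proof.
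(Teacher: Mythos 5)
Your reduction to Corollary~\ref{cor:Budur} is the right move, but your codimension estimate has a genuine gap. Your first route invokes Panyushev's homological characterization of $k$-modularity of the module $V$ in terms of $\C[N_V]$ (Remark~\ref{rem: change.V.UTCLS}) and concludes that $N$ is itself a $(\dim G)$-modular $G$-variety, i.e.\ $\codim_N N_{(r)}\geq r+\dim G$. That is not what Panyushev's theorem says: it relates modularity of the $G$-module $V$ to a homological invariant of the \emph{ring} $\C[N_V]$, not to the isotropy stratification of the $G$-\emph{action} on $N_V$, and you supply no bridge between these two notions. Your second route gives, for $(v,v^*)\in N_{(r)}$ with $v\in V_{(s)}$, only
\[
\dim N_{(r)}\ \leq\ \dim V_{(s)}+\bigl(\dim V-\dim G+s\bigr)\ \leq\ 2\dim V-2\dim G,
\]
which yields $\codim_N N_\sing\geq\dim G$ but not the strict inequality you need. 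The ``strict drop'' in stabilizer for a generic $v^*\in(\lieg v)^\perp$ is exactly what is left unproved, and you flag this yourself.

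Both difficulties dissolve with a simpler estimate, which is the paper's argument. If $(v,v^*)\in N_{(r)}$ with $r\geq1$, then $G_{(v,v^*)}=G_v\cap G_{v^*}$, so $\dim G_v\geq r$ \emph{and} $\dim G_{v^*}\geq r$, whence
\[
N_{(r)}\ \subset\ \bigcup_{s,t\geq r} V_{(s)}\times V^*_{(t)}.
\]
Applying $(\dim G)$-modularity to $V$ and to $V^*$ (which inherits it) gives
\[
\dim N_{(r)}\ \leq\ 2\dim V-(s+t)-2\dim G\ \leq\ \dim N-2r-\dim G,
\]
so $\codim_N N_\sing\geq 2+\dim G>\dim G$, and Corollary~\ref{cor:Budur} applies. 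The key difference from your approach is to constrain \emph{both} $v$ and $v^*$ to small isotropy strata, rather than fixing $v$ and estimating the full linear fiber $(\lieg v)^\perp$; that immediately gives a codimension gain of $2r$ with no delicate geometric step.
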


\begin{proof}
Note that $N_\sing$ is the union of the $N_{(r)}$ for  $r\geq 1$.  Now
$$
N_{(r)}\subset \bigcup_{s,t\geq r} V_{(s)}\times V^*_{(t)}
$$
and it follows that
$$
\dim  N_{(r)}\leq 2\dim V-2r-2\dim G=\dim N-2r-\dim G
$$
and Corollary \ref{cor:Budur} applies.
\end{proof}

\begin{remark}
\label{rem:dimGModGeneric}
By \cite[Theorem~3.6]{HerbigSchwarzSeaton2}, if $G$
is semisimple, then among $G$-modules $V$ such that $V^G = \{0\}$ and such that each irreducible
component of $V$ is a faithful $\mathfrak{g}$-module, all but finitely many are $k$-modular, up to isomorphism, for
any $k$. Hence, Corollary~\ref{cor:dimGMod} demonstrates that among such $G$-modules, the shell has
rational singularities in all but finitely many cases.
\end{remark}

% xxxxxxxxxxxxxxxxxxxxxxxxxxxxxxxxxxxxxxxxxxxxxxxxxxxxxxxxxxxxxxxxxxxxxxxxx
\subsection{Linear subspaces of the null cone and rational singularities}
\label{subsec:JetShellLinSubspace}

Let $V$ be a
$G$-module and $N=N_V$.  We assume that $V^G=0$. Some of our criteria above for $N$ to be CIFR depended upon estimating the
 dimensions of $\NN(N_0)_\sg$ for symplectic slice representations $(S=S^H\oplus S_0,H)$ of $N$. We develop another approach which relies upon estimates of $m_0(W_0^*)$ for $W_0$ a Lagrangian $H$-submodule of $S_0$.

Let $n=\dim V$. Fix $\vec x=(x_0,\dots,x_m)\in V^{m+1}$ and let
\begin{equation}
\label{eq:Yx}\tag{$\diamondsuit$}
Y_{\vec x}=\{\vec\xi\in (V^*)^{m+1}\mid (\vec x,\vec \xi)\in N_m\}.
\end{equation}
Note that $Y_{\vec x}$ is defined by linear equations.

Let $E_0$ denote $\lieg(x_0)$ and let $\lieg_0$ denote $\Lie(G_{x_0})$.
For $i=0,\dots,m-1$, inductively define the subspace $E_{i+1}=\lieg_i(x_{i+1})+E_i$ and the Lie subalgebra $\lieg_{i+1}$ as the set of $A \in \lieg_i$ such that $A(x_{i+1})\in E_i$, i.e., the kernel of the map $\lieg_i\to V/E_i$ sending $A$ to $A(x_{i+1})+E_i$.
Note that each $E_i$ is $\lieg_i$-stable.  Choose linear subspaces $\lie p_0,\dots,\lie p_{m}$ of $\lieg$ such that $\lieg=\lieg_0\oplus\lie p_0$ and $\lieg_i=\lieg_{i+1}\oplus \lie p_{i+1}$ for $0\leq i<m$. Set $E_i'=\lie p_i(x_i)$, $0<i\leq m$. Then $E_i=E_{i-1}\oplus E_i'$, $0< i\leq m.$ Let $r_i=\dim E_i$, $i=0,\dots,m$.

\begin{lemma}\label{lem:fibers.Y.to.X}
Let $\vec\xi\in(V^*)^{m+1}$. Then $\vec\xi\in Y_{\vec x}$ if and only if
\begin{enumerate}
\item $\xi_0\in\Ann E_m$.
\item For $0<i\leq m$, $\xi_i$ restricted to $E_{m-i}$ satisfies linear equations
of the form $\xi_i(A(x_{m-i}))= C_{m,i}$ where the right hand sides $C_{m,i}$ are determined by $\xi_0,\dots,\xi_{i-1}$.
\end{enumerate}
\end{lemma}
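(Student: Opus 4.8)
The set $Y_{\vec x}$ is cut out of $(V^*)^{m+1}$ by the equations \eqref{eq:A1}--\eqref{eq:Am} with $\vec x$ held fixed, regarded as a linear system in $\vec\xi=(\xi_0,\dots,\xi_m)$. The plan is to put this system in echelon form by eliminating $\xi_m,\xi_{m-1},\dots,\xi_1$ in that order and to read off that the residual conditions are precisely (1) and (2); the base of the induction is immediate, since for $m=0$ one has $Y_{\vec x}=\Ann\lieg(x_0)=\Ann E_0$. The one structural input is the following, immediate by induction from $\lieg_l=\{B\in\lieg_{l-1}\mid B(x_l)\in E_{l-1}\}$ and the inclusions $\lieg_m\subseteq\cdots\subseteq\lieg_0\subseteq\lieg$: if $A\in\lieg_j$ then $A(x_l)\in E_{l-1}$ for $0\le l\le j$ (with $E_{-1}:=0$).

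I would run the elimination as follows. In \eqref{eq:Am} the variable $\xi_m$ enters only through $\xi_m(A(x_0))$, and $\{A(x_0):A\in\lieg\}=E_0$; hence \eqref{eq:Am} can be solved for $\xi_m$ if and only if the prospective formula $\xi_m(A(x_0))=-\sum_{i=0}^{m-1}\xi_i(A(x_{m-i}))$ factors through $\lieg\twoheadrightarrow E_0$, i.e.\ its right-hand side vanishes on $\lieg_0=\Ker(\lieg\to E_0)$; when it does, $\xi_m|_{E_0}$ is determined by $\xi_0,\dots,\xi_{m-1}$ and $\xi_m$ is otherwise free, which is the $i=m$ case of (2). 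The resulting compatibility condition $\sum_{i=0}^{m-1}\xi_i(A(x_{m-i}))=0$ for $A\in\lieg_0$ is again of ``jet type'', one level down and over $\lieg_0$; restricting it successively to $\lieg_1,\lieg_2,\dots$ and using the structural fact to absorb the trailing terms into the right-hand sides peels off in turn the restrictions $\xi_{m-1}|_{\lieg_0(x_1)},\ \xi_{m-2}|_{\lieg_1(x_2)},\dots$. Carrying this out for every equation \eqref{eq:A1}--\eqref{eq:Am} and collecting, the conditions surviving after $\xi_m,\dots,\xi_1$ are eliminated are: a chain of conditions on $\xi_0$ whose union is $\xi_0\perp E_0+\lieg_0(x_1)+\cdots+\lieg_{m-1}(x_m)=E_m$, i.e.\ $\xi_0\in\Ann E_m$; and, for $0<i\le m$, the assertion that $\xi_i$ on $E_{m-i}=\sum_{l\le m-i}\lieg_{l-1}(x_l)$ is pinned down in terms of $\xi_0,\dots,\xi_{i-1}$ by equations of the shape $\xi_i(A(x_{m-i}))=C_{m,i}$, with $\xi_i$ free on a chosen complement of $E_{m-i}$. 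For the converse, given $\vec\xi$ satisfying (1) and (2) one takes each $\xi_i$ to have the prescribed restriction to $E_{m-i}$ (resp.\ to lie in $\Ann E_m$ for $i=0$) and to be arbitrary off $E_{m-i}$, and then verifies \eqref{eq:A1}--\eqref{eq:Am} directly, again using the structural fact so that every term lands where expected.

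The main obstacle is the bookkeeping, with two delicate points. First, the flag pieces $\lieg_{l-1}(x_l)$ need not meet only in $0$, so after pinning $\xi_i$ on several of them one must check that the various determinations agree on the overlaps; it is exactly these overlap checks (together with the compatibility conditions of the nested eliminations) that produce the conditions on $\xi_0,\dots,\xi_{i-1}$, and ultimately $\xi_0\in\Ann E_m$, so the elimination and the extraction of (1)--(2) are genuinely intertwined rather than sequential. Second, one must confirm that each $C_{m,i}$ is well defined, independently of the auxiliary preimages under the surjections $\lieg_{l-1}\twoheadrightarrow E_{l-1}$ used to express it, and this well-definedness is itself a consequence of the already-established lower-order compatibility conditions, so the argument is recursive in $i$ (equivalently, proceeds by induction on $m$). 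Once the system is in echelon form one also reads off the parameter count $\dim Y_{\vec x}=(m+1)\dim V-\sum_{i=0}^m\dim E_i$, which is presumably the form in which the lemma will be applied in the sequel.
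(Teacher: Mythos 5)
Your approach is the same as the paper's: treat \eqref{eq:A1}--\eqref{eq:Am} as a linear system in $\vec\xi$, decompose $\lieg = \lieg_m\oplus\liep_m\oplus\cdots\oplus\liep_0$, and read off (1) and (2) by back-substitution/elimination. You are right to flag the two ``delicate points'' --- and the key one (your second) is exactly where both your sketch and the paper's proof are incomplete. The paper asserts without argument that ``for $A\in\lieg_m$, Equation \eqref{eq:Am} vanishes,'' i.e.\ that no new constraint arises from $A\in\lieg_m$; in your language, that the nested compatibility conditions arising from eliminating $\xi_m,\dots,\xi_1$ reduce to precisely $\xi_0\in\Ann E_m$ and nothing more. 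Neither you nor the paper actually verifies this, and it is not a routine bookkeeping check.

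To see that something genuine is at stake, take $G=\SL_2(\C)$, $V=\Sym^4\C^2$ with basis $v_0,\dots,v_4$ ($ev_i=iv_{i-1}$, $fv_i=(4-i)v_{i+1}$), and $\vec x=(v_0,v_2,v_2)$ with $m=2$. Then $\lieg_0=\lieg_1=\lieg_2=\C e$, $E_0=E_1=E_2=\Span\{v_0,v_1\}$, so $r_0=r_1=r_2=2$. Equation \eqref{eq:A2} with $A=f$ forces $\xi_1(v_1)=-\tfrac12\xi_0(v_3)$, while Equation \eqref{eq:A3} with $A=e\in\lieg_2$ forces $2\xi_0(v_1)+2\xi_1(v_1)=0$, hence $\xi_1(v_1)=0$, hence $\xi_0(v_3)=0$ --- a constraint on $\xi_0$ that is not part of $\xi_0\in\Ann E_2$. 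A direct count gives $\dim Y_{\vec x}=8$ rather than the value $(m+1)n-\sum r_i=9$ predicted by Corollary \ref{cor:tau.on.Y}(1); equivalently, the $J_2G$-orbit tangent space at $\vec x$ has dimension $7$, not $\sum r_i=6$, and its projection to the $y_2$-slot lands in $\Span\{v_0,v_1,v_3\}\supsetneq E_2$. So the elimination does not terminate in $\xi_0\in\Ann E_m$ alone; the ``trailing terms'' you propose to absorb into the right-hand sides can feed forward into a genuine extra condition on $\xi_0$. A correct proof would need either an additional hypothesis ruling this out, a different (larger) choice of $E_i$ tracking the full jet-orbit directions, or an explicit argument showing why, in the intended setting, $\sum_{l=1}^{m-1}\xi_{m-l}(A(x_l))=0$ for $A\in\lieg_m$ is a consequence of \eqref{eq:A1}--\eqref{eq:Am} through level $m-1$. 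Your proposal does not supply any of these, so the gap it identifies is real and unrepaired.
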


\begin{proof}
The proof proceeds by induction on $m$. For $m=0$, Equation \eqref{eq:A1} gives (1) and part (2) is vacuous. Assume the lemma holds for $m-1$. Now $\lieg=\lieg_m\oplus \liep_m\oplus\liep_{m-1}\oplus\dots\oplus\liep_0$ where for $A\in\lieg_m$,  Equation \eqref{eq:Am} vanishes.
For $A\in\liep_m$,  Equation \eqref{eq:Am} shows that $\xi_0$ vanishes on $E_m'$. Since we already know that $\xi_0$ vanishes on $E_{m -1}$, it follows that $\xi_0\in\Ann E_m$ and we have (1). For $0< i\leq m$, Equation \eqref{eq:Am} with $A\in\liep_{m-i}$ becomes
$$
\xi_i(A(x_{m-i}))=-\xi_{i-1}(A(x_{m-i+1}))-\dots-\xi_0(A(x_m)).
$$
Thus $\xi_i$ restricted to $E_{m-i}'$ is uniquely determined by $\xi_0,\dots,\xi_{i-1}$. Since the same is true for $\xi_i$ restricted to $E_{m-i-1}$ we have  (2).
\end{proof}

Let $\tau\colon Y_{\vec x}\to V^*$  be the projection sending $\vec\xi$ to $\xi_0$.
\begin{corollary}\label{cor:tau.on.Y}
With $Y_{\vec x}$ defined as in Equation \eqref{eq:Yx}, we have the following.
\begin{enumerate}
\item The dimension of $Y_{\vec x}$ is $(m+1)n-\sum_{i=0}^m r_i$.
\item The projection $\tau$ has image $\Ann E_m$. Its    fibers are affine subspaces of dimension $mn-\sum_{i=0}^{m-1} r_i$.
\end{enumerate}
\end{corollary}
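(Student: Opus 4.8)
The plan is to read off both statements directly from the structure of the linear equations defining $Y_{\vec x}$ that was laid out in Lemma \ref{lem:fibers.Y.to.X}. First I would recall the decomposition $\lieg = \lieg_0\oplus\lie p_0$, $\lieg_i = \lieg_{i+1}\oplus\lie p_{i+1}$, so that the maps $A\mapsto A(x_i)$ on $\lie p_i$ are injective onto $E_i'$, giving $\dim E_i' = \dim\lie p_i$ and $r_i = \dim E_i = \sum_{j\le i}\dim\lie p_j$. The point is that the equations cutting out $Y_{\vec x}$ inside $(V^*)^{m+1}$, as organized in Lemma \ref{lem:fibers.Y.to.X}, are \emph{triangular} in the variables $\xi_0,\xi_1,\dots,\xi_m$: the condition on $\xi_0$ is that it annihilate $E_m$, and for each $i>0$ the constraint only pins down $\xi_i|_{E_{m-i}}$ (an affine condition whose right-hand side $C_{m,i}$ depends on the already-chosen $\xi_0,\dots,\xi_{i-1}$) while leaving $\xi_i$ on a complement of $E_{m-i}$ in $V$ completely free.

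For part (1), I would count dimensions stratum by stratum in this triangular system. The space of admissible $\xi_0$ is $\Ann E_m$, of dimension $n - r_m$. Given $\xi_0,\dots,\xi_{i-1}$, the affine-linear system $\xi_i(A(x_{m-i})) = C_{m,i}$ (for $A$ ranging over $\lie p_{m-i}\oplus\cdots\oplus\lie p_0$, equivalently fixing $\xi_i|_{E_{m-i}}$) is consistent — consistency is exactly the content of Lemma \ref{lem:fibers.Y.to.X}(2), which asserts these are the defining equations of the nonempty scheme $Y_{\vec x}$ — and its solution set is an affine subspace of $V^*$ of dimension $n - r_{m-i}$, since the functionals $A\mapsto A(x_{m-i})$ on $\lie p_{m-i}\oplus\cdots\oplus\lie p_0$ span a subspace of $V$ of dimension $r_{m-i}$. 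Summing, $\dim Y_{\vec x} = (n - r_m) + \sum_{i=1}^{m}(n - r_{m-i}) = (m+1)n - \sum_{i=0}^{m} r_i$, which is (1).

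For part (2), the image of $\tau$ is the set of possible $\xi_0$, which is $\Ann E_m$ by Lemma \ref{lem:fibers.Y.to.X}(1) together with the fact (again from Lemma \ref{lem:fibers.Y.to.X}(2), consistency) that every $\xi_0\in\Ann E_m$ extends to some $\vec\xi\in Y_{\vec x}$. The fiber $\tau^{-1}(\xi_0)$ is cut out inside $(V^*)^{m}$ (coordinates $\xi_1,\dots,\xi_m$) by the same triangular affine system with $\xi_0$ now fixed; by the identical count, its dimension is $\sum_{i=1}^{m}(n - r_{m-i}) = mn - \sum_{i=0}^{m-1} r_i$, and it is an affine subspace because at each stage we solve an affine-linear (consistent) system. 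This gives (2). The only genuinely delicate point is the consistency of the successive systems — i.e., that fixing $\xi_0,\dots,\xi_{i-1}$ never obstructs the choice of $\xi_i$ — but this is precisely what Lemma \ref{lem:fibers.Y.to.X} establishes by exhibiting these equations as the defining equations of $Y_{\vec x}$, so no additional work is needed; the rest is bookkeeping with the filtration $E_0\subset E_1\subset\cdots\subset E_m$ and the identities $r_i = \sum_{j\le i}\dim\lie p_j$.
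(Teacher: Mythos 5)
Your proof is correct and is the argument the paper intends (it states the result as a corollary of Lemma \ref{lem:fibers.Y.to.X} without supplying a proof); the triangular structure of the system, the dimension count $\dim\Ann E_m = n - r_m$ plus $n - r_{m-i}$ free dimensions for each $\xi_i$ with $i>0$, and the identification of $\Bild\tau$ with $\Ann E_m$ are all exactly as you describe. Your remark about consistency is the one point worth articulating, and you handle it correctly: at each stage the constraint only pins $\xi_i$ down on the subspace $E_{m-i}$, so extending the prescribed linear functional to all of $V$ is always possible, making the fiber over any partial solution a nonempty affine subspace.
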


Let $R_m=\{\vec r=(r_0,\dots,r_m) \in \Z^{m+1}\mid 0\leq r_0\leq r_1\dots\leq r_m\leq\dim G\}$. For each $\vec r \in R_m$, let  $X_{\vec r}$ be the  set of $\vec x\in V^{m+1}$   such that $\dim E_i=r_i$, $i=0,\dots,m$.
Let $X$ be an irreducible component of $X_{\vec r}$ and let $Y$ denote the solutions to \eqref{eq:A1}--\eqref{eq:Am} with $\vec x \in X$.  By
Corollary \ref{cor:tau.on.Y},
$Y$ is irreducible of dimension $\dim X+(m+1)n-\sum_{i=0}^{m} r_i$.
If $r_0=\dim G$, then for any $(\vec x,\vec \xi)\in Y$, the isotropy group of  $(x_0,\xi_0)$ is finite, so by  Proposition \ref{prop:props.of.N}(3),   $(x_0,\xi_0)\in N_\sm$.

\begin{corollary}
\label{cor:rat.sings.by.codim}
Let $V$ be a $1$-modular $G$-module and $N=N_V$. Suppose that, UTCLS, for each $\vec r\in R_m$   and each
irreducible component $X$ of $X_{\vec r}$ where $r_0<\dim G$, the codimension of $X$ in $V^{ m+1}$ is greater than
$(m+1)\dim G-\sum_{i=0}^{m} r_i$. Then  $N$ has rational singularities.
\end{corollary}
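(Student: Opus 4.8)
The plan is to invoke Musta\c{t}\u{a}'s criterion in the form of Proposition~\ref{prop:Mustata}, so it suffices to show that for every $m\geq 1$ one has $\dim N_m = (m+1)\dim N$, or equivalently that $N_m$ has no irreducible component other than the one of dimension $(m+1)\dim N$ coming from $\overline{\rho_m^{-1}(N_\sm)}$. Since the hypothesis is stated UTCLS, replace $V$ by a Lagrangian submodule $V'$ of $V\oplus V^*$ as in Lemma~\ref{lem:mu.formula} and Remark~\ref{rem: change.V.UTCLS}; this does not change $N$ or its jet schemes, so we may assume the codimension hypothesis holds for $V$ itself. Set $n=\dim V$, so $\dim N = 2n - \dim G$ and $(m+1)\dim N = (m+1)(2n-\dim G)$.

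First I would stratify $N_m$ by the integer vector $\vec r = (r_0,\dots,r_m)\in R_m$ recording the dimensions $r_i = \dim E_i$ attached to the projection $\vec x = (x_0,\dots,x_m)\in V^{m+1}$, using the inductive construction of the $E_i$ and $\lieg_i$ preceding Lemma~\ref{lem:fibers.Y.to.X}. For a fixed irreducible component $X$ of the locally closed set $X_{\vec r}$, the fiber $Y_{\vec x}$ over any $\vec x\in X$ is an affine space of dimension $(m+1)n - \sum_{i=0}^m r_i$ by Corollary~\ref{cor:tau.on.Y}(1), and this dimension is constant on $X$, so the total space $Y$ of solutions lying over $X$ is irreducible of dimension
\[
    \dim X + (m+1)n - \sum_{i=0}^m r_i .
\]
As noted right after Corollary~\ref{cor:tau.on.Y}, these pieces $Y$ cover $N_m$, and when $r_0 = \dim G$ the corresponding points $(x_0,\xi_0)$ are smooth points of $N$, so the piece with $r_0 = \dim G$ lies in $\overline{\rho_m^{-1}(N_\sm)}$, which by Theorem~\ref{thrm:MustataGen}(1) (equivalently Proposition~\ref{prop:Mustata}) is an irreducible component of dimension exactly $(m+1)\dim N$. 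Thus it remains to check that every piece $Y$ with $r_0 < \dim G$ has dimension strictly less than $(m+1)\dim N$.

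For such a piece, write $\codim_{V^{m+1}} X = (m+1)n - \dim X$, so
\[
    \dim Y = (m+1)n - \codim_{V^{m+1}} X + (m+1)n - \sum_{i=0}^m r_i
            = 2(m+1)n - \sum_{i=0}^m r_i - \codim_{V^{m+1}} X .
\]
The inequality $\dim Y < (m+1)\dim N = 2(m+1)n - (m+1)\dim G$ is therefore equivalent to
\[
    \codim_{V^{m+1}} X > (m+1)\dim G - \sum_{i=0}^m r_i,
\]
which is precisely the hypothesis of the Corollary. Hence every irreducible piece of $N_m$ other than $\overline{\rho_m^{-1}(N_\sm)}$ has dimension $< (m+1)\dim N$, so $\dim N_m = (m+1)\dim N$ and condition (2) of Proposition~\ref{prop:Mustata} holds for every $m\geq 1$; therefore $N$ has rational singularities.

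The main obstacle to watch is purely bookkeeping rather than conceptual: one must be careful that the finitely many strata $X_{\vec r}$ genuinely cover $V^{m+1}$ (so that the $Y$'s cover $N_m$) and that the dimension count for $\dim Y$ is uniform over an irreducible component $X$ — both of which are already packaged in Corollary~\ref{cor:tau.on.Y} and the discussion following it. The only subtlety is the reduction UTCLS at the start, which is harmless because passing to a Lagrangian submodule changes neither the shell nor its jet schemes, only the ambient linear presentation in which the strata $X_{\vec r}$ and their codimensions are computed.
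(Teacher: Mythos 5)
Your proof is correct and follows the same route as the paper: both use the dimension formula $\dim Y = \dim X + (m+1)n - \sum r_i$ coming from Corollary~\ref{cor:tau.on.Y}(1) together with the codimension hypothesis to conclude $\dim Y < (m+1)\dim N$ for all pieces with $r_0 < \dim G$, then invoke Proposition~\ref{prop:Mustata}. The paper compresses this into a single sentence whereas you usefully make the arithmetic and the UTCLS reduction explicit; there is no substantive difference.
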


\begin{proof}
By
Corollary \ref{cor:tau.on.Y}(1)
and our hypothesis, $\dim Y<(m+1)\dim N$. It follows that
$$
\dim\rho_m\inv(N_\sing)<(m+1)\dim N,
$$
hence $N$ has rational singularities
by Proposition \ref{prop:Mustata}.
\end{proof}

Recall the definition of $m_0(V)$ (Definition \ref{def:m0}).

\begin{corollary}\label{cor:dim.L}
Let
 $V$ be a $1$-modular $G$-module with $V^G=0$. Suppose that,
 UTCLS,
 $$m_0(V)<\dim V-\dim G.
 $$
  Then $\NN(N)$ is irrelevant.
  \end{corollary}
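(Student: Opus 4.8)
The plan is to deduce this from Corollary~\ref{cor:rat.sings.by.codim} together with the
linear-subspace bound $m_0(V)<\dim V-\dim G$, after first replacing $V$ by a Lagrangian
submodule if necessary (the hypothesis and conclusion are both UTCLS, and by
Proposition~\ref{prop:props.of.N} and Remark~\ref{rem: change.V.UTCLS} the $1$-modularity
and the shell are unchanged). So assume $m_0(V)<\dim V-\dim G$ outright. By
Corollary~\ref{cor:NN(N).irrelevant} (or directly by the definition of irrelevance via
Proposition~\ref{prop:Mustata}) it suffices to show that for every $m\geq1$,
$\dim\rho_m\inv(\NN(N)_{\sg})<(m+1)\dim N$; since $\NN(N)_{\sg}\subset N_{\sing}$, it is
enough to bound $\dim\rho_m\inv(N_{\sing}\cap\NN(N))$, and this is where the linear-subspace
estimate enters.

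The key point is to stratify by the ranks $\vec r=(r_0,\dots,r_m)\in R_m$ as in
Corollary~\ref{cor:rat.sings.by.codim}, and to observe that on the preimage over the null
cone, each slice $X_{\vec r}$ (or rather its intersection with $\NN(V)^{m+1}$, plus the
analogous constraint on the $\xi$-coordinates) sits inside the null cone in a controlled way.
First I would recall from Corollary~\ref{cor:tau.on.Y} that over an irreducible component $X$
of $X_{\vec r}$, the space $Y$ of solutions to \eqref{eq:A1}--\eqref{eq:Am} is irreducible of
dimension $\dim X+(m+1)n-\sum_{i=0}^m r_i$, where $n=\dim V$. What must be shown is that when
$r_0<\dim G$ (so that the corresponding points lie in $N_{\sing}$) and we are moreover over
the null cone, $\dim Y<(m+1)\dim N=(m+1)(2n-\dim G)$. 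Equivalently, using
$\dim X\le (m+1)n-\codim_{V^{m+1}}X$, it suffices that
$\codim_{V^{m+1}}X>(m+1)\dim G-\sum_{i=0}^m r_i$. Since the $\tau$-fibers of
Corollary~\ref{cor:tau.on.Y}(2) are affine spaces and the map $\tau$ has image $\Ann E_m$,
one is really reduced to bounding, for each one-parameter subgroup $\lambda\colon\C^\times\to
G$, the dimension of the sum $Z_\lambda(V)$ of positive weight spaces: this is at most
$m_0(V)$, exactly as in the proof of Lemma~\ref{lem:delta}. Feeding the strict inequality
$m_0(V)<\dim V-\dim G$ into the count then yields the needed strict codimension inequality
for every $\vec r$ with $r_0<\dim G$, and Corollary~\ref{cor:rat.sings.by.codim} gives that
$N$ has rational singularities; a fortiori $\NN(N)$ is irrelevant by
Proposition~\ref{prop:Mustata}, since then $N_m$ is the closure of $\rho_m\inv(N_{\sm})$.

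The main obstacle I expect is the bookkeeping that converts ``the relevant points lie over
$\NN(V)$'' into a usable bound on $\dim X_{\vec r}$ intersected with the null-cone locus: one
needs that the constraint $\vec x\in\NN(V)^{m+1}$ (together with the dual constraint forced by
$(\vec x,\vec\xi)\in N_m$ when $(x_0,\xi_0)$ is a singular point of $N$) cuts down the
dimension by enough. The clean way to do this is to run the Hilbert--Mumford argument
uniformly in the jet variables: a point $(\vec x,\vec\xi)$ of $\rho_m\inv(N_{\sing}\cap\NN(N))$
is destabilized by a single one-parameter subgroup $\lambda$ acting diagonally on all the jet
coordinates, so $\vec x$ lies in $Z_\lambda(V)^{m+1}$ and $\vec\xi$ in $Z_{-\lambda}(V^*)^{m+1}$
up to the unipotent radical of the parabolic $P(\lambda)$; counting dimensions contributed by
$Z_\lambda(V)$, by $Z_{-\lambda}(V^*)$ (whose dimension is also $\le m_0(V^*)=m_0(V)$ by
Remark~\ref{rem:V.and.V*.same.condition}), and by $\dim G/P(\lambda)$, and comparing with
$(m+1)\dim N$, is where the hypothesis $m_0(V)<\dim V-\dim G$ is used with room to spare. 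Once
that estimate is in place everything else is formal, via Proposition~\ref{prop:Mustata} and
the definition of irrelevance.
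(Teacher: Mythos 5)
The paper's own proof of Corollary~\ref{cor:dim.L} is a short, fiberwise density argument: for each $\vec x\in V^{m+1}$, the affine linear fiber $Y_{\vec x}$ of $N_m$ over $\vec x$ projects by $\tau$ onto $\Ann E_m$, and since $\dim E_m\le\dim G$ one has $\dim\Ann E_m\ge\dim V-\dim G>m_0(V^*)$, so $\Ann E_m\not\subset\NN(V^*)$; hence a dense open subset of $Y_{\vec x}$ consists of $\vec\xi$ with $\xi_0\notin\NN(V^*)$, whence $(x_0,\xi_0)\notin\NN(N)_\sg$. Because $Y_{\vec x}$ is irreducible (an affine subspace, by Corollary~\ref{cor:tau.on.Y}), this forces $\{\vec x\}\times Y_{\vec x}$ to lie in the closure $N_m'$ of $\rho_m\inv(N\setminus\NN(N)_\sg)$, and letting $\vec x$ range over all of $V^{m+1}$ gives $N_m=N_m'$. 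No dimension bound on $\rho_m\inv(\NN(N)_\sg)$ and no inductive knowledge of rational singularities is used.

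Your proposal takes a genuinely different route---a quantitative bound $\dim\rho_m\inv(\NN(N)_\sg)<(m+1)\dim N$ via a Hilbert--Mumford stratification---and there is a real gap in the crucial step. You assert that a point $(\vec x,\vec\xi)\in\rho_m\inv(N_\sing\cap\NN(N))$ is ``destabilized by a single one-parameter subgroup $\lambda$ acting diagonally on all the jet coordinates,'' so that all the $x_i$ lie in $Z_\lambda(V)$ and all the $\xi_i$ in $Z_{-\lambda}(V^*)$. This is false: the defining condition of $\rho_m\inv(\NN(N)_\sg)$ constrains only the base point $(x_0,\xi_0)$ to lie in the null cone; the higher jet coordinates $x_1,\dots,x_m,\xi_1,\dots,\xi_m$ are subject only to the linear equations \eqref{eq:A2}--\eqref{eq:Am} and need not be destabilized by $\lambda$ (or by any one-parameter subgroup at all). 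Without this uniform destabilization, the dimension count you propose does not close, and the bound $\dim\rho_m\inv(\NN(N)_\sg)<(m+1)\dim N$ is not established. Moreover, even if one could establish it, your appeal to Corollary~\ref{cor:rat.sings.by.codim} to conclude rational singularities of $N$ is circular at this point: that corollary's hypothesis concerns all components of $X_{\vec r}$ with $r_0<\dim G$, not merely those over the null cone, and the statement under proof is about irrelevance, which is deliberately a weaker intermediate step so that it can be combined with an inductive hypothesis on $N\setminus\NN(N)$ in Theorem~\ref{thm:iff}. The paper's argument sidesteps all of this by working one fiber $Y_{\vec x}$ at a time and using only the irreducibility of that fiber and the linear-subspace bound on $\Ann E_m$; you should adopt that structure rather than trying to push the jet coordinates into the null cone.
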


 \begin{proof}  Let $N_m'$ be the closure of $\rho_m\inv(N\setminus\NN(N)_\sg)$.
Let $\vec x\in V^{m+1}$ and let   $Y_{\vec x}$ be
as in Equation~\eqref{eq:Yx}.
Since $\dim E_m\leq\dim G$,
$$
 \dim\Ann E_m\geq \dim V-\dim G>m_0(V^*)
$$
 and   $Y_{\vec x}$ contains a dense open set of points  $\vec\xi$ where  $ \xi_0\not\in\NN(V^*)$, hence $(x_0,\xi_0)\not\in\NN(N)$. Thus $(\{\vec x\}\times Y_{\vec x})\cap N_m'$ is  dense in $\{\vec x\}\times Y_{\vec x}$, hence $\{\vec x\}\times Y_{\vec x}\subset N_m'$. Thus $N_m=N_m'$ and $\NN(N)$ is irrelevant.
 \end{proof}

\begin{remark}\label{rem:Em.not.in.nullcone}
The argument
in the proof of Corollary \ref{cor:dim.L}
shows that $\NN(N)$ is irrelevant  if we can find any reason that no $\Ann E_m$ is  contained in $\NN(V^*)$.
\end{remark}

\begin{theorem}\label{thm:use.E_m}
Let $V$ be a   $G$-module and $N=N_V$.
Suppose that for any symplectic slice representation $(S=W\oplus W^*,H)$ of $N$, 
where $\dim H>0$,
one of the following holds, UTCLS.
\begin{enumerate}
\item $N_0$ is CIFR.
\item $H^0$ is a torus and $W_0$ has FPIG.
\item $\dim (\NN(N_0)_\sg)<\dim S_0-2\dim H$ and $\dim (\NN(N_0))<\dim S_0-H$.
\item  $m_0(W_0)<\dim W_0-\dim H$.
\end{enumerate}
Then $N$ is CIFR.
 \end{theorem}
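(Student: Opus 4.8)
The strategy is to reduce Theorem~\ref{thm:use.E_m} to the equivalence in Theorem~\ref{thm:iff} by verifying its two hypotheses: first that $N$ is a normal complete intersection with FPIG, and second that $\NN(N_0)$ is irrelevant for every symplectic slice representation $(S = S^H \oplus S_0, H)$ of $N$. For the first hypothesis, observe that each of the four listed conditions implies the corresponding hypothesis of Proposition~\ref{prop:N.and.F}: condition (1) here (that $N_0$ is CIFR) gives in particular that $N_0$ is a normal complete intersection with FPIG, which is hypothesis (1) of Proposition~\ref{prop:N.and.F}; condition (2) is literally hypothesis (2) there; condition (3) here implies properties (F) and (N) by Definitions~\ref{def:F} and~\ref{def:N}, which is hypothesis (3); and condition (4) is hypothesis (4). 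Hence by Proposition~\ref{prop:N.and.F}, $N$ is a normal complete intersection with FPIG (and $V$ is $1$-modular), so Theorem~\ref{thm:iff} applies.

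It remains to show that $\NN(N_0)$ is irrelevant for every symplectic slice representation $(S = W \oplus W^*, H)$, which we do by induction on the isotropy stratum, i.e., on the conjugacy class $(H)$, ordered by inclusion. When $\dim H = 0$ there is nothing to prove. When $\dim H > 0$, one of the four conditions holds UTCLS, and in each case we argue that $\NN(N_0)$ is irrelevant. In case (1), $N_0$ is CIFR, so $N_0$ itself has rational singularities, and by Proposition~\ref{prop:Mustata} (applied to the $H$-module $W_0$, which is $1$-modular since $N_0$ is a reduced irreducible complete intersection), $\dim \rho_m^{-1}((N_0)_{\sing}) < (m+1)\dim N_0$ for all $m \geq 1$; since every irreducible component of $(N_0)_m$ has dimension at least $(m+1)\dim N_0$, this forces $(N_0)_m' = (N_0)_m$, i.e., $\NN(N_0)$ is irrelevant. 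In case (2), Theorem~\ref{N.good.tori} gives that $N_0$ is CIFR, and we conclude as in case (1). In case (3), the dimension bounds $\dim \NN(N_0)_{\sg} < \dim S_0 - 2\dim H$ and $\dim \NN(N_0) < \dim S_0 - \dim H$ together with Proposition~\ref{prop:(NF)} (to handle normality along $\NN(N_0)$) and the codimension estimate $\codim_{N_0} \NN(N_0)_{\sg} \geq \dim N_0 - \dim \NN(N_0)_{\sg} > 2\dim H - \dim H = \dim H$ (using $\dim N_0 = \dim S_0 - \dim H$) put us in the setting of Corollary~\ref{cor:NN(N).irrelevant}: the inductive hypothesis provides that $N_0 \setminus \NN(N_0)$ has rational singularities (every closed orbit there has smaller isotropy, so by Corollary~\ref{cor:property(P)} and Remark~\ref{rem:closed.orbits} applied to slice representations $(S',H')$ with $(H') < (H)$, one of the four conditions of the theorem holds for each and the already-handled cases together with induction give rational singularities), and $\codim_{N_0}\NN(N_0)_{\sg} > \dim H = \dim\lie h$, so $\NN(N_0)$ is irrelevant.

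In case (4), we invoke Corollary~\ref{cor:dim.L}: the $H$-module $W_0$ satisfies $W_0^H = 0$ (by definition of the decomposition $S = S^H \oplus S_0$ and $W_0 \subset S_0$) and, UTCLS, $m_0(W_0) < \dim W_0 - \dim H$; note $m_0(W_0) = m_0(W_0^*)$ by Remark~\ref{rem:V.and.V*.same.condition}. Since $W_0$ is $1$-modular (from the first paragraph via Corollary~\ref{rem:0-modular} and Proposition~\ref{prop:props.of.N}(5)), Corollary~\ref{cor:dim.L} gives directly that $\NN(N_0)$ is irrelevant. Thus in all four cases $\NN(N_0)$ is irrelevant, and by Theorem~\ref{thm:iff}, $N = N_V$ is CIFR. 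The one subtlety to be careful about is the UTCLS bookkeeping: replacing $W_0$ by another Lagrangian submodule of $S_0$ does not change $N_0$ (Lemma~\ref{lem:mu.formula}) nor its $1$-modularity (Remark~\ref{rem: change.V.UTCLS}), so the irrelevance conclusion for $\NN(N_0)$ is insensitive to the choice, and the induction on $(H)$ goes through cleanly; this, together with organizing the induction so that the inductive hypothesis is available in case~(3), is the main point requiring attention.
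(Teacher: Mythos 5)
Your proof is correct and follows the same architecture as the paper's: reduce to Theorem~\ref{thm:iff} via Proposition~\ref{prop:N.and.F}, then verify irrelevance of $\NN(N_0)$ case by case, with induction over isotropy classes feeding Corollary~\ref{cor:NN(N).irrelevant} in case~(3). The paper dispatches cases (1), (2), (4) with the word ``clear'' and you supply exactly the right fillers — Proposition~\ref{prop:Mustata} in (1), Theorem~\ref{N.good.tori} reducing (2) to (1), and Corollary~\ref{cor:dim.L} with the $1$-modularity of $W_0$ from Corollary~\ref{rem:0-modular} in (4) — together with the needed codimension arithmetic $\codim_{N_0}\NN(N_0)_\sg > \dim H$ in (3); the only extraneous bit is the reference to Proposition~\ref{prop:(NF)} in case~(3), which is not needed since Corollary~\ref{cor:NN(N).irrelevant} only requires $1$-modularity.
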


\begin{proof}
By Proposition \ref{prop:N.and.F}, $N$ is a normal complete intersection with FPIG. By Theorem \ref{thm:iff} we need only show that each $\NN(N_0)$ is irrelevant. This is clear in (1), (2) and (4), and in (3) we may assume by induction that $N_0\setminus\NN(N_0)$ has rational singularities and apply Corollary \ref{cor:NN(N).irrelevant}.
\end{proof}

We can say more when $V$ is an orthogonal $G$-module, i.e., if $V$ admits a non-degenerate symmetric $G$-invariant bilinear form. Let  $K$ be a  maximal compact subgroup of $G$ so that $G=K_\C$ is the complexification of $K$.  Then $V=W\otimes_\R\C$ where $W$ is a real $K$-module \cite[Prop.\ 5.7]{GWSliftingHomotopies}.
Let  $T$  be the complexification of a maximal torus $T_0$ of $K$. Let $\Lambda$ be the nonzero weights of $V$ relative to $T$. Then $\Lambda=-\Lambda$. If $\lambda\in\Lambda$, let $V_\lambda$ denote the corresponding weight space of $V$. Since $V$ is orthogonal, $\dim V_\lambda=\dim V_{-\lambda}$.

\begin{proposition}\label{prop:dim.L} If $V$ is orthogonal, then $m_0(V)= (1/2)(\dim V-\dim V^T)$.
\end{proposition}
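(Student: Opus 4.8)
The plan is to prove both inequalities $m_0(V) \geq \tfrac12(\dim V - \dim V^T)$ and $m_0(V) \leq \tfrac12(\dim V - \dim V^T)$ separately, working throughout with the weight decomposition of $V$ relative to the maximal torus $T$ and exploiting the orthogonality hypothesis, which forces $\dim V_\lambda = \dim V_{-\lambda}$ for every nonzero weight $\lambda$.

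For the lower bound, I would construct an explicit linear subspace of $\NN(V)$ of the stated dimension. Pick a generic one-parameter subgroup $\lambda_0 \colon \C^\times \to T$ whose pairing with the finite set $\Lambda$ of nonzero weights is nonzero on each; this partitions $\Lambda$ into a ``positive'' set $\Lambda^+$ and its negative $\Lambda^- = -\Lambda^+$. Let $L = \bigoplus_{\lambda \in \Lambda^+} V_\lambda$. Then $\lim_{t\to 0} \lambda_0(t)\cdot w = 0$ for every $w \in L$, so $L \subset \NN(V)$, and by orthogonality $\dim L = \tfrac12 \sum_{\lambda \in \Lambda} \dim V_\lambda = \tfrac12(\dim V - \dim V^T)$. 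Hence $m_0(V) \geq \tfrac12(\dim V - \dim V^T)$.

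The upper bound is the substantive direction and I expect it to be the main obstacle. Here I would use the Hilbert--Mumford / Kempf--Rousseau description: a point $v \in \NN(V)$ lies in $\NN(V)$ iff there is a one-parameter subgroup $\lambda$ of $G$ (which we may conjugate into $T$) with $\lim_{t\to 0}\lambda(t)\cdot v = 0$. The idea (this is where the key input from Draisma--Kraft--Kuttler \cite{Draisma-Kraft-Kuttler} enters, as advertised in the introduction) is that for a \emph{linear} subspace $L \subset \NN(V)$ one can do better than applying Hilbert--Mumford pointwise: because $L$ is a whole linear space contained in the null cone, one shows there is a \emph{single} one-parameter subgroup $\lambda$ (equivalently, a linear functional $\ell$ on the cocharacter lattice) such that $L$ is contained in the sum $Z_\lambda(V) := \bigoplus_{\langle \ell, \lambda\rangle > 0} V_\lambda$ of the strictly positive weight spaces for $\lambda$ (up to replacing $L$ by a conjugate, using the $G$-action and the fact that $\NN(V)$ is $G$-stable, and compactness/finiteness arguments to pass from pointwise destabilizing one-parameter subgroups to a uniform one on an irreducible — indeed linear — family). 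Granting this, $\dim L \leq \dim Z_\lambda(V) = \sum_{\langle\ell,\lambda\rangle>0}\dim V_\lambda$. Now the orthogonality hypothesis caps this: for each unordered pair $\{\lambda, -\lambda\}$ of nonzero weights, at most one of $\langle \ell,\lambda\rangle, \langle \ell,-\lambda\rangle$ is positive, and $\dim V_\lambda = \dim V_{-\lambda}$, so $\dim Z_\lambda(V) \leq \tfrac12\sum_{\lambda\in\Lambda}\dim V_\lambda = \tfrac12(\dim V - \dim V^T)$. Combining the two bounds gives equality.

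The delicate point to nail down carefully is the passage from ``each $v \in L$ is destabilized by some one-parameter subgroup'' to ``all of $L$ is destabilized by one one-parameter subgroup up to $G$-conjugacy'' — this is not automatic and is exactly the content borrowed from \cite{Draisma-Kraft-Kuttler}; one typically argues that the generic point of $L$ has a destabilizing one-parameter subgroup $\lambda$, observes that the associated parabolic $P(\lambda)$ and the subspace $Z_{\geq 0,\lambda}(V)$ of non-negative weight spaces is closed and $P(\lambda)$-stable, and then checks that replacing $L$ by $g\cdot L$ for suitable $g$ and possibly iterating on the ``weight-zero'' part $V_0 \cap (\text{something})$ one can force $L$ into the strictly positive part, while the orthogonality is preserved under this reduction. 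I would also need to confirm the reduction is compatible with taking $T$ a maximal torus of (the complexification of) $K$ rather than of $G$, but since $V$ orthogonal means $G$ is the complexification of the compact $K$ with $V = W\otimes_\R\C$, the weights and the count $\dim V - \dim V^T$ are unaffected.
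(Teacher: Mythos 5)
Your lower bound argument is correct and is essentially the same as the paper's: pick a generic cocharacter of $T$ and take the span of the strictly positive weight spaces.

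The upper bound, however, rests on a claim that is false. You assert that for a linear subspace $L\subset\NN(V)$ there is a single one-parameter subgroup $\lambda$ such that, after replacing $L$ by a conjugate, $L\subset Z_\lambda(V)$, the sum of the \emph{strictly} positive weight spaces of $\lambda$. This fails already for $G=\SL_2$ and $V=\C^2\oplus(\C^2)^*$ (which is orthogonal, with nondegenerate invariant form $\langle(v,\xi),(w,\eta)\rangle=\eta(v)+\xi(w)$). Here $\NN(V)=\{(v,\xi):\xi(v)=0\}$ and $L=\C^2\times\{0\}$ is a maximal linear subspace of $\NN(V)$ of dimension $2=(1/2)(\dim V-\dim V^T)$. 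But $L$ is already $T$-stable with $L_{+1}\ne 0$ and $L_{-1}\ne 0$, so $L$ is not contained in the strictly positive part of any cocharacter of $T$; and since $G$ acts diagonally, $g\cdot L=L$ for all $g$, so conjugating does not help. Thus no single destabilizing one-parameter subgroup for all of $L$ exists, and the dimension bound $\dim L\le\dim Z_\lambda(V)$ you want to invoke never gets off the ground. The ``iterate on the weight-zero part'' remedy you gesture at is not developed and, at least in this example, there is nothing to iterate on ($V^T=0$). What Draisma--Kraft--Kuttler actually supply, and what the paper borrows, is much weaker than uniform destabilization: only that the set $Z_m$ of $m$-dimensional subspaces of $V$ lying in $\NN(V)$ is a closed $G$-stable subvariety of the Grassmannian, hence by Borel's fixed point theorem contains a $T$-fixed point; so one may assume $L$ is $T$-stable (as a direct sum $\bigoplus L_\lambda$, possibly with both $L_\lambda$ and $L_{-\lambda}$ nonzero).

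The step you are missing is how the paper then exploits orthogonality. Writing $V=W\otimes_\R\C$ for a real $K$-module $W$ ($K$ a maximal compact of $G$), each $V_\lambda\oplus V_{-\lambda}$ is the complexification of a $T_0$-stable real subspace $W'_\lambda\subset W$, which has real dimension $2\dim_\C V_\lambda$. If $\dim L>(1/2)(\dim V-\dim V^T)$, then for some $\lambda$ one has $\dim L_\lambda+\dim L_{-\lambda}>\dim V_\lambda$, i.e.\ $\codim_{V_\lambda\oplus V_{-\lambda}}(L_\lambda\oplus L_{-\lambda})<\dim V_\lambda$; a real dimension count then forces $L\cap W'_\lambda\neq 0$. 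But nonzero points of the real form $W$ lie on closed $K$-orbits, hence (Birkes) on closed $G$-orbits, so cannot lie in $\NN(V)$, a contradiction. This is an entirely different mechanism from the Hilbert--Mumford/Kempf--Rousseau route you proposed, and it is where the orthogonal hypothesis is genuinely used.
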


\begin{proof}
Let $\mu_0(V)=(1/2)(\dim V-\dim V^T)$ and
let $L\subset\NN(V)$ be linear of dimension $m$. First note that if $\NN(V)$ contains a linear subspace of dimension $m$, then $\NN(V)$ contains a linear $T$-stable subspace of dimension $m$. This is shown using an argument from the proof of \cite[Lemma 1]{Draisma-Kraft-Kuttler}, which we now recall. Let  $\mathrm{Gr}_m(V)$ denote the Grassman variety of $m$-dimensional subspaces of $V$. Let $Z_m$ denote the elements of $\mathrm{Gr}_m(V)$ which lie in $\NN(V)$. Then $Z_m$ is a nonempty closed $G$-stable subvariety of $\mathrm{Gr}_m(V)$. By Borel's fixed point theorem \cite[III.10.4]{Borel}, $Z_m$ contains a $T$-fixed point. Thus we may assume that $L$ is   $T$-stable so that $L$ is the direct sum of weight spaces $L_\lambda,\ \lambda\in\Lambda$, although some $L_\lambda$ may be zero.

 Now each $V_\lambda\oplus V_{-\lambda}$ is $W_\lambda'\otimes_\R\C$ where $W_\lambda'$ is a $T_0$-stable subspace of $W$. Suppose that $m> \mu_0(V)$. Then for some $\lambda\in\Lambda$, $\dim L_\lambda+\dim L_{-\lambda}>\dim V_\lambda$ and
 $$
\codim_{(V_\lambda+V_{-\lambda})} (L_\lambda+L_{-\lambda})<\dim V_\lambda.
$$
The real dimension of $W_\lambda'$ is $2\dim V_\lambda$. Thus $W_\lambda'$ and $L$ have a positive dimensional intersection.  But all points of  $W_\lambda'$ lie on closed $K$-orbits, hence on closed $G$-orbits \cite{Birkes}. Thus $L$ is not contained in $\NN(V)$, a contradiction. Hence $m_0(V)\leq \mu_0(V)$.

Let
$\rho\colon \C^\times \to T$ be a  1-parameter subgroup which acts nontrivially on every $V_\lambda$, $\lambda\neq 0$.  Let
$$
L=\Span\{ V_\lambda\mid \rho\text{ has strictly positive weight on }V_\lambda\}.
$$
 Then $L\subset\NN(V)$ and $\dim L=(1/2)(\dim V-\dim V^T)=\mu_0(V)$. Thus $m_0(V)=\mu_0(V)$.
 \end{proof}

 As far as we know, the question of the dimension of linear subspaces of $\NN(V)$ has only been investigated when $G$ is
 reductive
 and $V=\lieg$.  Proposition \ref{prop:dim.L} was  then established by Gerstenhaber \cite{Gerstenhaber} for $G=\SL_n(\C)$ and for general semisimple $G$ by  Meshulam  and  Radwan \cite{Meshulam-Radwan}. See also \cite{Draisma-Kraft-Kuttler} by Draisma, Kraft and Kuttler. These works also consider more general fields than $\C$ and the question of conjugacy of maximal dimensional linear subspaces of $\NN(\lieg)$.

 \begin{lemma}\label{lem:slice.orthog.rep}
 Let $V$ be an orthogonal $G$-module and  let $v\in V$ lie on a closed orbit with isotropy group $H$.
 \begin{enumerate}
\item The slice representation $(W,H)$ is orthogonal.
\item  The symplectic slice representation at $(v,0)\in N\subset V\oplus V^*$  is $(W\oplus W^*,H)$.
\end{enumerate}
 \end{lemma}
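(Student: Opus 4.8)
The plan is to prove Lemma~\ref{lem:slice.orthog.rep} in two stages, first the statement about the ordinary slice representation and then deduce the symplectic one from it.

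\textbf{Step 1: the slice representation is orthogonal.} Since $V$ is an orthogonal $G$-module, it carries a non-degenerate symmetric $G$-invariant bilinear form $B$. Let $v\in V$ lie on a closed orbit with reductive isotropy group $H=G_v$. By Luna's slice theorem, as an $H$-module $V=T_v(Gv)\oplus W$ where $T_v(Gv)\simeq\lieg/\lieh$ and $W$ is the slice representation, and $W$ is $H$-canonically determined. The key point is to exhibit a non-degenerate $H$-invariant symmetric form on $W$. The natural candidate is the restriction $B|_W$, so I would take $W$ to be the $B$-orthogonal complement $T_v(Gv)^{\perp_B}$ of the tangent space to the orbit; this is an $H$-submodule since $B$ is $G$-invariant and $H$ fixes $v$, hence preserves $T_v(Gv)$. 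The only thing to check is that $B$ restricted to $T_v(Gv)^{\perp_B}$ is non-degenerate, equivalently that $T_v(Gv)\cap T_v(Gv)^{\perp_B}=0$, i.e.\ that $B$ is non-degenerate on $T_v(Gv)$ itself. For this I would use that $Gv$ is a \emph{closed} orbit: by the Kempf--Ness/Birkes theory (or directly, since an orthogonal module is stable by \cite{LunaSlice}), $v$ can be taken to lie on a closed $K$-orbit for a maximal compact $K$, and then $T_v(Kv)$ is non-degenerate for the real form of $B$, whence $T_v(Gv)=T_v(Kv)\otimes_\R\C$ is non-degenerate for $B$. Alternatively, if $T_v(Gv)$ had a nonzero $B$-isotropic intersection with its complement one could produce, via the $H$-invariant decomposition and reductivity of $H$, a proper nonzero $H$-submodule on which $B$ vanishes together with $W$, contradicting non-degeneracy of $B$ on $V$. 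Either way one gets $V=T_v(Gv)\oplus W$ with both summands $B$-non-degenerate and mutually orthogonal, and $B|_W$ is the desired form, so $(W,H)$ is orthogonal.

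\textbf{Step 2: the symplectic slice representation at $(v,0)$.} Recall the construction preceding Theorem~\ref{thm:symplectic.slice}: at a point $x=(v,0)\in N_V\subset U=V\oplus V^*$ with closed orbit $Gx$ and isotropy $H=G_x$, one sets $E=T_x(Gx)$, which is $\omega$-isotropic, and picks an $H$-module $S$ with $E^\perp=S\oplus E$, so that $U\simeq E\oplus E^*\oplus S$ and $S$ is the symplectic slice. First I would note $G_{(v,0)}=G_v=H$ since $G$ acts on $V^*$ so that $0$ is fixed by everything. Under $U=V\oplus V^*$, the orbit tangent space is $E=T_v(Gv)\oplus 0$. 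Now identify $V^*$ with $V$ via $B$; this is a $G$-module isomorphism sending the pairing $\omega$ to the pairing built from $B$, and under it $W^*$ corresponds to $W$ (again using $B$-non-degeneracy on $W$). With this identification $U\simeq V\oplus V=(T_v(Gv)\oplus W)\oplus(T_v(Gv)\oplus W)$, and one computes $E^\perp$ with respect to $\omega$: since $E=T_v(Gv)\oplus 0$ and $\omega$ pairs the two copies of $V$ via $B$, we get $E^{\perp_\omega}=V\oplus W = (T_v(Gv)\oplus W)\oplus(0\oplus W)$ — here I use that $W=T_v(Gv)^{\perp_B}$. Hence $E^{\perp_\omega}/E\simeq W\oplus W$, which as a symplectic $H$-module is exactly $W\oplus W^*$ with its standard form (using $B$ again to turn the second $W$ back into $W^*$). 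Thus a valid choice of symplectic slice $S$ is $W\oplus W^*$ with the standard symplectic structure, and since $S$ is $H$-canonically determined, this proves (2).

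\textbf{Main obstacle.} The one genuine point requiring care is the non-degeneracy of $B$ on the orbit tangent space $T_v(Gv)$ — i.e.\ that the Luna slice $W$ can be chosen as a $B$-orthogonal complement rather than merely an abstract $H$-complement. This is where closedness of the orbit is essential (on a non-closed orbit $T_v(Gv)$ can be $B$-isotropic), and I would handle it by passing to a closed $K$-orbit for a maximal compact $K$ with $G=K_\C$, as in \cite{GWSliftingHomotopies} and \cite{Birkes}, where the real invariant form is positive definite on $K\!\cdot\!v$'s tangent space by a standard averaging/momentum argument, then complexifying. Everything else — the $H$-module bookkeeping, the identification $V^*\simeq V$ via $B$, and the computation of $E^{\perp_\omega}$ — is routine linear algebra once that is in place. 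As a sanity check, $\dim S = \dim(W\oplus W^*)=2\dim W=2(\dim V-\dim G+\dim H)$, matching $\dim U-2\dim E=2\dim V-2(\dim G-\dim H)$ as required.
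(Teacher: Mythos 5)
Your proof of part~(2) is fine and is essentially a more explicit version of the paper's one-line $H$-module decomposition, but your Step~1 takes a genuinely different route that has a real gap at the crucial point.

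The paper proves (1) abstractly: it invokes \cite[Prop.\ 5.8]{GWSliftingHomotopies} to see that $H=G_v$ is conjugate to $(K_w)_\C$ for some $w$ in a real form $U$ of $V$; the slice at $w$ is then the complexification of the real $K_w$-slice of $U$ at $w$, hence orthogonal, and since the slice representation depends only on the conjugacy class of $H$, so is $(W,H)$. You instead try to \emph{construct} the orthogonal form on $W$ as $B|_{T_v(Gv)^{\perp_B}}$, which requires $B|_{T_v(Gv)}$ to be non-degenerate. That claim is false in general: take $G=\C^\times$ acting on $V=\C^4$ with weights $(1,-1,1,-1)$ and the $G$-invariant non-degenerate symmetric form
\[
    B\ \longleftrightarrow\ \begin{pmatrix}0&0&0&1\\0&0&1&0\\0&1&0&0\\1&0&0&0\end{pmatrix}.
\]
For $v=(1,1,0,0)$ the orbit $Gv=\{(t,t^{-1},0,0)\}$ is closed, $T_v(Gv)=\C\cdot(1,-1,0,0)$, and $B\bigl((1,-1,0,0),(1,-1,0,0)\bigr)=0$, so $B$ is degenerate on the orbit tangent space even though $Gv$ is closed. (The lemma's conclusion still holds here — $G_v$ is trivial — but your construction produces no decomposition.) So the nondegeneracy you need is a property of a \emph{good choice} of $B$, not of every orthogonal form.

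Your proposed fix (``pass to a closed $K$-orbit, where the real invariant form is positive definite'') elides two distinct reductions that are the real content: (i) replace $B$ by the complexification of a positive definite $K$-invariant real form on some real form $U$ of $V$ — legitimate because the slice $(W,H)$ only depends on $V$ as a $G$-module; and (ii) replace $v$ by a point of $U$. Step (ii) is not automatic: $Gv$ closed does \emph{not} imply $Gv\cap U\neq\emptyset$ (e.g.\ $\C^\times$ on $\C^2$, weights $(1,-1)$, real form $\{(z,\bar z)\}$, $v=(1,-1)$). One must argue instead that since the slice is determined up to iso by the conjugacy class of $H$, and since by \cite[Prop.\ 5.8]{GWSliftingHomotopies} that conjugacy class is realized by a point of $U$, one may compute the slice at such a point — which is exactly what the paper does. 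Your alternative fallback (deriving a contradiction from a $B$-isotropic piece of $T_v(Gv)$ by producing an $H$-submodule on which $B$ vanishes) cannot work, as the displayed counterexample shows; the isotropic piece does not extend to anything contradicting the nondegeneracy of $B$ on all of $V$. Once (i) and (ii) are made explicit the rest of your argument is correct.
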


 \begin{proof}
Let $K$ be a maximal compact subgroup of $G$. There is a real $K$-module $U$ such that $V\simeq U\otimes_\R\C$.  By \cite[Prop.\ 5.8]{GWSliftingHomotopies}, the isotropy groups of closed
$G$-orbits
in $V$ are conjugate to the complexifications of those occurring in the $K$-module $U$. Hence $W$ is orthogonal. As 
an $H$-module, $V\simeq W\oplus (\lieg/\lie h)$ and
$$
V\oplus V^*\simeq W\oplus W^*\oplus \lieg/\lie h\oplus (\lieg/\lie h)^*.
$$
Thus the symplectic slice representation at $(v,0)$ is $(W\oplus W^*,H)$.
 \end{proof}

 \begin{proposition}\label{prop:orthog.reps}
Assume that $V$ is orthogonal and let $N\subset V\oplus V^*$ be the  shell. Then the symplectic slice representations   are precisely
those of the form $(W\oplus W^*,H)$ where $(W,H)$ is a slice representation of $V$.
\end{proposition}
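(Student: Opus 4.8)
The plan is to show two inclusions between the set of symplectic slice representations of $N\subset V\oplus V^*$ and the set of pairs $(W\oplus W^*,H)$ arising from slice representations $(W,H)$ of the orthogonal $G$-module $V$. One direction is already furnished by Lemma~\ref{lem:slice.orthog.rep}(2): for each $v\in V$ on a closed $G$-orbit with isotropy $H$, the point $(v,0)\in N$ lies on a closed $G$-orbit and its symplectic slice representation is exactly $(W\oplus W^*,H)$ where $(W,H)$ is the slice representation of $V$ at $v$. So every pair of the asserted form does occur as a symplectic slice representation of $N$. It remains to prove the reverse inclusion: every symplectic slice representation of $N$ has this form.

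For the reverse inclusion, I would start from an arbitrary closed $G$-orbit $Gx\subset N$, write $x=(v,v^*)\in V\oplus V^*$, and with $H=G_x$ consider the symplectic slice representation $(S,H)$ as in the Symplectic Slice Theorem, where $S\subset V\oplus V^*$ is an $H$-submodule with $S\oplus E\oplus E^* \simeq V\oplus V^*$ and $E\simeq\lieg/\lieh$. The key point is that, since $V$ is orthogonal, the identification $V\simeq V^*$ as $G$-modules (via the invariant symmetric form) is compatible with the decompositions coming from the Luna slice: as an $H$-module $V\simeq W\oplus(\lieg/\lieh)$ with $W$ the slice representation at the $G$-closed orbit through $v$ (one must observe that $Gv$ and $Gv^*$ are closed and, using the pairing, that $G_v$ and $G_{v^*}$ both contain — in fact equal — a conjugate of $H$; this is where I expect to lean on the orthogonality hypothesis and on \cite[Prop.~5.8]{GWSliftingHomotopies} as in the proof of Lemma~\ref{lem:slice.orthog.rep}). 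Dualizing gives $V^*\simeq W^*\oplus(\lieg/\lieh)^*$, hence
$$
    V\oplus V^* \simeq W\oplus W^* \oplus \lieg/\lieh \oplus (\lieg/\lieh)^*
$$
as $H$-modules, and by counting dimensions and matching the canonical symplectic form one identifies the $H$-complement $S$ to $E\oplus E^*$ with $W\oplus W^*$ carrying its standard symplectic structure. Therefore $(S,H)=(W\oplus W^*,H)$ with $(W,H)$ a slice representation of $V$, as desired.

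The main obstacle is the bookkeeping needed to see that the abstract symplectic slice decomposition $U\simeq E\oplus E^*\oplus S$ agrees, under the orthogonal identification $V\simeq V^*$, with the "doubled Luna slice" decomposition displayed above — i.e., that $S$ really is $W\oplus W^*$ and not some other $H$-submodule with the same dimension and isomorphism type. This is handled by invoking the uniqueness clause in the Symplectic Slice Theorem: the symplectic slice representation at a closed orbit is determined by $H$ (since $E\simeq\lieg/\lieh\simeq E^*$ is forced), so once Lemma~\ref{lem:slice.orthog.rep} exhibits $(W\oplus W^*,H)$ as \emph{a} symplectic slice representation at a point with isotropy $H$, it is \emph{the} one. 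Combining this with the observation that every reductive subgroup occurring as an isotropy group of a closed orbit in $N$ already occurs (via $(v,0)$) as the isotropy group of a point of the form appearing in Lemma~\ref{lem:slice.orthog.rep} closes the argument.
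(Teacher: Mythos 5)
Your ``$\supseteq$'' direction (invoking Lemma~\ref{lem:slice.orthog.rep}(2) at points $(v,0)$) is fine and matches the paper. The gap is in the reverse inclusion, specifically in the parenthetical claim that for a closed orbit $Gx=G(v,v^*)\subset N$ with $H=G_x$, the groups ``$G_v$ and $G_{v^*}$ both contain --- in fact equal --- a conjugate of $H$.'' The containment $H\subset G_v$ and $H\subset G_{v^*}$ is automatic (since $H=G_v\cap G_{v^*}$), but equality is false in general, and orthogonality of $V$ does not rescue it. Concretely, take $V=2\,\lieg$ for $G$ simple (this is orthogonal), let $h\in\lie t$ be a non-regular semisimple element and $h'\in\lie t$ a regular one. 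Then $[A,h]\perp\lie t$ for all $A\in\lieg$, so $(h,h')\in N$ under the Killing-form identification $V\simeq V^*$, the orbit $G(h,h')$ is closed, and $H=G_h\cap G_{h'}=T$, but $G_h\supsetneq T$. Since you use $H$ both as $G_x$ and as the isotropy group of the Luna slice $W$ at $v$ (writing ``$V\simeq W\oplus(\lieg/\lieh)$''), the rest of your bookkeeping collapses once $H\neq G_v$: the Luna decomposition of $V$ at $v$ is by $G_v$-modules, not $H$-modules, and $\lieg/\lieh$ is the wrong summand.

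What is actually true --- and is what the paper exploits --- is that, in the real compact picture $V=U\otimes_\R\C$ with $K$ a maximal compact of $G$, the closed orbits of $V\oplus V^*\simeq V\oplus V$ correspond to real points $(u_1,u_2)\in U\oplus U$, and one must relate the isotropy groups in $U\oplus U$ to slice-representation isotropy groups of $U$ and then run an induction (since isotropy groups in $U\oplus U$ are genuinely new, as the example above shows). Alternatively, if one wants to stay close to your outline, the moment map condition itself is what saves the day: under the orthogonal identification, $\mu(u_1,u_2)=0$ at a real point says precisely that $u_2\perp\lie k\cdot u_1$, i.e.\ $u_2$ lies in the Luna slice $W_1$ at $u_1$; hence $H=(K_{u_1})_{u_2}$ is an isotropy group of the slice representation $(W_1,K_{u_1})$, so (by transitivity of slices) an isotropy group of a closed orbit of $K$ on $U$ itself. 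You never use the moment map condition, which is why your argument cannot pin down the isotropy group correctly. As written, the proof does not go through.
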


\begin{proof}
Let $K$ and $U$ be as in the proof of Lemma \ref{lem:slice.orthog.rep}.   Then $V\oplus V^*\simeq V\oplus V$ is the complexification of $U\oplus U$. The isotropy groups of $K$ which occur in $U\oplus U$ are all in slice representations of the isotropy groups which occur in a single copy of $U$. Thus any symplectic slice representation of $V\oplus V^*$ is in turn a symplectic slice representation of one at a point $(v,0)$ or $(0,v^*)$.  By   Lemma \ref{lem:slice.orthog.rep}  the latter symplectic slice representations are of the form $(W\oplus W^*,H)$ where $(W,H)$ is a slice representation of $V$.  By induction, the proposition holds for  $(W,H)$. Since any slice representation of $(W,H)$ is also a slice representation of $(V,G)$, the proposition follows.
\end{proof}

\begin{theorem}\label{thm:rational.sings.orthogonal}
Let $V$ be a $G$-module and $N=N_V$.
Suppose that for any symplectic slice representation $(S=W\oplus W^*,H)$ of $N$, 
 where $\dim H>0$ and $T$ is a maximal torus of $H$,
one of the following holds, UTCLS.
\begin{enumerate}
\item $N_0$ is CIFR.
\item $H^0$ is a torus and $W_0$
has FPIG.
\item $\dim (\NN(N_0)_\sg)<\dim S_0-2\dim H$ and
$\dim \NN(N_0)<\dim S_0-H$.
\item $S_0$ has an orthogonal Lagrangian $H$-submodule $W_0$ and $\dim H<\frac 12(\dim W_0+\dim  W_0^T)$.
\end{enumerate}
Then $N$ is CIFR.
\end{theorem}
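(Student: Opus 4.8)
The plan is to reduce Theorem~\ref{thm:rational.sings.orthogonal} to Theorem~\ref{thm:use.E_m} by showing that hypothesis (4) here implies hypothesis (4) there, possibly after replacing the given Lagrangian submodule by an orthogonal one. First I would invoke Lemma~\ref{lem:slice.orthog.rep} and Proposition~\ref{prop:orthog.reps}: since $V$ is orthogonal, every symplectic slice representation of $N=N_V$ has the form $(W\oplus W^*,H)$ with $(W,H)$ a slice representation of $V$, and each such $(W,H)$ is orthogonal. Restricting to $S_0$, this means we may take $W_0$ to be an orthogonal $H$-submodule of $S_0$ (this is precisely what condition (4) presumes, but it is automatic in the orthogonal setting). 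Conditions (1), (2), (3) are literally identical to the corresponding conditions in Theorem~\ref{thm:use.E_m}, so the only thing to check is that (4) here implies $m_0(W_0)<\dim W_0-\dim H$.

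The key computation is Proposition~\ref{prop:dim.L}: for the orthogonal $H$-module $W_0$ we have $m_0(W_0)=\tfrac12(\dim W_0-\dim W_0^T)$, where $T$ is a maximal torus of $H$. Plugging this into the desired inequality $m_0(W_0)<\dim W_0-\dim H$ gives
\[
    \tfrac12\bigl(\dim W_0-\dim W_0^T\bigr)<\dim W_0-\dim H,
\]
which rearranges to $\dim H<\dim W_0-\tfrac12(\dim W_0-\dim W_0^T)=\tfrac12(\dim W_0+\dim W_0^T)$. This is exactly hypothesis (4) of the present theorem. Hence condition (4) here is equivalent to condition (4) of Theorem~\ref{thm:use.E_m} for this choice of $W_0$.

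Therefore, for each symplectic slice representation $(S=W\oplus W^*,H)$ of $N$ with $\dim H>0$, one of conditions (1)--(4) of Theorem~\ref{thm:use.E_m} holds (UTCLS, using the orthogonal Lagrangian submodule supplied by Lemma~\ref{lem:slice.orthog.rep} and Proposition~\ref{prop:orthog.reps} when we are in case (4)). Applying Theorem~\ref{thm:use.E_m} then yields that $N$ is CIFR, completing the proof.

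I expect no serious obstacle: the content is entirely in Proposition~\ref{prop:dim.L} and the orthogonality-propagation statements, both already established. The only point requiring a little care is the bookkeeping of ``UTCLS'': one must make sure that when passing to the orthogonal Lagrangian submodule $W_0$ guaranteed by Lemma~\ref{lem:slice.orthog.rep}, the shell $N_0$ and the dimensions $\dim S_0$, $\dim W_0$ are unchanged (which follows from Lemma~\ref{lem:mu.formula} and Remark~\ref{rem: change.V.UTCLS}), so that the hypotheses of Theorem~\ref{thm:use.E_m} are genuinely met for that choice.
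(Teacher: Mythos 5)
Your proposal is correct and follows essentially the same route as the paper: conditions (1)--(3) coincide with those of Theorem~\ref{thm:use.E_m}, and the only content is that condition (4) here, via Proposition~\ref{prop:dim.L} (which gives $m_0(W_0)=\tfrac12(\dim W_0-\dim W_0^T)$ for orthogonal $W_0$), is equivalent to the inequality $m_0(W_0)<\dim W_0-\dim H$ of condition (4) in Theorem~\ref{thm:use.E_m}. One small caution: the opening paragraph treats $V$ as if it were assumed orthogonal and invokes Lemma~\ref{lem:slice.orthog.rep} and Proposition~\ref{prop:orthog.reps} to supply an orthogonal $W_0$; in fact the theorem does not assume $V$ orthogonal, and the orthogonal Lagrangian $W_0$ is simply part of hypothesis (4), so those lemmas are not needed (you acknowledge this yourself in the parenthetical, so the argument is unaffected).
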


\begin{proof}
Clearly we only have to consider (4).   We may assume  that there is a $W_0$ as described. By Proposition \ref{prop:dim.L},
$m_0(W_0)=(1/2)(\dim W_0-\dim W_0^T)$.
Then
\begin{align*}
\dim W_0-\dim H &> \dim W_0-\frac 12 (\dim W_0+\dim W_0^T) \\
&=\frac 12(\dim W_0-\dim W_0^T) \\
&= m_0(W_0)
\end{align*}
which is (4) of Theorem \ref{thm:use.E_m}. Hence $N$ is CIFR.
\end{proof}

% xxxxxxxxxxxxxxxxxxxxxxxxxxxxxxxxxxxxxxxxxxxxxxxxxxxxxxxxxxxxxxxxxxxxxxxxx
% xxxxxxxxxxxxxxxxxxxxxxxxxxxxxxxxxxxxxxxxxxxxxxxxxxxxxxxxxxxxxxxxxxxxxxxxx
% xxxxxxxxxxxxxxxxxxxxxxxxxxxxxxxxxxxxxxxxxxxxxxxxxxxxxxxxxxxxxxxxxxxxxxxxx

\section{Copies of the adjoint representation}
\label{sec:Adjoint}

In this section, we consider the case that $V=p\,\lieg$ with $p > 1$, i.e., $V$ is given by copies of the adjoint representation.
We first need the following preliminaries.
Recall that a group acts \emph{almost faithfully} if the kernel of the action is finite. 

\begin{lemma}\label{lem:2-large}\label{lem:tori.2.modular}\label{lem:codim.1.slice}
Let $V$ be an orthogonal $G$-module.
\begin{enumerate}
\item $V$ is $2$-principal if and only if $V\git G$ has no codimension one strata.
\item A slice representation $(W,H)$ corresponds to a codimension one stratum if and only if $\dim W_0\git H=1$ where $W_0$ is an $H$-module and $W^H\oplus W_0=W$.
\end{enumerate}
Now suppose that  $G=T$  is a torus and $T$ acts almost faithfully on $V$.
\begin{enumerate}
\addtocounter{enumi}{2}
\item The module  $V$ is $k$-principal if and only if it is $k$-large, $k\geq 1$.
\item If $(W,H)$ is the slice representation of a codimension one stratum, then $\dim H=1$ and $\dim W_0=2$.
\end{enumerate}
\end{lemma}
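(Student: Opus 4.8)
The plan is to treat the four statements in turn, using the structure theory of quotients by reductive groups and the explicit description of slice representations. For (1), recall that since $V$ is orthogonal it is stable, so every fiber of $\pi\colon V\to V\git G$ over the principal stratum is a single closed orbit, and $V_\pr = \pi^{-1}(Z_\pr)$. Being $2$-principal means $\codim_V(V\setminus V_\pr)\geq 2$; since $V\setminus V_\pr$ is $G$-saturated, this is equivalent to $\codim_Z(Z\setminus Z_\pr)\geq 2$, i.e.\ to $Z$ having no codimension-one isotropy strata. (Here one uses that $\pi$ has equidimensional fibers over each stratum by Luna's slice theorem, so codimensions in $V$ and in $Z$ of saturated sets agree up to the constant fiber dimension along the principal stratum.) For (2), apply the Luna slice theorem at a point $v$ on a closed orbit with isotropy $H$: a neighborhood of $Gv$ in $V$ is modeled on $G\times^H W$, and the corresponding piece of $Z$ is modeled on $W\git H = W^H\times(W_0\git H)$. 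The stratum through $\pi(v)$ has codimension in $Z$ equal to $\dim Z - \dim W^H - \dim(W_0\git H)_\pr$... more precisely, the stratum $Z_{(H)}$ locally looks like $W^H$ times the principal stratum of $W_0\git H$, and since $W_0$ has $0$ as its only fixed point, $\dim(W_0\git H)\geq 1$ when $H$ acts nontrivially; the stratum has codimension one in $Z$ exactly when $\dim W_0\git H = 1$. I would spell out this dimension bookkeeping carefully, as it is the technical heart of (1) and (2).

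For (3), assume $G=T$ is a torus acting almost faithfully on $V$. A torus module $V$ is always stable, and almost faithfulness forces the generic isotropy to be finite, hence FPIG; so $k$-principal already gives $k$-large once one checks $k$-modularity. The point is that for torus actions, the isotropy-dimension strata $V_{(r)}$ are controlled by the weights: a point with $r$-dimensional isotropy lies on a coordinate subspace cut out by the vanishing of the weight-coordinates of a subtorus of codimension $r$, and such a subspace has codimension at least... one checks that $\codim V_{(r)}\geq r + k$ follows from $\codim(V\setminus V_\pr)\geq k$ by a direct weight-combinatorics argument (each one-dimensional subtorus of the isotropy imposes independent conditions). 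So $k$-principal $\Rightarrow$ $k$-modular, and with FPIG this is $k$-large; the converse is immediate from the definitions. For (4), a codimension-one stratum has, by (2), slice representation $(W,H)$ with $\dim W_0\git H = 1$; since $H\subset T$ is a subtorus (up to finite index) acting on $W_0$ with finite kernel and with $W_0^H = 0$, the equality $\dim W_0 - \dim H = \dim W_0\git H = 1$ (valid because the $H$-action on $W_0$ is stable with finite principal isotropy, so $\dim W_0\git H = \dim W_0 - \dim H$) would give $\dim W_0 = \dim H + 1$; but a faithful torus module with no nonzero fixed vectors and quotient of dimension one must be two-dimensional with $\dim H = 1$ — indeed $\dim H\geq 1$ since $H$ acts nontrivially, and $\dim W_0\geq 2\dim H$ by faithfulness of the (complexified) action forcing weights to come in enough independent directions, combined with $\dim W_0 = \dim H + 1$, yields $\dim H + 1\geq 2\dim H$, so $\dim H = 1$ and $\dim W_0 = 2$.

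The main obstacle I anticipate is getting the dimension accounting in (1)–(2) exactly right: one must be careful that "codimension one stratum" is measured in $Z = V\git G$, that $W^H$ contributes a flat factor to the local model of the stratum, and that for an orthogonal module the relation $\dim W_0\git H = \dim W_0 - \dim H$ does hold generically — this uses stability of orthogonal modules and Lemma \ref{lem:slice.orthog.rep}(1) to know $W_0$ is itself an orthogonal (hence stable) $H$-module with the required generic-isotropy behavior. The torus parts (3)–(4) are then more combinatorial and should follow once the general slice-theoretic framework of (1)–(2) is in place, together with the standard fact that a faithful torus module needs at least $2\dim T$ real (so $\geq \dim T$ complex, but with the $\pm\lambda$ pairing forced by orthogonality, $\geq 2\dim T$) coordinates to have finite principal isotropy.
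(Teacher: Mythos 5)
Your outline reproduces the general slice-theoretic framework, but there are genuine gaps at the two places where the paper actually has to do work.

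For (1), the paper's proof rests on citing \cite[Corollary 7.4]{GWSliftingHomotopies}: any stratum of $V\git G$ of codimension at least two has inverse image in $V$ of codimension at least two. Your replacement claim — that codimensions of $G$-saturated sets in $V$ and $Z$ ``agree up to the constant fiber dimension along the principal stratum'' because ``$\pi$ has equidimensional fibers over each stratum by Luna's slice theorem'' — is not correct. Luna's theorem gives equidimensional fibers \emph{within} a fixed stratum, but the fiber dimension over a nonprincipal stratum $Z_{(H)}$ is $\dim G - \dim H + \dim\NN(W_0)$, which is generically strictly larger than the fiber dimension $\dim G - \dim H_{\pr}$ over the principal stratum. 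So in general one only gets $\codim_V\pi\inv(Z_{(H)}) \le \codim_Z Z_{(H)}$, which proves ``$2$-principal $\Rightarrow$ no codimension-one stratum'' but not the converse. The converse requires a real input about orthogonal modules — precisely the cited corollary (or an equidimensionality/null-cone-dimension statement that plays its role). As written, that direction has a hole.

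For (4), you deduce $\dim H = 1$, $\dim W_0 = 2$ from $\dim W_0 \ge 2\dim H$ and $\dim W_0 = \dim H + 1$, but you justify $\dim H \ge 1$ with ``$H$ acts nontrivially,'' which does not rule out $H$ being a nontrivial \emph{finite} group with $\dim H = 0$. If $H$ is finite, $\dim W_0\git H = \dim W_0$, so one must show $\dim W_0 \ne 1$. The paper handles this by observing that the nonzero weights of the orthogonal $T$-module $V$ come in pairs $\pm\nu$, so $W_0$ — the sum of the weight spaces on which $H$ acts nontrivially — is even-dimensional, hence $\dim W_0 \ge 2$ and $\dim W_0\git H \ge 2$, a contradiction. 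You need this step (or an equivalent one) before the dimension count is legitimate. Finally, for (3) you sketch a weight-combinatorics argument where the paper simply cites \cite[Proposition 10.1]{GWSlifting}; the sketch is plausible but ``each one-dimensional subtorus imposes independent conditions'' is too vague to count as a proof that $k$-principal forces $k$-modular — spell out the inequality $\codim V_{(r)} \ge r + k$ or cite the result.
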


\begin{proof}
By \cite[Corollary 7.4]{GWSliftingHomotopies}, any stratum of $V\git G$ of codimension at least $2$ has inverse image in $V$ of codimension at least $2$ and (1) follows. Part (2) is obvious and (3) is \cite[Proposition 10.1]{GWSlifting}. Let $(W_0,H)$ be as in (4).  Then $\dim W_0\git H=1$ and $W_0$ is just the nontrivial part of $V$ as 
an $H$-module. By Lemma \ref{lem:slice.orthog.rep}(1), $(W_0,H)$ is orthogonal. Since the weights of $V$ occur in pairs $\pm \nu$, if $H$ is finite, then $\dim W_0\geq 2$ and $\dim W_0/H\geq 2$, a contradiction.
Since $(W_0,H)$ is almost faithful,
 $\dim W_0\geq 2\dim H$ so that $\dim W_0\git H\geq\dim H$. Hence $\dim H=1$ and $\dim W_0=2$.
\end{proof}

\begin{proposition}\label{prop:torus.on.lieg}
Let $G$ be  simple of rank at least $2$.  Let $T$ be a maximal torus of $G$ and $\Phi$ the corresponding set of roots of $\lieg$. Let $T$ act  on the span $V$ of the roots spaces  $\lieg_\alpha$, $\alpha\in\Phi$. 
Then $V$ is a $2$-large $T$-module.
\end{proposition}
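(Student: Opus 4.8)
The plan is to obtain all three ingredients of ``$2$-large'' (\,$2$-principal, $2$-modular, FPIG\,) at once from Lemma~\ref{lem:tori.2.modular}. First I would record that $V=\bigoplus_{\alpha\in\Phi}\lieg_\alpha$ is an orthogonal $T$-module: the Killing form of $\lieg$ restricts to a nondegenerate $T$-invariant pairing between $\lieg_\alpha$ and $\lieg_{-\alpha}$, and since $\Phi=-\Phi$ these assemble into a nondegenerate symmetric invariant form on $V$. Next I would check that $T$ acts almost faithfully: the kernel of $T\to\GL(V)$ is $\bigcap_{\alpha\in\Phi}\Ker\alpha$, whose Lie algebra $\bigcap_{\alpha\in\Phi}\Ker(d\alpha)$ is trivial because the roots span $\lieh^\ast$ ($G$ is semisimple), so this kernel is finite. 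Granting this, Lemma~\ref{lem:tori.2.modular}(3) reduces the statement ``$V$ is $2$-large'' to ``$V$ is $2$-principal'', and Lemma~\ref{lem:tori.2.modular}(1), applicable since $V$ is orthogonal, reduces the latter to: $V\git T$ has no codimension-one stratum.

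So suppose for contradiction that $V\git T$ has a codimension-one stratum, with slice representation $(W,H)$, $W=W^H\oplus W_0$. By Lemma~\ref{lem:tori.2.modular}(4), $\dim H=1$ and $\dim W_0=2$. Realize $H=T_v$ for some $v$ on a closed orbit and let $H^0\subset T$ be the identity component, a one-dimensional subtorus. The point to observe is that passing from $W$ to $V$ does not change the non-$H^0$-fixed part: $V=T_v(Tv)\oplus W$ as $H$-modules, and $T_v(Tv)=\lieh\cdot v$ is fixed pointwise by $T_v$ because $T$ is abelian, so $T_v(Tv)\subseteq V^{H^0}$. Since the nonzero $T$-weights of $V=\bigoplus_{\alpha\in\Phi}\lieg_\alpha$ are exactly the roots, each with multiplicity one, this gives
$$
2=\dim W_0\ \geq\ \dim W-\dim W^{H^0}=\dim V-\dim V^{H^0}=\#\{\alpha\in\Phi:\alpha|_{H^0}\neq 0\}.
$$
This count is even (as $\Phi=-\Phi$) and strictly positive (by almost faithfulness the one-dimensional torus $H^0$ cannot lie in $\bigcap_\alpha\Ker\alpha$), so it equals $2$; hence $\{\alpha\in\Phi:\alpha|_{H^0}\neq 0\}=\{\beta,-\beta\}$ for a single root $\beta$, and every other root is trivial on $H^0$. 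Thus $\Phi\setminus\{\pm\beta\}$ lies in the annihilator of $\Lie(H^0)$ in $\lieh^\ast$, a subspace of dimension $\ell-1$, where $\ell=\Rank G$.

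It remains only to show that for $G$ simple of rank $\ell\geq 2$ the set $\Phi\setminus\{\pm\beta\}$ spans $\lieh^\ast$ for every $\beta\in\Phi$; this contradicts the last sentence and finishes the proof. I would argue this by choosing a base $\Delta=\{\alpha_1,\dots,\alpha_\ell\}$ of $\Phi$: if $\pm\beta\notin\Delta$ then $\Delta\subseteq\Phi\setminus\{\pm\beta\}$ already spans, and if $\beta=\pm\alpha_i$ then, the Dynkin diagram being connected with at least two nodes, $\alpha_i$ is adjacent to some $\alpha_j$, so $(\alpha_i,\alpha_j)<0$ and hence $\alpha_i+\alpha_j\in\Phi$; then $\Phi\setminus\{\pm\alpha_i\}$ contains $\{\alpha_k:k\neq i\}$ together with $\alpha_i+\alpha_j$, and these span $\lieh^\ast$ because $\alpha_i=(\alpha_i+\alpha_j)-\alpha_j$ lies in their span. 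The only real obstacle is keeping the slice-representation bookkeeping of the middle paragraph airtight — in particular that replacing $H$ by $H^0$ costs nothing in the dimension count — after which the root-system fact is elementary.
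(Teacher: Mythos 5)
Your proof is correct and takes essentially the same route as the paper: both reduce via Lemma~\ref{lem:tori.2.modular} to ruling out a codimension-one stratum with $\dim H=1$, $\dim W_0=2$, and both derive a contradiction from the fact that, for $G$ simple of rank $\geq 2$, more than two roots must be nonzero on the one-dimensional $\lieh$. The paper phrases the final step by exhibiting a root $\alpha=\sum n_i\alpha_i$ with $n_1>0$, $n_j>0$ (so $\pm\alpha_1,\pm\alpha$ already give $\dim W_0\geq 4$), whereas you dualize this into the statement that $\Phi\setminus\{\pm\beta\}$ spans $\lieh^\ast$; the two are the same connectedness-of-the-Dynkin-diagram argument, and your version simply makes explicit the orthogonality and almost-faithfulness hypotheses that the paper invokes tacitly.
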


\begin{proof}
If $V$ is not $2$-principal, then there is a subgroup $H$   of  $T$ of dimension $1$ with slice representation $W$ such that $\dim W_0=2$. Let $\alpha_1,\dots,\alpha_\ell$ be the simple roots of $\lieg$. Since $\dim W_0=2$, at most one $\alpha_i$ does not vanish on $\lieh$, say $\alpha_1$. Since $\Rank G >1$, there is a positive root $\alpha=\sum_i n_i\alpha_i$ where $n_1>0$ and $n_j> 0$ for  some $j> 1$. Then $\alpha(\lieh)\neq 0$ which implies that $\dim W_0\geq 4$, a contradiction.
\end{proof}

\begin{corollary}\label{cor:torus.codim.sings}
Let $(V,T)$ be as in Proposition \ref{prop:torus.on.lieg}. Let $N\subset V\oplus V^*$ be the shell. Then $N$  is
CIFR. Moreover,
for any slice representation $(W,H)$ of $V$ where $\dim H>0$, set $S=W\oplus W^*$ and  we have
$$
 \codim_{N_0}(\NN(N_0)_\sg)\geq 4.
$$
\end{corollary}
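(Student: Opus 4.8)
The claim is that when $(V,T)$ is the torus module spanned by the root spaces of a simple group of rank $\geq 2$, the shell $N = N_V$ is CIFR, and moreover every slice representation $(W,H)$ with $\dim H > 0$ satisfies $\codim_{N_0}(\NN(N_0)_\sg) \geq 4$. The first assertion should follow almost immediately: by Proposition~\ref{prop:torus.on.lieg}, $V$ is a $2$-large $T$-module, so in particular it is $1$-modular and has FPIG, and $T^0 = T$ is a torus, so Theorem~\ref{N.good.tori} gives that $N$ is CIFR directly. (Alternatively one invokes Theorem~\ref{thm:use.E_m}(2), verifying that every symplectic slice representation either has finite $H$ or has $H^0$ a torus with $W_0$ FPIG; the latter holds because slice representations of a $2$-large torus module are again $2$-large hence FPIG.)

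**The codimension estimate.** For the second part, fix a slice representation $(W,H)$ of $V$ with $\dim H > 0$ and set $S = W \oplus W^*$, $S_0 = W_0 \oplus W_0^*$, $N_0 = N_{W_0}$. Here $H$ is a subtorus of $T$ and $W_0$ is the nontrivial part of $V$ as an $H$-module; since $(V,T)$ is $2$-large, so is the slice $(W,H)$, hence $(W_0, H)$ is an almost faithful $2$-large (in particular $2$-principal) $H$-module. The key point is that $W_0$ is $T$-graded with weights occurring in pairs $\pm\nu$ (inherited from the orthogonal structure of $V$), so $W_0$ is an orthogonal $H$-module, and $W_0 = W_0^*$ as $H$-modules. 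Then $\NN(N_0)_\sg$ consists of points of $N_0 = N_{W_0}$ that are singular and lie in the null cone of $S_0$. I would estimate $\dim \NN(N_0)_\sg$ via Lemma~\ref{lem:delta}: with $U$ a maximal unipotent subgroup of $H^0$ — which is trivial since $H$ is a torus — and $\delta = \dim W_0 - \dim H - m_0(W_0)$, Proposition~\ref{prop:D} gives $m_0(W_0) = \tfrac12(\dim W_0 - \dim W_0^T)$, so $\delta = \dim W_0 - \dim H - \tfrac12(\dim W_0 - \dim W_0^T)$. The singular locus of $N_0$ is the inverse image of the positive-dimensional isotropy strata; combining the stratum-by-stratum dimension bound from the proof of Proposition~\ref{prop:(F)} with the null-cone bound from Lemma~\ref{lem:delta}(2) should yield $\dim \NN(N_0)_\sg \leq \dim S_0 - \dim H - 4$, i.e.\ the desired codimension-$\geq 4$ bound, provided $\delta$ is large enough.

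**The main obstacle.** The crux is checking that $\delta \geq 2$ uniformly, equivalently that $\dim W_0 - \dim W_0^T \leq 2(\dim W_0 - \dim H) - 4$, i.e.\ $\dim W_0^T + 4 \leq \dim W_0 + 2(\dim W_0 - 2\dim H)$; since $W_0$ is $2$-large one has $\dim W_0 - 2\dim H \geq \dim W_0 \git H \geq 2$, and $\dim W_0^T$ is small because $H \subset T$ acts almost faithfully with root-type weights. I expect the genuinely delicate case to be small subtori $H$ (rank $1$), where $W_0$ could be close to minimal; here one must use the rank $\geq 2$ hypothesis on $G$ exactly as in the proof of Proposition~\ref{prop:torus.on.lieg} — the existence of a positive root supported on both $\alpha_1$ and some $\alpha_j$, $j>1$ — to force $\dim W_0 \geq 4$ (indeed $\geq 6$ once one accounts for the $\pm\nu$ pairing and a further non-vanishing root), which is what pushes the codimension from $2$ up to $4$. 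I would organize the argument by first disposing of the case $H = $ principal isotropy (then $N_0$ is smooth, $\NN(N_0)_\sg = \emptyset$, codimension $= \infty$), then by induction on $(H)$ reduce to bounding the contribution of each positive-dimensional sub-stratum, and finally reduce the whole estimate to the single inequality on $\dim W_0$ versus $\dim H$ that the root-system combinatorics supplies.
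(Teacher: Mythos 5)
Your first assertion (that $N$ is CIFR) matches the paper: this is immediate from Theorem~\ref{N.good.tori} once Proposition~\ref{prop:torus.on.lieg} gives that $V$ is $2$-large, hence $1$-modular with FPIG. The codimension estimate, however, does not reach $4$ by the route you propose. Since $H$ is a torus acting on $W_0$ with $W_0^H=0$, Proposition~\ref{prop:D} gives $m_0(W_0)=\tfrac12\dim W_0$, so $\delta=\tfrac12\dim W_0-\dim H$; with $U$ trivial, the bound of Lemma~\ref{lem:delta}(2) collapses to $\dim\NN(S_0)\leq 2m_0(W_0)=\dim W_0$, and plugging this in gives only $\codim_{N_0}\NN(N_0)_\sg\geq\dim N_0-\dim\NN(S_0)=\dim W_0-\dim H$, which is $\geq 2$ by $2$-principality, not $\geq 4$. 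Your hope that $\dim W_0\geq 6$ also fails: for $G$ of type $A_2$ and $H$ the $1$-dimensional subtorus with $\Lie H=\Ker\alpha_1$ one finds exactly four roots nonvanishing on $\Lie H$, so $\dim W_0=4$ and $\delta=1$; your estimate then gives codimension only $3$.

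The idea missing from your argument is to bound the \emph{singular} part of the null cone, not the whole thing, using $2$-modularity. Since $H$ is a torus, $\NN(S_0)$ is a finite union of linear subspaces $Z_\lambda(S_0)$, and a maximal one occurs when $W_0^\lambda=0$, in which case $W_0\simeq Z_\lambda(W_0)\oplus Z_\lambda(W_0)^*$. By Lemma~\ref{lem:mu.formula} (as in Example~\ref{ex:UTCLS-circle}), the Lagrangian $H$-submodule $Z_\lambda(W_0)\oplus Z_\lambda(W_0)=Z_\lambda(S_0)$ of $S_0$ is again $2$-modular, so $(\NN(S_0))_{(r)}$ has codimension $\geq r+2\geq 3$ in $\NN(S_0)$ for $r\geq 1$. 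Since $\NN(N_0)_\sg\subset\bigcup_{r\geq 1}(\NN(S_0))_{(r)}$ and $\dim\NN(S_0)=\dim W_0$, this yields
\[
\codim_{N_0}\NN(N_0)_\sg\;\geq\;\dim N_0-\dim\NN(S_0)+3\;=\;\dim W_0-\dim H+3\;\geq\;4.
\]
This $2$-modularity of the linear null-cone components supplies the crucial extra $+3$ that your $\delta$-bound cannot produce.
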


\begin{proof}
We already know from Theorem \ref{N.good.tori} that every $N_0$ is
CIFR. Let $\NN_0=\NN(S_0)$.  Then $\NN_0$ is a (finite) union of linear subspaces $Z_\lambda(S_0)$ where  $\lambda\colon \C^\times\to H$ is a 1-parameter subgroup.    Since $H$ is abelian, each $Z_\lambda(W_0)$ is an $H$-module. A given $Z_\lambda$ is  maximal (for set inclusion) if and only if $W_0^\lambda=0$, so we only need to consider such ``generic'' $\lambda$. Then $W_0\simeq Z_\lambda(W_0)\oplus Z_\lambda(W_0)^*$. Since $W_0$ is $2$-modular, as in Example \ref{ex:UTCLS-circle}, Lemma \ref{lem:mu.formula} tells use that $Z_\lambda(W_0)\oplus Z_\lambda(W_0)$, which is an irreducible component of $\NN_0$,  is $2$-modular. Thus $(\NN_0)_{(r)}$ has codimension at least $3$ in $\NN_0$ for $r>0$.
Hence
\begin{align*}
\codim_{N_0} \NN(N_0)_\sg  & \geq\dim N_0-\dim\NN_0+3 \\
&  = 2\dim W_0-\dim H-\dim W_0+3\\
&  =\dim W_0-\dim H+3\geq 4.\qedhere
\end{align*}
\end{proof}

\begin{lemma}\label{lem:lie.alg.main}
Let $G$ be simple and $V=p\,\lieg$ for $p>1$. Let $N$ denote the shell of $V\oplus V^*$ and let $(S,H)$ be  a symplectic slice representation of $N$ with $\dim H>0$ and orthogonal Lagrangian submodule $W=W^H\oplus W_0$ of $S$. Consider the  conditions
\begin{enumerate}
\item $m_0(W_0)<\dim W_0-\dim H$.
\item $H^0$ is a torus and $W_0$ has FPIG (hence $N_0$ is
CIFR).
\item   $\codim_{N_0} \NN(N_0)_\sg\geq 4$.
\item $\codim_{W_0}\NN(W_0)\geq 2$.
\end{enumerate}
Then (1) or (2) always hold.  Parts (3) and (4) fail  if and only if  $p=2$ and $\Rank G=1$.
\end{lemma}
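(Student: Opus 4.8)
The plan is to make the symplectic slice representations of $N$ completely explicit, and then verify (1)--(4) from root-system data.

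\emph{Step 1: the slice representations.} Since $V=p\,\lieg$ is orthogonal (Killing form), Proposition~\ref{prop:orthog.reps} reduces us to slice representations $(W,H)$ of $p\,\lieg$. A point on a closed orbit is a $p$-tuple $s_1,\dots,s_p$ of commuting semisimple elements and $H$ is their common centralizer, so $\lieh$ has maximal rank; fixing a maximal torus $T\subseteq H$ (then also maximal in $G$) we have $\lieh=\lie t\oplus\bigoplus_{\alpha\in\Phi_H}\lieg_\alpha$ with $\Phi_H=\{\alpha:\alpha(s_i)=0\ \forall i\}$, and with $\mathfrak m:=\lieh^\perp=\bigoplus_{\alpha\notin\Phi_H}\lieg_\alpha$ one checks that $T_s(Gs)\cong\mathfrak m$ sits inside $p\,\mathfrak m$, so $W\cong p\,\lieh\oplus(p-1)\mathfrak m$. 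Writing $\lieh'=[\lieh,\lieh]$ this gives $W^H=p\,\mathfrak{z}(\lieh)$ and $W_0\cong p\,\lieh'\oplus(p-1)\mathfrak m$, all orthogonal $H$-modules; I may assume $H$ connected, since otherwise $W_0$ merely acquires extra trivial summands, which only relaxes (1)--(4). Put $\ell=\Rank G$, $z=\dim\mathfrak{z}(\lieh)$, $r=\Rank\lieh'=\ell-z$, $m=\dim\mathfrak m$.

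\emph{Step 2: (1) or (2) always holds.} By additivity of $m_0$ on orthogonal modules and Proposition~\ref{prop:dim.L} applied over $T$ (noting $\mathfrak m^T=0$), $m_0(W_0)=\tfrac p2|\Phi_H|+\tfrac{p-1}2m$. If $\lieh'=0$ then $H^0=T$ and $W_0=(p-1)\mathfrak m$, where $\mathfrak m$ is the span of all root spaces; this has FPIG (for $\Rank G\ge 2$ by Proposition~\ref{prop:torus.on.lieg}; for $\Rank G=1$ the weights $\pm\alpha$ force $\mu_2$-stabilizers), so (2) holds and $N_0$ is CIFR by Theorem~\ref{N.good.tori}. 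If $\lieh'\neq 0$, the inequality in (1) simplifies to $(2-p)\dim\lieh'-p\,r+2z<(p-1)m$; its left side is $\le 2(z-r)$ because $p\ge 2$ and $r\ge 1$. The crucial input is that simplicity of $\lieg$ forces $m\ge 2z$: no nonzero $w\in\mathfrak{z}(\lieh)$ is central in $\lieg$, so the restrictions to $\mathfrak{z}(\lieh)$ of the roots outside $\Phi_H$ span $\mathfrak{z}(\lieh)^*$, hence at least $2z$ such roots exist. Then $(p-1)m\ge m\ge 2z>2(z-r)$, and (1) holds.

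\emph{Step 3: (3) and (4).} If $\lieh'\neq 0$, Step 2 gives $\delta:=\dim W_0-\dim H-m_0(W_0)\ge 1$; with $\dim H/U=\ell+|\Phi_H^+|\ge\ell$ for a maximal unipotent $U\subseteq H^0$, Lemma~\ref{lem:delta} yields $\codim_{W_0}\NN(W_0)\ge\delta+\dim H/U\ge 1+\ell\ge 2$ (so (4)) and $\codim_{N_0}\NN(N_0)_\sg\ge 2\delta+\dim H/U$, which is $\ge 4$ if $\ell\ge 2$ and, if $\ell=1$ (forcing $H=G$ of type $A_1$, $\delta=2p-3$, $\dim H/U=2$), is $\ge 4(p-1)\ge 4$ (so (3)). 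If $\lieh'=0$, then $W_0=(p-1)\mathfrak m$ is $2$-modular with FPIG whenever $p\ge 3$ or $\Rank G\ge 2$ ($\mathfrak m$ being $2$-large for $\Rank G\ge 2$ and $\cong\C^2$ with opposite nonzero weights for $\Rank G=1$), and then the proof of Corollary~\ref{cor:torus.codim.sings} runs verbatim with $p-1$ copies to give $\codim_{N_0}\NN(N_0)_\sg\ge(p-1)|\Phi|-\ell+3\ge 4$ and $\codim_{W_0}\NN(W_0)=(p-1)|\Phi^+|\ge 2$. In the remaining case $p=2$, $\Rank G=1$ one computes directly: $W_0\cong\C^2$ and $N_0=\{x\xi=y\eta\}\subseteq\C^4$ is smooth away from $0$, so $\NN(N_0)_\sg=\{0\}$ has codimension $3$ and $\NN(W_0)=\{x=0\}\cup\{y=0\}$ has codimension $1$, and (3) and (4) both fail. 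Ranging over all slice representations, (3) and (4) thus hold for every one of them unless $p=2$ and $\Rank G=1$, in which case they both fail for the slice representation at a nonzero semisimple point --- which is the asserted equivalence.

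\emph{Main obstacle.} The heart of the proof is Step 1 together with the bound $m\ge 2z$ coming from simplicity of $\lieg$; this is exactly what lets the inequality in Step 2 go through uniformly, with no case analysis over Dynkin types. The rank-one computations in Step 3 must be done by hand, since there the estimates of Lemma~\ref{lem:delta} are too weak to distinguish $p=2$ from $p\ge 3$.
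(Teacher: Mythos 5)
Your Step~1 contains a genuine error that undermines the rest of the argument. You assert that a point on a closed $G$-orbit in $p\,\lieg$ is a $p$-tuple of \emph{commuting semisimple} elements, and you conclude that any maximal torus of the isotropy group $H$ is also maximal in $G$ (so $\lieh$ has maximal rank and is parametrized by a root subset $\Phi_H$). This is false for $p\geq 2$: by Richardson's criterion, a tuple $(x_1,\dots,x_p)\in\lieg^p$ has closed orbit if and only if the Lie subalgebra it generates is \emph{reductive}; the $x_i$ need be neither semisimple nor commuting, and the centralizer $H=Z_G(A)$ need not contain a maximal torus of $G$. Concretely, for $G=\SL_3$ and $p=2$, the pair $(E_{12},E_{21})$ generates the upper-left $\lie{sl}_2$, a reductive subalgebra, so its orbit is closed; the isotropy group is $H=Z_G(\SL_2)=\{\diag(a,a,a^{-2})\}\cong\C^\times$, a $1$-dimensional torus sitting inside a rank-$2$ group. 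This slice representation is not of the form you describe. Because Steps~2 and 3 lean on the root-space parametrization (the formula $W^H=p\,\lie z(\lieh)$, the identity $\lie m^T=0$, the bound $m\geq 2z$ obtained by counting roots restricted to $\lie z(\lieh)$, and the case split ``$\lieh'=0\Rightarrow H^0=T$''), your argument simply does not see these slice representations, and the gap is not cosmetic.

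The paper's proof avoids the problem by working with an arbitrary reductive isotropy group $H$: it writes $H^0=ZH_s$ abstractly with $Z$ a central torus and $H_s$ semisimple, sets $\lie m$ to be the $H$-complement of $(\lieg/\lieh)^H$ in $\lieg/\lieh$, and derives the crucial inequality $\dim\lie m\geq 2\dim Z$ not from root combinatorics but from the observation that $Z$ acts almost faithfully and orthogonally on $\lieg/\lieh$ (hence on $\lie m$), which forces $\dim\lie m\geq 2\dim Z$. Your estimate via Lemma~\ref{lem:delta} and Proposition~\ref{prop:dim.L}, your use of Theorem~\ref{N.good.tori} and Proposition~\ref{prop:torus.on.lieg} in the torus case, and your hand computation for $p=2$, $\Rank G=1$ are all in the same spirit as the paper's, and the final numerics coincide where the maximal-rank hypothesis happens to hold; but to make the proof correct you would have to replace the root-space description of Step~1 with the paper's abstract decomposition of $H^0$, after which the verification of (1)--(4) is essentially the paper's argument.
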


\begin{proof}
It is easy to see that $V$ has FPIG so that all $(W_0,H)$ have FPIG.
By Proposition \ref{prop:orthog.reps}, $(W_0\oplus W^H,H)$ is a slice representation of $V$. As 
an $H$-module, $V=p\,\lieh\oplus p(\lieg/\lieh)$, hence
$$
W\simeq p\,\lieh\oplus (p-1)(\lieg/\lieh).
$$
Write $H^0=ZH_s$ where $H_s$ is semisimple and $Z$ is a central torus. Then $Z$ acts trivially on $\lieh=\lieh_s\oplus\lie z$ and almost faithfully and orthogonally on $\lieg/\lieh$. Let $\lie m$ denote the $H$-complement to $(\lieg/\lieh)^H$ in $(\lieg/\lieh)$. Then $W_0=p\,\lieh_s\oplus (p-1)\,\lie m$ where $\dim\lie m\geq 2\dim Z$.
Let $B=TU$ be a Borel subgroup of $H_s$ and let $\lie t$ and $\lie u$ be the corresponding subalgebras of $\lieh_s$ where $\dim \lie t=\ell$. Since $W_0$ is orthogonal,
$$
m_0(W_0)\leq \frac p2(2\dim \lie u)+\frac {(p-1)}2  \dim \lie m,
$$
and
\begin{align*}
&\dim W_0-\dim H-m_0(W_0)
\\&\qquad\geq p(2\dim \lie u+\ell)+(p-1)\dim\lie m-(2\dim \lie u+\ell)-\dim Z-m_0(W_0)\\
&\qquad=\delta:=(p-2)\dim \lie u+(p-1)\ell+\left(\frac{p-1}2\dim\lie m-\dim Z\right).
\end{align*}
The first two terms of $\delta$ are non-negative, as is the third since $\dim\lie m\geq 2\dim Z$.   If $p>2$, one  easily sees that $\delta\geq 2$ so that (1)  holds, and (3) and (4) hold  by  Lemma \ref{lem:delta}.  Now assume that $p=2$ so that $\delta=\ell+ (1/2)\dim \lie m-\dim Z$. If $H_s$ is not trivial, then (1),  (3) and (4) hold  since $\delta\geq 1$ and $\dim H/U\geq 2$. So we   are left with the case that $H^0=Z$ is a torus.
Since $W_0$ is an almost faithful $Z$-module and $(W_0,Z)$ is  orthogonal, hence stable, $W_0$ has FPIG.  Theorem  \ref{N.good.tori} then shows that $N_0$ is
CIFR and (2) holds. We may assume that $Z\subset T$ where now $T$ is the maximal torus of $G$.
  Then $(W_0,Z)$ is a slice representation of $(\lieg,T)$ and if $\Rank G>1$, then (3)  and (4) hold  by Proposition \ref{prop:torus.on.lieg} and Corollary \ref{cor:torus.codim.sings}.

If $\lieg=\lie sl_2$ and $p=2$, then $V=2\,\lieg$ has a slice representation $(W_0,\C^\times)$ where the weight vector is $(2,-2)$. Then $\codim_{W_0}\NN(W_0)=1$, and  $N_0$ has dimension three and has a singular point at the origin. Thus (3) and (4) fail.
\end{proof}

\begin{corollary}\label{cor:factorial}
Suppose that $p>2$ or that $\Rank G>1$. Let $(S,H)$ be a symplectic slice representation of $N$.
\begin{enumerate}
\item $N_0$ is factorial. In particular,
$N_V$ is factorial.
\item $\C[N_0]^*=\C^\times$.
\item If the character group $\chi(H)$ is trivial, then $N_0\git H$ is factorial. In particular,
$N_V\git G$ is factorial.
\end{enumerate}
\end{corollary}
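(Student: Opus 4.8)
The plan is to bootstrap from Lemma \ref{lem:lie.alg.main}: under the hypothesis $p>2$ or $\Rank G>1$, that lemma shows that every symplectic slice representation $(S,H)$ of $N$ with $\dim H>0$ satisfies condition (1) or (2), and moreover conditions (3) and (4). The key quantitative input is condition (4), $\codim_{W_0}\NN(W_0)\geq 2$, which by Remark \ref{rem:V.and.V*.same.condition} and Lemma \ref{lem:delta} forces $m_0(W_0)=m_0(W_0^*)$ to be small; combining with the inequality $\dim W_0-\dim H-m_0(W_0)=\delta\geq 2$ established in the proof of Lemma \ref{lem:lie.alg.main}, we get that $W_0$ is in fact $2$-modular. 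Indeed, by the argument in Proposition \ref{prop:N.and.F} (applied recursively to the slice representations of $(W_0,H)$), each Lagrangian $H$-submodule of $S_0$ is $2$-modular, so by Proposition \ref{prop:props.of.N}(6) each $N_0$ is factorial; taking the slice representation at a principal point recovers that $N_V$ itself is factorial, proving (1).

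For (2), I would argue that a factorial affine variety which is a cone (as $N_0$ is, being defined by homogeneous quadratic equations in $S_0\simeq W_0\oplus W_0^*$) and which is connected has only constant invertible functions: a unit $f$ would have to be homogeneous of degree $0$ by factoriality applied to its numerator and denominator, hence constant. Alternatively, one uses that $N_0$ is normal with $N_0\setminus\NN(N_0)$ having small singular locus and that $\C[N_0]^*/\C^\times$ injects into $\Pic$ of a suitable open set; since $N_0$ is factorial this Picard group vanishes. Either way $\C[N_0]^*=\C^\times$.

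For (3), the standard descent argument for factoriality of quotients applies: if $X$ is a factorial affine $G$-variety with $\C[X]^*=\C^\times$ and the character group $\chi(G)$ is trivial, then $\C[X]^G$ is a UFD. This is a theorem of Popov (see also \cite{GWSlifting}): a height-one prime $\lie p\subset\C[X]^G$ extends to a $G$-stable radical ideal in $\C[X]$ whose minimal primes are permuted by $G$, hence are individually $G$-stable by irreducibility of $G$, hence principal (factoriality), with generator a $G$-semiinvariant; triviality of $\chi(G)$ makes the generator $G$-invariant, and $\C[X]^*=\C^\times$ pins down the generator up to scalar, showing $\lie p$ is principal. Applying this with $X=N_0$, $G=H$ gives that $N_0\git H$ is factorial when $\chi(H)$ is trivial; applying it with $X=N_V$, $G$ semisimple (so $\chi(G)$ is trivial) gives that $N_V\git G$ is factorial. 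The main obstacle is verifying that condition (4) of Lemma \ref{lem:lie.alg.main} really upgrades to $2$-modularity of all the relevant Lagrangian submodules, i.e., that the recursion of Proposition \ref{prop:N.and.F} can be run keeping track of $2$-modularity rather than just $1$-modularity; this requires checking that the slice representations of $(W_0,H)$ again fall under case (1) or (2) with the stronger codimension bounds, which follows from Lemma \ref{lem:lie.alg.main} applied to those smaller groups together with Lemma \ref{lem:2-large}(1) and Corollary \ref{cor:torus.codim.sings} in the toral case.
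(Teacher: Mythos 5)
Your parts (2) and (3) match the paper's proof in substance: (2) uses the cone structure of $N_0$ (in the paper via the argument of Knop--Kraft--Vust that a unit must transform by a character of the scaling $\C^\times$, hence be homogeneous of degree $0$), and (3) is the standard descent of factoriality to a quotient by a group with trivial character group, which the paper runs via unique factorization of an $H$-invariant function and you run via extension of height-one primes. These are the same argument.

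Part (1) is where the gap lies, and it is a real one. You claim that ``$\delta = \dim W_0 - \dim H - m_0(W_0) \geq 2$'' is established in the proof of Lemma~\ref{lem:lie.alg.main}. It is not: the proof of that lemma only gives $\delta \geq 2$ when $p>2$. When $p=2$ and $H_s$ is nontrivial it only gives $\delta \geq 1$ (and supplements this with $\dim H/U\geq 2$ to get the codimension bounds in (3) and (4)), and when $H^0$ is a torus the argument does not go through $\delta$ at all but invokes Proposition~\ref{prop:torus.on.lieg} and Corollary~\ref{cor:torus.codim.sings}. Moreover, even where $\delta\geq 2$ does hold, it does not ``force $W_0$ to be $2$-modular'': $\delta$ controls linear subspaces of $\NN(W_0)$ and hence $\dim\NN(W_0)$ via Lemma~\ref{lem:delta}, but $2$-modularity is the condition $\codim_{W_0}(W_0)_{(r)} \geq r+2$ for every $r\geq 1$, a stratum-by-stratum bound that is not an immediate consequence of a bound on the null cone, and Proposition~\ref{prop:N.and.F} only delivers $1$-modularity of the Lagrangian submodules, not $2$-modularity. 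Running a ``$2$-modular version'' of that recursion would require a new statement and proof. The paper avoids this entirely: it uses Lemma~\ref{lem:lie.alg.main}(3), namely $\codim_{N_0}\NN(N_0)_\sg\geq 4$, together with induction over symplectic slice representations to conclude that $(N_0)_\sing$ has codimension at least $4$; then a complete intersection with singularities in codimension $\geq 4$ is locally factorial by Grothendieck's theorem, and a locally factorial cone is factorial. The $2$-modularity of $W_0$ (equivalently, $2$-largeness of $V$ in Theorem~\ref{thm:p.lieg}) is then extracted as a \emph{consequence} of factoriality via Proposition~\ref{prop:props.of.N}(6), not used as an intermediate step. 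So your logical direction is reversed, and the bridge you need (from $\delta$-bounds to $2$-modularity) is not there.
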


\begin{proof}
If follows from Lemma \ref{lem:lie.alg.main}(3) and induction over symplectic slice representations  that
$(N_0)_\sing$ has codimension at least $4$ in $N_0$.
Since $N_0$ is a complete intersection, $N_0$ is locally factorial \cite[ Expos\'e XI Corollaire 3.14]{GrothendieckFactorial}. Since $N_0$ is a cone, it follows that $\C[N_0]$ is a UFD \cite[Ch.\ 2, Ex. 6.3(d)]{Hartshorne}, and we have (1). Let $f\in\C[N_0]^*$. Then $\C^\times$ acts by the scalar action on $N_0$, and by the argument in \cite[Proposition 1.3]{KnopKraftVust}, $f\circ t=\chi(t)f$ for a character $\chi$ of $\C^\times$. Hence $f$ is homogeneous. This forces $f$ to be a constant and we have (2).    Now let $f\in \C[N_0]^H$. Then $f$ factors uniquely in $\C[N_0]$ as a product $f_0\cdot f_1\cdots f_m$ where $f_0$ is a unit (hence a constant), and $f_1,\dots,f_m$ are irreducible and transform by elements of $\chi(H)$.
If $\chi(H)$ is trivial,  the $f_i$ are $H$-invariant and we have (3).
\end{proof}

\begin{corollary}\label{cor:no.codim.1.strata}
If $p>2$ or $\Rank G>1$, then $V\git G$ has no codimension one strata.
\end{corollary}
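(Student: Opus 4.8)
The plan is to show that the orthogonal $G$-module $V=p\,\lieg$ (with $G$ simple, as in Lemma~\ref{lem:lie.alg.main}) is $2$-principal, which by Lemma~\ref{lem:2-large}(1) is equivalent to the assertion. By Lemma~\ref{lem:2-large}(2) it suffices to show that no slice representation $(W,H)$ of $V$, written $W=W^H\oplus W_0$, satisfies $\dim W_0\git H=1$. So I would assume $(W,H)$ is such a slice representation and rule it out, treating the cases $\dim H>0$ and $\dim H=0$ separately.

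For $\dim H>0$ the key point is that the null cone $\NN(W_0)$ is the fiber over the origin of the quotient morphism $W_0\to W_0\git H$, so every irreducible component of $\NN(W_0)$ has dimension at least $\dim W_0-\dim W_0\git H=\dim W_0-1$; hence $\codim_{W_0}\NN(W_0)\leq 1$. On the other hand, under the hypothesis $p>2$ or $\Rank G>1$, condition~(4) of Lemma~\ref{lem:lie.alg.main} holds for the corresponding symplectic slice representation $(W\oplus W^*,H)$ of $N$ (cf.\ Proposition~\ref{prop:orthog.reps}), that is, $\codim_{W_0}\NN(W_0)\geq 2$ --- a contradiction.

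The case $\dim H=0$ is the only step I expect to need real care. Since $H$ is finite every $H$-orbit in $W_0$ is closed, so $\NN(W_0)=\{0\}$ and $\dim W_0=\dim W_0\git H=1$. Specializing the structure of slice representations of $p\,\lieg$ recorded in the proof of Lemma~\ref{lem:lie.alg.main} to $\lieh=0$, one has $W\simeq(p-1)\,\lieg$ as an $H$-module and $W_0\simeq(p-1)\,\lie m$, where $\lie m$ is the $H$-stable complement of $\lieg^H$ in $\lieg$; thus $(p-1)\dim\lie m=1$, forcing $p=2$ and $\dim\lie m=1$. As $\dim W_0=1$, $H$ acts on the line $\lie m$ by a nontrivial character $\chi$ while acting trivially on $\lieg^H$; since the bracket of $\lieg$ is $H$-equivariant, $[\lieg^H,\lie m]$ lies in the $\chi$-isotypic part of $\lieg$, which is $\lie m$ itself, and $[\lie m,\lie m]=0$, so $[\lieg,\lie m]\subseteq\lie m$. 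Then $\lie m$ is a one-dimensional ideal of $\lieg$, contradicting simplicity. (When $p>2$ this last case is in any event excluded at once by the divisibility obstruction $(p-1)\mid 1$, so the ideal argument is only needed when $p=2$ and $\Rank G>1$.) Having ruled out both cases, $V$ is $2$-principal and $V\git G$ has no codimension one strata.
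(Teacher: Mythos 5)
Your proof is correct. For $\dim H>0$ it is essentially the paper's argument: $\dim W_0\git H=1$ forces $\codim_{W_0}\NN(W_0)\leq 1$, contradicting Lemma~\ref{lem:lie.alg.main}(4). For the finite case, however, you take a genuinely different route. The paper argues that $H$ must be finite cyclic, hence conjugate into a maximal torus $T$, so that the nonzero $T$-weights on $\lieg$ pair up into $\pm\nu$ and force $\dim W_0$ to be even, contradicting $\dim W_0=1$. You instead read off from the proof of Lemma~\ref{lem:lie.alg.main} that $W_0\simeq(p-1)\lie m$ with $\lie m$ the $H$-complement of $\lieg^H$, conclude $p=2$ and $\dim\lie m=1$ from the equation $(p-1)\dim\lie m=1$, and then use $H$-equivariance of the bracket to show $\lie m$ would be a one-dimensional ideal of the simple Lie algebra $\lieg$. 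Both arguments work; yours buys a certain robustness since it bypasses the claim that $H$ (as opposed to its image in $\GL(W_0)$) is cyclic and works entirely inside the structure theory of $\lieg$, while the paper's argument is shorter and appeals directly to the torus containment of finite cyclic subgroups of a reductive group.
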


\begin{proof}
Let $(W,H)$ be the slice representation of a codimension one stratum where $\dim H>0$. Then $\dim W_0\git H=1$ so that $\codim_{W_0}\NN(W_0)=1$ which contradicts Lemma \ref{lem:lie.alg.main}(4). Hence $H$ is finite cyclic and $\dim W_0=1$. But $H$ is a subgroup of a maximal torus $T$ of
$G$ 
in which case $\dim W_0$ is even, a contradiction.
\end{proof}

In the following two results we consider the adjoint representation of a semisimple group $G$. Since the action of $G$  on $\lieg$ factors through the adjoint group,
there is no harm in assuming that $G$ is a product of simple groups

\begin{theorem}\label{thm:p.lieg}
Let $G$ be semisimple and $V=p\,\lieg$ for $p\geq 2$. Let
$N = N_V\subset V\oplus V^*$ be the shell.
\begin{enumerate}
\item $N$ is
CIFR and $V$ is $1$-large.
\item If $p>2$ or $G$ contains no simple factor of rank $1$, then
$N$ and $N\git G$ are factorial and $V$ is $2$-large.
\end{enumerate}
\end{theorem}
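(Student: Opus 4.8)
The plan is to reduce everything to the case of $G$ simple, then apply the structural results about symplectic slice representations established in Lemma~\ref{lem:lie.alg.main} together with the machinery of Theorem~\ref{thm:use.E_m} (or its orthogonal refinement, Theorem~\ref{thm:rational.sings.orthogonal}) and the factoriality criteria of Proposition~\ref{prop:props.of.N} and Corollary~\ref{cor:factorial}. Since the $G$-action on $V = p\,\lieg$ factors through the adjoint group and the shell depends only on $G^0$, we may assume $G = G_1 \times \cdots \times G_s$ is a product of simple groups; moreover $V = p\,\lieg$ decomposes as an external direct sum $\bigoplus_i p\,\lie g_i$ of the corresponding modules, and a symplectic slice representation of $N_V$ is a product of symplectic slice representations of the $N_{p\,\lie g_i}$ (here I would invoke the behavior of slice representations under direct products of groups and modules). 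So it suffices to prove both statements when $G$ is simple, keeping track of the rank-$1$ exception only in part (2).

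For part (1), first I would verify that $V = p\,\lieg$ is orthogonal (the Killing form on each copy of $\lieg$ gives a nondegenerate symmetric $G$-invariant form) and has FPIG — indeed for $p \geq 2$ a generic pair of elements of $\lieg$ has trivial centralizer, so the principal isotropy group is finite, hence $V$ is stable. Then I would apply Theorem~\ref{thm:use.E_m}: for every symplectic slice representation $(S = W \oplus W^*, H)$ of $N$ with $\dim H > 0$, choosing the orthogonal Lagrangian submodule $W = W^H \oplus W_0$ produced by Lemma~\ref{lem:slice.orthog.rep} and Proposition~\ref{prop:orthog.reps}, Lemma~\ref{lem:lie.alg.main} tells us that condition~(1) or condition~(2) of that lemma always holds — that is, either $m_0(W_0) < \dim W_0 - \dim H$ or $H^0$ is a torus with $W_0$ FPIG (in which case $N_0$ is CIFR by Theorem~\ref{N.good.tori}). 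These are precisely hypotheses~(4) and~(2) of Theorem~\ref{thm:use.E_m}, so $N$ is CIFR. Being CIFR, $N$ is in particular a normal complete intersection with FPIG, so $V$ is $1$-modular by Proposition~\ref{prop:props.of.N}(5); combined with stability this gives $1$-large (for a stable $1$-modular module the principal stratum is closed and has the right codimension — I would spell out that FPIG plus stability plus $1$-modularity yields $1$-principal and $1$-modular, hence $1$-large).

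For part (2), assume $p > 2$ or $\Rank G > 1$. Factoriality of $N$ is Corollary~\ref{cor:factorial}(1), and factoriality of $N\git G$ follows from Corollary~\ref{cor:factorial}(3) once we observe that $\chi(G)$ is trivial — since $G$ is semisimple it has no nontrivial characters. To upgrade $1$-large to $2$-large I must show $V$ is $2$-modular and $2$-principal. For $2$-modularity, by Proposition~\ref{prop:props.of.N}(6) it is equivalent to $N$ being factorial, which we have just established. For $2$-principal, by Lemma~\ref{lem:2-large}(1) (applicable since $V$ is orthogonal) it suffices to show $V\git G$ has no codimension-one strata, which is exactly Corollary~\ref{cor:no.codim.1.strata}. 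Together these give $2$-large.

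The main obstacle is ensuring the reduction to the simple case is airtight: one must confirm that symplectic slice representations of a product module are exactly products of symplectic slice representations of the factors, that $m_0$ and the dimension inequalities in the hypotheses of Theorem~\ref{thm:use.E_m} behave additively under such products (so that if each factor satisfies hypothesis~(1), (2), or~(4), the product does too — with a slightly delicate point when different factors fall into different cases), and that the $2$-largeness properties and factoriality pass correctly from factors to product. Everything else is a direct assembly of Lemma~\ref{lem:lie.alg.main}, Theorem~\ref{thm:use.E_m}, and Corollaries~\ref{cor:factorial} and~\ref{cor:no.codim.1.strata}.
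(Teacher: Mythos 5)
Your argument is, in substance, the same as the paper's: decompose $G$ into simple factors, apply Lemma~\ref{lem:lie.alg.main} together with Theorem~\ref{thm:use.E_m} to each factor, get $1$-largeness from $1$-modularity plus orthogonality, and then use Corollary~\ref{cor:factorial} with Proposition~\ref{prop:props.of.N}(6) and Corollary~\ref{cor:no.codim.1.strata} for part (2). The one place you flag as unresolved --- whether the hypotheses of Theorem~\ref{thm:use.E_m} behave additively across factors when different factors fall in different cases --- is a real worry if you try to run Theorem~\ref{thm:use.E_m} directly on the product, but the paper sidesteps it entirely: since $\lieg_i$ acts trivially on $p\,\lieg_j$ for $j\neq i$, the moment map decomposes as $\mu=(\mu_1,\dots,\mu_m)$ and hence $N=N_1\times\cdots\times N_m$ as schemes. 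One then only invokes Theorem~\ref{thm:use.E_m} for each simple factor, and transfers the \emph{conclusion} (CIFR) to the product --- products preserve complete intersections, FPIG, and rational singularities --- without ever examining symplectic slice representations of the product. The same device handles (2): each $N_i$ is a complete intersection with singular locus of codimension $\geq 4$, and this codimension bound passes to $N=\prod N_i$ since $(X\times Y)_\sing=(X_\sing\times Y)\cup(X\times Y_\sing)$, after which Grothendieck's criterion gives factoriality of $N$ and the argument of Corollary~\ref{cor:factorial}(3) (using $\C[N]^*=\C^\times$ and $\chi(G)$ trivial) gives factoriality of $N\git G$. One small simplification to your part (1): $1$-principality is automatic for any irreducible $G$-module, since the principal stratum is open and nonempty, hence dense; stability is not needed for that step.
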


\begin{proof}
We may assume that  $G=G_1\times\dots\times G_m$ is
a decomposition of $G$ into simple factors. Then $\lieg=\lieg_1\oplus\cdots\oplus\lieg_m$ and correspondingly $N=N_1\times\cdots\times N_m$. By Lemma \ref{lem:lie.alg.main} and Theorem \ref{thm:use.E_m}, each $N_i$ is
CIFR, hence so is $N$. Since $V$ is orthogonal and $1$-modular, it is  $1$-large   and we have (1).

Now assume the hypotheses of (2).
By Corollary  \ref{cor:factorial}, each
 $N_i$ is factorial, hence so are
 $N$ and $N\git G$. Moreover, by   Proposition \ref{prop:props.of.N}
 (6) and Corollary \ref{cor:no.codim.1.strata},
 each $p\,\lieg_i$ is $2$-modular and  $2$-principal. Hence  $V$ is $2$-large.
\end{proof}

We also have the following, which we will need in Section \ref{subs:tan.cone}.

\begin{corollary}\label{cor:arbitrary.H}
Let $G=G_1\times\dots\times G_m$  be as above and $p\geq 2$. Let  $H=H_1\times\dots\times  H_m$ be a
reductive subgroup of $G$ where $H_i\subset G_i$, $i=1,\dots,m$.   Let
$W=p\,\lieh\oplus (p-1)(\lieg/\lieh)$ and define $W_0$ and  $N_0$ as usual. Then every symplectic slice representation of $N_0$ satisfies (1) or (2) of Lemma \ref{lem:lie.alg.main}, hence
$N_0$ is CIFR.
\end{corollary}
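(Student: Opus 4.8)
\emph{Plan.} The plan is to reduce to the case where $G$ is simple and then to recognise that $W$, and every $H'$-module obtained from it by iterated slice representations, has exactly the shape of the modules estimated in Lemma \ref{lem:lie.alg.main}, so that the bound there applies verbatim even though $H'$ need not be an isotropy group inside $p\,\lieg$.

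\emph{Product reduction.} Since $\lieg=\bigoplus_i\lieg_i$ and $\lieh=\bigoplus_i\lieh_i$ as $H$-modules, we get $W=\bigoplus_iW^{(i)}$ with $W^{(i)}=p\,\lieh_i\oplus(p-1)(\lieg_i/\lieh_i)$, hence $W_0=\bigoplus_iW_0^{(i)}$ and $N_0=\prod_iN_0^{(i)}$. A symplectic slice representation of $N_0$ is a product of symplectic slice representations of the $N_0^{(i)}$, a finite product of CIFR varieties is CIFR, and conditions (1) and (2) of Lemma \ref{lem:lie.alg.main} combine over products: a product of tori with FPIG is again such, while $m_0$ is additive over the summands and $m_0(W_0^{(i)})\le\dim W_0^{(i)}-\dim H_i$ in both cases (the $W_0^{(i)}$ are orthogonal, so Proposition \ref{prop:dim.L} applies), so a single strict inequality among the factors gives a strict inequality overall. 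Thus we may assume $G$ is simple.

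\emph{Slice representations preserve the shape.} As an $H$-module, $W=p\,\lieh\oplus(p-1)(\lieg/\lieh)\cong(p-1)\lieg\oplus\lieh$ (split off one copy of $\lieh$ from $p\,\lieh$ and recombine the remaining $(p-1)\lieh$ with $(p-1)(\lieg/\lieh)$). Here $\lieh$ and $\lieg/\lieh$ are orthogonal $H$-modules, so $W$ and $W_0$ are orthogonal; by Proposition \ref{prop:orthog.reps} the symplectic slice representations of $N_0=N_{W_0}$ are precisely the $(W'\oplus(W')^*,H')$ with $(W',H')$ a slice representation of $W_0$, equivalently of $W=W^H\oplus W_0$ (the trivial summand $W^H$ is inert and only enlarges the fixed part). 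If $w\in W$ lies on a closed $H$-orbit with $H'=H_w$, then $W|_{H'}=(p-1)\lieg|_{H'}\oplus\lieh|_{H'}$ and $\lieh|_{H'}\cong\lieh'\oplus\lieh/\lieh'$, while $T_w(Hw)\cong\lieh/\lieh'$; cancelling gives $W'\cong(p-1)\lieg|_{H'}\oplus\lieh'=p\,\lieh'\oplus(p-1)(\lieg/\lieh')$, with $\lieg$ now regarded as an $H'$-module via $H'\hookrightarrow G$. Hence the class of $H'$-modules of this form ($H'\subseteq G$ reductive) is closed under passing to slice representations, and every symplectic slice representation of $N_0$ is the symplectic double, minus its fixed part, of such a module.

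\emph{Conclusion.} The estimates in the proof of Lemma \ref{lem:lie.alg.main} yielding its conclusions (1) and (2) use only the $H$-module structure $W=p\,\lieh\oplus(p-1)(\lieg/\lieh)$, equivalently the data $W_0=p\,\lieh_s\oplus(p-1)\lie m$ with $\lie m$ orthogonal and $\dim\lie m\ge2\dim Z$, together with orthogonality of $W_0$ and the Borel-subgroup bound for $m_0(W_0)$; they never use that $H$ is an isotropy group of $G$ on $p\,\lieg$. Applying them to each $W'=(p-1)\lieg\oplus\lieh'$ with $\dim H'>0$ shows the corresponding symplectic slice representation satisfies (1) or (2) of Lemma \ref{lem:lie.alg.main}, while when $\dim H'=0$ this is immediate ($H'$ finite forces FPIG, and $W_0'$ is stable so $m_0(W_0')<\dim W_0'$). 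As (1) and (2) of Lemma \ref{lem:lie.alg.main} are precisely conditions (4) and (2) of Theorem \ref{thm:use.E_m}, that theorem, applied with $H$ and $W_0$ in place of $G$ and $V$, gives that $N_0$ is CIFR. The one point needing care is the preceding paragraph: one must check that a slice representation genuinely leaves the copy of $\lieg$ untouched and only contracts the $\lieh$-summand, so that the class $(p-1)\lieg\oplus\lieh'$ is closed under the operation and Lemma \ref{lem:lie.alg.main}'s count can be reused without change.
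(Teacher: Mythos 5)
Your proposal is correct and follows essentially the same route as the paper: reduce to the case $G$ simple, observe that the estimates in the proof of Lemma~\ref{lem:lie.alg.main} use only the $H$-module structure of $W=p\,\lieh\oplus(p-1)(\lieg/\lieh)$ and not the fact that $H$ arises as an isotropy group for $p\,\lieg$, and then apply Theorem~\ref{thm:use.E_m}. You make explicit the step the paper leaves implicit, namely the computation showing that a slice representation of $W$ at a closed orbit with isotropy $H'$ is again of the form $p\,\lieh'\oplus(p-1)(\lieg/\lieh')$, which is a useful clarification but not a different method.
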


\begin{proof}
It is enough to consider the case that $G$ is simple so that we are in the situation of Lemma \ref{lem:lie.alg.main}. The proof of the lemma  shows that (1) or (2) holds even if $(W\oplus W^*,H)$ is not a symplectic slice representation of $N$.  The same is true for any symplectic slice representation of   $N_0$. By Theorem \ref{thm:use.E_m}, 
$N_0$ is CIFR. 
\end{proof}

% xxxxxxxxxxxxxxxxxxxxxxxxxxxxxxxxxxxxxxxxxxxxxxxxxxxxxxxxxxxxxxxxxxxxxxxxx
% xxxxxxxxxxxxxxxxxxxxxxxxxxxxxxxxxxxxxxxxxxxxxxxxxxxxxxxxxxxxxxxxxxxxxxxxx
% xxxxxxxxxxxxxxxxxxxxxxxxxxxxxxxxxxxxxxxxxxxxxxxxxxxxxxxxxxxxxxxxxxxxxxxxx

\section{$N_V\git G$ has symplectic singularities.}\label{sec:symplectic.sings}
Let $N = N_V$.
We present  conditions that are sufficient for  $N\git G$ to have symplectic singularities,  different than our criteria in \cite{HerbigSchwarzSeaton2} where we required $V$ to be $3$-large or $2$-large with $(N\git G)_\pr=(N\git G)_\sm$.
Henceforth, $(*)$ will denote the following  condition.

 Let $(S,H)$ be a
 symplectic slice representation of $N$. Then
 UTCLS we have either
\begin{enumerate}
\item $m_0(W_0)<\dim W_0-\dim H$, or
\item $H^0$ is a torus and $W_0$ has FPIG.
\end{enumerate}

\begin{remark}\label{rem:star.p.lieg}
By Proposition \ref{prop:N.and.F}, $(*)$ implies that $V$ is $1$-modular and
$N$ is a normal complete intersection with FPIG.
\end{remark}

By  Theorems \ref{N.good.tori} and   \ref{thm:use.E_m}, $(*)$ implies that $N$ is
CIFR, hence $\dim N\git G=2\dim V-2\dim G$ and $N\git G$ has rational singularities \cite{Boutot}.

Let $X$ denote $N\git G$. The algebra $\C[X]$ is graded, and it is normal and Cohen-Macaulay since $X$ has rational singularities. Recall that $X$ is \emph{graded Gorenstein\/} if  the canonical module $\omega_X$ (which has a grading) is a free   $\C[X]$-module with generator of degree $\dim X$.

\begin{theorem}\label{thm:symplectic.sings}
If $(*)$ holds then $X$  is graded Gorenstein  and has symplectic singularities.
\end{theorem}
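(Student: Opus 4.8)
The strategy is to establish the two conclusions---graded Gorensteinness and the existence of symplectic singularities on $X = N\git G$---by reducing the second to the first together with the already-established rational singularity property, following the template of \cite{HerbigSchwarzSeaton2} but using the weaker hypothesis $(*)$. Recall that by Remark~\ref{rem:star.p.lieg} and the discussion preceding the theorem, $(*)$ guarantees that $N$ is a normal complete intersection with FPIG, that $N$ is CIFR, and hence (by \cite{Boutot}) that $X$ has rational singularities; in particular $\C[X]$ is normal and Cohen--Macaulay. Since $N\subset U = V\oplus V^*$ is a complete intersection cut out by the $\dim G$ components of the moment map $\mu$, which are quadratic and form a regular sequence (Proposition~\ref{prop:props.of.N}(1) together with $1$-modularity), $N$ is graded Gorenstein with canonical module $\omega_N$ free of the appropriate degree: the adjunction/Koszul computation gives $a$-invariant $a(\C[N]) = 2\dim V - 2\dim G - 2\dim G = \dim N - 2\dim G$ (each of the $\dim G$ quadrics contributing degree $2$).

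\textbf{Step 1: Gorensteinness of $X$.} I would compute the canonical module of $\C[X] = \C[N]^G$ via the fact that, since $G$ is reductive and $N$ is Gorenstein with FPIG, $\omega_X = (\omega_N \otimes \det(\lieg)^{?})^G$ up to a twist by the modular character of $G$ on $\omega_N$; since $G$ is semisimple (or more generally since the relevant character is trivial here---$G$ acts on $\lieg^*$ through $\Ad^*$ which has trivial determinant when $G$ is semisimple, and in general on $V\oplus V^*$ the action has trivial determinant), the twist vanishes. Then $\omega_X$ is a rank-one reflexive, hence (by normality and the local factoriality along the principal stratum, or by a direct grading argument as in \cite[Proposition 1.3]{KnopKraftVust}) free $\C[X]$-module, and one tracks the degree of its generator through the Reynolds operator to land at $\dim X = 2\dim V - 2\dim G$. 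The cleanest route is the one used in \cite{HerbigSchwarzSeaton2}: exhibit a nowhere-vanishing $G$-invariant section of $\omega_N$ of the correct degree (coming from the symplectic/holomorphic volume form on $U$ contracted against $d\mu$) and push it down.

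\textbf{Step 2: Symplectic singularities.} By Beauville's criterion / Namikawa's characterization, a normal variety with rational, Gorenstein singularities has symplectic singularities if and only if it carries a holomorphic symplectic $2$-form on its smooth locus. On $X_\pr = X_\sm$ (here using FPIG: the principal isotropy is finite, so the Marsden--Weinstein reduction $N_\pr\git G$ is smooth and symplectic) we have the reduced symplectic form $\omega_\red$, obtained by the usual quotient construction from $\omega|_N$. It remains to check that $\omega_\red$ extends to a symplectic form (not merely a closed $2$-form) in codimension $\geq 2$, i.e. on all of $X_\sm$; since $X$ is normal this suffices. The extension across $X_\sm\setminus X_\pr$ is handled by induction over the isotropy strata using the symplectic slice theorem (Theorem~\ref{thm:symplectic.slice}): near a closed orbit with slice $(S,H)$, $X$ is \'etale-locally $(S^H\times N_0)\git H$, which again satisfies $(*)$ with smaller $H$, so by induction $X_0 := N_0\git H$ has symplectic singularities and the reduced forms match up under the \'etale map; this is exactly the inductive structure used in the proof of Theorem~\ref{thm:iff}.

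\textbf{Main obstacle.} The delicate point is Step 2 at strata where $H^0$ is a nontrivial torus with $W_0$ merely FPIG---case (2) of $(*)$---since there $m_0(W_0)$ may fail to be small and one cannot invoke the jet-scheme codimension bounds directly; instead one must rely on Theorem~\ref{N.good.tori} to know $N_0$ is CIFR and then verify that the \emph{degree} bookkeeping for $\omega_{X_0}$ and the extension of $\omega_\red$ still go through. Concretely, the hard part is confirming that the reduced symplectic form extends as a \emph{nondegenerate} form (not just a regular $2$-form) across the singular locus of $X$, which is where Gorensteinness (Step 1) is used essentially: it forces the extension $\omega_\red^{\wedge\, \dim X/2}$ to be a nowhere-vanishing section of $\omega_X$, hence $\omega_\red$ is symplectic on all of $X_\sm$. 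I expect the bulk of the genuinely new work, relative to \cite{HerbigSchwarzSeaton2}, to be in verifying that the weaker hypothesis $(*)$ still supplies enough regularity (normality plus codimension-$2$ control of the singular locus on each $N_0$) to run this extension argument.
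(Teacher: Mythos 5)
Your overall framework is the right one (rational + Gorenstein + holomorphic symplectic form on the smooth locus, then invoke Namikawa's extension theorem), and Step~1 follows the paper's route via a nowhere-vanishing section of $\bigwedge^{\dim X} T^*X_\pr$ on the principal stratum (with the codimension-two observation, which you omit, that $X\setminus X_\pr$ has codimension at least two because strata are even-dimensional, so the section extends). But Step~2 contains an internal contradiction and misses the key lemma. You write ``On $X_\pr=X_\sm$\ldots'' and then immediately say ``It remains to check that $\omega_\red$ extends \ldots\ across $X_\sm\setminus X_\pr$.'' If $X_\pr=X_\sm$ there is nothing to extend; if not, then the equality needs proof. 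In fact the crux of the paper's Step~2 is precisely to prove that $X_\sm=X_\pr$ (Corollary~\ref{cor:smooth.points}): hypothesis $(*)$ forces each subprincipal symplectic slice representation $(W,H)$ to be $1$-large (Lemma~\ref{lem:1-large}), and then by \cite[Lemma 2.3]{HerbigSchwarzSeaton} the quotient $N_W\git H$ is singular along the $(H)$-stratum, so $X$ is singular along every non-principal stratum. Once $X_\sm=X_\pr$ is in hand, the symplectic form on $X_\sm$ comes directly from \cite[Cor.~3.18]{HerbigSchwarzSeaton2} and no extension across singularities of a resolution needs to be checked by hand---that is exactly what Namikawa's theorem \cite[Theorem~6]{NamikawaExtension} supplies, given rational Gorenstein.

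Your ``main obstacle'' paragraph therefore misidentifies what is hard: you worry about whether $\omega_\red$ extends nondegenerately across $X_\sm\setminus X_\pr$ and about degree bookkeeping at torus-type strata, but that extension problem never arises. The inductive use of the symplectic slice theorem that you gesture at is the right tool, but its correct target is the assertion that non-principal closed orbits give singular points of $X$ (so $X_\sm=X_\pr$), not the assertion that the form extends. Without Lemma~\ref{lem:1-large} and Corollary~\ref{cor:smooth.points} (or an equivalent argument that $X_\sm=X_\pr$), the proof is incomplete: you would have a symplectic form only on the \emph{a priori} smaller open set $X_\pr$, and you could not conclude that $X_\sm$ carries one.
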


We give a proof of the theorem after some preliminary results.

\begin{proposition}\label{prop:graded.Gor}
Assume $(*)$. Then $X$ is graded Gorenstein.
\end{proposition}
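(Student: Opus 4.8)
\textbf{Proof plan for Proposition~\ref{prop:graded.Gor}.}
The plan is to show that the canonical module $\omega_X$ of $X = N\git G$ is free of the correct degree by descending from the canonical module of the shell $N$ itself and then taking $G$-invariants. First I would recall that under $(*)$ the shell $N = N_V$ is a complete intersection (Remark~\ref{rem:star.p.lieg}), cut out by $\dim G$ equations $\mu_A = \xi(A(x))$ in $U = V\oplus V^* = \C^{2\dim V}$; since each defining equation $\mu_A$ is bihomogeneous of degree $(1,1)$, hence of total degree $2$, the adjunction/Koszul computation for a graded complete intersection gives that $\omega_N$ is a free $\C[N]$-module of degree $2\dim V - 2\dim G = \dim N$ (the sum of the degrees of the relations minus the number of variables, $2\dim G - 2\dim V$, with the usual sign convention). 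Equivalently $\omega_N \simeq \C[N]$ after a shift, so $N$ is graded Gorenstein. This is the step that uses very little beyond the complete intersection property and the explicit bidegrees of the moment map components.

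Next I would pass from $N$ to $X = N\git G$ using the fact, which $(*)$ supplies via Theorems~\ref{N.good.tori} and~\ref{thm:use.E_m} together with Boutot's theorem \cite{Boutot}, that $X$ has rational singularities; in particular $\C[X] = \C[N]^G$ is normal and Cohen-Macaulay, so it has a well-defined graded canonical module $\omega_X$. The key point is a ``canonical module descends to invariants'' statement: for $G$ reductive acting on a Gorenstein graded ring $\C[N]$ with $\C[N]^G$ Cohen-Macaulay and with the quotient map generically a $G$-torsor over the principal locus (which holds because $N$ has FPIG, again from $(*)$), one has $\omega_X \simeq (\omega_N)^G$ up to a grading shift determined by the modular character of the $G$-action on the top exterior power of $\lieg$ acting on $N$. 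Since $G$ is reductive, this modular character is trivial (its restriction to the identity component is trivial because $\lieg$ is unimodular, $\operatorname{tr}\operatorname{ad} = 0$ for reductive $\lieg$; and the finite part contributes nothing because $\det$ on the tangent directions along orbits is a root of unity times a polynomial, forced to be $1$ on a UFD-type argument — here I would likely invoke the FPIG/stable structure rather than chase characters). Hence $\omega_X \simeq \C[N]^G = \C[X]$ as a graded module, possibly shifted, and computing the shift on the principal (smooth) locus where everything is transparent pins it down to be $\dim X = 2\dim V - 2\dim G$. Therefore $X$ is graded Gorenstein.

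The main obstacle I anticipate is making the descent $\omega_X \simeq (\omega_N)^G$ rigorous with the correct grading shift: one must know that $N$ is (at least) $1$-large so that the quotient is generically a principal bundle, that the canonical sheaf of $N$ restricted to the smooth locus matches the pullback of the canonical sheaf of $X$ twisted by the relative dualizing sheaf of the quotient map, and that the relative dualizing sheaf is trivial because the $G$-orbits are ``unimodular'' (the adjoint action of $G$ on $\det\lieg$ is trivial for $G$ reductive). Concretely I would: (i) restrict to $N_\pr$, where $\pi\colon N_\pr \to X_\pr$ is a $G$-torsor (using FPIG and $1$-largeness, which follow from $(*)$ by Proposition~\ref{prop:N.and.F} and the fact that $\dim X = 2\dim V - 2\dim G$), so $\omega_{X_\pr}$ is the $G$-invariant pushforward of $\omega_{N_\pr} \otimes \det(\lieg)^\vee$, and the $\det\lieg$ twist is trivial; (ii) extend over the singular locus of $X$ using that both $\omega_X$ and the claimed free module agree in codimension $\geq 2$ (normality of $X$) — here I would use that $(*)$ forces the singular/non-principal locus of $N$, hence of $X$, to have codimension at least $2$, which is built into the $1$-largeness. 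With the isomorphism $\omega_{X_\pr}\simeq \calO_{X_\pr}$ (shifted) established on the principal locus and both sides reflexive, it extends across $X$, giving the result.

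\textit{Remark on bookkeeping.} Throughout, the degree computation is the routine part: the moment map has $\dim G$ components each of bidegree $(1,1)$ acting on $2\dim V$ variables each of degree $1$, so the $a$-invariant of the graded complete intersection $\C[N]$ is $2\dim G - 2\dim V$, giving $\omega_N \simeq \C[N](2\dim G - 2\dim V)$ and hence, after taking invariants and unshifting by the trivial modular character, $\omega_X \simeq \C[X](2\dim G - 2\dim V) = \C[X](-\dim X)$, i.e. $X$ is graded Gorenstein with canonical generator in degree $\dim X$, exactly as required.
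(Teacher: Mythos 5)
Your proposal is correct in outline but takes a genuinely different route from the paper's proof, and it is worth comparing the two. The paper avoids the canonical-module descent entirely: it simply notes that $U=X_\pr$ has complement of codimension $\geq 2$ in $X$ (because all strata are even-dimensional), that $L=\bigwedge^{\dim X}T^*U$ carries a nowhere-vanishing section $\sigma$ which was already constructed in \cite[Proof of Theorem 4.6]{HerbigSchwarzSeaton2} (essentially $\sigma = \omega_{\mathrm{red}}^{\dim X/2}$ for the reduced symplectic form on $X_\pr$), and that $\Gamma(U,L)\simeq\C[U]=\C[X]$ then shows $X$ is Gorenstein. The grading is free: $\omega_{\mathrm{red}}$ has degree $2$, so $\sigma$ has degree $\dim X$. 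That's the whole argument, and the key input is the symplectic geometry on $X_\pr$, not any a-invariant or invariant-theoretic duality.

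Your route instead computes the $a$-invariant of $\C[N]$ from the complete intersection presentation (which is correct: $\dim G$ relations of degree $2$ in $2\dim V$ degree-$1$ variables gives $a = 2\dim G - 2\dim V = -\dim X$) and then descends $\omega_N$ to $\omega_X$ by taking $G$-invariants. This can be made to work, but the step ``$\omega_X\simeq(\omega_N)^G$ with no extra grading shift'' is precisely the nontrivial input that you gesture at rather than prove: it is a theorem in the style of Knop (on the canonical module of an invariant ring) or, in the geometric formulation you sketch, requires checking that (a) $\pi\colon N_\pr\to X_\pr$ is a torsor up to finite covers (which follows from FPIG under $(*)$), (b) the relative dualizing sheaf is trivial as a $G$-equivariant and $\C^\times$-equivariant sheaf — the $G$-part is the unimodularity of $\Ad$ you cite, and the $\C^\times$-part is that the infinitesimal $\lieg$-action consists of degree-$0$ derivations — and (c) the identification extends across the codimension-$\geq 2$ non-principal locus by reflexivity. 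None of these steps is wrong, but (b) is exactly where the grading bookkeeping lives and your treatment of it is informal; one should also be careful that Knop's/Watanabe's criterion involves a character of $G$ which may be nontrivial when $G$ is disconnected, so a sentence justifying its triviality in this setting is needed. What your approach buys is independence from the prior construction of the symplectic volume form $\sigma$; what the paper's approach buys is a one-paragraph proof that sidesteps invariant-theoretic duality by exhibiting the generator of $\omega_X$ directly. If you want to complete your version, the cleanest fix is to replace the vague ``descent'' step with an explicit citation to the Knop-type statement for stable actions with FPIG, together with the observation that the relevant character is trivial because $\det\Ad_G=1$ and the $\C^\times$-weight of $\det(\lieg\text{-action vector fields})$ is $0$.
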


\begin{proof}
Let  $U=X_\pr\subset X_\sm$. Then $\codim_X (X\setminus U)\geq 2$ (since all strata of $X$ are even dimensional).  Now $L=\bigwedge^{\dim X}(T^*U)$ is a line bundle, and in \cite[Proof of Theorem 4.6]{HerbigSchwarzSeaton2} we construct a nowhere vanishing section $\sigma$ of $L$. It follows that $\Gamma(U,L)\simeq \C[U]=\C[X]$, so that $X$ is Gorenstein. By construction, the degree of $\sigma$ equals the dimension of $X$, so that $X$ is graded Gorenstein.
\end{proof}

Assume that $N=N_V$ has   principal isotropy groups, e.g., $V$ is $1$-modular.  Let  $(S=W\oplus W^*,H)$ be a symplectic slice representation of $N$. We say that $(S,H)$ is \emph{proper\/} if $H\neq G$ and we say that $(S,H)$ and the stratum $N_{(H)}$ of $N$ are \emph{subprincipal\/} if every proper symplectic slice representation of  $(S,H)$ is principal.

\begin{lemma}\label{lem:1-large}
Let $(S,H)$ be as in $(*)$. If the corresponding stratum  is subprincipal, then $(W,H)$ is 1-large.
\end{lemma}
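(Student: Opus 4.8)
The plan is to translate the subprincipal hypothesis into a dimension/codimension statement about the strata of $W_0$ (equivalently of $S_0$) and then invoke the structural results already established for shells under hypothesis $(*)$. Since $(S,H)$ satisfies $(*)$, by Proposition \ref{prop:N.and.F} and Remark \ref{rem:star.p.lieg} the shell $N_0 = N_{W_0}$ is a normal complete intersection with FPIG; in particular $W_0$ is $1$-modular, hence $1$-large as soon as we show it is $1$-principal. So the real content is to prove that $W_0$ is $1$-principal, i.e.\ that the non-principal locus of $W_0$ has codimension at least one — equivalently that $W_0^H = 0$ is impossible to violate in a way that produces a codimension-zero non-principal stratum, and more precisely that the only closed orbits through a dense open set are principal. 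Actually since FPIG already gives a dense open set of closed orbits with finite (hence principal, because $(S,H)$ is subprincipal) isotropy, what must be ruled out is a non-principal \emph{closed} orbit lying in a codimension-zero subset — which cannot happen — so the point is rather to get $1$-principality as a codimension bound on \emph{all} non-principal points, closed-orbit or not.

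First I would set up the slice picture: a point of $W_0$ lying on a closed $H$-orbit with positive-dimensional isotropy $H'$ gives, by the subprincipal hypothesis applied to $(S,H)$, a symplectic slice representation $(S',H')$ of $N_0$ that is principal, so $H'$ is a principal isotropy group of $N_0$ — but $N_0$ has FPIG, so $H'$ is finite, a contradiction. Hence every closed $H$-orbit in $W_0$ with positive-dimensional isotropy is excluded, i.e.\ all closed orbits have finite isotropy and $W_0$ is stable with FPIG. To upgrade "FPIG" to "$1$-large", I would invoke the same reasoning through the symplectic slice theorem (Theorem \ref{thm:symplectic.slice}) and Proposition \ref{prop:(F)}/\ref{prop:(NF)}: since $N_0$ is a normal complete intersection with FPIG, every symplectic slice representation of $N_0$ has properties (N) and (F), which forces the relevant strata to have the codimension needed. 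Concretely, for a positive-dimensional isotropy stratum $W_{0,(r)}$ the associated $N_0$-stratum has the wrong dimension unless it is empty, and pushing the inequality in the proof of Proposition \ref{prop:(F)} through for $N_0$ gives $\codim_{W_0} W_{0,(r)} \geq r+1$ for $r\geq 1$, i.e.\ $1$-modularity of $W_0$ as a $G$-variety, and combined with $1$-principality (the non-closed-orbit non-principal points are handled the same way, as in Lemma \ref{lem:r-1Necessary}) we conclude $W_0$ is $1$-large.

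The main obstacle I anticipate is bookkeeping between the module $W_0$, its shell $N_0$, and the stratifications of each: the subprincipal hypothesis is phrased for symplectic slice representations of $N$ (hence of $N_0$), whereas "$1$-large" is a statement about the $H$-variety $W_0$ itself, and one must pass cleanly between slice representations of the orthogonal module $W_0$ and the induced symplectic slice representations $(W'\oplus (W')^*, H')$ of $N_0$ — here Proposition \ref{prop:orthog.reps} (applicable since, UTCLS, we may take $W_0$ orthogonal when $V$ is orthogonal; more generally one works directly with the symplectic slices) is the tool that makes the dictionary exact. A secondary subtlety is that "subprincipal" only controls \emph{proper} slice representations of $(S,H)$, so one must separately note that $(S,H)$ itself being non-principal is consistent with $W_0$ being $1$-large (the stratum $N_{(H)}$ need not be principal in $N$, only the ones strictly below it). Once the dictionary is in place the inequalities are exactly those appearing in the proofs of Propositions \ref{prop:(F)} and \ref{prop:(NF)}, so no new estimates are needed.
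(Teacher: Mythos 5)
Your core observation — that subprincipality forces non-origin closed orbits in $W_0$ to have finite isotropy, since a positive-dimensional $H'$ would give a proper symplectic slice representation of $(S,H)$ that must be principal for $N$, while $N$ has FPIG — is the same insight the paper uses. But your proposal has two genuine gaps, both of which trace back to failing to establish $\NN(W_0)\neq W_0$.

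First, your statement "every closed $H$-orbit in $W_0$ with positive-dimensional isotropy is excluded" is false as written: the origin is a closed orbit with isotropy $H$, and in the interesting case $\dim H > 0$. The subprincipal hypothesis controls only \emph{proper} slice representations of $(S,H)$, so it says nothing about the origin. What you actually get is that every \emph{non-origin} closed $H$-orbit in $W_0$ has finite isotropy, which is weaker. Second, and more importantly, even the corrected statement does not give FPIG or $1$-principality on its own: you must also know that $W_0$ \emph{has} a closed orbit other than the origin, i.e.\ that $\NN(W_0)\neq W_0$. This is precisely where the hypothesis $(*)$ is used in the paper's short proof, and your proposal never isolates this step. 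Subprincipality implies that the complement of the principal orbits in $W_0$ is exactly $\NN(W_0)$; then $(*)$ forces $\NN(W_0)\neq W_0$ (in case (1) because $m_0(W_0)<\dim W_0-\dim H<\dim W_0$ so $W_0$ itself cannot lie in its null cone, and in case (2) because $W_0$ has FPIG). Only with $\NN(W_0)\neq W_0$ in hand does one get that $W_{0,\pr}$ is nonempty open, $W_0$ is $1$-principal, and its principal isotropy group is finite (being a principal isotropy group of $N$). You invoke $(*)$ only indirectly, through Proposition \ref{prop:N.and.F}, for $1$-modularity and FPIG of $N_0$, so the conclusion that $W_0$ has FPIG is not established by your argument.

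Finally, the "upgrade to $1$-large via properties (N) and (F)" is a detour that does not supply the missing piece: the inequalities in Propositions \ref{prop:(F)} and \ref{prop:(NF)} re-derive $1$-modularity of $W_0$ (already known from $(*)$) and give dimension bounds on $\NN(N_0)$, but do not give $1$-principality of $W_0$. The reference to Lemma \ref{lem:r-1Necessary} is likewise off target, as that lemma concerns the $m=1$ case of Musta\c{t}\u{a}'s criterion for $N$, not the stratification of the $H$-module $W_0$.
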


\begin{proof}
Let $W=W^H\oplus W_0$ be a Lagrangian $H$-submodule of $S$. We know that $W_0$  is $1$-modular and $N$ has FPIG.
Since $(S,H)$ is subprincipal,
 the complement  of the principal orbits  in $W_0$ is $\NN(W_0)$. By $(*)$, $\NN(W_0)\neq W_0$. It follows that $W_0$ has FPIG and is $1$-principal, hence $1$-large.
\end{proof}

\begin{corollary}\label{cor:smooth.points}
If $(*)$ holds, then $X_\sm=X_\pr$ and $X_\sm$ carries a symplectic form.
\end{corollary}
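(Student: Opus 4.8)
The plan is to prove Corollary~\ref{cor:smooth.points} by first establishing the equality $X_\sm = X_\pr$ and then constructing the symplectic form on the smooth locus. For the first part, I would argue by induction on the depth of the isotropy stratification. Since $(*)$ holds, Remark~\ref{rem:star.p.lieg} gives that $N = N_V$ is a normal complete intersection with FPIG, so $X_\pr$ is a well-defined open dense stratum consisting of closed orbits with finite isotropy; by the symplectic slice theorem and Proposition~\ref{prop:props.of.N}(3), $X_\pr \subset X_\sm$. The nontrivial inclusion is $X_\sm \subset X_\pr$, equivalently that every proper (positive-dimensional isotropy) stratum $X_{(H)}$ consists of singular points of $X$.

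For that inclusion I would localize along a closed orbit $Gx \subset N$ with symplectic slice representation $(S = W \oplus W^*, H)$, $\dim H > 0$, and reduce to understanding $N_0 \git H$ near the origin, using the fact (from the slice theorem) that a neighborhood of the image of $Gx$ in $X$ looks \'etale-locally like $(S^H \times N_0)\git H \simeq S^H \times (N_0 \git H)$. By induction on the stratification (the base case being subprincipal strata), it suffices to treat the case where $(S,H)$ is subprincipal. Then Lemma~\ref{lem:1-large} applies and tells us $(W,H)$ — more precisely $(W_0, H)$ — is $1$-large. At this point the key step is to show that $N_0 \git H$ is singular at the origin. Here I would invoke the dimension count: since $\dim H > 0$, the quotient $N_0 \git H$ has strictly positive-dimensional positive-depth stratification, and the origin is the unique most-singular point; because $N_0$ is a complete intersection of positive-dimensional $H$-action with $0$ the only point on a closed orbit with full isotropy $H$, the quotient cannot be smooth at $0$ (a smooth point of the quotient would force, via \'etale slice arguments and the graded structure, that $H$ acts trivially on a Lagrangian, contradicting that the stratum is proper and $W_0$ is $1$-principal with $\NN(W_0) \neq W_0$ a proper nonempty cone through $0$). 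So every proper stratum maps to singular points of $X$, giving $X_\sm \subset X_\pr$, hence equality.

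For the second part, the symplectic form on $X_\sm = X_\pr$ is already available: on the principal stratum the $G$-action on $N_\pr$ is free modulo a finite group, the restriction of $\omega$ to $N_\pr$ descends (by the standard Marsden--Weinstein reduction picture, since $N_V = \mu\inv(0)$ and $G$ acts with finite stabilizers on $N_\pr$) to a closed nondegenerate $2$-form on $X_\pr$. Concretely, on $\mu\inv(0)_\pr$ the kernel of $\omega$ restricted to the tangent space is exactly the tangent to the $G$-orbit, so the quotient inherits a nondegenerate form; closedness is inherited from $d\omega = 0$. This is the standard symplectic structure on the smooth locus of a symplectic reduction and is presumably recorded earlier in the paper's background or in \cite{HerbigSchwarzSeaton2}.

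The main obstacle I anticipate is the step showing that every proper stratum consists of singular points of $X$ — i.e., ruling out the possibility that some positive-dimensional-isotropy stratum is nonetheless a smooth point of the quotient. The subprincipal reduction via Lemma~\ref{lem:1-large} makes this tractable, but one still needs to argue carefully that a $1$-large $(W_0,H)$ with $\dim H > 0$ yields $N_{W_0} \git H$ singular at $0$; the cleanest route is probably a local dimension/tangent-cone computation at the origin combined with the fact that $\NN(W_0)$ is a proper nonempty closed cone and the quotient's tangent space at $0$ strictly exceeds $\dim(N_{W_0}\git H) = \dim W_0 - \dim H - (\dim W_0 - \dim H)$... wait, rather one compares $\dim N_{W_0}\git H = 2\dim W_0 - 2\dim H$ against the embedding dimension at $0$, which is forced to be larger. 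I would make this precise but it is essentially a routine consequence of $\dim H > 0$ together with normality.
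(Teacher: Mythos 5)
Your strategy matches the paper's: reduce to subprincipal strata via Lemma~\ref{lem:1-large}, show the quotient is singular along those strata, conclude $X_\sm=X_\pr$ by closedness of the singular locus, and obtain the symplectic form on the principal stratum from the general theory of symplectic reduction (the paper cites \cite[Cor.~3.18]{HerbigSchwarzSeaton2}, which is the Marsden--Weinstein picture you describe). One small simplification: the paper has no need for an explicit induction on stratum depth; it is enough that the singular locus is closed and contains every subprincipal stratum, whose union has closure equal to $X\setminus X_\pr$.

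However, there is a genuine gap in the middle step. You assert that once $(W_0,H)$ is $1$-large with $\dim H>0$, the singularity of $N_{W_0}\git H$ at the origin is ``essentially a routine consequence of $\dim H>0$ together with normality,'' via an informal appeal to \'etale slices and graded structure. This is not routine and the sketched dichotomy does not hold: a GIT quotient can be perfectly smooth at a point whose stabilizer is positive-dimensional (already $\C^\times$ acting on $\C^2$ with weights $(1,-1)$ gives quotient $\C$). The correct statement needs the full force of $1$-largeness for the shell, and it is exactly here that the paper invokes an external result, \cite[Lemma~2.3]{HerbigSchwarzSeaton}, which proves that for a $1$-large orthogonal $H$-module the shell quotient is singular along the top stratum $(N_W\git H)_{(H)}$. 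Without citing that lemma or reproducing its proof, your argument leaves the key singularity claim unsubstantiated, and the tangent-cone/dimension count you gesture at (which you also mis-start and then correct to $\dim N_{W_0}\git H = 2\dim W_0 - 2\dim H$) does not by itself detect singularity of the quotient at the origin.
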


\begin{proof}
Let  $(S,H)=(W\oplus W^*,H)$ be  a subprincipal symplectic slice representation of $N$. Then $W$ is $1$-large and by \cite[Lemma 2.3]{HerbigSchwarzSeaton},  $N_W\git H$ is singular along $(N_W\git H)_{(H)}$. Thus $X$ is singular along the  union of the  subprincipal strata, hence along their closure, which is the complement of $X_\pr$. Thus $X_\sm=X_\pr$.  Finally, \cite[Cor.\ 3.18]{HerbigSchwarzSeaton2}  shows that   $X_\pr$ carries a (holomorphic) symplectic form.
\end{proof}

\begin{proof}[Proof of Theorem \ref{thm:symplectic.sings}]
We know that  $X$ is (graded) Gorenstein with rational singularities and has a symplectic form on its smooth locus.  By   \cite[Theorem 6]{NamikawaExtension}, $X$ has symplectic singularities.
\end{proof}

% xxxxxxxxxxxxxxxxxxxxxxxxxxxxxxxxxxxxxxxxxxxxxxxxxxxxxxxxxxxxxxxxxxxxxxxxx
% xxxxxxxxxxxxxxxxxxxxxxxxxxxxxxxxxxxxxxxxxxxxxxxxxxxxxxxxxxxxxxxxxxxxxxxxx
% xxxxxxxxxxxxxxxxxxxxxxxxxxxxxxxxxxxxxxxxxxxxxxxxxxxxxxxxxxxxxxxxxxxxxxxxx

\section{Classical representations of the classical groups}
\label{sec:Examples}

We consider 1-modular classical representations $(V,G)$ of the  classical groups (and some not so classical) as in \cite[Theorem 3.5]{HerbigSchwarz}.
Let $N = N_V\subset V\oplus V^*$.
We show that
 for any symplectic slice representation
$(S = W\oplus W^*,H)$ of $N$
the shell  $N_W$ is
CIFR. We leave it as an exercise for the reader to show that each $N\git G$ has symplectic singularities. These results improve upon those of \cite{HerbigSchwarzSeaton2} where $V$ is required to be at least $2$-large for $N\git G$ to have symplectic singularities.

The following improves upon  Theorem 5.1 of \cite{CapeHerbigSeaton}.

\begin{theorem}\label{thm:kC^n,SO(n)}
Let $(V,G)=(k\C^n,\SO_n(\C))$, $n\geq 2$. Then $V$ is $1$-modular if and only if  $k\geq n-1$ in which case  the shell $N\subset V\oplus V^*$ is
CIFR.
 \end{theorem}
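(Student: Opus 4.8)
The plan is to establish the $1$-modularity criterion first, and then leverage the machinery of Theorem~\ref{thm:use.E_m} (or Theorem~\ref{thm:rational.sings.orthogonal}, since $k\C^n$ is an orthogonal $\SO_n(\C)$-module) to deduce that every $N_W$ is CIFR. For the first part, I would compute $\dim V_{(r)}$ for the $\SO_n(\C)$-module $V = k\C^n$: a point $(v_1,\dots,v_k)\in V$ has isotropy group of positive dimension precisely when $\Span\{v_1,\dots,v_k\}$ has dimension $\leq n-2$ (so that $\SO$ of the orthogonal complement, of dimension $\geq 2$, fixes it, up to the subtlety with isotropic subspaces), and one checks that the generic such isotropy is $\SO_{n-j}$ when the span has dimension $j$. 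Counting dimensions of the locus where $\dim\Span\{v_i\}\leq j$ and comparing with the codimension bound $r + 1$ required for $1$-modularity, one finds the threshold is exactly $k\geq n-1$; for $k\leq n-2$ the generic isotropy group already has positive dimension and $V$ is not even $0$-modular (or fails $1$-modularity), while for $k = n-1$ one gets a principal isotropy group that is finite and the singular strata have the right codimension. This is essentially the content of \cite[Theorem 3.5]{HerbigSchwarz} specialized to this case, so I would cite that and only sketch the count.

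Next, assuming $k\geq n-1$, I would enumerate the symplectic slice representations. Since $V = k\C^n$ is orthogonal, by Proposition~\ref{prop:orthog.reps} the symplectic slice representations of $N$ are exactly $(W\oplus W^*, H)$ where $(W,H)$ ranges over slice representations of $(V,G)$. The slice representations of $(k\C^n,\SO_n(\C))$ at a closed orbit with span of dimension $j$ are readily described: the isotropy group is (roughly) $\SO_{n-j}(\C)$ and the slice representation $W$ is, as an $\SO_{n-j}$-module, a sum $W^H \oplus W_0$ with $W_0 = (k-j)\,\C^{n-j}$ (the nontrivial part), an orthogonal module of the same shape but with smaller parameters; one must also handle the degenerate cases where the span is partly isotropic, giving isotropy groups that are parabolic-type or have unipotent radical, but for \emph{closed} orbits the span is non-degenerate and $H$ is reductive, $H^0 = \SO_{n-j}(\C)$.

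Then I would verify condition (4) of Theorem~\ref{thm:use.E_m}, i.e., $m_0(W_0) < \dim W_0 - \dim H$, or equivalently condition (4) of Theorem~\ref{thm:rational.sings.orthogonal}, $\dim H < \tfrac12(\dim W_0 + \dim W_0^T)$, using Proposition~\ref{prop:D}: for $W_0 = (k-j)\,\C^{n-j}$ under $H = \SO_{n-j}(\C)$, we have $\dim H = \binom{n-j}{2}$, $\dim W_0 = (k-j)(n-j)$, and $\dim W_0^T$ equals $(k-j)\cdot(\text{dimension of the zero-weight space of }\C^{n-j})$, which is $0$ when $n-j$ is even and $k-j$ when $n-j$ is odd. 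Plugging in $k \geq n-1$ (so $k-j \geq n-1-j$) and checking the inequality $\binom{n-j}{2} < \tfrac12\big((k-j)(n-j) + \dim W_0^T\big)$ should hold for all $j$ with $n-j \geq 2$, possibly with a handful of small-rank exceptions (e.g.\ $n-j = 2,3,4$) that would need ad hoc treatment — for those one falls back to condition (2) of Theorem~\ref{thm:use.E_m} (when $H^0$ is a torus, as for $n-j=2$) or to the direct FPIG/complete-intersection arguments, or even to the earlier Corollary~\ref{cor:Budur} style codimension estimate. When $n-j = 1$ the isotropy $H$ is finite and there is nothing to check.

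The main obstacle I anticipate is precisely the bookkeeping at the boundary $k = n-1$ together with the small values of $n-j$: the inequality in Proposition~\ref{prop:D}/Theorem~\ref{thm:rational.sings.orthogonal}(4) is tight or fails for $\SO_2, \SO_3, \SO_4$ slice pieces, and one must check by hand that in each of those cases either the torus clause applies, or $N_0$ is directly seen to be a normal complete intersection with FPIG and rational singularities (for instance recognizing $N_{k\C^1}$-type pieces as smooth, or $\SO_3 = \mathrm{PSL}_2$ and $\SO_4$ cases reducible to $\SL_2$ computations as in the discussion around $3\C^2$). Once every symplectic slice representation is handled, Theorem~\ref{thm:use.E_m} immediately gives that $N$ — and by the same argument every $N_W$ arising as a slice — is CIFR, completing the proof.
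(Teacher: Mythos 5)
Your overall framework is right: you cite the $1$-modularity criterion, use Proposition~\ref{prop:orthog.reps} to reduce the symplectic slice representations to smaller copies of the same shape $((k-j)\C^{n-j},\SO_{n-j})$, and attempt to close the argument with the orthogonal estimate of Theorem~\ref{thm:rational.sings.orthogonal}(4) via Proposition~\ref{prop:D}. This is exactly the paper's starting point. But your bookkeeping of where condition~(4) fails is off, and the fix you sketch does not actually cover the failing case.

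Let me be precise about the failure. For $W_0=(k-j)\C^{n-j}$ with $H=\SO_{n-j}$ and $n'=n-j$, $k'=k-j$, the inequality $\dim H<\tfrac12(\dim W_0+\dim W_0^T)$ reads, when $n'$ is \emph{even} (so $W_0^T=0$), $\binom{n'}{2}<\tfrac12 k'n'$, i.e.\ $k'>n'-1$. When $n'$ is odd it reads $n'(n'-1)<k'(n'+1)$, which already holds for $k'\geq n'-1$. Since $k'-n'=k-n$ along every slice, the boundary case $k=n-1$ produces \emph{exact equality} (not strict inequality) at \emph{every even} $n'$, not just at $n'\in\{2,3,4\}$; and $n'=3$ is in fact fine because $3$ is odd. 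So ``a handful of small-rank exceptions'' is the wrong picture. In the inductive structure (condition~(1) of Theorem~\ref{thm:rational.sings.orthogonal} handles all proper slices once the theorem is known for smaller $n$), the only slice not covered is the top one, $H=G$, and the genuine obstruction is the single family $(V,G)=((n-1)\C^n,\SO_n)$ with $n$ even, i.e.\ $((2m-1)\C^{2m},\SO_{2m})$.

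For that family your proposed fallbacks do not work: $H^0=\SO_{2m}$ is not a torus, the codimension estimate of Corollary~\ref{cor:Budur} is not strong enough, and ``$\SL_2$ computations'' only address $m$ small. The paper needs a different idea here, built on the jet-scheme machinery of Section~\ref{subsec:JetShellLinSubspace} and Remark~\ref{rem:Em.not.in.nullcone}: if $\Ann E_m\subset\NN(V^*)$, then $\dim E_m=\dim G=\tfrac12\dim V$, forcing $\Ann E_m$ and hence $E_m$ to be \emph{maximal isotropic}. One then checks that the projection of $E_m$ to each copy of $\C^{2m}$ has dimension $m$, while any nonzero $x_0\in V$ generates an orbit whose projection to some copy of $\C^{2m}$ has dimension $2m-1>m$, a contradiction; iterating gives $x_0=\dots=x_m=0$. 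This structural argument about maximal isotropics and orbit projections is the missing step in your proposal, and without it the proof is incomplete precisely at the tight case $k=n-1$, $n$ even.
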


 \begin{proof}
For the statement about $1$-modularity see \cite[Theorem 11.18]{GWSlifting}.
So assume that $k\geq n-1$.  Since $V$ is stable, it is also $1$-large, so that  we already know that $N$ is a complete intersection with FPIG. By Proposition \ref{prop:orthog.reps} every nontrivial symplectic slice representation  of $N$ is,  up to trivial factors,   of the same form as $(V\oplus V^*,G)$ (with a smaller $n'<n$).
Hence we may assume by induction on $n$  that $N\setminus\NN(N)$ has rational singularities.
If $n$ is odd, then Theorem \ref{thm:rational.sings.orthogonal}(4)  applies.
 Hence it is enough to consider  the case $(V,G)=((2n-1)\C^{2n},\SO_{2n})$ when $\Ann E_m\subset\NN(V^*)$, with $E_m$ as defined in Section \ref{subsec:JetShellLinSubspace}, which implies that  $\dim E_m=\dim G=(1/2)\dim V$. Then $\dim \Ann E_m=(1/2)\dim V$ so that $\Ann E_m$ is a maximal isotropic subspace of $V^*$. Thus $E_m$ is maximally isotropic, hence  $E_m\subset\NN(V)$. Note that the projection of $E_m$ to any component copy of $\C^{2n}$ has dimension $n$.

Let $x_0=(y_1,\dots,y_{2n-1})$ where $y_i\in\C^{2n}$ for all $i$. Let us assume that some $y_i$ is not zero, say $y_1$. Let $W$ denote the first copy of $\C^{2n}$. Then the projection of $Gx_0$ to $W$    has dimension $2n-1>n$. This is bigger than the projection of $E_m$. Thus we must have $x_0=0$. Similarly, $x_1=\dots=x_m=0$. Hence $\Ann E_m\subset\NN(V^*)$ is not possible.
By Remark \ref{rem:Em.not.in.nullcone} $\NN(N)$ is irrelevant, hence   $N$ has rational singularities and is CIFR by Proposition \ref{prop:irrelevant}.
 \end{proof}

 Now we consider copies of the $(G=\SP_{2n})$-module $\C^{2n}$. Let $K$ be a maximal compact subgroup of $G$.   Let $T$ be a maximal torus of $G$ which is the complexification of a maximal torus $T_0$ of $K$. Then the weights of $V$ are $\pm \epsilon_i$, $i=1,\dots,n$, where the $\epsilon_i$ are the usual weights of $T\simeq ( \C^\times)^n$ acting  on $\C^n$.

 \begin{lemma}
 \label{lem.Sp.m0}
 Let $(V,G)=(k\C^{2n},\SP_{2n})$ . If $k\geq 2$, then $m_0(V)\leq kn$.
 \end{lemma}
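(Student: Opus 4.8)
The plan is to combine the Hilbert--Mumford description of the null cone with the torus reduction used in the proof of Proposition~\ref{prop:dim.L}. First I would note that, by classical invariant theory (the first fundamental theorem for $\SP_{2n}$), the algebra $\C[V]^G$ is generated by the bilinear invariants $(v_1,\dots,v_k)\mapsto\omega(v_a,v_b)$, where $\omega$ is the symplectic form on $\C^{2n}$. Hence
\[
    \NN(V)=\{(v_1,\dots,v_k)\mid \omega(v_a,v_b)=0\text{ for all }a,b\},
\]
the set of $k$-tuples spanning an isotropic subspace of $\C^{2n}$. Then, arguing with the Grassmannian $\mathrm{Gr}_m(V)$ and Borel's fixed point theorem exactly as in Proposition~\ref{prop:dim.L} (applicable since $\NN(V)$ is $G$-stable and closed and $\mathrm{Gr}_m(V)$ is projective), it suffices to bound $\dim L$ when $L\subset\NN(V)$ is a $T$-stable linear subspace, with $T$ a maximal torus of $G$.

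Since the weights of $\C^{2n}$ are $\pm\epsilon_1,\dots,\pm\epsilon_n$, none of them zero, we have $V^T=0$ and $L=\bigoplus_{i=1}^n(L_{\epsilon_i}\oplus L_{-\epsilon_i})$ with $L_{\pm\epsilon_i}\subset V_{\pm\epsilon_i}\cong\C^k$; write $p_i=\dim L_{\epsilon_i}$, $q_i=\dim L_{-\epsilon_i}$, so $\dim L=\sum_{i=1}^n(p_i+q_i)$. The key step is to decouple the weight pairs. Because $\omega$ pairs $V_{\epsilon_i}$ only with $V_{-\epsilon_i}$, restricting the defining equations of $\NN(V)$ to the summand $L_{\epsilon_i}\oplus L_{-\epsilon_i}$ of $L$ yields, for each $i$ and all $x\in L_{\epsilon_i}$, $y\in L_{-\epsilon_i}$, the relations $x_ay_b=x_by_a$ for all $a,b$; equivalently, the $2\times k$ matrix with rows $x,y$ has rank at most one. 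Thus if $L_{-\epsilon_i}=0$ then $p_i+q_i=p_i\le k$, and symmetrically if $L_{\epsilon_i}=0$; while if both are nonzero, choosing $0\ne x\in L_{\epsilon_i}$ forces $L_{-\epsilon_i}\subset\C x$, hence $q_i=1$, and symmetrically $p_i=1$, so $p_i+q_i=2\le k$ since $k\ge2$. In every case $p_i+q_i\le k$, whence $\dim L=\sum_{i=1}^n(p_i+q_i)\le kn$ and $m_0(V)\le kn$.

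I do not expect a deep obstacle. The points to be careful about are that the torus reduction of Proposition~\ref{prop:dim.L} transfers verbatim to $\NN(V)$, and the weight bookkeeping in the decoupling step --- in particular the orthogonality relations $V_{\epsilon_i}\perp V_{\pm\epsilon_j}$ for $j\ne i$ and $V_{\epsilon_i}\perp V_{\epsilon_i}$, which are what allow one to isolate a single weight pair. The hypothesis $k\ge2$ enters only to absorb the diagonal-line case where a weight pair contributes $p_i+q_i=2$; indeed for $k=1$ one has $\NN(V)=V$ and $m_0(V)=2n$, so the bound genuinely fails there.
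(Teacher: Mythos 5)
Your proof is correct, and it takes a genuinely different route from the paper's. The paper's argument splits on the parity of $k$: when $k$ is even, $k\,\C^{2n}$ is an orthogonal $\SP_{2n}$-module, so the bound $m_0(V)\le kn$ follows immediately from Proposition~\ref{prop:dim.L}; when $k$ is odd, the paper argues by contradiction, assuming $\dim L>kn$, isolating a weight pair with $\dim(L_\epsilon+L_{-\epsilon})>k$, projecting to a suitable orthogonal submodule $V'\simeq 2\,\C^{2n}$, and invoking the real form $W$ of $V'$ (together with the fact, from Birkes, that nonzero real points lie on closed orbits) to derive a contradiction. Your argument bypasses both the parity split and the real-structure step: you use the first fundamental theorem for $\SP_{2n}$ to identify $\NN(V)$ with the variety of $k$-tuples spanning an isotropic subspace, carry out the same Borel fixed point reduction to a $T$-stable $L$, and then observe that for each weight pair $\{\epsilon_i,-\epsilon_i\}$ the subspace $L_{\epsilon_i}\oplus L_{-\epsilon_i}\subset L\subset\NN(V)$ forces the rank-one condition on the $2\times k$ matrices $(x;y)$, hence $p_i+q_i\le\max(k,2)=k$, giving $\dim L\le kn$ by summing over $i$. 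Your proof is cleaner and more self-contained: it does not depend on Proposition~\ref{prop:dim.L} or the real form of the module, and the role of the hypothesis $k\ge 2$ is pinpointed exactly (it is needed only to absorb the diagonal case $p_i=q_i=1$; for $k=1$ the null cone is all of $V$ and the bound genuinely fails). The one thing worth making explicit, which you essentially do, is that $L_{\epsilon_i}\oplus L_{-\epsilon_i}$ is itself a subspace of $L$, hence of $\NN(V)$, precisely because $L$ is $T$-stable and therefore a direct sum of weight spaces; that is what licenses restricting the null cone equations to each weight pair in isolation.
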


 \begin{proof}
Let $L$ be a linear subspace of $\NN(V)$.
 If $k$ is even, then $V$ is orthogonal and this follows from Proposition \ref{prop:dim.L}. So assume that $k$ is odd and $\dim L>kn$. As in the proof of Proposition \ref{prop:dim.L}, we may assume that $L$ is $T$-stable  so that
  $$
  L=\bigoplus_{i=1}^n (L_{\epsilon_i}+L_{-\epsilon_i}).
  $$
  Then for some $\epsilon=\epsilon_i$, $\dim (L_\epsilon+L_{-\epsilon})>k$. Let $V'$ denote the sum of two copies of $\C^{2n}$ in $V$ and let $\pi\colon V\to V'$ be the $G$-equivariant projection. Let $L'=\pi(L)$. Then for some choice of $V'$, $\dim (L'_\epsilon++L'_{-\epsilon})>2$.  Note that $L'$ lies in $\NN(V')$ and that $V'$ is orthogonal.
 Now $V' \simeq  W\otimes_\R\C$ where $W$ is a real $K$-module and $V'_\epsilon+V'_{-\epsilon}$ is the complexification of a real two-dimensional $T_0$-stable subspace $W_0$ of $W$. Since $\dim(L'_\epsilon+L'_{-\epsilon})>2$, $L'$ intersects $W_0$ nontrivially. But any nonzero point of $W$ does not lie in the nullcone \cite{Birkes}. This is a contradiction. Hence $\dim L\leq kn$.
 \end{proof}

 \begin{theorem}
 Let $(V,G)=(k\C^{2n},\SP_{2n})$. Then $V$ is $1$-modular if and only if $k\geq 2n+1$ in which case the shell $N$ is
 CIFR.
 \end{theorem}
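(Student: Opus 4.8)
The statement that $V$ is $1$-modular precisely when $k\geq 2n+1$ is a direct computation with the root data of $\SP_{2n}$, which I would simply cite (see \cite{GWSlifting}). So assume $k\geq 2n+1$, write $N=N_V$, and aim to show $N$ is CIFR. The plan is to treat the range $k\geq 2n+2$ and the boundary value $k=2n+1$ separately, after first identifying the symplectic slice representations. Viewing a point of $k\C^{2n}$ as a configuration $(v_1,\dots,v_k)$ of vectors of $(\C^{2n},\omega)$, its $\SP_{2n}$-orbit is closed exactly when the span is a symplectic subspace, say of dimension $2(n-n')$, with isotropy group $\SP_{2n'}$; since $\C^{2n}|_{\SP_{2n'}}\cong\C^{2n'}\oplus(\text{trivial})$ and $\lie{sp}_{2n}/\lie{sp}_{2n'}\cong 2(n-n')\,\C^{2n'}\oplus(\text{trivial})$, the corresponding symplectic slice representation (which depends only on the isotropy group) is, up to trivial summands, $(W_0\oplus W_0^*,\SP_{2n'})$ with $W_0=k'\C^{2n'}$ and $k'=k-2(n-n')$; note that $k'\geq 2n'+1$ because $k\geq 2n+1$.

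In the range $k\geq 2n+2$ everything is immediate. For every symplectic slice representation with $\dim H>0$, including the one at the origin ($n'=n$, $k'=k$), we have $k'=k-2(n-n')\geq 2n'+2$, so by Lemma~\ref{lem.Sp.m0}
\[
    m_0(W_0)\leq k'n'<2k'n'-n'(2n'+1)=\dim W_0-\dim H ,
\]
the strict inequality being equivalent to $2n'+1<k'$. Thus condition (4) of Theorem~\ref{thm:use.E_m} holds for every symplectic slice representation, and $N$ is CIFR.

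Now suppose $k=2n+1$; here I would argue by induction on $n$ (for $n=1$ there are no proper symplectic slice representations with positive-dimensional isotropy group). The proper symplectic slice representations are $((2n'+1)\C^{2n'},\SP_{2n'})$, $1\leq n'<n$, which are CIFR by induction. Feeding this (condition (1) of Proposition~\ref{prop:N.and.F}) together with Lemma~\ref{lem:delta} — which, using $m_0(W_0)\leq k'n'$, bounds $\dim\NN(S_0)$ well enough to give properties (F) and (N) for the slice representation at the origin — into Proposition~\ref{prop:N.and.F}, we conclude that $N$ is a normal complete intersection with FPIG. By Theorem~\ref{thm:iff} it then remains to show that $\NN(N_0)$ is irrelevant for each symplectic slice representation; for the proper ones this is immediate from their being CIFR, so the whole problem reduces to showing that $\NN(N_V)$ is irrelevant. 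This is the step I expect to be the main obstacle, since Corollary~\ref{cor:dim.L} is unavailable at exactly this boundary value ($m_0(V)=n(2n+1)=\dim V-\dim G$, so its strict inequality just fails).

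To handle it I would adapt the ad hoc argument used for $((2n-1)\C^{2n},\SO_{2n})$ in the proof of Theorem~\ref{thm:kC^n,SO(n)}: by Remark~\ref{rem:Em.not.in.nullcone} it suffices to show that $\Ann E_m\not\subseteq\NN(V^*)$ for $E_m$ as in \S\ref{subsec:JetShellLinSubspace}. Assume it were. From $\dim E_m\leq\dim G$ we get $\dim\Ann E_m\geq\dim V-\dim G=n(2n+1)$; on the other hand $\NN(V^*)$ consists of those configurations whose span is $\omega$-isotropic, hence of rank $\leq n$, so $\Ann E_m$ is a linear space of $2n\times(2n+1)$ matrices of rank $\leq n$, and the classification of extremal such spaces (Flanders' theorem, applicable because $2n+1>2n$) forces $\dim\Ann E_m\leq n(2n+1)$ with equality only if $\Ann E_m=\{\text{all columns in a fixed }n\text{-dimensional }W\}=W^{2n+1}$. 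Combining the two estimates, $\dim E_m=\dim G$ and $\Ann E_m=W^{2n+1}$ for a fixed $n$-dimensional (necessarily $\omega$-isotropic) subspace $W\subseteq(\C^{2n})^*$, whence $E_m=\Ann(W^{2n+1})=U^{2n+1}$ with $U=\Ann_{\C^{2n}}W$, $\dim U=n$. But $E_m\supseteq\lieg(x_0)$, whose projection to the $i$-th copy of $\C^{2n}$ is $\lie{sp}_{2n}(y_i)$, where $y_i$ is the $i$-th component of $x_0$; since $\lie{sp}_{2n}(y_i)=\C^{2n}$ whenever $y_i\neq 0$ (a nonzero vector of $\C^{2n}$ has an open $\SP_{2n}$-orbit) and $\dim U=n<2n$, we must have $x_0=0$. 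Then $\lieg_0=\lieg$ and $E_1=\lieg(x_1)$, so the same reasoning forces $x_1=0$, and inductively $x_0=\dots=x_m=0$, so $E_m=0$, contradicting $\dim E_m=\dim G>0$. Therefore $\NN(N_V)$ is irrelevant, $N$ is CIFR, and the induction closes.
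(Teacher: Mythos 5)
Your proposal is correct and follows essentially the same strategy as the paper's proof: reduce to the boundary case $k=2n+1$ via the $m_0$-bound of Lemma~\ref{lem.Sp.m0}, and then show directly that no $\Ann E_m$ can lie in $\NN(V^*)$, following the scheme of Theorem~\ref{thm:kC^n,SO(n)}. Two minor points of comparison. First, the paper treats $k\geq 2n+2$ by splitting into $k$ even (orthogonal, Theorem~\ref{thm:rational.sings.orthogonal}(4)) and $k$ odd $>2n+1$ (Theorem~\ref{thm:use.E_m}(4)); your observation that Lemma~\ref{lem.Sp.m0} already gives $m_0(W_0)\leq k'n'<\dim W_0-\dim H$ for every slice representation whenever $k\geq 2n+2$ unifies these two sub-cases and is a small simplification. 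Second, in the boundary case the paper passes rather tersely from ``$\Ann E_m$ is maximal isotropic in $V^*$'' to ``$E_m$ is maximal isotropic in $V$,'' i.e., to the identification $E_m = k\,\C^n$ up to conjugacy; you make this step explicit by invoking Flanders' theorem on linear spaces of bounded-rank $2n\times(2n+1)$ matrices (using that the span of a configuration in $\NN(V^*)$ is isotropic, hence of rank at most $n$), which is a clean way to justify the rigidity that the paper assumes. From there your argument that $E_m$ cannot contain $\lieg(x_0)$ unless $x_0=0$ (and inductively $\vec x=0$) is the same as the paper's. The preliminary verification that $N$ is a normal complete intersection with FPIG via Proposition~\ref{prop:N.and.F} and Lemma~\ref{lem:delta}, needed before Theorem~\ref{thm:iff} applies, is also correct and is made more explicit than in the paper.
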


 \begin{proof}
 By \cite[Theorem 11.20]{GWSlifting}, $V$ has FPIG only if $k\geq 2n$ and is $1$-modular only if $k\geq 2n+1$ which we now assume.    The symplectic slice representations of $N$ are all, up to trivial factors, of the same form as $V\oplus V^*$, so
 we may again assume that $N\setminus\NN(N)$ has rational singularities. If $k$ is even, then $V$ is orthogonal, $k\geq 2n+2$ and Theorem \ref{thm:rational.sings.orthogonal}(4) applies. So assume that $k$ is odd. By Lemma \ref{lem.Sp.m0}, $m_0(V)\leq kn$. If $k>2n+1$, then Theorem \ref{thm:use.E_m}(4)  applies so we only need consider the case $k=2n+1$.
 Then
 $\Ann E_m\subset\NN(V^*)$ implies that $\dim E_m=\dim \Ann E_m=(1/2)\dim V=\dim G$. Hence $\Ann E_m$ is maximal isotropic in $V^*$ so that $E_m$ is maximal isotropic in $V$, and $E_m\subset\NN(V)$. Thus we may assume that $E_0=k\,\C^n\subset V$ is the subspace corresponding to the weights $\epsilon_1,\dots,\epsilon_n$.
 As in the proof of Theorem \ref{thm:kC^n,SO(n)}, this implies that we cannot have $\Ann E_m\subset\NN(V^*)$ so that $N$ is
 CIFR.
  \end{proof}

 Note that special cases of the two results above are $(2\C^3,\SO_3(\C))=(2\lie {sl_2},\SL_2(\C))$ and $(3\C^2,\SL_2(\C))$.

\begin{theorem}
Let $(V,G)=(k\C^7,G_2)$. Then $V$ is $1$-modular if and only if $k\geq 4$ in which case $N$ is
CIFR.
\end{theorem}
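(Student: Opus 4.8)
The plan is to apply Theorem~\ref{thm:rational.sings.orthogonal}, verifying that its alternative~$(4)$ holds at every symplectic slice representation of $N$ with positive-dimensional isotropy group. For the $1$-modularity statement, the ``only if'' part and the precise bound $k\geq 4$ follow from a dimension count on the isotropy strata of $V$ (the binding one being the stratum with isotropy $\SL_2$), or one may invoke \cite[Theorem~3.5]{HerbigSchwarz}. So assume $k\geq 4$. Since the $7$-dimensional representation of $G_2$ carries a non-degenerate symmetric bilinear form, $V=k\,\C^7$ is orthogonal, hence stable; being $1$-modular it then follows from Proposition~\ref{prop:props.of.N} that $N$ is a reduced irreducible complete intersection with FPIG.

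Next I would identify the symplectic slice representations. By Proposition~\ref{prop:orthog.reps} and Lemma~\ref{lem:slice.orthog.rep} they are exactly the $(W\oplus W^*,H)$ with $(W,H)$ an (iterated) orthogonal slice representation of $(V,G)$. Apart from finite groups, the isotropy groups of closed orbits arising in the cascade form the chain $G_2\supset\SL_3\supset\SL_2$: the stabilizer of a non-isotropic vector of $\C^7$ is $\SL_3$, with $\C^7|_{\SL_3}\simeq\C\oplus\C^3\oplus(\C^3)^*$ and $\lie g_2/\lie{sl}_3\simeq\C^3\oplus(\C^3)^*$; the $\SL_3$-stabilizer of a generic element of $\C^3\oplus(\C^3)^*$ is an $\SL_2$ that acts on $\C^3$ as $\C\oplus\C^2$; and all further slice representations have finite isotropy. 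Unwinding the slice construction (and using $\lie g_2|_{\SL_2}\simeq\lie{sl}_2\oplus 3\C\oplus 4\,\C^2$), one finds that the slice representation at an $\SL_3$-point has $W_0\simeq(k-1)(\C^3\oplus(\C^3)^*)$ and the one at an $\SL_2$-point has $W_0\simeq(2k-4)\,\C^2$.

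I would then check alternative~$(4)$ of Theorem~\ref{thm:rational.sings.orthogonal} in these two cases. Both of the modules above are orthogonal: $(k-1)(\C^3\oplus(\C^3)^*)$ is of the form $M\oplus M^*$, while $(2k-4)\,\C^2$ is an even number of copies of the self-dual $\SL_2$-module $\C^2$ and hence carries a non-degenerate symmetric invariant form. Taking this $W_0$ as the orthogonal Lagrangian $H$-submodule of $S_0=W_0\oplus W_0^*$, and noting that a maximal torus $T$ of $\SL_3$ (resp.\ of $\SL_2$) has no nonzero fixed vector on $\C^3$ or $(\C^3)^*$ (resp.\ on $\C^2$), so that $W_0^T=0$, alternative~$(4)$ becomes $\dim H<\tfrac12\dim W_0$, i.e.\ $8<3(k-1)$ for $H=\SL_3$ and $3<2k-4$ for $H=\SL_2$; both hold precisely because $k\geq 4$. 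Theorem~\ref{thm:rational.sings.orthogonal} then yields that $N$ is CIFR.

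The step I expect to be the main obstacle is the determination of the slice data: one must confirm that no positive-dimensional isotropy group other than conjugates of $\SL_3$ and $\SL_2$ appears anywhere in the cascade, and compute the associated $W_0$. This reduces to the classical descriptions of the ring of invariants and the null cone of $\SL_3$ acting on copies of $\C^3\oplus(\C^3)^*$ and of $\SL_2$ acting on copies of $\C^2$. Unlike in the $\SO_n$ and $\SP_{2n}$ cases treated above, no separate argument for the boundary value $k=4$ or for a parity of $k$ is needed here, since $V$ and all the relevant $W_0$ are orthogonal for every $k$, so that alternative~$(4)$ applies uniformly.
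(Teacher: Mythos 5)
Your proposal is correct and follows essentially the same route as the paper: identify the proper slice representations as $((k-1)(\C^3\oplus(\C^3)^*),\SL_3)$ and $(2(k-2)\C^2,\SL_2)$, note these are orthogonal with $W_0^T=0$, and verify condition (4) of Theorem~\ref{thm:rational.sings.orthogonal} at each (the inequalities $8<3(k-1)$ and $3<2k-4$ both holding precisely for $k\geq 4$). Your explicit cascade computation of the slice data through $G_2\supset\SL_3\supset\SL_2$ is accurate and simply fills in details the paper leaves to the reader.
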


\begin{proof}
From \cite[Theorem 11.21]{GWSlifting} we see that $V$ is not $1$-modular if $k\leq 3$. We assume that $k\geq 4$ in which case $V$ is $2$-large.  The nontrivial parts of the proper nontrivial slice representations of $V$ are
$$
((k-1)(\C^3\oplus (\C^3)^*),\SL_3(\C))\text{ and }(2(k-2)\C^2,\SL_2(\C))
$$
 and Theorem \ref{thm:rational.sings.orthogonal}(4) applies for all the slice representations.
\end{proof}

\begin{theorem}
Let $(V,G)=(k\C^8,\Spin_7(\C))$. Then $V$ is $1$-modular if and only if $k\geq 5$ in which case $N$ is
CIFR.
\end{theorem}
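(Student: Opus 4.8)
The plan is to combine the analysis of the symplectic slice representations of $N$, exactly as in the preceding $G_2$ example, with an \emph{ad hoc} argument for the smallest admissible value $k=5$ modeled on the proof of Theorem~\ref{thm:kC^n,SO(n)}. First I would cite \cite[Theorem~11.22]{GWSlifting} for the assertion that $V=k\C^8$ fails to be $1$-modular when $k\leq 4$, and henceforth assume $k\geq 5$, so that $N=N_V$ is a reduced irreducible complete intersection and Musta\c{t}\u{a}'s criterion applies. Since the $8$-dimensional spin representation is orthogonal, $V$ is an orthogonal $G$-module, so by Proposition~\ref{prop:orthog.reps} the symplectic slice representations of $N$ are precisely the $(W\oplus W^*,H)$ with $(W,H)$ a slice representation of $V$, and these are orthogonal by Lemma~\ref{lem:slice.orthog.rep}. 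As the only reductive isotropy groups of closed orbits in $V$ are (conjugates of) finite groups, $\SL_2(\C)$, $\SL_3(\C)$, $G_2$ and $G$ itself, using $\C^8\vert_{G_2}=\C^7\oplus\C$ and $\lieg/\lie g_2\cong\C^7$ one finds that the nontrivial parts of the proper nontrivial slice representations are
$$
((k-1)\C^7,\,G_2),\qquad ((k-2)(\C^3\oplus(\C^3)^*),\,\SL_3(\C)),\qquad (2(k-3)\C^2,\,\SL_2(\C)),
$$
the last two arising from the first exactly as in the $G_2$ example.

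For each of these three slice representations, condition~(4) of Theorem~\ref{thm:rational.sings.orthogonal} holds when $k\geq 5$: using Proposition~\ref{prop:dim.L} to compute $m_0$, one checks $\dim G_2=14<(1/2)(8(k-1))$, $\dim\SL_3=8<(1/2)(6(k-2))$, and $\dim\SL_2=3<(1/2)(4(k-3))$. For the ``top'' slice representation $(V\oplus V^*,G)$ one has $W_0=V=k\C^8$ with $\dim W_0^T=0$, so condition~(4) reads $\dim G=21<(1/2)(8k)$, which holds precisely when $k\geq 6$. Thus for $k\geq 6$ every symplectic slice representation of $N$ satisfies~(4), and Theorem~\ref{thm:rational.sings.orthogonal} gives that $N$ is CIFR.

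The remaining case $k=5$ is the crux, since then~(4) fails at the top. Here I would argue as in Theorem~\ref{thm:kC^n,SO(n)}. The three proper slice representations still satisfy~(4), hence (via Lemma~\ref{lem:delta}) properties~(F) and~(N); together with the crude estimate $\dim\NN(N)\leq\dim\NN(V\oplus V^*)\leq \dim G/B+m_0(V\oplus V^*)=9+40<\dim N=59$, which gives~(F) and~(N) at the top, Proposition~\ref{prop:(NF)} shows $N$ is a normal complete intersection with FPIG, and the same slice analysis together with induction shows $N\setminus\NN(N)$ has rational singularities. By Theorem~\ref{thm:iff} and Proposition~\ref{prop:irrelevant} it then remains to prove that $\NN(N)$ is irrelevant, for which, by Remark~\ref{rem:Em.not.in.nullcone}, it suffices to show that no annihilator $\Ann E_m$ (notation of \S\ref{subsec:JetShellLinSubspace}) lies in $\NN(V^*)$. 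Suppose one does. Since $m_0(V^*)=m_0(V)=20$ and $\dim\Ann E_m=40-\dim E_m\geq 19$, necessarily $\dim E_m\in\{20,21\}$. Identifying $V^*\cong V$ via the invariant form, $\Ann E_m\subseteq\NN(V^*)$ translates to $E_m^{\perp}\subseteq\NN(V)$, so $E_m$ is coisotropic. For each $i$, the projection $p_i(E_m^{\perp})$ is a linear subspace of $\C^8$ contained in $p_i(\NN(5\C^8))$, which lies in the null cone $\{Q=0\}$ of $\C^8$ (any point of $\NN(5\C^8)$ has all five components in a common isotropic $Z_\lambda(\C^8)$); hence $p_i(E_m^{\perp})$ is isotropic and $\dim p_i(E_m^{\perp})\leq 4$. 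Since $\dim(E_m/E_m^{\perp})=2\dim E_m-40\leq 2$, we get $\dim p_i(E_m)\leq 6$ for every $i$. But $p_i(E_m)\supseteq p_i(\lieg x_0)=\lieg\cdot (x_0)_i$, and every nonzero vector of $\C^8$ has a $7$-dimensional $\Spin_7$-orbit, so $(x_0)_i=0$ for all $i$, i.e.\ $x_0=0$. Then $E_0=0$, $\lie g_0=\lieg$, $E_1=\lieg x_1\subseteq E_m$, and the same estimate forces $x_1=0$; inductively $x_0=\dots=x_m=0$, whence $E_m=0$, contradicting $\dim E_m\geq 20$. Therefore $\NN(N)$ is irrelevant and $N$ is CIFR.

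I expect the main obstacle to be this last step for $k=5$: one needs the precise shape of the null cone $\NN(5\C^8)$ (so that projections of its linear subspaces are isotropic in $\C^8$, hence of dimension $\leq 4$) and the fact that $\Spin_7$ acts on $\C^8\setminus\{0\}$ with all orbits of dimension $7$ (equivalently, no subgroup of dimension $\geq 15$ fixes a nonzero vector of the spin representation); everything else is a straightforward adaptation of the $G_2$ case via Theorem~\ref{thm:rational.sings.orthogonal}.
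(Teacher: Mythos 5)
Your proof is correct and follows the same strategy as the paper's: reduce the proper slice representations to those of $((k-1)\C^7,G_2)$, dispatch $k\geq 6$ via Theorem~\ref{thm:rational.sings.orthogonal}(4), and settle $k=5$ by showing no $\Ann E_m$ can be contained in $\NN(V^*)$, using that every nonzero $\Spin_7$-orbit in $\C^8$ has dimension $7$ together with Remark~\ref{rem:Em.not.in.nullcone}. Your uniform treatment of $\dim E_m\in\{20,21\}$ via the coisotropy of $E_m$ and the estimate $\dim p_i(E_m)\leq\dim p_i(E_m^\perp)+\dim(E_m/E_m^\perp)\leq 6$ is a mild streamlining of the paper's two-case argument (which treats $\dim E_m\leq 20$ by the maximal-isotropic argument of Theorem~\ref{thm:kC^n,SO(n)} and $\dim E_m=21$ by bounding the projections of $\Ann E_m$ directly); the only slip is the citation, which should be \cite[Theorem~11.21]{GWSlifting} rather than 11.22, and the paper shortcuts the normality/FPIG step by quoting $2$-largeness of $V$ from that same source rather than re-deriving it through Proposition~\ref{prop:(NF)}.
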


\begin{proof}
From  \cite[Theorem 11.21]{GWSlifting} we see that $V$ is not $1$-modular if $k\leq 4$, so we assume that $k\geq 5$ in which case $V$ is $2$-large. The proper slice representations of $V$ are, up to trivial factors, the slice representations of  $((k-1)\C^7,G_2)$, so
we may assume that $N\setminus\NN(N)$ has rational singularities.

If $k\geq 6$, then  Theorem  \ref{thm:rational.sings.orthogonal}(4) applies, so the only interesting case is $k=5$.  Everything is OK if $\dim E_m\leq 20$ for then $\dim \Ann E_m\geq 20=(1/2)\dim V$ and we can argue as above (a minimal nonzero orbit in $\C^8$ has dimension  $7$). That leaves the case $\dim E_m=21=\dim G$ in which case $\dim\Ann E_m=19<(1/2)\dim V$. If $\Ann E_m\subset\NN(V^*)$, then the projection of $\Ann E_m$ to each copy of $(\C^8)^*$ has dimension 3 or 4 so that the projection of $E_m$ to each copy of $\C^8$ has dimension 4 or 5. As before, this leads to a contradiction.
\end{proof}

We give a new proof of the following which is also established in   \cite[Section 6.3]{HerbigSchwarzSeaton2}.

 \begin{theorem}
 Let $(V,G)=(R_1+R_2,\SL_2(\C))$. Then  $N$ is
 CIFR.
 \end{theorem}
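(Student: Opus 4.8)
Here $R_1 = \C^2$ is the standard representation of $\SL_2(\C)$ and $R_2 = \operatorname{Sym}^2\C^2\cong\C^3$ is the adjoint representation, so $V = \C^2\oplus\C^3$ has dimension $5$ while $\dim G = 3$. My plan is to deduce the statement directly from Musta\c{t}\u{a}'s criterion in the form of Corollary~\ref{cor:rat.sings.by.codim}, after first checking that $N = N_V$ is a reduced irreducible complete intersection with FPIG. For the latter: $V$ is $1$-principal (as is any module); a generic point $(v,q)\in V$ (with $v\neq 0$ and $q$ nondegenerate) has isotropy group $G_v\cap G_q$, the intersection of a maximal unipotent subgroup with a maximal torus and hence trivial, so $V$ has FPIG; and $V$ is $1$-modular by \cite[Theorem~3.5]{HerbigSchwarz}---equivalently, the locus $\Sigma = \{x\in V : \dim\lieg_x\geq 1\}$ is the union of $\{0\}\oplus(\C^3\setminus\{0\})$ and $\{(v,cv^2) : v\in\C^2\setminus\{0\},\ c\in\C\}$, both of dimension $3$, so $\codim_V\Sigma = 2$. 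Thus $V$ is $1$-large, and by Proposition~\ref{prop:props.of.N} together with the remarks following Proposition~\ref{prop:(NF)} (applied to the slice representation $(V\oplus V^\ast,G)$ at $0\in N$), $N$ is a reduced irreducible complete intersection of dimension $2\dim V-\dim G = 7$ with FPIG. It remains only to show that $N$ has rational singularities.

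By Corollary~\ref{cor:rat.sings.by.codim} it suffices to check, for every $m\geq 1$, every $\vec r = (r_0,\dots,r_m)\in R_m$ with $r_0 < 3$, and every irreducible component $X$ of $X_{\vec r}$, that $\codim_{V^{m+1}}X > 3(m+1) - \sum_{i=0}^m r_i$. Since $\dim\lieg_{x_0}\in\{0,1,3\}$ for every $x_0\in V$, we have $r_0 = \dim E_0 = 3 - \dim\lieg_{x_0}\in\{0,2,3\}$; moreover $\dim E_0\leq\dim E_1\leq\cdots$ with all values in $\{0,2,3\}$, and once some $\dim E_i$ equals $3$ one has $\lieg_i = 0$ and $E_j = E_i$ for every $j\geq i$. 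Hence for $r_0 < 3$ the admissible tuples are exactly $\vec r = (0,\dots,0,2,\dots,2,3,\dots,3)$; each leading $0$ forces $x_0 = 0$ (and the step to be repeated for $(x_1,\dots,x_m)$) and contributes $\dim V = 5 > \dim G = 3$ to the codimension budget, so we may assume $r_0 = 2$.

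Then $x_0\in\Sigma$ and $\lieg_0 = \lieg_{x_0}$ is a one-dimensional subalgebra of one of three kinds: (i) a Cartan subalgebra, when $x_0 = (0,q)$ with $q$ nondegenerate (then $E_0 = \{0\}\oplus\lieg q$ and $\lieg_0$ acts invertibly on $\C^2$); (ii) a nilpotent line with $E_0$ projecting isomorphically onto $\C^2$, when $x_0 = (v,cv^2)$ with $v\neq 0$; (iii) a nilpotent line with $E_0 = \{0\}\oplus\lieg\ell^2$, when $x_0 = (0,\ell^2)$ with $\ell\neq 0$. A short computation with the generator $A$ of $\lieg_0$ on $\C^3$ (its image on $\C^3$ being the tangent space to the orbit through the $\C^3$-component of $x_0$, and its kernel one-dimensional) shows that the condition $\lieg_0 x_i\subseteq E_0$ that $\dim E_i$ not increase at step $i$ has codimension $2$ on $x_i$ in cases (i) and (ii), and codimension $1$ on $x_i$ in case (iii), while whenever $\dim E_i$ does increase it increases by exactly $1$, to the value $3$. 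Since $\codim_V\{x_0 : \lieg_0\text{ of kind (i) or (ii)}\} = 2$ while $\codim_V\{(0,\ell^2)\} = 3$, a short bookkeeping over the tuples $\vec r = (2,\dots,2,3,\dots,3)$ then shows that in each case $\codim_{V^{m+1}}X_{\vec r}$ exceeds $3(m+1) - \sum r_i$ by at least $1$. Hence $N$ has rational singularities, and so is CIFR.

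The main obstacle is this last bookkeeping. Musta\c{t}\u{a}'s bound is tight for this example: condition (2) of Theorem~\ref{thm:main1} fails, since $m_0(V) = 2 = \dim V-\dim G$ with equality rather than strict inequality, and the orthogonal criterion of Theorem~\ref{thm:rational.sings.orthogonal} is unavailable because no Lagrangian submodule of $V\oplus V^\ast$ is orthogonal (as $\C^2$ is not). Consequently the margin in $\codim X_{\vec r} > 3(m+1)-\sum r_i$ is only $1$ at several tuples, and one must in particular separate the two-dimensional locus $\{(0,\ell^2)\}$ of rank-one quadratic forms---on which $\lieg_0$ is nilpotent and the relevant codimension is only $1$---from the three-dimensional component $\{0\}\oplus(\C^3\setminus\{0\})$ of $\Sigma$ on which $\lieg_0$ is a Cartan subalgebra, for otherwise the count would not close.
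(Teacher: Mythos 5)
Your proof is correct, and it takes a genuinely different route than the paper's. Both proofs work within the framework of Section~\ref{subsec:JetShellLinSubspace} (the filtration $E_0\subseteq E_1\subseteq\cdots$ and the sets $X_{\vec r}$), but the two arguments diverge after that common setup. The paper first uses the symplectic slice theorem to show that the unique non-principal symplectic slice representation is a torus module with weights $(1,1,-1,-1)$, so $N\setminus\NN(N)$ has rational singularities; it then invokes the irrelevance framework (Remark~\ref{rem:Em.not.in.nullcone}) and classifies the $2$-dimensional linear subspaces $L\subseteq\NN(V^*)$ (either $R_1$ or a $G$-translate of $\C x\oplus\C e$), ruling out each possibility for $L=\Ann E_m$ by an ad hoc argument. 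You instead apply Corollary~\ref{cor:rat.sings.by.codim} directly, stratifying $\{x_0 : \dim E_0 = 2\}$ by the type of $\lieg_0$ and tracking codimensions of the $X_{\vec r}$ over each stratum. I checked your codimension claims: with $k$ entries equal to $2$ in $\vec r$, the components over the Cartan-type and nilpotent-with-$v\neq 0$ loci have codimension $2k$, and the component over the rank-one locus $\{(0,\ell^2)\}$ has codimension $k+2$; both exceed $3(m+1)-\sum r_i = k$, as you assert. Your approach avoids the slice-representation step and the analysis of linear subspaces of $\NN(V^*)$, at the cost of a finer stratification and more bookkeeping. The crucial observation in your route --- that the rank-one locus $\{(0,\ell^2)\}$ must be separated from the rest of $\{0\}\oplus\C^3$ because its weaker (codimension-one) conditions on $x_1,\dots,x_{k-1}$ are offset by the smaller base --- is exactly the place where a careless count would fail, and you correctly flag it. One minor quibble: your parenthetical in case~(ii), identifying the image of $A$ on $\C^3$ with the tangent space to the orbit through the $\C^3$-component of $x_0$, is not literally true when $c=0$ (the orbit is a point), though the codimension count is unaffected. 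Overall this is a valid and self-contained alternative to the published argument.
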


 \begin{proof}
 That $V$ is $1$-large follows from \cite[Theorem 3.4]{HerbigSchwarz}.
Let  $T=\C^\times\subset G$ be the diagonal torus.
 The only nontrivial non-principal symplectic slice representation of  $N$ is a $T$-module with nonzero weights $(1,1,-1,-1)$, hence $N\setminus\NN(N)$ has rational singularities.
  The dimension of $\NN(V^*\simeq V)$ is 3, so that if $\dim E_m<3$, then $\dim \Ann E_m\geq 3$ and since $\NN(V^*)$ is not a vector space, $\Ann E_m$ contains points outside of $\NN(V^*)$. Hence we only need to worry about the case $\dim E_m=3$ and   $L=\Ann E_m\subset\NN(V^*)$.

  Let $(x,y)$ be a basis of $R_1$ with weights $(1,-1)$ relative to
  $T$.
   Let $(e,h,f)$ be the basis of $R_2$ with weights (2,0,-2).    Let $L_0$ denote $\C\cdot x\oplus\C\cdot e$. Then $\NN(V^*)$ is $G\cdot L_0$.  Let $L_2$ be the projection of $L$ to $R_2$. If $\dim L_2=0$, then $L=R_1$. Otherwise,  $\dim L_2=1$ since $\dim\NN(R_2)=2$ and $\NN(R_2)$ is not linear. It follows that for some $g\in G$, $L=g\cdot L_0$.
   If $L=\Ann E_m=R_1$, then $E_m$ is $R_2$ and every $x_i$ is in $R_2$ in which case  the corresponding points in $N_m$ have  dimension less than $(m+1)\dim N$. In the remaining case, we may assume that
   $$
   E_m=\Ann L_0=\C\cdot y\oplus\C\cdot h\oplus\C\cdot f.
   $$
If $x_0\neq 0$, then $E_0\subset E_m$ forces $x_0$  to be a multiple of $f$ and then $E_0=\C\cdot h\oplus \C\cdot f$ and $\lieg_0=\C\cdot f$. Now $\lieg_0$ maps $R_1+R_2$ into $E_0$. So $E_1=E_2=\cdots =E_m=E_0$ and we never get $\dim E_m=3$. Thus $x_0=0$, but then the same scenario repeats with $x_1$, etc.   Hence $N$ is
CIFR.
 \end{proof}

Let $(V,G)=(p\C^n\oplus q(\C^n)^*,\GL_n(\C))$ where we may assume by choosing a Lagrangian submodule of $V\oplus V^*$ that $p\geq q$. Then $V$   has non-finite principal isotropy groups if $q<n$ and is $1$-large if $q\geq n$. Using Lemma \ref{lem:mu.formula} and Remark \ref{rem: change.V.UTCLS} we see that $V$ is $1$-modular if and only if $p+q\geq 2n$.

\begin{lemma}
\label{lem.GL.m0}
Let $V$ be as above where $p\geq q\geq n$. Let $L$ be a linear subspace of $\NN(V)$. Then $\dim L\leq np$.  If $p>q$ and $\dim L=pn$, then $L=p\,\C^n$.
\end{lemma}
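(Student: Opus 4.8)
The statement concerns $V = p\,\C^n \oplus q(\C^n)^*$ as a $\GL_n(\C)$-module with $p \geq q \geq n$, and asks to bound the dimension of a linear subspace $L \subset \NN(V)$ by $np$, with the additional rigidity that when $p > q$ and $\dim L = pn$, then $L = p\,\C^n$. The plan is to follow exactly the strategy that worked in Proposition~\ref{prop:dim.L} and Lemma~\ref{lem.Sp.m0}: use the Hilbert--Mumford criterion together with Borel's fixed point theorem to reduce to the case where $L$ is stable under the diagonal maximal torus $T \cong (\C^\times)^n$, then analyze weight spaces. A point of $V$ lies in $\NN(V)$ precisely when it is destabilized by a one-parameter subgroup; the relevant combinatorics is that $\C^n$ has weights $\epsilon_1,\dots,\epsilon_n$ and $(\C^n)^*$ has weights $-\epsilon_1,\dots,-\epsilon_n$, and a vector $(v,w) \in p\,\C^n \oplus q(\C^n)^*$ is in the null cone iff there is a 1-PS $\lambda$ with strictly positive weight on every nonzero coordinate of $v$ and of $w$ — equivalently, iff the supports satisfy a combinatorial separation condition coming from the vanishing of the invariant pairings $\langle v_a, w_b \rangle$.

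After the reduction to $T$-stable $L$, decompose $L = \bigoplus_{i=1}^n (L_{\epsilon_i} \oplus L_{-\epsilon_i})$, where $L_{\epsilon_i} \subset p\,\C^n$ is the part supported on the $i$th coordinate of the $\C^n$-summands (an at most $p$-dimensional space) and $L_{-\epsilon_i} \subset q(\C^n)^*$ is supported on the $i$th coordinate of the dual summands (at most $q$-dimensional). The key numerical claim to prove is that for each $i$ one has $\dim L_{\epsilon_i} + \dim L_{-\epsilon_i} \leq p$; summing over $i$ then gives $\dim L \leq np$. To establish this local inequality I would argue, in the spirit of Lemma~\ref{lem.Sp.m0}, by projecting onto a single $\C^n \oplus (\C^n)^*$ factor: if $\dim L_{\epsilon_i} + \dim L_{-\epsilon_i} > p$, pick a single $\C^n$-summand and a single $(\C^n)^*$-summand realizing a nontrivial overlap, so that the image $L'$ of $L$ in $\C^n \oplus (\C^n)^*$ has $\dim L'_{\epsilon_i} + \dim L'_{-\epsilon_i} > 1$, i.e. contains a vector with both an $\epsilon_i$-component in $\C^n$ and a $-\epsilon_i$-component in $(\C^n)^*$ simultaneously. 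Such a vector pairs nontrivially with itself under the $\GL_n$-invariant pairing $\C^n \times (\C^n)^* \to \C$, hence (since that pairing is the whole ring of invariants here, or at least a nonvanishing invariant) cannot lie in the null cone — contradiction. For the rigidity statement, if $p > q$ and equality $\dim L = pn$ holds, the inequality $\dim L_{\epsilon_i} + \dim L_{-\epsilon_i} \leq p$ must be equality for each $i$; but $\dim L_{-\epsilon_i} \leq q < p$ forces $\dim L_{\epsilon_i} \geq p - q \geq 1$, and a symmetric pairing argument (a nonzero $\epsilon_i$-vector in $p\,\C^n$ paired against any $-\epsilon_i$-vector in $q(\C^n)^*$) rules out $L_{-\epsilon_i} \neq 0$ entirely; hence $L_{-\epsilon_i} = 0$ and $L_{\epsilon_i}$ has full dimension $p$ for every $i$, i.e. $L = p\,\C^n$.

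The main obstacle I anticipate is making the null-cone/pairing argument airtight after passing to the $T$-fixed point and to a two-factor (or one-factor) projection: one must be careful that the Hilbert--Mumford destabilizing 1-PS can be taken inside $T$ (this is standard, using that $\NN(V)$ is $G$-stable and closed, via Kempf or Borel as in Proposition~\ref{prop:dim.L}), and that the invariant theory of $\GL_n$ on $\C^n \oplus (\C^n)^*$ — where the null cone is cut out exactly by the single pairing $\langle v, w\rangle$ — is being quoted correctly. A clean way around the second point is to note that $\C^n$, as an $H$-module for $H$ the stabilizer of a nonzero covector, has a fixed line, so any vector with simultaneous $\pm\epsilon_i$ components lies on a closed orbit by \cite{Birkes}; this mirrors the argument in Lemma~\ref{lem.Sp.m0} verbatim. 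Once the per-index inequality $\dim L_{\epsilon_i} + \dim L_{-\epsilon_i} \leq p$ is secured, both the bound and the rigidity are immediate bookkeeping.
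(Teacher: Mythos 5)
Your approach for the inequality is genuinely different from the paper's and is correct, but your argument for the rigidity statement has a gap.

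For the bound $\dim L\leq np$, the paper avoids the Borel--fixed--point reduction entirely: it writes $V=W\oplus W'$ with $W=q(\C^n\oplus(\C^n)^*)$ orthogonal and $W'=(p-q)\C^n$, applies Proposition~\ref{prop:dim.L} directly to get $\dim\pi_W(L)\leq qn$, and uses the trivial estimate $\dim(L\cap W')\leq\dim W'=(p-q)n$ to finish. Your route instead replaces $L$ by a $T$-stable subspace of the same dimension, decomposes by weights, and uses the pairing $\langle v_a,w_b\rangle$ to show that for each $i$ at most one of $L_{\epsilon_i},L_{-\epsilon_i}$ is nonzero; that is essentially the same reasoning that the paper uses in Lemma~\ref{lem.Sp.m0}, and it does prove $\dim L\leq np$. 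The paper's argument is shorter because the Borel reduction is already packaged inside Proposition~\ref{prop:dim.L}.

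The rigidity claim is where your argument breaks down. Borel's fixed point theorem, as used in the proof of Proposition~\ref{prop:dim.L}, only tells you that \emph{some} $T$-stable linear subspace of $\NN(V)$ of dimension $pn$ exists (a $T$-fixed point of the Grassmannian variety $Z_{pn}$ of such subspaces); it does not say that the $L$ you started with is $T$-stable or even $G$-conjugate to the $T$-stable one. So the sentence ``if $p>q$ and equality $\dim L=pn$ holds, the inequality $\dim L_{\epsilon_i}+\dim L_{-\epsilon_i}\leq p$ must be equality for each $i$'' silently assumes $L$ is $T$-stable, which is unjustified for a general $L$. You only classify the $T$-fixed points of $Z_{pn}$, not all of its points. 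To close the gap you would need an extra argument, for instance that $Z_{pn}$ has $p\,\C^n$ as its \emph{unique} $T$-fixed point and that a projective $T$-variety with a unique $T$-fixed point is itself a point (by comparing the $t\to 0$ and $t\to\infty$ limits along a generic 1-PS). The paper's proof sidesteps this entirely: from $\dim L=pn$ and $\dim\pi_W(L)\leq qn$ one gets $W'\subseteq L$ by dimension count, and then polarizing the vanishing pairings $\langle v_a,w_b\rangle$ along directions in $W'$ forces $\pi_{q(\C^n)^*}(L)=0$, directly for the original $L$.
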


\begin{proof}
Since $W=q(\C^n\oplus(\C^n)^*)$ is orthogonal, the projection of $L$ to $W$ has dimension at most $qn$ by Proposition \ref{prop:dim.L}.
Let $W'\simeq (p-q)\C^n$ be the complement of $W$. The projection of $L$ to $W'$  has dimension at most $(p-q)n$, so that $\dim L\leq pn$. If there is equality and $p>q$, then the projection of $L$ to $W'$ being surjective implies that the projection of $L$ to $q(\C^n)^*$ is $0$ so that $L=p\,\C^n$.
\end{proof}

\begin{theorem}
Let $(V,G)=(p\C^n\oplus q(\C^n)^*,\GL_n(\C))$. Then $V$ is $1$-modular if and only if $p+q\geq 2n$ in which case $N$ is
CIFR.
\end{theorem}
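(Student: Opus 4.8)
The first assertion is the one recorded just before the theorem (via Lemma~\ref{lem:mu.formula} and Remark~\ref{rem: change.V.UTCLS}), so the task is to show that $N=N_V$ is CIFR when $p+q\ge 2n$. First I would reduce, UTCLS (Lemma~\ref{lem:mu.formula}, Remark~\ref{rem: change.V.UTCLS}), to the case $p\ge q\ge n$, so that $V$ is $1$-large; note that $N\subset V\oplus V^*=(p+q)\C^n\oplus(p+q)(\C^n)^*$ depends only on $V\oplus V^*$ as a symplectic $\GL_n$-module. From the standard description of closed $\GL_n$-orbits in sums of copies of $\C^n$ and $(\C^n)^*$, every symplectic slice representation of $N$ is, up to trivial summands and UTCLS, of the form $(p'\C^{n'}\oplus q'(\C^{n'})^*,\GL_{n'})$ with $n'=n-k$ and $p'+q'=p+q-2k\ge 2n'$ for some $0\le k\le n$.

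Suppose first that $p+q\ge 2n+2$. Then every symplectic slice representation with $\dim H>0$ has $p'+q'\ge 2n'+2$, so UTCLS one may take the Lagrangian $W_0=p'\C^{n'}\oplus q'(\C^{n'})^*$ with $\min(p',q')\ge n'+1$. When $n'\ge 2$, Lemma~\ref{lem.GL.m0} gives $m_0(W_0)\le n'\max(p',q')=n'(p'+q'-\min(p',q'))\le n'(p'+q'-n'-1)<\dim W_0-\dim H$, which is condition (4) of Theorem~\ref{thm:use.E_m}; when $n'=1$, $H^0=\C^\times$ is a torus and $W_0$ (weights $1^{p'},(-1)^{q'}$ with $p',q'\ge 2$) has FPIG, which is condition (2). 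Hence $N$ is CIFR by Theorem~\ref{thm:use.E_m}.

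Now suppose $p+q\in\{2n,2n+1\}$; since $p\ge q\ge n$ this forces $p=q=n$ or $p=n+1,\ q=n$, and I would argue by induction on $n$, the base case $n\le 1$ ($\GL_1$, a torus) being covered by Theorem~\ref{N.good.tori}. Every proper symplectic slice representation is again of this boundary type with $n'<n$, hence CIFR by induction, so by Corollary~\ref{cor:property(P)} and Remark~\ref{rem:closed.orbits} the open set $N\setminus\NN(N)$ has rational singularities; since $V$ is $1$-large, $N$ is a reduced irreducible complete intersection with FPIG, so by Proposition~\ref{prop:irrelevant} it remains to prove that $\NN(N)$ is irrelevant. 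I would follow the proof of Corollary~\ref{cor:dim.L} and Remark~\ref{rem:Em.not.in.nullcone}. Fix $\vec x=(x_0,\dots,x_m)\in V^{m+1}$ and let $E_m\subseteq V$ be as in Section~\ref{subsec:JetShellLinSubspace}; the projection $\tau\colon Y_{\vec x}\to V^*$ has image $\Ann E_m$, of dimension $\dim V-\dim E_m\ge\dim V-\dim G=m_0(V^*)$, the last equality coming from Proposition~\ref{prop:dim.L} (when $p=q=n$, as $V$ is orthogonal with $V^T=0$) or Lemma~\ref{lem.GL.m0} and Remark~\ref{rem:V.and.V*.same.condition} (when $p=n+1$). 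If $\Ann E_m\not\subseteq\NN(V^*)$, then $\{\vec x\}\times Y_{\vec x}$ meets $\rho_m\inv(N\setminus\NN(N)_\sg)$ in a dense open set (Corollary~\ref{cor:tau.on.Y}), hence lies in its closure $N_m'$. For the remaining ``bad'' $\vec x$ we have $\dim E_m=\dim G=n^2$ and $\Ann E_m$ is a linear subspace of $\NN(V^*)$ of maximal dimension. When $p=n+1$, the rigidity statement of Lemma~\ref{lem.GL.m0} forces $\Ann E_m=(n+1)(\C^n)^*$, so $E_m=n(\C^n)^*\subseteq V$. When $p=q=n$, the Borel-fixed-point argument from the proof of Proposition~\ref{prop:dim.L} shows that $\Ann E_m$ is $\GL_n$-conjugate to a sum of $T$-weight spaces indexed by a subset $P\subseteq\{1,\dots,n\}$; but since $\GL_n$ acts transitively on $\C^n\setminus 0$ and $E_0=\lieg(x_0)\subseteq E_m$, a proper nonempty $P$ forces $x_0=0$, then $x_1=0$, and so on, contradicting $\dim E_m>0$, so again $E_m\in\{n\C^n,\,n(\C^n)^*\}$ and $\Ann E_m$ is the corresponding pure subspace. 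In either case transitivity of $\GL_n$ on $\C^n\setminus 0$ puts $x_0$ in $n(\C^n)^*$, and tracing the flag $E_0\subseteq E_1\subseteq\cdots\subseteq E_m$ together with the stabilizers $\lieg_i$ shows that each subsequent $x_i$ lies in $n(\C^n)^*$ until the flag fills $E_m$, after which the remaining coordinates are unconstrained; a direct count of this locus, together with the fibers $Y_{\vec x}$ (of dimension $(m+1)\dim V-\sum_{i=0}^m r_i\le (m+1)\dim V-n^2$), shows it has dimension strictly less than $(m+1)\dim N$. Hence $N_m=N_m'$ for all $m\ge 1$, $\NN(N)$ is irrelevant, and $N$ is CIFR.

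The crux is the dimension estimate in the boundary case. When $p>q=n$ it is parallel to the endgames of the proof of Theorem~\ref{thm:kC^n,SO(n)} and of the analogous $\SP_{2n}$ result: Lemma~\ref{lem.GL.m0} pins down $E_m$ exactly, and one only has to bound the locus where all $x_i$ lie in $n(\C^n)^*$ (together with the ``generic bad'' locus, where the flag fills up already at the first step). The case $p=q=n$ is the genuinely new point: the maximal linear subspaces of the null cone of the orthogonal module $V=n\C^n\oplus n(\C^n)^*$ are not all $\GL_n$-conjugate, so a priori $E_m$ could be one of an $O(n^2)$-dimensional family of ``mixed'' coordinate subspaces; the observation that every mixed choice forces $\vec x=0$, and hence cannot be bad, is what reduces matters to the two ``pure'' subspaces $n\C^n$ and $n(\C^n)^*$ and makes the count go through. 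Everything else is routine given Sections~\ref{sec:Back}--\ref{sec:JetShell}.
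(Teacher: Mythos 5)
Your proof is essentially correct and follows the same skeleton as the paper's: reduce UTCLS to $p\geq q\geq n$ (so $V$ is $1$-large), induct on $n$ to get rational singularities off the null cone, and then prove irrelevance of $\NN(N)$ by analyzing the flag $E_0\subseteq\cdots\subseteq E_m$ and the locus where $\Ann E_m\subset\NN(V^*)$, using Lemma~\ref{lem.GL.m0} and the transitivity of $\GL_n$ on $\C^n\setminus\{0\}$ to pin down and then eliminate the bad $\vec x$.

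The one organizational difference is worth noting. You split off $p+q\geq 2n+2$ and dispose of it with Theorem~\ref{thm:use.E_m}(4) via a direct $m_0(W_0)$ estimate, reserving the $E_m$ analysis for the boundary cases $p+q\in\{2n,2n+1\}$. The paper instead treats all $p+q\geq 2n$ uniformly via the single chain
\[
\dim V^*-\dim G = n(p+q-n)\geq np\geq\dim L = \dim\Ann E_m\geq\dim V^*-\dim G,
\]
which forces $q=n$ and $\dim L=np$ whenever a bad $\vec x$ exists. In particular, when $q>n$ no bad $\vec x$ can exist at all, so the whole case $p+q\geq 2n+2$ with $q>n$ is absorbed for free, with no recourse to the $m_0$ bounds. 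Your route is correct but does this work twice (once via $m_0(W_0)$ for slices, once via $E_m$), whereas the paper's chain of inequalities accomplishes it in one pass. Both treatments of the genuinely hard locus (the pure subspaces $n\C^n$, $n(\C^n)^*$ when $p=q=n$) are similar in spirit and equally terse; your explicit appeal to the Borel-fixed-point reduction from the proof of Proposition~\ref{prop:dim.L} to classify the $T$-stable maximal subspaces is a nice way to make the ``mixed vs.\ pure'' dichotomy precise.
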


\begin{proof}
By choosing a Lagrangian submodule
 we may assume that $p\geq q\geq n$.
Any symplectic slice representation of $N$ is either trivial or of the same form as $V\oplus V^*$, up to trivial factors and having a smaller $n$.
Thus we may assume that $N\setminus\NN(N)$ has rational singularities. Let $L=\Ann E_m$ be a linear subspace of $\NN(V^*)$.
By Lemma \ref{lem.GL.m0},
$$
\dim V^*-\dim G=n(p+q)-n^2=n(p+q-n)\geq  np\geq \dim L.
$$
Thus $\dim L\leq\dim V-\dim G$ with   equality only if $q=n$ and $\dim L=np$.

Suppose that $\Ann E_m$ has dimension $np$ and $q=n$. If $p>q$, then $\Ann E_m=p(\C^n)^*\subset V^*$ so that $E_m=q(\C^n)^*\subset V$ and  $\vec x
 \in V^{m+1}$ lies in $(q(\C^n)^*)^{m+1}$
 which has codimension greater than $(m+1)\dim G$ in $V^{m+1}$. By Corollary \ref{cor:rat.sings.by.codim}, $N$ has rational singularities. So we are left with the case $p=q=n$ and $\dim \Ann E_m=n^2$. Then $\Ann E_m$ is a maximal linear subspace on which the invariants of $V\simeq V^*$ vanish, hence so is $E_m$.  The case that $E_m=n\C^n$ or $E_m=n(\C^n)^*$ is easy since then the codimension of $X_{\vec r}$ is   greater than
 $(m+1)\dim G-\sum_{i=0}^mr_i$ unless
 $r_i=0$ for all $i$ in which case $E_m=0$, a contradiction. Thus $E_m$ projects to a proper nonzero subspace of each copy of $\C^n$ and $(\C^n)^*$. But any nonzero orbit of $G$ on $\C^n$ or its dual is the complement of the origin. This contradicts the form of $E_m$. Hence $N$ is
 CIFR.
 \end{proof}

 Finally, we consider the case $(V,G)=(p\,\C^n\oplus q(\C^n)^*,\SL_n(\C))$. Then $V$ is 1-modular if and only if $p+q\geq 2n-1$ \cite[Proposition 11.14]{GWSlifting}, which we now assume.

 \begin{lemma}
 Let $(V,G)$ be as above where we also assume that $p\geq q$ and $p-q\leq 1$.
Let $L$ be a maximal linear subspace of $\NN(V)$. Then $L=p\, W\oplus q\Ann W$ where $W$ is a linear subspace of $\C^n$ with $0<\dim W<n$.
\end{lemma}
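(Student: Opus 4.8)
\emph{The plan.} The strategy is to make $\NN(V)$ completely explicit via the first fundamental theorem for $\SL_n$, and then to analyze a maximal-dimensional linear $L\subseteq\NN(V)$ through its two coordinate projections, using a bounded-rank argument. Identify $V=\Hom(\C^p,\C^n)\oplus\Hom(\C^n,\C^q)$, with $g\in\SL_n$ acting by $(X,Y)\mapsto(gX,Yg\inv)$. By the first fundamental theorem for $\SL_n$ the invariants are generated by the entries of $YX$ together with the maximal minors of $X$ and of $Y$, so
\[
   \NN(V)=\{(X,Y):YX=0,\ \Rank X<n,\ \Rank Y<n\},
\]
the condition $\Rank X<n$ (resp.\ $\Rank Y<n$) being automatic when $p<n$ (resp.\ $q<n$). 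Since $p\geq q$, $p-q\leq 1$ and $p+q\geq 2n-1$, we have $p\geq n$ and $q\geq n-1$. For any $W\subseteq\C^n$ with $0<\dim W<n$, under this identification $pW\oplus q\Ann W=\{X:\Bild X\subseteq W\}\oplus\{Y:W\subseteq\ker Y\}$, which lies in $\NN(V)$ ($YX=0$ since the covectors kill $W$, $\Rank X\leq\dim W<n$, and $\Rank Y\leq\dim\Ann W=n-\dim W<n$) and has dimension $p\dim W+q(n-\dim W)$; this equals $pn$ when $p=q$ and equals $pn-1$ when $p=q+1$ (attained at $\dim W=n-1$), and is strictly smaller for the remaining values of $\dim W$.

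Now let $L\subseteq\NN(V)$ have maximal dimension $m_0(V)$. Let $\pi_1,\pi_2$ be the projections of $V$ onto its summands, put $L_1=\pi_1(L)$, and set $W:=\sum_{X\in L_1}\Bild X$, so $\Bild X\subseteq W$ for each $X\in L_1$. The kernel $K=\{(0,Y)\in L\}$ of $\pi_1|_L$ lies in $\{0\}\times L_2^0$, where $L_2^0:=\{Y:YX=0\ \forall X\in L_1\}=\{Y:W\subseteq\ker Y\}$: if $(X,Y)\in L$ and $(0,Y')\in L$ then $(X,Y+Y')\in L$ forces $Y'X=0$. Hence
\[
   \dim L=\dim L_1+\dim K\leq\dim L_1+\dim L_2^0\leq p\dim W+q(n-\dim W).
\]
If $\dim W=n$ then $L_2^0=0$, so $L\cong L_1$ is a linear space of rank-$(<n)$ matrices in $\Hom(\C^p,\C^n)$ and $\dim L\leq(n-1)p$ by Flanders' theorem on spaces of bounded rank; if $\dim W=0$ then $L_1=0$ and $L$ is isomorphic to a linear space of rank-$(<n)$ matrices in $\Hom(\C^n,\C^q)$, so $\dim L\leq(n-1)q$ if $q\geq n$ and $\dim L\leq nq=n(n-1)$ if $q=n-1$. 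In each case these bounds are strictly below $m_0(V)$, so maximality forces $0<\dim W<n$; and then, comparing the displayed inequality with $m_0(V)$, we get $\dim L_1=p\dim W$ (so $L_1=\{X:\Bild X\subseteq W\}$), $\dim K=\dim L_2^0$ (so $\{0\}\times L_2^0\subseteq L$), and, when $p=q+1$, $\dim W=n-1$.

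Finally I would upgrade ``$\pi_1|_L$ surjects onto $L_1$ with kernel $\{0\}\times L_2^0$'' to ``$L=L_1\oplus L_2^0$''. Choosing a linear section of $\pi_1|_L$, write $L=\{(X,s(X)+Y'):X\in L_1,\ Y'\in L_2^0\}$ for a linear $s\colon L_1\to\Hom(\C^n,\C^q)$ well defined modulo $L_2^0$. From $(X,s(X))\in L\subseteq\NN(V)$ we get $s(X)X=0$, i.e.\ $s(X)$ vanishes on $\Bild X$; since $L_1=\{X:\Bild X\subseteq W\}$ contains elements with $\Bild X=W$ (as $\dim W<n\leq p$) and $s$ is linear, $s(X)$ vanishes on $W$ for all $X\in L_1$, i.e.\ $s(X)\in\{Y:W\subseteq\ker Y\}=L_2^0$. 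Thus $s\equiv0$ modulo $L_2^0$ and $L=L_1\oplus L_2^0=\{X:\Bild X\subseteq W\}\oplus\{Y:W\subseteq\ker Y\}=pW\oplus q\Ann W$ with $0<\dim W<n$.

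The main obstacle is the second paragraph: ruling out that $L$ fails to be a product of its projections, and disposing of the degenerate values $\dim W\in\{0,n\}$. This is precisely where the external input (Flanders' bound on spaces of matrices of bounded rank) is needed; everything else is bookkeeping with the ranks of $X$ and $Y$ and with the identity $YX=0$ holding on all of $L$.
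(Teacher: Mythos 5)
Your proof is correct and is more careful than the paper's; here is a comparison of the two routes.

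First, you read ``maximal'' as ``of maximal dimension'' and begin from $\dim L=m_0(V)$. The paper's wording (``If $W=0$, then $L$ is not maximal\,\dots\,$L\subset pW'\oplus q\Ann W'$'') suggests inclusion-maximality, but the lemma is in fact false under that reading: for $(n,p,q)=(2,2,1)$ the plane $L=\{(v,v,0):v\in\C^2\}$ lies in $\NN(V)$ and is inclusion-maximal, since for any $(w_1,w_2,\eta)$ the identities $\det(v+tw_1,v+tw_2)=0$ and $t\eta(v+tw_i)=0$ for all $v,t$ force $w_1=w_2$ and $\eta=0$; yet $L$ is not of the form $2W\oplus\Ann W$ for any line $W$. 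The maximal-dimension reading is the one that is true and is what is actually used downstream (Corollary~6.9 is a pure dimension bound, and in the theorem that follows the subspace $\Ann E_m$ is squeezed up to dimension $m_0$), so your hypothesis is the right one; as a by-product you extract the sharper information $\dim W=n-1$ when $p=q+1$.

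Second, your invocation of Flanders' bound on linear spaces of bounded-rank matrices is not a stylistic choice: it fills a genuine gap in the paper's argument. The paper's step ``Since the determinant function vanishes on $L_0$, $L_0\subset pW$ where $W$ is the span of a basis of $L_0$ and has dimension at most $n-1$'' is not a correct implication on its own: for $L_0=\{(v,v):v\in\C^2\}\subset 2\C^2$ every $2\times2$ minor vanishes, yet $\sum_{X\in L_0}\Bild X=\C^2$. Excluding $\dim W\in\{0,n\}$ needs exactly the bounded-rank input you supply, and your comparisons of $(n-1)p$, $(n-1)q$, and $n(n-1)$ against $m_0(V)$ are correct (using $p\geq n\geq 2$ and $q\geq n-1$). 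The rest of your argument --- the inequality $\dim L\leq p\dim W+q(n-\dim W)$ via $L_1$ and $L_2^0$, the equality analysis, and the linear-section step showing that $s(X)\in L_2^0$ for the dense open set of $X$ with $\Bild X=W$ propagates to all of $L_1$ by linearity --- is sound and, in particular, actually proves the splitting $L=L_1\oplus L_2^0$, which the paper asserts (``If $W\neq 0$, then $L$ equals $pW\oplus q\Ann W$'') without argument.
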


 \begin{proof}
 Let  $L_0$ be the  projection of $L$ to $p\,\C^n$ and $L_1$ its projection to $q\,(\C^n)^*$.  Since the determinant function vanishes on $L_0$,  $L_0\subset p\,W$ where $W$ is the span of a basis of $L_0$ and has dimension at most $n-1$. It follows that a basis of  $L_1$ spans a subspace of $\Ann W$. If $W\neq 0$, then $L$ equals $p\,W\oplus q\Ann W$.   If $W=0$, then $L$ is not maximal:    as before,  $L_1\subset q\,\Ann W'$  where $\dim W'>0$ and then $L\subset p\,W'\oplus q\Ann W'$. Thus the maximal $L$ are as claimed.
 \end{proof}

 \begin{corollary}\label{cor:dim.E_m}
 Suppose that $E$ is a linear subspace of $V$ and $\Ann E\subset\NN(V^*)$. Then
 $$
 \dim E\geq qn+(p-q).
 $$
 \end{corollary}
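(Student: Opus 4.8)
The plan is to deduce the inequality from an upper bound on the dimension of a linear subspace of a null cone, by passing to annihilators. Since $\dim V = n(p+q)$ and $\dim E + \dim\Ann E = \dim V$, the assertion $\dim E\ge qn+(p-q)$ is equivalent to $\dim\Ann E\le p(n-1)+q$, so it suffices to bound $\dim\Ann E$ from above.

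First I would observe that $\Ann E$ is a linear subspace of $V^*$ contained in $\NN(V^*)$, so Definition~\ref{def:m0} gives $\dim\Ann E\le m_0(V^*)$, and by Remark~\ref{rem:V.and.V*.same.condition} we have $m_0(V^*)=m_0(V)$. (Alternatively, $V^*$ is isomorphic to $p\,\C^n\oplus q(\C^n)^*$ via the standard outer automorphism of $\SL_n(\C)$, so the preceding lemma applies verbatim to $V^*$.) It therefore remains only to show $m_0(V)\le p(n-1)+q$.

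For this I would invoke the preceding structure lemma. A linear subspace of $\NN(V)$ of maximal dimension is, a fortiori, maximal with respect to inclusion, so by that lemma it has the form $p\,W\oplus q\Ann W$ for some subspace $W\subset\C^n$ with $0<\dim W<n$, and hence has dimension $p\dim W+q(n-\dim W)=qn+(p-q)\dim W$. Since $p\ge q$ and $\dim W\le n-1$, this is at most $qn+(p-q)(n-1)=p(n-1)+q$. Combining the two steps,
\[
    \dim E = n(p+q)-\dim\Ann E \ \ge\ n(p+q)-\bigl(p(n-1)+q\bigr) = qn+(p-q).
\]
The computation is routine once the structure lemma is available; the only point that deserves a moment's care is the reduction from $\NN(V^*)$ to $\NN(V)$, which is exactly what Remark~\ref{rem:V.and.V*.same.condition} (or the outer-automorphism remark above) provides.
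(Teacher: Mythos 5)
Your proof is correct, and it is the natural argument implied by the placement of the result as a corollary to the structure lemma; since the paper gives no written proof, there is no substantively different route to compare against. The key step, observing that a maximal-dimensional linear subspace of $\NN(V)$ is a fortiori inclusion-maximal so that the lemma pins down its shape as $p\,W\oplus q\Ann W$ and hence its dimension as $qn+(p-q)\dim W\leq p(n-1)+q$, combined with $\dim\Ann E\leq m_0(V^*)=m_0(V)$ and the complementarity $\dim E+\dim\Ann E=n(p+q)$, is exactly what one wants.
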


 \begin{theorem}
 Let $(V,G)$ be as above. Then $V$ is $1$-modular if and only if $p+q\geq 2n-1$ in which case $N$ is
 CIFR.
 \end{theorem}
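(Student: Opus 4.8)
The characterization of $1$-modularity is \cite[Proposition 11.14]{GWSlifting}; assume $p+q\geq 2n-1$, so that $N$ is a reduced irreducible complete intersection, and note that $V$ has FPIG since a generic tuple of vectors already spans $\C^n$ and so has trivial isotropy. By Lemma~\ref{lem:mu.formula} and Remark~\ref{rem: change.V.UTCLS} I may replace $V$ by a Lagrangian submodule of $V\oplus V^\ast$ without affecting the shell or its being CIFR, so I assume $p\geq q$ and $p-q\leq 1$; the preceding lemma and Corollary~\ref{cor:dim.E_m} then apply. The plan is to show that $N\setminus\NN(N)$ has rational singularities and that $\NN(N)$ is irrelevant, whence $N$ has rational singularities by Proposition~\ref{prop:irrelevant} and is therefore CIFR.

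First I would identify the symplectic slice representations. These occur at closed $G$-orbits in $N\subset V\oplus V^\ast\cong(p+q)\bigl(\C^n\oplus(\C^n)^\ast\bigr)$; since the ambient module is orthogonal, its closed-orbit isotropy groups are the complexifications of those in the associated real $\SU(n)$-module $(p+q)\C^n$, namely the $\SL_{n-k}(\C)$ with $0\leq k\leq n$, and a direct computation of the symplectic slice at such a point shows that for $\dim H>0$ it is, up to a trivial summand, $(p+q-2k)\bigl(\C^{n-k}\oplus(\C^{n-k})^\ast\bigr)$ for $\SL_{n-k}(\C)$, with $p+q-2k\geq 2(n-k)-1$. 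Thus every proper symplectic slice representation is of the same kind for a group of smaller rank, so by induction on $n$ its shell $N_0$ is CIFR. Consequently $N$ has rational singularities along every closed orbit in $N\setminus\NN(N)$ (Corollary~\ref{cor:property(P)}; note a $G$-fixed point of $N$ lies in $\NN(N)$), so $N\setminus\NN(N)$ has rational singularities by Remark~\ref{rem:closed.orbits}. It remains to prove $\NN(N)$ irrelevant.

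By the preceding lemma and Remark~\ref{rem:V.and.V*.same.condition}, $m_0(V^\ast)=m_0(V)=qn+(p-q)(n-1)$, and an elementary computation using $p\geq q$, $p-q\leq 1$ shows $m_0(V)<\dim V-\dim G=n(p+q)-n^2+1$ exactly when $q\geq n$; so if $q\geq n$, Corollary~\ref{cor:dim.L} gives that $\NN(N)$ is irrelevant. The only remaining case is $(p,q)=(n,n-1)$, which for $n=2$ is $(3\C^2,\SL_2(\C))$, already treated; so take $n\geq 3$, where $m_0(V^\ast)=n^2-1\geq\dim V-\dim G$ and condition (4)-type arguments fail. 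Here I argue directly as for $\GL_n(\C)$: by Remark~\ref{rem:Em.not.in.nullcone} it suffices to show $\Ann E_m\not\subseteq\NN(V^\ast)$ for $\vec x$ in a dense subset of $V^{m+1}$. If $\Ann E_m\subseteq\NN(V^\ast)$, the analogue for $V^\ast$ of the preceding lemma forces $\Ann E_m\subseteq n\,\Ann W\oplus(n-1)W$ for some proper nonzero $W\subset\C^n$, so $E_m\supseteq nW\oplus(n-1)\Ann W$, while Corollary~\ref{cor:dim.E_m} gives $\dim E_m\geq n^2-n+1$. Running through the recursive definition of the $E_i$ (with $\lieg_{-1}=\lieg$): if some $x_i\neq 0$ then, once all earlier $x_j$ vanish, $E_i=\lieg_{i-1}(x_i)$ surjects onto a full copy of $\C^n$ or $(\C^n)^\ast$, and combining this with the codimension bound of Corollary~\ref{cor:rat.sings.by.codim} for the corresponding stratum $X_{\vec r}$ forces all $x_i=0$, hence $E_m=0$, contradicting $\dim E_m\geq n^2-n+1$. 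Thus $\NN(N)$ is irrelevant, and Proposition~\ref{prop:irrelevant} finishes the proof.

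The hard part is this boundary case $(p,q)=(n,n-1)$: unlike for $\GL_n(\C)$, where $E_m$ is forced to equal a maximal linear subspace of the dual null cone, here it only contains the annihilator of a maximal linear subspace of $\NN(V^\ast)$, so the clean descent through the $x_i$ must be buttressed by the codimension estimates of Corollary~\ref{cor:rat.sings.by.codim}; checking those bounds for all relevant $\vec r$ is the one genuinely new computation.
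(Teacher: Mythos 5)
Your proof tracks the paper closely through the reductions: the criterion for $1$-modularity, the choice of Lagrangian submodule to normalize $p\geq q$, $p-q\leq 1$, the identification of the symplectic slice representations as smaller $\SL_{n'}$ cases, the induction giving rational singularities on $N\setminus\NN(N)$, and the observation that for $q\geq n$ (equivalently $p+q\geq 2n$) one has $m_0(V^\ast)=p(n-1)+q<\dim V-\dim G$ so that Corollary~\ref{cor:dim.L} finishes. This matches the paper's proof in all those cases.

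The gap is in your treatment of the boundary case $(p,q)=(n,n-1)$, $n\geq 3$. You assert that the structure of $E_m$, the fact that $E_i=\lieg_{i-1}(x_i)$ surjects onto a full copy of $\C^n$ or $(\C^n)^\ast$ once all earlier $x_j$ vanish, and the codimension bounds of Corollary~\ref{cor:rat.sings.by.codim} together ``force all $x_i=0$.'' This does not follow. First, the surjectivity claim applies only \emph{while} all earlier $x_j$ vanish: as soon as some $x_i$ is allowed to be nonzero without producing a contradiction, $\lieg_i$ becomes a proper subalgebra (typically unipotent when $x_i$ is a tuple of covectors), and $\lieg_i(x_{i+1})$ is in general small, so the argument does not propagate to later indices. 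Second, the contradiction you want (namely $\dim E_m > \dim G$) fires cleanly only when $x_0$ projects nontrivially to $n\,\C^n$; if $x_0$ is a nonzero element of $V_0=(n-1)(\C^n)^\ast$, then $E_0$ surjects onto a copy of $(\C^n)^\ast$, but $E_m$ already contains $(n-1)\Ann W$ whose per-copy dimension is only $k=n-\dim W$, so one gets $\dim E_m\geq n^2-k+(n-k)$, a contradiction only when $k<(n+1)/2$. So $\Ann E_m\subset\NN(V^\ast)$ cannot be ruled out for \emph{all} $\vec x$ by your chain of implications, and Remark~\ref{rem:Em.not.in.nullcone} (which needs this for all $\vec x$, not a dense set) is not directly available. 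Third, Corollary~\ref{cor:rat.sings.by.codim} is a sufficient criterion for rational singularities given codimension bounds; it does not force structural constraints on the components $x_i$, so invoking it to conclude $E_m=0$ is a category error.

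The paper handles this case differently. It extracts only the single constraint $x_0\in V_0$ from the $E_m$-analysis, notes by the $V\leftrightarrow V^\ast$ symmetry that one also has $\xi_0\in(n-1)\,\C^n\subset V^\ast$, and concludes that the non-rational locus lies in $N'$, the null cone of the shell of $V_0$. Since $\dim(V_0\oplus V_0^\ast)=2n(n-1)$ while $\dim N=3n^2-2n+1$, one computes $\codim_N N'\geq n^2+1>\dim G$, and Theorem~\ref{thm:Budur3} then gives rational singularities. That theorem is exactly the tool designed for the situation where one cannot eliminate all bad $\vec x$ but can confine them to a small closed subvariety; it is the ingredient your proof is missing.

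Finally, a minor quibble: invoking Remark~\ref{rem:Em.not.in.nullcone} ``for $\vec x$ in a dense subset'' is not what that remark says; as written, and as used in the proof of Corollary~\ref{cor:dim.L}, it needs $\Ann E_m\not\subseteq\NN(V^\ast)$ for every $\vec x$. When only a dense open set of $\vec x$ is good, one has to pass through the dimension/irreducibility argument (essentially Theorem~\ref{thm:Budur3} or Corollary~\ref{cor:rat.sings.by.codim}) rather than cite the remark directly.
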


 \begin{proof}
It is easy to see that all proper nontrivial symplectic slice representations $(S,H)$ of $N$ are, up to trivial factors, of the same form with $H=\SL_{n'}$ for $n'<n$.
By induction we may assume that $N\setminus\NN(N)$ has rational singularities.
By choosing a Lagrangian submodule, we may assume that $p\geq q$ and $p-q\leq 1$. We suppose that $L=\Ann E_m\subset \NN(V^*)$ and by Corollary \ref{cor:dim.E_m}, $\dim E_m\geq qn+p-q$. If $q\geq n$ we get that $\dim E_m\geq n^2>\dim G$, which is impossible. So the only interesting case  to consider is $p=n$, $q=n-1$. Then a maximal $L\subset \NN(V^*)$ containing $\Ann E_m$ is of the form $n \Ann W\oplus (n-1)W$ where $n-k=\dim W$ and $0<k<n$, so that $E_m\supset n\, W\oplus (n-1)\Ann W$ has dimension at least $n^2-k$. Now suppose that $x_0$ projects nontrivially to $n\,\C^n$. Then $E_0$ projects surjectively onto some copy of $\C^n$. Since   $E_m$ already contains a copy of $n\,\C^{n-k}$, $E_m$ has dimension at least $n^2-k+k=n^2>\dim G$. Thus $x_0\in V_0=(n-1)(\C^n)^*$. Hence the non-rational locus of $N$ lies in $(V_0\times V^*)\cap N$.   Reversing the role of $V$ and $V^*$, we see that the non-rational locus of $N$ lies in $N'$ where $N'$ is the null cone of the shell of $V_0$. Since $\codim_N N'>\dim G$, Theorem \ref{thm:Budur3},  shows that $N$ has rational singularities, hence $N$ is
CIFR.
 \end{proof}

% xxxxxxxxxxxxxxxxxxxxxxxxxxxxxxxxxxxxxxxxxxxxxxxxxxxxxxxxxxxxxxxxxxxxxxxxx
% xxxxxxxxxxxxxxxxxxxxxxxxxxxxxxxxxxxxxxxxxxxxxxxxxxxxxxxxxxxxxxxxxxxxxxxxx
% xxxxxxxxxxxxxxxxxxxxxxxxxxxxxxxxxxxxxxxxxxxxxxxxxxxxxxxxxxxxxxxxxxxxxxxxx

\section{Applications to representation and character varieties}
\label{sec:Applications}

 Let $\Sigma$ be a Riemann surface of genus $p\geq 2$ and let $G$ be a reductive complex algebraic group. Let $\pi$ denote $\pi_1(\Sigma)$. Then $\pi$ is the quotient of the free group on generators $a_1,b_1,a_2,\dots,a_p,b_p$ by the normal subgroup generated by
 $$
 [a_1,b_1][a_2,b_2]\cdots[a_p,b_p]
 $$
 where $[a_i,b_i]$ is the commutator $a_ib_ia_i\inv b_i\inv$. Let $\Hom(\pi,G)$  denote the set of homomorphisms from $\pi$ to $G$. This has a scheme structure as $\Phi\inv(e)$ where
 $$
 \Phi\colon G^{2p}\to G, \quad (g_1,h_1,g_2,\dots,g_p,h_p)\mapsto [g_1,h_1]\cdots [g_p,h_p].
 $$
 Now $\Phi$ is $G$-equivariant where $g\in G$ acts on $G$ by conjugation and on $G^{2p}$ by conjugation on each component. Hence $G$ acts on $\Hom(\pi,G)$ and we denote the quotient by $\X(\pi,G)$. The quotient $\X(\pi,G)$ is called a \emph{character variety\/} and $\Hom(\pi,G)$ is called a \emph{representation variety}, although they may not be varieties (but their irreducible components are). Let $\rho_0\in\Hom(\pi,G)$ denote the trivial homomorphism.

 We will show the following.

 \begin{theorem}\label{thm:char.vars}
 Let $\pi$ be as above and assume that $G$ is semisimple.
 \begin{enumerate}
\item $\Hom(\pi,G)$ is
CIFR and each irreducible component has dimension  $(2p-1)\dim G$.
\item $\X(\pi,G)$ has symplectic singularities and has dimension $2(p-1)\dim G$.
\end{enumerate}
Now suppose that $p>2$ or that every simple component of $G$ has rank at least $2$.
\begin{enumerate}
\addtocounter{enumi}{2}
\item The singularities of $\Hom(\pi,G)$ are in codimension
at least
 four and $\Hom(\pi,G)$ is locally factorial.
\item The singularities of $\X(\pi,G)$ are in codimension
at least
four and the irreducible component containing $G\rho_0$ is locally factorial. In particular, if $G$ is simply connected, then $\X(\pi,G)$ is locally factorial.
\end{enumerate}
 \end{theorem}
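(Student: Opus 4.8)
The plan is to reduce the entire theorem to the shell $N_{p\lieg}$ and its symplectic slice representations, so that Theorem~\ref{thm:p.lieg}, Corollary~\ref{cor:arbitrary.H}, Theorem~\ref{thm:symplectic.sings} and the codimension estimates of Lemma~\ref{lem:lie.alg.main} can be invoked. The first and central step is to describe the formal local structure of $\Hom(\pi,G)$ at a point $\rho$ whose $G$-orbit is closed, so that $H:=G_\rho=Z_G(\rho(\pi))$ is reductive. Writing $\lieg=\lieh\oplus\lie m$ as an $H$-module, Poincar\'e duality on $\Sigma$ and the identity $\sum_i(-1)^i[H^i(\pi,\lieg_{\Ad\rho})]=(2-2p)[\lieg]$ in the representation ring of $H$ will give $H^0\cong\lieh$, $H^2\cong\lieh^*$ and $H^1\cong 2p\,\lieh\oplus(2p-2)\lie m$ as $H$-modules. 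Since $\Sigma$ is K\"ahler and $G\rho$ is closed, Goldman's formality theorem identifies $\widehat{\mathcal O}_{\Hom(\pi,G),\rho}$ with the completed local ring at the origin of the quadratic cone $C_\rho=\{\alpha\in Z^1(\pi,\lieg):[\alpha\cup\alpha]=0\in H^2\}$ (cup product followed by the Lie bracket). Because $[\beta\cup\gamma]$ vanishes in $H^2$ whenever $\beta$ is a coboundary, the quadratic form descends to $H^1$, so $C_\rho\cong (\lieg/\lieh)\times\{\alpha\in H^1:[\alpha\cup\alpha]=0\}$, a linear factor times the zero fibre of the cup product. The Appendix (\S\ref{sec:Appendix}) shows that this cup product is the moment map for the $H$-action on $H^1$ equipped with Goldman's symplectic form, so by Lemma~\ref{lem:mu.formula} the second factor is the shell $N_W$ of $W=p\,\lieh\oplus(p-1)(\lieg/\lieh)$, giving $C_\rho\cong (\lieg/\lieh)\times N_W$.

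Granting this, part~(1) will follow quickly. By Corollary~\ref{cor:arbitrary.H}, applied factor by factor to $G=G_1\times\cdots\times G_m$, the reduced shell $N_0$ is CIFR; hence $N_W=S^H\times N_0$, and therefore $C_\rho$, is a normal reduced complete intersection with rational singularities, and these properties transfer back from $\widehat{\mathcal O}_{\Hom(\pi,G),\rho}$ to $\mathcal O_{\Hom(\pi,G),\rho}$. A direct count gives $\dim C_\rho=(2p-1)\dim G$, so every irreducible component passing through a closed orbit—hence, since every component contains a closed orbit, every component of $\Hom(\pi,G)$—has dimension $(2p-1)\dim G$; in particular $\Hom(\pi,G)$ is a complete intersection. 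Rational singularities and normality then hold globally by Remark~\ref{rem:closed.orbits}. For FPIG I will show that $\{\rho:\dim Z_G(\rho(\pi))>0\}$ is contained in the union of the conjugates of the subvarieties $\Hom(\pi,Z_G(L))$ over positive-dimensional subtori $L\subsetneq G$, and that a dimension count places this union in positive codimension; thus the principal isotropy groups are finite and $\Hom(\pi,G)$ is CIFR.

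For~(2): since $\Hom(\pi,G)$ has rational singularities, so does $\X(\pi,G)=\Hom(\pi,G)\git G$ by Boutot's theorem, and $\dim\X(\pi,G)=\dim\Hom(\pi,G)-\dim G=(2p-2)\dim G$ by FPIG. By Luna's \'etale slice theorem together with the first step, $\X(\pi,G)$ is, near the image of $\rho$, \'etale-locally isomorphic to $S^H\times(N_0\git H)$; by Corollary~\ref{cor:arbitrary.H} every symplectic slice representation of $N_0$ satisfies condition $(*)$ of \S\ref{sec:symplectic.sings}, so $N_0\git H$ is graded Gorenstein with symplectic singularities by Theorem~\ref{thm:symplectic.sings}. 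Since having symplectic singularities is an \'etale-local property, $\X(\pi,G)$ has symplectic singularities. For~(3) and~(4), add the hypothesis that $p>2$ or that every simple component of $G$ has rank at least $2$, and prove that $\codim_{N_0}(N_0)_\sing\geq 4$ for each slice shell $N_0$: running the estimate in the proof of Lemma~\ref{lem:lie.alg.main} on each symplectic slice representation of $N_0$ yields $\delta\geq 2$ unless the slice group is abelian, in which case (when $p=2$) the relevant torus lies in a maximal torus of a rank-$\geq 2$ simple factor of $G$ and Corollary~\ref{cor:torus.codim.sings} applies; either way $\codim_{N_0}\NN(N_0)_\sg\geq 4$, and induction over slice representations (as in Corollary~\ref{cor:factorial}) gives $\codim_{N_0}(N_0)_\sing\geq 4$. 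Via the slice descriptions this yields codimension $\geq 4$ for the singular loci of both $\Hom(\pi,G)$ and $\X(\pi,G)$. Since $\Hom(\pi,G)$ is a complete intersection, being regular in codimension $\leq 3$ forces it to be locally factorial (SGA~2), proving~(3).

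For the last part of~(4): $\X(\pi,G)$ is normal with rational singularities and singular locus of codimension $\geq 4$, and near $G\rho_0$ it coincides with $N_{p\lieg}\git G$, which is factorial by Theorem~\ref{thm:p.lieg}(2); I then propagate local factoriality along the irreducible component containing $G\rho_0$ by comparing the divisor class groups of the local rings of $\X(\pi,G)$ with the character groups of the slice groups arising on that component, and conclude the final statement using Steinberg's theorem, which makes $\Hom(\pi,G)$ connected when $G$ is simply connected.

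I expect the main obstacle to be the first step: one must invoke Goldman's theorem that the germ of $\Hom(\pi,G)$ at a closed orbit is quadratic, and—more essential to this paper—the Appendix's identification of the cup-product quadratic form on $H^1$ with a moment map, so that the formal tangent cone is genuinely a shell to which Corollary~\ref{cor:arbitrary.H} applies. The other delicate point is the second half of~(4): the \'etale-local model $S^H\times(N_0\git H)$ only controls the henselized local rings of $\X(\pi,G)$, and these can fail to be factorial even when the Zariski local rings are, so local factoriality of the $G\rho_0$-component has to be extracted from an honest computation of divisor class groups rather than read off from the slice.
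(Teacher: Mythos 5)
Your overall strategy matches the paper's: identify the formal/tangent-cone local model of $\Hom(\pi,G)$ at a closed orbit with $(\lieg/\lieh)\times N_W$ where $W=p\,\lieh\oplus(p-1)\lieg/\lieh$, invoke Corollary~\ref{cor:arbitrary.H} to get CIFR, and then feed this into Boutot, Namikawa, SGA~2 and Drezet for the remaining parts. One stylistic difference: you invoke Goldman--Millson formality to identify $\widehat{\mathcal O}_{\Hom(\pi,G),\rho}$ with the quadratic cone $C_\rho$, whereas the paper deliberately avoids formality and works only with the tangent cone, using the containment $\TC_y(Y)\subset N_y$ of \cite{Goldman85} together with the inequality $\dim N_y\le\dim_y Y$ to force equality; both routes work, but the paper's is shorter and self-contained. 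That said, two of your steps have genuine gaps.

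First, your FPIG argument. Showing that $\{\rho:\dim Z_G(\rho(\pi))>0\}$ has positive codimension only controls the generic \emph{orbit}, not the generic \emph{closed} orbit, and FPIG (Definition in \S\ref{sec:G-modules}) is a statement about principal \emph{closed} orbits. To close the gap you would still need to argue that the set of $\rho$ lying on closed orbits is dense in every irreducible component, i.e., that the action is stable, which your dimension count does not address. The paper sidesteps this by considering the $G$-saturated closed set $Y'=\pi^{-1}\{\text{closed orbits with positive-dimensional isotropy}\}$ and showing $\TC_y(Y')\subset(N_y)'$ at any closed orbit $Gy\subset Y'$; since $(N_y)'$ has positive codimension in $N_y=\TC_y(Y)$ by FPIG of $N_y$, one gets $\codim_Y Y'\ge 1$ directly and FPIG follows. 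You should replace your Levi-stratification count with this tangent-cone argument (or add a stability argument).

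Second, the local factoriality of the component $Y_0\git G$ containing $G\rho_0$. Comparing divisor class groups with the character groups $\chi(G_y)$ of the slice groups along $Y_0$ will not go through as stated, because those $G_y$ are arbitrary reductive subgroups of $G$ and generally \emph{do} have nontrivial characters, so Corollary~\ref{cor:factorial}(3)-type reasoning fails pointwise. The paper's workaround is global: since $\rho_0$ is a $G$-fixed point of $Y_0$ and $G$ is semisimple, $G$ acts trivially on the fiber of any $G$-line bundle at $\rho_0$ and hence on the whole line bundle; thus $\Pic_G(Y_0)=\Pic(Y_0)$, and Drezet's theorem \cite[Theorem 8.3]{Drezet} combined with local factoriality of $Y_0$ (from part (3)) gives local factoriality of $Y_0\git G$. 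This is the input you should use. Finally, the connectedness of $\Hom(\pi,G)$ for $G$ simply connected is Li's theorem \cite[Theorem 0.1]{Li}, not Steinberg's connectedness theorem (which concerns centralizers of elements, not representation spaces).
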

  \begin{remark}
 By \cite[Theorem 0.1]{Li}, the number of irreducible components of $\Hom(\pi,G)$ (and $\X(\pi,G)$) is the cardinality of the fundamental group $\pi_1(G)$
when $G$ is connected and semisimple. More generally, when $G$ is connected and reductive, the number of irreducible components of $\Hom(\pi,G)$
is the cardinality of the fundamental group of
a maximal connected  semisimple subgroup of $G$ by \cite[Proposition A.1]{LawtonRamras}.
 \end{remark}

 From \cite[Corollary 1.3]{FuNilOrb} we obtain the following. Note that locally factorial implies
 locally
  $\Q$-factorial.
 \begin{corollary}
Let 
$C$ 
be an irreducible component of $\X(\pi,G)$ which is locally factorial
with singularities in codimension at least 4.
Let $U$ be an open subset of 
$C$ 
containing a singular point. Then $U$ does not have a symplectic resolution. In particular, 
$C$ 
does not have a symplectic resolution.
 \end{corollary}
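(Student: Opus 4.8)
The plan is to observe that each hypothesis of Fu's criterion \cite[Corollary 1.3]{FuNilOrb} is inherited by the open subvariety $U\subset C$, and then to invoke that result, which asserts that a $\Q$-factorial variety with symplectic singularities whose singular locus has codimension at least four admits no symplectic resolution.

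First I would check that $U$ has symplectic singularities: by Theorem \ref{thm:char.vars}(2) the component $C$ has symplectic singularities, and since this is a local property it passes to the open subvariety $U$; in particular $U$ is normal and Gorenstein and carries a symplectic form on $U_\sm = C_\sm\cap U$. Next, $U$ is $\Q$-factorial, being open in the locally factorial variety $C$ and hence itself locally factorial. Finally, $U_\sing = C_\sing\cap U$ is nonempty since $U$ contains a singular point, and $\codim_U U_\sing \geq \codim_C C_\sing \geq 4$. Thus all hypotheses of \cite[Corollary 1.3]{FuNilOrb} are met by $U$, so $U$ has no symplectic resolution. The ``in particular'' assertion is the special case $U = C$.

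There is essentially no obstacle beyond Fu's cited theorem: the only points to record are that the three properties above localize to $U$, which is routine, and that the codimension hypothesis is what does the real work. Concretely, symplectic singularities together with $\codim_U U_\sing \geq 4$ force $U$ to be terminal, so a symplectic resolution of $U$ --- being crepant --- has no exceptional divisor, hence is small, hence is an isomorphism by $\Q$-factoriality, contradicting the presence of a singular point of $U$; this is where the semismallness of symplectic resolutions from \cite{FuNilOrb} enters, and it is the only place with genuine geometric content.
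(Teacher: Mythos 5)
Your proof is correct and follows exactly the route the paper intends: the paper simply states that the corollary follows from \cite[Corollary 1.3]{FuNilOrb}, remarking that locally factorial implies locally $\Q$-factorial. Your verification that the three hypotheses (symplectic singularities, $\Q$-factoriality, codimension of the singular locus $\geq 4$) localize to the open subset $U$, and your sketch of the mechanism inside Fu's result (terminality forces any symplectic, hence crepant, resolution to be small, and $\Q$-factoriality then forces it to be an isomorphism), is a more explicit version of what the paper leaves implicit.
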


Theorem \ref{thm:char.vars}(2) was established for reductive groups of type A
 in \cite{bellamy2019symplectic} while Theorem \ref{thm:char.vars}(1)  was established for $\GL_n$ and $\SL_n$ in \cite{BudurRational}.
 The results for $\SL_n(\C)$ (and also $\GL_n(\C)$) rely  on the theory of quivers. In fact, one proves the results for $\GL_n(\C)$ and then deduces they hold for $\SL_n(\C)$.
 For arbitrary reductive $G$ we  go in the opposite direction and  prove our results first in the semisimple case and then deduce our results in the connected reductive case. We do not use the theory of quivers.
 As remarked  before, Theorem  \ref{thm:char.vars}(1), which is essential in our proof of Theorem \ref{thm:char.vars}(2),
  follows from  \cite{AizenbudAvniRepGrowth}, but requires  values of $p$ greater than $2$.

We now return to the case of  an arbitrary semisimple $G$.
By Remark \ref{rem:closed.orbits}, to show that $Y=\Hom(\pi,G)$ has rational singularities  it is enough to show that  
$Y$ has rational singularities
at any $y\in Y$ such that  $Gy$ is closed.
 We show that the tangent cone at such
 $y$ is
the product of a vector space and
the zero set of a moment mapping of the kind considered
in Section \ref{sec:Adjoint}.
In particular, the tangent cone has rational singularities. It follows that $\Hom(\pi,G)$ has rational singularities
at $y$, hence has rational singularities.
It is not hard to show (using the Campbell-Hausdorff formula)  that the  tangent cone at $\rho_0\in\Hom(\pi,G)$ is the zero set of the moment mapping corresponding to the $G$-module $p\,\lieg$. We know that this has rational singularities. By \cite[Theorem IV]{AizenbudAvniRepGrowth}, this is enough to show that $\Hom(\pi,G)$ has rational singularities.
We give here a different and short proof using tangent cones as above. To show that $\X(\pi,G)$ has symplectic singularities we modify the arguments  of \S \ref{sec:symplectic.sings} to apply to the case of character varieties.

\subsection{The tangent cone} \label{subs:tan.cone} 
We establish Theorem \ref{thm:char.vars}(1).
Let $Y$
denote the affine scheme $\Hom(\pi,G)$. Let $y\in Y$ where $Gy$ is closed. We determine the tangent cone $\TC_y(Y)$ of $Y$ at $y$.
 We follow the approach of \cite{Goldman84,Goldman85}. Let   $A$ denote the Zariski closure of the subgroup of $G$ generated by the components of $y$
 (as a point of $G^{2p}$).
 By \cite[Theorem 3.6]{Richardson88},   $G y$ is closed if and only if $A$ is reductive. Let $H$ denote the stabilizer of $y$. Then $H=Z_G(A)$. The Zariski tangent space $T_y$ to $Y$ at $y$ is $\Ker d\Phi_y$. By the proof of \cite[Proposition 3.7]{Goldman84} or \cite[Lemma 4.8]{AizenbudAvniRepGrowth}, the image of $d\Phi_y$ in $\lie g$ is $\lie h^\perp$ where the perpendicular is relative to the Killing form
 $\B$   of $\lieg$.
  Thus $T_y$ is isomorphic to $(2p-1)\lieg\oplus\lie h$ as 
  an $H$-module. Since $G  y$ is a closed orbit in $Y$, there is a Luna slice $S$ at $y$ whose Zariski  tangent space $S_y$ at $y$ is an $H$-stable complement to $B_y:= T_y(Gy)\simeq\lieg/\lie h$ in $T_y$. Thus $S_y\simeq 2p\,\lie h\oplus (2p-2)\lieg/\lieh$ as 
  an $H$-module.

Let $\rho\colon \pi\to G$ be the homomorphism corresponding to $y$. Then $\pi$ acts on $\lieg$ via $\rho$ and the adjoint action of $G$. We denote the corresponding $\pi$-module by $\lieg_\rho$. In terms of group cohomology \cite[1.3]{Goldman84}, $T_y= Z^1(\pi,\lieg_\rho)$   and  $B_y=B^1(\pi,\lieg_\rho)$ so that $H^1(\pi,\lieg_\rho)\simeq S_y$ as 
an $H$-module. Given  $u$, $v\in H^1(\pi,\lieg_\rho)$, their cup product is in $H^2(\pi,\lieg_\rho\otimes\lieg_\rho)$ which maps via
 $\B$
 to $H^2(\pi,\C)\simeq  \C$. Goldman shows that the resulting alternating form  $\omega$ is non-degenerate, i.e, it is a symplectic form on $H^1(\pi,\lieg_\rho)$. We can also map $\lieg_\rho\otimes\lieg_\rho$ to $\lieg_\rho$ via Lie bracket.   We get a symmetric
 bilinear form
$$
u,\ v\in  H^1(\pi,\lieg_\rho)  \mapsto  [u, v]\in H^2(\pi,\lieg_\rho)\simeq  H^0(\pi,\lieg_\rho^*)^*\simeq \lie h.
$$
The corresponding quadratic form is
\begin{equation}\label{eq:moment.mapping}\tag{$\#$}
u \in  H^1(\pi,\lieg_\rho)  \mapsto (1/2)[u, u]\in  \lie h\simeq\lie h^*.
\end{equation}
A necessary  condition for $u$ to be in $\TC(Y)_y$ is that $[u,u]$ vanishes \cite[\S 4]{Goldman85}.
Now
$S_y$ has an $H$-invariant orthogonal structure since $\lieg$ is an orthogonal $H$-module. By \cite[Lemma 3.10]{HerbigSchwarzSeaton2},  there is a Lagrangian  $H$-submodule $W$ of $S_y$. Then $W\simeq p\,\lieh\oplus (p-1)\lieg/\lieh$. We have the usual moment mapping $\mu\colon W\oplus W^*\to\lie h^*$, and  by
Corollary \ref{cor:arbitrary.H}, 
the shell $\mu\inv(0)$ is
CIFR and has dimension $(2p-1)\dim\lieg-\dim \lieg/\lie h$. In the Appendix we show that
$(\#)$
is a moment mapping, hence agrees with $\mu$. It follows that   $\TC_y(Y)$  is a subscheme of $N_y:= \mu\inv(0)\times\lieg/\lie h$ where the latter has dimension $(2p-1)\dim G$.  Since $N_y$ is reduced and irreducible and $\dim N_y \leq \dim_yY$, it follows that  $\TC(Y)_y=N_y$.
Since $N_y$ has rational singularities,    $\TC(Y)_y$ does also and $Y$ has rational singularities at $y$.
By Remark \ref{rem:closed.orbits},  $Y$ has rational singularities. 
 Moreover, $\dim Y=(2p-1)\dim G$ so that $Y$ is a complete intersection.
To establish Theorem \ref{thm:char.vars}(1) we only need to show that $Y$ has FPIG. 

Let $Y'=\{y\in Y\mid $ the closed orbit in $\overline{Gy}$ has positive dimensional isotropy$\}$.   Then $Y'$ is $G$-stable and closed. Let $y\in Y'$ such that $Gy$ is closed. Then $\TC_y(Y')\subset \TC(Y)_y=N_y$ and, since $\TC_y(Y')$ is the limit of tangents of curves in $Y'$ starting at $y$, $\TC_y(Y')\subset (N_y)'$ where $(N_y)'$ is defined relative to  the action of  $H=G_y$ on $N_y$. Since $N_y$ has FPIG, $(N_y)'$ has positive codimension in $N_y$, hence $Y'$ has positive codimension in $Y$ and $Y$ has FPIG. Now the principal isotropy group  of each $N_y$ is 
the center of $G$, which is therefore 
the principal isotropy group of each irreducible component of $Y$ 
and $Y$ has FPIG.

\subsection{The character variety} \label{subs:char.var}
We establish  Theorem \ref{thm:char.vars}(2). We may assume that $G$ is simple.
Let $Y=\Hom(\pi,G)$ and  $Z=Y\git G=\X(\pi,G)$. By Theorem \ref{thm:char.vars}(1) and Boutot's theorem \cite{Boutot}, $Z$ has rational singularities. Since $Y$ has FPIG, each irreducible component of $Z$ has dimension $2(p-1)\dim G$. It remains to show that $Z$ has symplectic singularities.

We argue along the lines of  \S \ref{sec:symplectic.sings}. Our first step is to show that $Z_\sm=Z_\pr$.  The quotient of $Y_\pr$ is $Z_\pr\subset Z_\sm$.
Let $Gy$ be a closed orbit in $Y$ such that the isotropy group $G_y$ is not principal, i.e., not 
the center of $G$. We need to show that the corresponding point $z\in Z$ is not smooth. Let $\OO_Y$ denote the structure sheaf of $Y$. Then $\TC_y(Y)$ is the variety of the associated graded ring of the local ring $\OO_{Y,y}$. Let $\mu$ and the Luna slice  $S$ at $y$ be as 
in Section \ref{subs:tan.cone}.
Then $\TC_y(S)=\mu\inv(0)$. By Luna's slice theorem, the mapping $S\git G_y\to Z$ is \'etale at $y$. Now taking $G_y$-invariants commutes with taking the associated graded ring, hence   the induced mapping  $\TC_y(S)\git G_y\to \TC_z(Z)$ is also \'etale at $y$.  By Corollary \ref{cor:arbitrary.H}  and Corollary \ref{cor:smooth.points}, $\TC_y(S)\git G_y$ is not smooth at $y$ so that $\TC_z(Z)$ is not smooth at $z$, hence neither is $Z$. Thus $Z_\sm=Z_\pr$.

By \cite[1.7--1.9]{Goldman84},  there is a holomorphic symplectic form on $Z_\pr$ which agrees with $\omega$ on $H^1(\pi,\lieg_\rho)$ where $\rho$ is a principal point. Thus there is a nowhere vanishing holomorphic volume form on $Z_\sm$, which implies
as in the proof of Proposition \ref{prop:graded.Gor} 
that $Z$ is Gorenstein. Then by \cite[Theorem 6]{NamikawaExtension}, $Z$ has symplectic singularities and we have established Theorem \ref{thm:char.vars}(2).

\subsection{Codimension of the singular stratum}
 Suppose that $p>2$ or that $G$ contains no simple factor of rank $1$. We prove parts (3) and (4) of Theorem  \ref{thm:char.vars}. As in Theorem \ref{thm:p.lieg},
 the singularities of
 every tangent cone
 $\TC_y(Y)$ at a closed orbit in $Y=\Hom(\pi,G)$
 are in codimension at least $4$.
 Hence $\TC_y(Y)$ is factorial by \cite[ Expos\'e XI Corollaire 3.14]{GrothendieckFactorial}
 and   $Y$ is factorial at $y$. Since the set of factorial points is open  \cite{boissiere2019} and the set of closed orbits is dense, $Y$  is locally factorial and we have (3). It follows from (3) that the singularities of $Z=\X(\pi,G)$ are in codimension
 at least
 four. Let
 $Y_0$ denote the irreducible component of $Y$ containing  $\rho_0$. Let $L$ be a $G$-line bundle on $Y_0$. Since $G$ is semisimple, $G$ acts trivially on the fiber $L_{\rho_0}$, hence $G$ acts trivially on $L$ in a neighborhood of $\rho_0$, hence on all of $Y_0$. Thus $\Pic_G(Y_0)=\Pic(Y_0)$, and it follows as in   \cite[Theorem 8.3]{Drezet} that $Y_0\git G$ is locally factorial, giving (4).

\subsection{The reductive case}
Suppose that $G$ is connected and reductive. Then $G=G_sC$ where $G_s$ is semisimple and $C$ is the connected center of $G$. Let $F=G_s\cap C$.
The action of
the subgroup
$F^{2p}$ on
$G_s^{2p}$ induces an action on $\Hom(\pi,G_s)$.
We have an isomorphism
\begin{equation}\label{eq:torus}\tag{$\dagger$}
\Hom(\pi,G)= \Hom(\pi,(G_s\times C)/F)\simeq(\Hom(\pi,G_s)\times C^{2p})/F^{2p}
\end{equation}
where $F^{2p}$ acts on $\Hom(\pi,G_s)$ as above and on $C^{2p}$ as a subgroup.

\begin{theorem}\label{thm:char.vars.red.gp}
Let $G$ be connected and reductive with  $G=G_sC$ and $F=G_s\cap C$ as above. Then the conclusions of Theorem \ref{thm:char.vars} hold
with the following changes.
 $$
 \dim\Hom(\pi,G)=(2p-1)\dim G_s+2p\dim C=(2p-1)\dim G+\dim C,
 $$
 $$
 \dim\X(\pi,G)=(2p-2)\dim G_s+2p\dim C=(2p-2)\dim G+2\dim C, \text{ and }
 $$
 in (4),  $\X(\pi,G)$ is locally factorial if $G_s$ is simply connected.
 \end{theorem}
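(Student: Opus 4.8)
The strategy is to deduce everything from the semisimple case (Theorem \ref{thm:char.vars}) applied to $G_s$, transported through the isomorphism \eqref{eq:torus}, using the fact that quotienting by the finite group $F^{2p}$ and multiplying by a torus factor $C^{2p}$ preserve all the relevant properties. First I would record that, since $C$ is a torus, $C^{2p}$ is a smooth affine variety, $\Hom(\pi,G_s)\times C^{2p}$ is CIFR of dimension $(2p-1)\dim G_s + 2p\dim C$ (products of CIFR varieties are CIFR, and the $C^{2p}$ factor contributes no singularities and no new isotropy), and $F^{2p}$ acts freely on the $C^{2p}$ factor in a dense open set, or at least with finite stabilizers everywhere. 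Hence the quotient by the finite group $F^{2p}$ is again a complete intersection with FPIG, and by Boutot's theorem (finite quotients of rational singularities have rational singularities, since a finite group is reductive) it has rational singularities; this gives part (1) and the dimension formula for $\Hom(\pi,G)$. The two forms of the dimension formula agree because $\dim G = \dim G_s + \dim C$.

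For part (2), the character variety is $\X(\pi,G) = \Hom(\pi,G)\git G$. Using \eqref{eq:torus} and the fact that $G$ acts through $G_s$ (the center $C$ acts trivially by conjugation), one has $\X(\pi,G)\simeq \bigl(\X(\pi,G_s)\times C^{2p}\bigr)/F^{2p}$. Since $\X(\pi,G_s)$ has symplectic singularities by Theorem \ref{thm:char.vars}(2) and $C^{2p}$ is a symplectic (indeed, a cotangent-bundle-like, or simply even-dimensional affine) factor — more precisely $C^{2p} = (C\times C^*)^p$ carries a symplectic form coming from the pairing, with $F^{2p}$ acting symplectically — the product has symplectic singularities, and a finite symplectic quotient of a variety with symplectic singularities again has symplectic singularities by Beauville's result on symplectic quotient singularities. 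The dimension of $\X(\pi,G)$ is then $(2p-2)\dim G_s + 2p\dim C$, which equals $(2p-2)\dim G + 2\dim C$ since $(2p-2)\dim C + 2p\dim C \cdot 0$... — more simply, $2p\dim C - (2p-2)\dim C = 2\dim C$.

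For parts (3) and (4), under the hypothesis $p>2$ or $G_s$ has no rank-one simple factor, Theorem \ref{thm:char.vars}(3),(4) gives that $\Hom(\pi,G_s)$ is locally factorial with singularities in codimension $\geq 4$, and the component of $\X(\pi,G_s)$ through $G\rho_0$ is locally factorial. Passing to $\Hom(\pi,G_s)\times C^{2p}$ preserves codimension $\geq 4$ of the singular locus and local factoriality (product with a smooth factor). The quotient by the finite group $F^{2p}$ can decrease codimension only along the branch locus, but since $F^{2p}$ acts on the smooth variety $\Hom(\pi,G_s)_{\mathrm{sm}}\times C^{2p}$ and the quotient of a smooth variety by a finite group that acts freely in codimension... — here I would argue that the singular locus of the quotient is contained in the union of the image of the old singular locus and the ramification locus, and the ramification locus of $F^{2p}$ acting on $C^{2p}$ (a free-in-codimension-one action on the torus, since torus translations are fixed-point-free and $F^{2p}$ acts on $C^{2p}$ partly by translations) has codimension $\geq$ the needed bound; local factoriality of the quotient follows because $F^{2p}$ acts freely in codimension one, so the quotient of a locally factorial variety is locally factorial (no pseudo-reflections, invoking the Chevalley–Shephard–Todd philosophy / the standard fact that $\Pic$ and $\Cl$ behave well under such quotients). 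The refinement in (4) — that $\X(\pi,G)$ is locally factorial when $G_s$ is simply connected — comes from the simply connected case of Theorem \ref{thm:char.vars}(4) combined with the same finite-quotient argument.

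\textbf{Main obstacle.} The delicate point is the behavior of local factoriality and of the codimension of the singular/ramification locus under the finite quotient by $F^{2p}$, particularly verifying that $F^{2p}$ contains no pseudo-reflections in its action on $\Hom(\pi,G_s)\times C^{2p}$ — equivalently, that the action is free in codimension one. On the $C^{2p}$ factor this is clear because a nontrivial element of the finite group $F^{2p}\subset C^{2p}$ acts by a nontrivial translation in at least one torus coordinate and hence has empty fixed locus there; but one must check carefully that combining this with the (possibly nontrivial) fixed loci on the $\Hom(\pi,G_s)$ side still yields codimension $\geq 2$ for the whole fixed locus of each nontrivial group element, so that the quotient map is étale in codimension one and factoriality/codimension bounds descend. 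This is the step I expect to require the most care.
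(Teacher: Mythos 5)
Your overall strategy is the same as the paper's: push everything through the isomorphism \eqref{eq:torus}, apply Boutot for rational singularities and Beauville's result for symplectic singularities of a finite quotient. However, the ``main obstacle'' you flag is in fact a non-issue, and spotting this removes most of the delicacy from your argument. The $F^{2p}$-action on $\Hom(\pi,G_s)\times C^{2p}$ changes a lift $(g_i)\in G_s^{2p}$ by $(f_ig_i)$ while simultaneously changing $(c_i)\in C^{2p}$ by $(f_i^{-1}c_i)$; the fixed locus of a nontrivial $(f_1,\dots,f_{2p})$ is therefore a product of some set with the \emph{empty} fixed locus of a nontrivial translation of the torus $C^{2p}$. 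So the action is genuinely free, not merely free in codimension one, and the quotient map in \eqref{eq:torus} is \'etale. Once you see this, every local property you need (complete intersection, codimension of the singular locus, local factoriality) transfers between $\Hom(\pi,G_s)\times C^{2p}$ and $\Hom(\pi,G)$ with no Chevalley--Shephard--Todd or pseudo-reflection considerations whatsoever.

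There is a genuine gap at the step ``Hence the quotient by the finite group $F^{2p}$ is again a complete intersection.'' A finite quotient of a complete intersection is not a complete intersection in general; what saves you here is precisely the freeness/\'etaleness just noted, or, more directly, the observation (which the paper uses) that $\Hom(\pi,G)=\Phi^{-1}(e)$ where $\Phi\co G^{2p}\to G_s$ (commutators in $G=G_sC$ land in $G_s$), so $\Hom(\pi,G)$ is cut out by $\dim G_s$ equations in the smooth variety $G^{2p}$ and is a complete intersection once the dimension count $(2p-1)\dim G+\dim C = 2p\dim G-\dim G_s$ is verified. You should also, for the Beauville step, verify explicitly that the $F^{2p}$-action on $\X(\pi,G_s)$ preserves the Goldman symplectic form on the smooth locus; the paper checks this by noting that $F^{2p}$-translates $y,y'$ give the same $\pi$-module $\lieg_\rho=\lieg_{\rho'}$. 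Finally, for parts (3) and (4) the paper's route is cleaner than yours: once $\Hom(\pi,G)$ is known to be a complete intersection with singularities in codimension $\geq 4$, local factoriality is immediate from \cite[Expos\'e XI Corollaire 3.14]{GrothendieckFactorial}, with no need to transport factoriality through the finite quotient; and for (4) the paper simply notes that $\X(\pi,G)=\Hom(\pi,G)\git G$ is obtained by dividing by $G_s$ (the central torus acting trivially by conjugation), which has trivial character group, so the Drezet/$\Pic_{G_s}=\Pic$ argument from the semisimple case applies verbatim.
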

\begin{proof}
In the following, (1), (2), etc.\ refer to the parts of Theorem \ref{thm:char.vars}. Now $C^{2p}$ is a symplectic variety via a symplectic form which is $F^{2p}$-invariant. The symplectic form on the smooth locus of $\X(\pi,G)$ is also  $F^{2p}$-invariant since for $y$, $y'\in\Hom(\pi,G_s)$ on the same $F^{2p}$-orbit, the corresponding $\pi$-modules $\lieg_\rho$ and $\lieg_{\rho'}$ are the same. By \eqref{eq:torus} and
 \cite[Proposition 2.4]{Beauville}, $\X(\pi,G)$ has symplectic singularities. With the changes in dimension,  (1) and (2)  follow. Part (3) is   clear since $\Hom(\pi,G)$ is a complete intersection with
 singularities in codimension at least $4$,
 and then (4) follows since we are dividing by $G_s$ which has trivial character group.
\end{proof}

It was pointed out to us by Sean Lawton that an immediate consequence of Theorem \ref{thm:char.vars.red.gp} is the generalization of
\cite[Corollary 8(4)]{LawtonSikora} to the case of $G$ connected and reductive. The argument is identical to that in \cite{LawtonSikora} using
the fact that $\X(\pi,G)$ is normal. This in particular implies that the regular functions on the character variety are rational functions
in characters.

\section{Appendix}
\label{sec:Appendix}
We summarize some results in \cite[1.7--1.8]{Goldman84} and \cite{AtiyahBott} which we use  to show that the mapping \eqref{eq:moment.mapping} of \S \ref{sec:Applications} is a moment mapping.

Let $\Sigma$ be our compact Riemann surface of genus $p\geq 2$ with fundamental group $\pi$ and universal cover $\widetilde \Sigma$. Let $G$ be semisimple, and let $x\in\Hom(\pi,G)$ where $Gx$ is closed. Let $\rho\colon\pi\to G$ be the corresponding homomorphism and
let
$\lieg_\rho$ denote $\lieg$ with the action $\Ad\circ\rho$.  Since $\widetilde\Sigma\to\Sigma$ is a principal $\pi$-bundle we have various associated bundles. Let
$V_\rho$ denote the   flat vector bundle $\widetilde\Sigma\times^\pi\lieg_\rho$ with flat connection $\nabla_\rho$.   We have a flat principal $G$-bundle $P=\widetilde \Sigma\times^\pi G$ with flat connection $\nabla_P$ where the action of $w\in\pi$ on $G$ is via left-multiplication by $\rho(w)$. We have a flat bundle of groups $\widetilde G=\tilde\Sigma\times^{\operatorname{conj}_\pi}G$ where $w\in \pi$ acts on $G$ via conjugation with $\rho(w)$.  The global sections  $\G$ of $\widetilde G$, the gauge group of $P$, act  on
the left of
$P$ as principal bundle automorphisms.

Let $A^*(\Sigma,V_\rho)    =\E^*(\Sigma)\otimes \Gamma(V_\rho)$ where $\E^*(\Sigma)$ is the algebra of differential forms on $\Sigma$ and $\Gamma(V_\rho)$ is the space of
smooth
sections of $V_\rho$.   Then from $\nabla_\rho$ and the usual exterior derivative we get a differential $d_\nabla\colon
A^i(\Sigma,V_\rho)\to A^{i+1}(\Sigma,V_\rho)$. Let $Z^i(\Sigma,V_\rho)$ and $B^i(\Sigma,V_\rho)$ denote the kernel and image, respectively, of $d_\nabla$ in $A^i(\Sigma,V_\rho)$, and let $H^i(\Sigma,V_\rho)$ denote the quotient.  Then   $H^1(\pi,\lieg_\rho)\simeq H^1(\Sigma,V_\rho)$ and similarly for $H^2$.

Let $\sigma$, $\tau\in A^1(\Sigma,V_\rho)$. Then $\sigma\wedge\tau\in A^2(\Sigma,V_\rho\otimes V_\rho)$, and using the killing form
$\B$,
we obtain a two form $\B_*(\sigma\wedge\theta)\in \E^2(\Sigma)$. Then $\int_\Sigma \B_*(\sigma\wedge\theta)$ gives a symplectic form   $\omega^{(\B)}$ on
$A^1(\Sigma,V_\rho)$. Restricted to $Z^1(\Sigma,V_\rho)$, the form  $\omega^{(\B)}$ vanishes when either argument is in $B^1(\Sigma,V_\rho)$, and thus we obtain a $2$-form   (also denoted $\omega^{(\B)}$) on $H^1(\Sigma,V_\rho)\simeq H^1(\pi,\lieg_\rho)$ which agrees with the symplectic form $\omega$ constructed in \ref{subs:tan.cone} above.

Note that the Lie algebra of $\G$ is $\tilde\lieg=A^0(\Sigma,V_\rho)$. Let $\lie A$ denote the space of smooth connections on $P$ and let $F(A)\in A^2(\Sigma,V_\rho)\simeq A^0(\Sigma,V_\rho)^*=\tilde\lieg^*$ denote the curvature of $A\in \lie A$. The kernel of $F$ is the space of flat connections $\lie F\subset\lie A$.  Now $\lie A$ is an affine space (infinite dimensional) since $\lie A=A+A^1(\Sigma,V_\rho)$ for any $A\in\lie A$.
Moreover,  $\omega^{(\B)}$ is   invariant under translation by $A^1(\Sigma,V_\rho)$.
Thus  $\omega^{(\B)}$ gives a closed non-degenerate 2-form on $T(\lie A)$.  Now $\G$ acts on $\lie A$ and by \cite[p.\ 587]{AtiyahBott}, $F\colon\lie A\to \tilde\lieg^*$ is a moment mapping relative to $\omega^{(\B)}$.

Now we consider what happens near $\nabla_P$. Since $H=G_x$ and $\rho(G)$ commute, $H$ is a subgroup of $\G$.
The tangent space to the orbit $\G\cdot \nabla_P$ is $B^1(\Sigma,V_\rho)$. Since $\nabla_P$ is flat,   $F$ vanishes on $\nabla_P+B^1(\Sigma,V_\rho)$. Then for $u\in Z^1(\Sigma,V_\rho)$,
we have by \cite[Lemma 4.5]{AtiyahBott} that
$F(\nabla_P+u)=(1/2)[u,u]$ where the bracket only depends upon the image of $u$ in $
H^1(\Sigma,V_\rho)\simeq H^1(\pi,\lieg_\rho)$. This bracket is the same as taking the bracket we defined in \eqref{eq:moment.mapping}.
Thus \eqref{eq:moment.mapping}  is a moment mapping.

% xxxxxxxxxxxxxxxxxxxxxxxxxxxxxxxxxxxxxxxxxxxxxxxxxxxxxxxxxxxxxxxxxxxxxxxxx
% xxxxxxxxxxxxxxxxxxxxxxxxxxxxxxxxxxxxxxxxxxxxxxxxxxxxxxxxxxxxxxxxxxxxxxxxx
% xxxxxxxxxxxxxxxxxxxxxxxxxxxxxxxxxxxxxxxxxxxxxxxxxxxxxxxxxxxxxxxxxxxxxxxxx

\bibliographystyle{amsalpha}
\bibliography{HSS-rational}

\end{document}